\documentclass[reqno]{amsart}
\usepackage[utf8]{inputenc}
\usepackage{times}

\usepackage{times}
\usepackage[margin=1.3in]{geometry}

\usepackage{amsmath, amsfonts, amssymb, amsthm, mathtools}
\usepackage{mathrsfs}
\usepackage{bbm}
\usepackage{bm}
\usepackage{stmaryrd}
\usepackage{tensor}
\usepackage[h]{esvect}
\usepackage{calligra}

\usepackage{tikz-cd}
\usetikzlibrary{matrix}
\usepackage[all]{xy}

\usepackage{graphicx}
\usepackage{epstopdf}
\usepackage[export]{adjustbox}
\usepackage{caption}
\usepackage{subcaption}
\usepackage{pict2e}

\usepackage{pifont}
\usepackage{csquotes}
\usepackage{halloweenmath}

\usepackage[linktocpage]{hyperref}
\hypersetup{
    colorlinks=true,
    linkcolor=blue,
    citecolor=blue,      
    urlcolor=blue,
}

\newtheorem{theorem}{Theorem}
\newtheorem{proposition}[theorem]{Proposition}
\newtheorem{lemma}[theorem]{Lemma}
\newtheorem{claim}[theorem]{Claim}
\newtheorem{corollary}[theorem]{Corollary}
\newtheorem{conjecture}[theorem]{Conjecture}

\newtheorem{remark}[theorem]{Remark}

\theoremstyle{definition}
\newtheorem{definition}[theorem]{Definition}
\newtheorem{example}[theorem]{Example}

\numberwithin{equation}{section}
\numberwithin{theorem}{section}

\usepackage[english]{babel}
\usepackage{enumitem}

\usepackage{hyphenat}

\usepackage[backend=bibtex,style=alphabetic,maxalphanames=4,maxnames=4]{biblatex}

\renewbibmacro{in:}{}

\newcommand{\op}{\operatorname}

\DeclareMathOperator{\Hom}{\mathscr{H}\text{\kern -3pt {\calligra\large om}}\,}
\DeclareDelimFormat[bib,biblist]{nametitledelim}{\addcomma\space}

\DeclareFieldFormat*{title}{\mkbibitalic{#1}\addcomma}
\DeclareFieldFormat*{journaltitle}{#1}
\DeclareFieldFormat*{volume}{\mkbibbold{#1}}
\DeclareFieldFormat{pages}{#1}
\DeclareFieldFormat[misc]{date}{preprint {#1}}
\DeclareFieldFormat{mr}{%
  MR\addcolon\space
  \ifhyperref
    {\href{http://www.ams.org/mathscinet-getitem?mr=MR#1}{\nolinkurl{#1}}}
    {\nolinkurl{#1}}}
    
\AtEveryBibitem{
  \clearfield{url}
  \clearfield{number}
  \clearfield{doi}
  \clearfield{issn}
  \clearfield{isbn}
  \clearfield{eprintclass}
}
\AtEveryBibitem{\ifentrytype{book}{\clearfield{pages}}{}}

\bibliography{hecke}

\usepackage{fancyhdr}
\renewcommand{\H}{\mathbb{H}}
\newcommand{\C}{\mathbb{C}}

\newcommand{\R}{\mathbb{R}}
\newcommand{\Z}{\mathbb{Z}}
\newcommand{\Q}{\mathbb{Q}}

\renewcommand{\P}{\mathbb{P}}
\newcommand{\bdry}{\partial}
\newcommand{\s}{\vskip.1in}
\newcommand{\n}{\noindent}
\newcommand{\J}{\mathfrak{J}}

\newcommand{\be}{\begin{enumerate}}
\newcommand{\ee}{\end{enumerate}}

\usepackage{todonotes}

\title{Obstructions to smoothing orbicurves and Orbifold Hecke algebras}

\author{Ko Honda}
\address{University of California, Los Angeles, Los Angeles, CA 90095}
\email{honda@math.ucla.edu} \urladdr{http://www.math.ucla.edu/\char126 honda}

\author{Roman Krutowski}
\address{University of California, Los Angeles, Los Angeles, CA 90095}
\email{romankrut@ucla.edu} \urladdr{http://romakrut.com}

\author{Yin Tian}
\address{School of Mathematical Sciences, Beijing Normal University; 
Laboratory of Mathematics and Complex Systems, Ministry of Education, Beijing 100875, China}
\email{yintian@bnu.edu.cn} \urladdr{}

\author{Tianyu Yuan}
\address{School of Mathematical Sciences, Eastern Institute of Technology, Ningbo, Zhejiang, 315200, China}
\email{tyyuan@eitech.edu.cn} \urladdr{}

\date{\today}

\keywords{orbifold Floer theory, Hecke algebra}

\subjclass[2020]{Primary 53D37; Secondary 20C08, 57R18.}

\begin{document}

\begin{abstract}
In this paper, we study obstructions to smoothing nodal orbicurves with orbighosts mapped into cyclic quotient singularities. As an application, we show, under some assumptions, that the orbifold Hecke algebra of a complex global quotient orbifold $[X/G]$ is isomorphic to the degree $0$ cohomology of the $A_\infty$-algebra of endomorphisms of a regular cotangent fiber $T_{[x]}^*[X/G]$ regarded as an object of the bulk-deformed wrapped Fukaya category of the orbifold $T^*[X/G]$ for a compact complex manifold $X$ and finite group $G \subset \operatorname{Aut}(X)$.
\end{abstract}

\maketitle

\tableofcontents

\section{Introduction}

The aim of this paper is to study the orbifold Floer theory of a global quotient $[M/G]$ 
for a symplectic manifold $(M,\omega)$ together with an action of a finite subgroup $G$ of its symplectomorphism group $\operatorname{Symp}(M,\omega)$. Specifically, we build tools for defining \emph{refined} invariants such as \emph{reduced} (open) Gromov-Witten invariants and Fukaya categories. This usually requires a careful analysis of the moduli space of stable curves, and, in particular, of the compactification of the main stratum. One way to analyze an essential part of the compactification is to determine which orbicurves are not \emph{smoothable}. In this work, we focus on nodal twisted orbicurves with ghosts on cyclic quotient singularities and obtain obstructions to smoothing such curves. We then use these obstructions to define and obtain some computations of wrapped Fukaya categories of particular global quotient orbifolds.

In the non-orbifold Calabi-Yau setting, recent advances of Doan-Walpuski~\cite{doan2023} and Ekholm-Shende \cite{ekholmshende2025ghost} enable us to define reduced (open) Gromov-Witten invariants of arbitrary genus. In both \cite{doan2023} and \cite{ekholmshende2025ghost}, the authors obtain obstruction results to smoothings of closed curves with a ghost bubble of positive genus attached at a single point. These results build on a long line of work \cite{zinger2009sharp, zinger2009reduced, niu2016thesis} under certain assumptions on the genus, starting with the work of Ionel \cite{ionel1998genus1}. On the algebro-geometric side, these ideas were developed in low genus settings in \cite{vakil2000enumerative, vakil2001tool}. The algebro-geometric analogs of the obstruction results of \cite{doan2023, ekholmshende2025ghost} were sharpened in the paper of Rezaee-Swaminathan \cite{rezaeeswaminathan2024obstruction} to the case of ghosts attached at multiple points, and \cite{rezaeeswaminathan2025construction} provided a criterion for smoothing nodal curves with ghost bubbles. (Their symplectic analogs are known only in the lower-genus cases from the works of \cite{zinger2009sharp, niu2016thesis}.)  We also note that recently Xu \cite{xu2026reduced} obtained a definition of reduced Gromov-Witten invariants using \emph{soft} topological methods, avoiding the need to establish obstructions to smoothings.

Recall that an \emph{orbicurve}
$$[u] \colon (\mathscr{C},q_1, \ldots, q_b) \to [M/G]$$
is a map from a \emph{stacky curve} $\mathscr{C}$, possibly with boundary,\footnote{We will often refer to both $[u]$ and the domain $\mathscr{C}$ as ``orbicurves'', following common usage in the non-orbifold case.} whose smooth stacky points $q_i$ are required to be mapped to the singular locus of $[M/G]$ (i.e., the locus of all stacky points). A sequence of smooth orbicurves may converge to a nodal orbicurve with an \emph{orbighost}, corresponding to a tuple of stacky points colliding on the domain. Ruling out such orbighosts is highly nontrivial even in the case of genus $0$ orbicurves.

In this paper, we initiate the study of obstructions to smoothings of orbicurves and obtain criteria for ruling out orbighosts landing on \emph{cyclic quotient singularities}; see Section~\ref{section: obstruction-to-smoothings} for more precise statements.

We then apply the smoothing obstructions developed in this paper to one important class of examples:
Let $X$ be a complex manifold, equipped with a finite subgroup $G$ of automorphisms. There is a naturally induced action of $G$ on $M=T^*X$, and our aim is to define and obtain computations of the \emph{regular wrapped Fukaya category of $[T^*X/G]$ bulk-deformed by} {\em inertia components} of the inertia orbifold $\mathcal{I}[T^*X/G]$, denoted $\mathcal{W}^{\operatorname{reg}}_{\bm\hbar}([T^*X/G])$, where $\bm \hbar$ is the associated tuple of bulk parameters. The inertia components of $[T^*X/G]$ are naturally in bijection with the inertia components of $[X/G]$. A connected (complex) hypersurface $Y \subset X$ fixed by the $G$-action corresponds to an embedded copy of $T^*Y \subset T^*X$, which is fixed by the induced action. This component is an example of a \emph{(compound) $A_{m-1}$-singularity}, i.e., locally $[T^*X/G]$ near a generic point of $[G \cdot T^*Y/G]$ is given by
$$[\C^2 /\mu_m] \times \C^{n-2},$$ 
with the cyclic group $\mu_m$ of order $m$ acting on $\C^2$ with opposite characters $\zeta_m^{\pm 1}=e^{\pm 2\pi i/m}$. Using our obstruction results 
we define $\mathcal{W}^{\operatorname{reg}}_{\bm\hbar}([T^*X/G])$ bulk-deformed by these $A_{m-1}$-singularities; see Theorem~\ref{theorem: well-defined-ainfty-cat}, which proves a proposal from \cite[Section 3.2]{mak2025orbifoldhamiltonianfloertheory} in this particular case.  

More significantly, we obtain a partial computation of the $A_\infty$-algebra of endomorphisms of a cotangent fiber $T^*_{[x]}[X/G]$ over a point $[x]$ associated with a regular point $x \in X^{\operatorname{reg}}$. This computation is an extension of the arguments of Abouzaid \cite{abouzaid2012wrapped} and Abbondandolo-Schwarz \cite{abbondandolo2006floer, abbondandolo2010floer} to the orbifold setting. It involves counting orbidisks with boundary on the $0$-section $[X/G] \subset [T^*X/G]$, which is a singular Lagrangian (by this we mean a Lagrangian with a nontrivial singular locus). To our knowledge, this is among the few available computations in orbifold Lagrangian Floer theory, involving singular Lagrangians. We refer the reader to \cite{COW2024twisted} for a proposal of a framework of such computations for arbitrary orbifolds.

Specifically, we show that there is a surjective algebra morphism 
\begin{equation}
    \mathcal{EV} \colon HW^0_{\bm \hbar}(T^*_{[x]}[X/G]) \to \mathcal{H}_{\bm \hbar}([X/G])
\end{equation}
from the degree-zero cohomology algebra of the $A_\infty$-algebra $CW^*_{\bm \hbar}(T^*_{[x]}[X/G])$ to Etingof's {\em orbifold Hecke algebra} $\mathcal{H}_{\bm \hbar}([X/G])$ \cite{EtingofPavel}
and show that $\mathcal{EV}$ is an isomorphism when $\pi_2(X) \otimes \mathbb{Q}=0$. See Section~\ref{section: fukaya and Hecke intro} below for the precise statement and discussion of the result.

\subsection{Obstructions to smoothing nodal orbicurves}\label{section: obstruction-to-smoothings}

Let $(M, \omega)$ be a symplectic manifold equipped with an $\omega$-tame almost complex structure $J_0$ and an effective action of a finite group $G$ of symplectomorphisms preserving $J_0$. Consider a \emph{cyclic quotient singularity} $[G \cdot \mathcal{Y}/G] \subset [M/G]$ arising from a connected component $\mathcal{Y}$ of the fixed point set $M^{H}$, where $H\subset G$ is a cyclic subgroup isomorphic to the group $\mu_m=\{\zeta_m^j \mid j=0, \dots m-1\}$ of $m$th roots of unity generated by $\zeta_m=e^{2\pi i/m}$, for some $m \in \Z_{\ge 2}$. The component $\mathcal{Y}$ is smooth and the normal bundle $\nu_{\mathcal{Y}}$ admits an \emph{isotypic decomposition}
\begin{equation}
    \nu_{\mathcal{Y}}= \oplus_{\chi \in \operatorname{Irr}_{\C}(H)} \nu_{\chi},
\end{equation}
where the action of $H$ on the fibers of the $J_0$-complex subbundle $\nu_\chi$ corresponds to a direct sum of copies of the nontrivial irreducible $\C$-representations of $H$ corresponding to the character $\chi$. 

We will assume that $J_0$ is \emph{adapted} to the cyclic quotient singularity  $[G \cdot \mathcal{Y}/G]$ in the following sense: Let $\mathcal{Y}^{\circ} \subset \mathcal{Y}$ denote the immersed submanifold of points $y \in \mathcal{Y}$ with the minimal possible stabilizer, i.e., with $G_y=H$.
Then:
\be
\item an auxiliary choice of an open $G$-invariant neighborhood $U$ of $G \cdot \mathcal{Y}^{\circ}$, together with a choice of almost complex $G$-invariant immersed \emph{isotypic submanifolds} $V_\chi \subset U$ for $\chi \in \operatorname{Irr}_{\C}(H)$ containing $\mathcal{Y}^{\circ}$, such that
$$TV_{\chi}|_{\mathcal{Y}^{\circ}}=T\mathcal{Y}^{\circ} \oplus \nu_\chi;$$
\item a choice of $G$-equivariant $J_0$-holomorphic projections
$$\pi_\chi \colon U \to V_{\chi}, \quad \pi_\chi|_{V_\chi}=\operatorname{id}.$$
\ee
In Section \ref{neighborhoods-of-fixed-sets} we show that such adapted almost complex structures $J_0$ exist in abundance.

A pseudoholomorphic orbicurve $[u] \colon (\mathscr{C},q_1, \ldots, q_b) \to [M/G]$, given by a map
from a (potentially nodal) twisted orbidisk $\mathscr{C}$ which is \emph{injective on isotropy groups}, can be represented by a $G$-equivariant $J_0$-holomorphic curve
$$u \colon \Sigma \to M,$$
with $[\Sigma/G] =\mathscr{C}$ (see Lemma~\ref{lemma: orbifold=equivariant}), and moreover the projection $\pi \colon \Sigma \to C$ has $(q_1, \ldots, q_b)$ as its branching locus, where $C$ is the coarse curve of $\mathscr{C}$. Specifying the inertia components to which these stacky points are mapped, together with generators of the local isotropy, specifies the \emph{Hurwitz datum} $\bm h=((h_1), \dots, (h_b))$ of the cover $\pi \colon \Sigma \to C$, where $(h_i) \in \operatorname{Conj}(G)$ is the conjugacy class of an element of $G$ fixing a ramification point in $\pi^{-1}(q_i)$. Hence, domains of the orbicurves can be viewed as elements of the Hurwitz moduli space $\overline{\mathscr{H}}_{0, \bm h, \emptyset}^G$ of admissible $G$-covers over orbidisks with prescribed Hurwitz datum $\bm h$. In the case of admissible $G$-covers over closed genus $0$ orbicurves, these spaces are known to be orbifolds by \cite{abramovichvistoli2002, ACV2003, olsson2007log}, and in the case of orbidisks one can deduce that there is a structure of orbifolds with corners on these spaces using doubling.

Given a stacky point $q_i$ and $(h_i)$ with $h_i \in H$, in Section~\ref{section: transversality-for-jets} we show that the Taylor expansion of $\pi_{\chi} \circ u$  
at a point $p \in \pi^{-1}(q_i)$ fixed by $h_i$ has an \emph{expected leading term} of order $k(\chi, q_i, (h_i))$, i.e., the $J_0$-holomorphic jet $j^{k(\chi, q_i, (h_i))-1}_p(\pi_\chi \circ u)$ necessarily vanishes, while $j^{k(\chi, q_i, (h_i))}_p(\pi_\chi \circ u)$ is generically nonzero. This number $k(\chi, q_i, (h_i))$ is given by an element of $\{1, \dots, d_i\}$, where $d_i=\operatorname{ord}(h_i)$, such that 
\begin{equation*}
    \chi(h_i)=\zeta_{d_i}^{k(\chi, q_i, (h_i))}. 
\end{equation*}
Since this number depends on $\chi$, the desire to pick the auxiliary submanifolds $V_\chi$ should appear natural.

Now consider a sequence of smooth domains $(\pi_k \colon \Sigma_k \to \mathscr{C}_k) \in \mathscr{H}_{0, \bm h, \emptyset}^G$ converging to a twisted nodal orbicurve 
$$(\pi_\infty \colon \Sigma_\infty \to \mathscr{C}_\infty= \mathscr{C}_{\bullet} \cup_{\ddot{n}}  \mathscr{C}_{\mathghost}) \in \overline{\mathscr{H}}_{0, \bm h, \emptyset}^G,$$ 
where $\mathscr{C}_{\mathghost}$ is a connected orbidisk or a genus $0$ orbicurve attached to the main component along a unique (potentially stacky) node $\ddot{n}$, and there is a decomposition $\Sigma_\infty=\Sigma_{\bullet} \cup_{\pi^{-1}(\ddot{n})} \Sigma_{\mathghost}$
into the preimages of $\mathscr{C}_\bullet$ and $\mathscr{C}_{\mathghost}$ under $\pi_\infty$. We endow curves in this sequence with $G$-equivariant domain-dependent almost complex structures $J_k$ on $M$ converging in the $C^\infty$-topology to $J_\infty$, which are assumed to coincide with $J_0$ near the preimages of the stacky points and on $\Sigma_{\mathghost}$.

Furthermore, we assume that there is a sequence of orbicurves $[u_k] \colon \mathscr{C}_k \to [M/G]$, represented by $G$-equivariant $J_k$-holomorphic maps $u_k \colon \Sigma_k \to M$, 
which converges in the Gromov topology to an orbicurve
$$[u_\infty] \colon \mathscr{C}_\infty=\mathscr{C}_{\bullet} \cup_{\ddot{n}}  \mathscr{C}_{\mathghost} \to [M/G]$$
with $[u_\infty]|_{\mathscr{C}_{\mathghost}}$ mapping $\mathscr{C}_{\mathghost}$ to a point $[y] \in [G \cdot \mathcal{Y}^{\circ}/G]$ with $y \in \mathcal{Y}^{\circ}$, and this map is represented by
\begin{equation}\label{eq: equivariant-limit-cruve}
    u_\infty=u_\bullet \cup u_{\mathghost} \colon \Sigma_{\bullet} \cup \Sigma_{\mathghost} \to M.
\end{equation}
We call $[u_{\mathghost}]$ an \emph{interior ghost} if $\mathscr{C}_{\mathghost}$  is a closed genus $0$ orbicurve, and a \emph{boundary ghost} if $\mathscr{C}_{\mathghost}$ is an orbidisk. 
We denote by $\Sigma_{\mathcal{Y}} \subset \Sigma_{\mathghost}$ the union of components mapped to $\mathcal{Y}^{\circ}$ under $u_{\mathghost}$. The space $H^{1,0}(\Sigma_{\mathcal{Y}})$ of holomorphic $1$-forms on $\Sigma_{\mathcal{Y}}$ naturally admits an $H$-action, and hence admits a character decomposition 
$$H^{1,0}(\Sigma_{\mathcal{Y}})=\textstyle\oplus_{\chi \in \operatorname{Irr}_\C(H)}H^{1,0}(\Sigma_{\mathcal{Y}})_\chi.$$

We also require $\partial\Sigma_k$ to be mapped to a $G$-orbit $\mathcal{L}=G \cdot L$ of an embedded Lagrangian $L \subset M$ which is preserved by the $H$-action. 

The following theorem is our main result on obstructing smoothings of orbicurves and is a combination of Theorems~\ref{ref: theorem-isotypic-direction-jet-vanishing} and \ref{ref: theorem-real-derivatives-vanishing}.

\begin{theorem}\label{theorem: main-obstruction-result}
    Let $[u_k]$ be a sequence of orbicurves represented by $J_k$-holomorphic equivariant curves $u_k \colon \Sigma_k \to M$, with domain-dependent almost complex structures $J_k$ as above, which converges to a $J_\infty$-holomorphic curve as in~\eqref{eq: equivariant-limit-cruve}. Then one of the following holds:
    \begin{enumerate}
        \item[(1)] The curve $[u_{\mathghost}]$ is an interior ghost and $\ddot{n}$ is a regular node.
        \item[(2)] The curve $[u_{\mathghost}]$ is an interior ghost and $\ddot{n}$ is a stacky node. If there exists a $1$-form $\eta \in H^{1,0}(\Sigma_{\mathcal{Y}})_\chi$ with a zero of order exactly $k(\chi, \ddot{n}, (h_{\ddot{n}}))-1$ at any node $p \in \pi^{-1}(\ddot{n})$ of $\Sigma_{\mathcal{Y}}$, then the $J_0$-holomorphic jet
        \begin{equation}
            j^{k(\chi, q_i, (h_i))}_p(\pi_\chi \circ u_\bullet)
        \end{equation}
        vanishes at the node $p \in \pi^{-1}(\ddot{n})$.
        \item[(3)] The curve $[u_{\mathghost}]$ is a boundary ghost, in which case $\ddot{n}$ is automatically regular. If there exists a nonzero $1$-form
        $$\overline{\eta}\otimes v \in( H^{0,1}_{\R}(\Sigma_{\mathghost})\otimes T_y L)^{H},$$
        such that $\eta(p)\neq 0$, then 
    \begin{equation}
        \langle du_\infty(p), v \rangle =0
    \end{equation}
    for each node $p$ of $\Sigma_{m_{\mathcal{Y}}}$ in $\pi^{-1}(\ddot{n})$ and any $v \in T_yL$ as above.
    \end{enumerate}
\end{theorem}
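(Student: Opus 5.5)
The plan is to adapt the Zinger / Doan–Walpuski / Ekholm–Shende obstruction argument to the $G$-equivariant setting. The key device is to pair the rescaled difference between $u_k$ and its limit with holomorphic $1$-forms on the ghost. Everything is done upstairs on $\Sigma_k$, equivariantly, and then restricted to the $H$-isotypic pieces.

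First I would localize. Near $y\in\mathcal{Y}^\circ$ I use the adapted structure: on $U$ the projections $\pi_\chi$ are $J_0$-holomorphic and $G$-equivariant, and $J_k=J_0$ near the ghost. Hence for large $k$ the map $w_k:=\pi_\chi\circ u_k$, restricted to the part of $\Sigma_k$ converging to $\Sigma_{\mathghost}$ together with a neck region, is an honest $J_0$-holomorphic map into $V_\chi$. Choosing local holomorphic coordinates on $V_\chi$ centered at $y$ with fiber coordinates in $\nu_\chi$, I regard the normal part $\xi_k$ of $w_k$ as an $H$-equivariant section with values in $\nu_\chi$. The equivariance means $\xi_k(hz)=\chi(h)\xi_k(z)$. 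Since the ghost maps to a point, $\xi_k\to 0$ on the ghost region.

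Case (1) requires nothing, so the content lies in (2) and (3). For (2), take $\eta\in H^{1,0}(\Sigma_{\mathcal{Y}})_\chi$, extended as a holomorphic $1$-form on the thick part of $\Sigma_k$ near the ghost by the standard plumbing (gluing) description of the smoothing. Pair it with $\xi_k$ to form the $H$-invariant $1$-form $\eta\otimes\xi_k$; invariance holds because the characters cancel. Apply Stokes, or equivalently the residue theorem, to $\bar\partial$ of $\langle \eta,\xi_k\rangle$ over the ghost part bounded by the neck circles. The interior contribution is $O(\text{error in }J)$, which vanishes since $J_k=J_0$ there. The remaining term is the boundary integral over the neck circles around each $p\in\pi^{-1}(\ddot n)$.

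On the neck with gluing parameter $\epsilon_k$, I write $z\cdot w=\epsilon_k$. By the earlier jet-transversality section, the Taylor expansion of $\pi_\chi\circ u_\bullet$ at $p$ begins in order $k(\chi,\ddot n,(h_{\ddot n}))$, and $\eta$ vanishes to order exactly $k-1$ at $p$. The residue therefore picks out $\epsilon_k^{k}\cdot c\cdot j^{k}_p(\pi_\chi\circ u_\bullet)$ plus higher-order terms, where $c\neq0$ is the leading coefficient of $\eta$. Summing over the $G$-orbit of nodes, equivariance makes all contributions equal up to a common nonzero factor. Dividing by $\epsilon_k^{k}$ and letting $k\to\infty$ forces the jet to vanish.

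Case (3) is the same argument with the doubled ghost. The element $\bar\eta\otimes v\in(H^{0,1}_\R(\Sigma_{\mathghost})\otimes T_yL)^H$ is paired with the linearized map $du_k$ on the disk. Because $\partial\Sigma_k$ maps to $\mathcal{L}$ and $v\in T_yL$, the real boundary terms on $\partial\Sigma$ vanish by the Lagrangian boundary condition. The neck residue then gives $\epsilon_k\,\eta(p)\langle du_\infty(p),v\rangle$, which forces the pairing to vanish. The regularity of $\ddot n$ here follows because boundary nodes lie on a real locus, where the isotropy fixing $\partial\Sigma$ pointwise is trivial by effectivity.

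I expect the main obstacle to be the error estimates. I must show that the correction terms, coming from the nonlinearity of $J_0$ in the chosen coordinates, the discrepancy of $J_k$ away from the ghost, and the approximate gluing, are $o(\epsilon_k^{k})$. The exponent $k$ can exceed $1$, so crude $C^1$ bounds will not suffice. I would first try exponential decay estimates on the neck, combined with the fact that $\pi_\chi$ removes the other isotypic components exactly, so that the remainder is quadratic in $\xi_k$ and hence of higher order. I would also use the fact that the leading orders are forced by the jet vanishing result.
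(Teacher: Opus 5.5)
Your residue identity is the same mechanism the paper uses: the residue computation in Step~3 and the integration by parts in Step~4 of the proof of Proposition~\ref{proposition: obstruction-on-higher-jets}. The difference is that you apply it directly to the actual curves. The paper instead applies it to an explicit preglued map plus a correction whose size is controlled by the gluing theorem. The step that would make your direct version legitimate is missing. (Index the sequence by $k'$ to keep it apart from the order $k=k(\chi,\ddot n,(h_{\ddot n}))$, and write $\epsilon_{k'}=|\tau_{k'}|$.) Two of your intermediate claims fail as stated.

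First, the interior term does not vanish merely because $J_{k'}=J_0$. In general $J_0|_{V_\chi}$ is not integrable, so $V_\chi$ has no holomorphic coordinates. In $H$-equivariant coordinates $(y',v)$ on $V_\chi$ with $J_0|_{\nu_\chi}=i$ along $\mathcal Y$, equivariance only yields $\bar\partial v_{k'}=O\bigl(|v_{k'}|\,(|dv_{k'}|+|dy'_{k'}|)\bigr)$. The mixed term couples the normal part to the $\mathcal Y$-component $y'_{k'}$, which $\pi_\chi$ does not remove. So the remainder is not quadratic in the normal part alone.

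Second, reading off $\epsilon_{k'}^{k}\,c\,j^k_p(\pi_\chi\circ u_\bullet)$ from a neck circle presupposes that $u_{k'}$ on the neck agrees with the Taylor polynomial of $u_\bullet$ up to relative error $o(1)$ at scale $\epsilon_{k'}^{k}$. Gromov convergence gives only $C^\infty$ convergence away from the nodes and small energy on the neck.

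You can handle the second point by moving the contour to a fixed circle $|w|=r'$ on the main side. There $C^\infty_{\mathrm{loc}}$ convergence gives, per node, the boundary term $\pm 2\pi i\,c\,\tau_{k'}^{k}\bigl(a+O(r')+o(1)\bigr)$, where $c$ and $a$ are the leading coefficients of $\eta$ and of $\pi_\chi\circ u_\bullet$ on the two sides of the node. But then everything rests on showing that $\int\eta\wedge\bar\partial v_{k'}$, over the ghost together with the neck $\epsilon_{k'}/r'<|z|<R_0$, is $O(r')\,\epsilon_{k'}^{k}+o(\epsilon_{k'}^{k})$.

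Since $|\eta|\sim|z|^{k-1}$ on the neck, this needs sharp a priori bounds such as $\sup_{\mathrm{ghost}}|v_{k'}|\le C\epsilon_{k'}^{k}$ and $|v_{k'}(z)|\le C\epsilon_{k'}^{k}|z|^{-k}$ on the neck. It also needs integrable decay of $|dy'_{k'}|$; mere uniform smallness of the neck energy leaves a factor $\log(1/\epsilon_{k'})$. Standard exponential decay lemmas give rates below $1$, which cannot produce $o(\epsilon_{k'}^{k})$ once $k\ge 2$. Rate $k$ requires three things:
\begin{itemize}
\item using $G_p$-equivariance to exclude low Fourier modes on the neck;
\item an elliptic estimate for $\chi$-isotypic functions on the ghost (there is no kernel, because $\chi$ must be nontrivial on the stabilizer of the components of $\Sigma_{\mathcal Y}$, as otherwise $\eta$ would descend to a holomorphic $1$-form on the genus-$0$ coarse ghost and vanish);
\item control of the coupling with $y'_{k'}$.
\end{itemize}

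The paper does not prove such estimates by hand. It builds the equivariant two-step Kuranishi model and invokes \cite[Prop.~5.36]{doan2023} to write the localized curves as $\exp_{\widetilde u_{\sigma_{k'},\tau_{k'},\kappa_{k'}}}(\hat\xi_{k'})$, with $\operatorname{ob}=0$ and $\|\hat\xi_{k'}\|_{W^{1,p}}\le C\epsilon_{k'}^{k}$. The pairing is then evaluated on the explicit preglued map with error $O(\epsilon_{k'}^{k+1/2})$.

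If you supplied the sharp estimates directly, in the spirit of \cite{ekholmshende2025ghost}, you would get a more local proof that needs neither the localization trick nor equivariant right inverses. As written, however, what you call the main obstacle is the substance of the argument, and the same gap is present in case~(3).
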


By Theorem~\ref{theorem: main-obstruction-result}, once the obstruction space of $1$-forms on the ghost curve $[u_{\mathghost}]$ has been determined to be nonzero, the existence of a smoothing imposes additional constraints on the Taylor series of the main curve $u_\bullet$ at the places where the nodes are attached. The computation of the obstruction space for an arbitrary cyclic quotient singularity reduces to an index calculation, which is somewhat tedious and depends on the combinatorics of the stacky points on the ghost curve $\mathscr{C}_{\mathghost}$. 

In this paper, we are mainly interested in the case of (compound) $A_{m-1}$-singularities, and for those Theorem~\ref{theorem: main-obstruction-result} admits the following strengthening, using explicit computations from Lemmas~\ref{lemma: 1-forms-on-cyclic-covers} and \ref{lemma: index-computation-for-An-singularity}.

Notice that with respect to some trivialization of $T_yM=\C^2_{A_{m-1}} \oplus \C^{n-2}_{\operatorname{triv}}$ as an $H$-representation, the tangent space $T_yL$ is given by the real $H$-subrepresentation
\begin{equation}\label{eq: local-form-for-L}
    \R^2_{A_{m-1}} \oplus \R^{n-2}_{\operatorname{triv}}.
\end{equation}

\begin{theorem}\label{theorem: obstruction-for-Am-main}
    In the situation of Theorem~\ref{theorem: main-obstruction-result}, assuming that $[G \cdot \mathcal{Y}/G] \subset [M/G]$ is a (compound) $A_{m-1}$-singularity and the tangent space $T_yL$ to the Lagrangian $L$ is given by~\eqref{eq: local-form-for-L}, one of the following holds:
    \begin{enumerate}
    \item[(1)] The curve $[u_{\mathghost}]$ is an interior ghost and $\ddot{n}$ is a regular node.
    \item[(2)]  The curve $[u_{\mathghost}]$ is an interior ghost and $\ddot{n}$ is a stacky node. The $J_0$-holomorphic jet
        \begin{equation}
            j^{k(\chi, q_i, (h_i))}_p(\pi_\chi \circ u_\bullet)
        \end{equation}
    vanishes at any node $p \in \pi^{-1}(\ddot{n})$ for at least \emph{one of the two} characters $\chi$ associated with this singularity.
    \item[(3)] The curve $[u_{\mathghost}]$ is a boundary ghost, in which case $\ddot{n}$ is automatically regular. If $\mathscr{C}_{\mathghost}$ has at least $2$ stacky points, then
\begin{equation}
    \langle du_\infty(p), v \rangle =0
\end{equation}
for any node $p$ of $\Sigma_{m_{\mathcal{Y}}}$ in $\pi^{-1}(\ddot{n})$ and any $v \in T_yL \cap (\nu_{\mathcal{Y}})_y$.
\end{enumerate}
\end{theorem}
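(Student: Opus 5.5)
The plan is to deduce Theorem~\ref{theorem: obstruction-for-Am-main} from Theorem~\ref{theorem: main-obstruction-result}. In cases (2) and (3) the only thing to supply is the \emph{existence} of a suitable obstruction $1$-form on the ghost. That is pure linear algebra on the ghost cover, computed with Lemmas~\ref{lemma: 1-forms-on-cyclic-covers} and \ref{lemma: index-computation-for-An-singularity}. Case (1) carries over verbatim.

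For case (2), let the ghost be interior with a stacky node $\ddot{n}$. Near $y$ write $T_yM=\C^2_{A_{m-1}}\oplus\C^{n-2}_{\operatorname{triv}}$, so there are exactly two nontrivial characters $\chi_\pm=\zeta_m^{\pm1}$. Then $k(\chi_+,\ddot{n},(h))=k$ and $k(\chi_-,\ddot{n},(h))=d-k$ (or $d$ when $k=d$), where $d=\operatorname{ord}(h)$.

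The component $\Sigma_{\mathcal{Y}}$ is, component by component, a cyclic $\mu_d$-cover (induced up to $G$) of a genus $0$ orbicurve. It is branched at the stacky points of $\mathscr{C}_{\mathghost}$ and at $\ddot{n}$, with monodromies $h_1,\dots,h_r,h_{\ddot{n}}^{-1}$ whose product is trivial. By Lemma~\ref{lemma: 1-forms-on-cyclic-covers}, $H^{1,0}(\Sigma_{\mathcal{Y}})_\chi$ is identified with meromorphic forms $f(z)\prod(z-z_i)^{-\{a_i\}}dz$ on $\P^1$, where the fractional exponents come from $\chi(h_i)$. Its dimension is $\sum_i\langle\chi\text{-exponent}\rangle-1$.

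The key step is the following dichotomy. Since the exponents for $\chi_+$ and $\chi_-$ at each branch point are complementary, the two dimensions add up to the genus-type count $r-1$, and at least one of them is positive. Moreover, placing $\ddot{n}$ at $\infty$, a nonzero form in that eigenspace can be chosen with vanishing order at $p$ exactly $k(\chi,\ddot{n},(h))-1$. The reason is that the order at $\infty$ of a form in the $\chi$-isotypic part is congruent to $k(\chi)-1$ mod $d$, and the minimal admissible order is attained by taking $\deg f$ maximal.

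I would verify this with Lemma~\ref{lemma: index-computation-for-An-singularity}: the ghost has at least two stacky points besides the node, which is forced by stability. Feeding this $\eta$ into Theorem~\ref{theorem: main-obstruction-result}(2) gives the jet vanishing for that $\chi$.

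For case (3), the boundary ghost, I would double $\Sigma_{\mathghost}$ and identify $H^{0,1}_{\R}(\Sigma_{\mathghost})$ with anti-invariant holomorphic forms on the double. The $H$-invariants of $H^{0,1}_\R\otimes T_yL$ split along~\eqref{eq: local-form-for-L}. The trivial summand $\R^{n-2}$ pairs with $H$-invariant forms, which descend to a disk and hence vanish. The $\R^2_{A_{m-1}}$ summand pairs with the $\chi_\pm$-isotypic real forms.

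With at least two stacky points, the index count of Lemma~\ref{lemma: index-computation-for-An-singularity} gives a nonzero equivariant element $\overline{\eta}\otimes v$. Its $H$-translates span all of $T_yL\cap(\nu_{\mathcal{Y}})_y$, since that space is an irreducible real $H$-representation. I also need $\eta(p)\neq0$ at the boundary node, which again comes from choosing the leading-order form as above.

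The main obstacle is this nonvanishing at the node, equivalently the exact vanishing order in case (2). It needs a careful residue/degree count at $\infty$ on the cyclic cover, and that count depends on the combinatorics of the stacky points. I would handle it uniformly by reducing to a single eigenspace computation on the $\mu_d$-cover of $\P^1$ with the node at $\infty$.
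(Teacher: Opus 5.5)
Your route is the paper's: the theorem is obtained by supplying the obstruction forms that Theorem~\ref{theorem: main-obstruction-result} requires, via Lemma~\ref{lemma: 1-forms-on-cyclic-covers} in case (2) and Lemma~\ref{lemma: index-computation-for-An-singularity} in case (3).

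Your case (2) is fine. The complementary fractional parts give $h^1(\chi_+)+h^1(\chi_-)=r-1\ge 1$ by stability. Your ``maximal $\deg f$'' argument is an explicit version of the paper's basepoint-freeness of $(\pi_*K_{\Sigma})_\chi$. There are two harmless slips.
\begin{itemize}
\item A component of $\Sigma_{\mathcal Y}$ is a cover whose deck group is generated by $h_1,\dots,h_r$. This can be strictly larger than $\mu_d$ with $d=\operatorname{ord}(h_{\ddot n})$. For example, with $m=6$, $h_1=\zeta_6$, $h_2=\zeta_6^2$, the node has order $2$ but the cover is a connected $\mu_6$-cover. Only the congruence of the vanishing order modulo the stabilizer order at $p$ is used, so nothing breaks.
\item Lemma~\ref{lemma: index-computation-for-An-singularity} concerns disks with boundary on $X$ and plays no role in case (2).
\end{itemize}

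The genuine gap is in case (3). There you pass from one nonzero invariant element to \emph{all} $v\in T_yL\cap(\nu_{\mathcal Y})_y$ by saying that its $H$-translates span an irreducible representation. This does not work, for the following reasons.
\begin{itemize}
\item An $H$-invariant element $\omega$ has trivial $H$-translates.
\item If you translate its value $\omega(p)=\eta(p)\otimes v$ instead, you only recover its values $\eta(hp)\otimes hv$ at the other nodes of the orbit. By equivariance of $u_\infty$ and invariance of the metric, $\langle du_\infty(hp),hv\rangle=0$ is the \emph{same} condition as $\langle du_\infty(p),v\rangle=0$. This is why Proposition~\ref{proposition: boundary-ghost-leading-obstruction} sums the orbit $Hp$ into a single real equation.
\item Hence each invariant obstruction element gives exactly one real linear condition on the $2$-dimensional normal part of $du_\infty(p)$. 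What you control at $p$ is the span of the values at $p$ of invariant elements.
\item Irreducibility says nothing about that span, because $H$ acts freely on the boundary nodes and fixes no $p$. Moreover, for $m=2$ the plane $\R^2_{A_1}=\sigma\oplus\sigma$ is not even irreducible.
\item The count $2b-2\ge 2$ does not by itself make evaluation at $p$ surjective either.
\end{itemize}

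The missing idea is the one used at the end of Lemma~\ref{lemma: index-computation-for-An-singularity}(b). A rotation $I$ by $\pi/2$ of the plane $T_yL\cap(\nu_{\mathcal Y})_y$ commutes with the $H$-action, which is by rotations; in the cotangent case $I$ is the complex structure of $\nu_Y|_y$. So $\operatorname{id}\otimes I$ preserves both the real boundary condition and $H$-invariance. It turns an invariant element with value $\eta(p)\otimes v$, $\eta(p)\neq 0$, into one with value $\eta(p)\otimes Iv$. Since $v$ and $Iv$ span the plane, both conditions follow.

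You also still have to produce one invariant element with $\eta(p)\ne 0$ at the real, non-branch boundary node. Your appeal to ``the leading-order form as above'' must be redone on the double and made compatible with the real structure. The paper does this via Grothendieck splitting of the doubled bundle pair. This step is repairable along the lines you indicate, but it is not yet an argument.
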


\begin{remark}
    We make no assumptions on the genus of orbicurves in the proof of Theorem~\ref{theorem: main-obstruction-result}. On the other hand, we use assumptions on the genus of the orbighost in our proof of Theorem~\ref{theorem: obstruction-for-Am-main}.
\end{remark}

\begin{remark}
    The results are not immediate from the obstruction results of \cite{doan2023} and \cite{ekholmshende2025ghost}, even in the simplest case $H \cong \mu_2$. However, in the case of $\mu_2$-stabilizers, one does not need to require $J$ to be adapted to the singularity, since the expected leading term in the normal direction is linear. Our proof is an upgraded version of the proof \cite{doan2023} to the equivariant setup. It would be interesting to see whether the techniques of \cite{ekholmshende2025ghost} are sufficient to arrive at a similar conclusion.
\end{remark}

\begin{remark}
    We comment here on the applicability of Theorem~\ref{theorem: main-obstruction-result} to defining reduced invariants of general cyclic quotient singularities. A particularly advantageous property of $A_{m-1}$-singularities is that the index of $[u_\bullet]$ coincides with the index of any curve in the sequence $[u_k]$, or in other words, $[u_{\mathghost}]$ gives a dimensionless contribution. For general cyclic quotient singularities, though, the index of $u_{\mathghost}$ may be negative, and we need to introduce virtual perturbations. 
    If both dimensionless and ghosts of negative dimensions are expected to appear in the Gromov compactification, one would have to choose virtual perturbations carefully in order to apply our Theorem~\ref{theorem: main-obstruction-result}. This is a promising future venue to pursue, and the only work that explores this kind of idea that we are aware of is \cite{ekholmshende2024counting}.
\end{remark}

\subsection{Wrapped Fukaya category of global quotient orbifolds and Hecke algebras}\label{section: fukaya and Hecke intro} 

Let $X$ be a complex manifold and $G \subset \operatorname{Aut}(X)$ be a finite subgroup. We denote by $X^{\operatorname{reg}} \subset X$ the \emph{regular} locus of points (i.e., those with trivial stabilizers), and by $X^{\operatorname{sing}}$ its complement. 
Let $Y \subset X^{\operatorname{sing}}$ be a connected (complex) hypersurface with stabilizer $H_Y\subset G$, which is necessarily isomorphic to $\mu_{m_Y}$ for some $m_Y>1$. To each $G$-orbit of $Y$, we associate formal variables $\hbar_{0}^Y, \dots, \hbar_{m_Y-1}^Y$. Let $C_Y$ be the conjugacy class of a small counterclockwise loop around $[G \cdot Y/G]$ in $\pi_1(X^{\operatorname{reg}}/G)$.

The \emph{orbifold Hecke algebra} $\mathcal{H}_{\bm \hbar}([X/G])$ is the quotient of the $\bm \hbar$-adic completion $$\C[\pi_1(X^{\operatorname{reg}}/G)]\llbracket \bm \hbar \rrbracket$$ of the group algebra of the \emph{braid group} $\pi_1(X^{\operatorname{reg}}/G)$ by the {\em Hecke relations}
\begin{equation}\label{eq: hbar-Hecke-relation-intro}
    \gamma^{m_Y}-\hbar_{m_Y-1}^{Y}\gamma^{m_Y-1}-\dots-\hbar_{1}^{Y}\gamma-(\hbar_0^Y+1)=0,
\end{equation}
for any connected hypersurface $Y$ in $X^{\operatorname{sing}}$ and any $\gamma \in C_Y$. We denote by $\mathcal{H}_{\bm \hbar}|_{\bm \hbar_0=0}([X/G])$ the specialization of the above algebra obtained by setting $\hbar_0^Y=0$ for all fixed hypersurfaces $Y$.

After a change of variables, the above algebra is isomorphic to the more familiar form of the orbifold Hecke algebra defined by Etingof \cite{EtingofPavel}, with variables $\tau_0^Y, \dots, \tau_{m_Y-1}^Y$ and Hecke relations
\begin{equation}\label{eq: Hecke-tau-relation-intro}
        \prod_{j=0}^{m_Y-1}(\gamma-\zeta_{m_Y}^je^{\tau_j^Y})=0.
\end{equation}
The insight of generalizing the quadratic Hecke relation to the higher degree relation of the form~\eqref{eq: Hecke-tau-relation-intro} is due to Brou\'{e}-Malle-Rouquier \cite{BMR}, where they used it in the case of defining representations of \emph{complex reflection groups}. Many examples of Hecke algebras of orbifolds, arising from actions with non-real reflections, are generalizations of the Hecke algebras of complex reflection groups. 

Etingof's definition of the orbifold Hecke algebras can be viewed as a far-reaching generalization of Cherednik's original topological interpretation of the \emph{double affine Hecke algebra} of type $A_1$ \cite{cherednik2005}. It provides a unified way of defining Hecke algebras which appear in representation theory. 
Among the examples of orbifold Hecke algebras are \emph{affine Hecke algebras} and \emph{double affine Hecke algebras} of connected simple Lie groups, Hecke algebras of complex reflection groups of Brou\'e-Malle-Rouquier \cite{BMR}, and also generalized double affine Hecke algebras considered in \cite{ego2006GDAHA} and \cite{eor2007GDAHA}. We review these and other examples in Section~\ref{section: examples}, and we also refer the reader to \cite[Section 3.7]{EtingofPavel}.

\s
In this paper, we define the \emph{regular wrapped Fukaya category} $\mathcal{W}_{\bm \hbar}^{\operatorname{reg}}(T^*[X/G])$ of the cotangent bundle orbifold $[T^*X/G]$ with the induced action of $G$, bulk-deformed by components of the inertia orbifold $\mathcal{I}[T^*X/G]$ associated with $A_{m_Y-1}$-singularities $T^*Y \subset T^*X$, where $Y$ is a connected hypersurface. An object $\mathcal{L}$ of this category is a disjoint $G$-orbit of an embedded, cylindrical-at-infinity Lagrangian $L \subset T^*X$, which can also be treated as a Lagrangian in $[T^*X/G]$, avoiding the singular locus. We denote by $T^*_{[x]}[X/G]$ the $G$-orbit of a fiber $T^*_xX$ at $x \in X^{\operatorname{reg}}$. We denote by $HW^*_{\bm \hbar}(T^*_{[x]}[X/G])$ the graded algebra associated with the $A_\infty$-algebra of endomorphisms of $T^*_{[x]}[X/G]$ in  $\mathcal{W}_{\bm \hbar}^{\operatorname{reg}}(T^*[X/G])$.
We show:

\begin{theorem}[= Theorem~\ref{theorem: Hecke-isomorphism} and Corollary~\ref{corollary: derived-Nakayama}]\label{theorem: comparison result}
    There exists a surjective morphism 
    $$\mathcal{EV} \colon HW^0_{\bm \hbar}(T^*_{[x]}[X/G]) \to \mathcal{H}_{\bm \hbar}|_{\bm \hbar_0=0}([X/G]).$$
     If $\pi_2(X) \otimes \mathbb{Q}=0$, the morphism $\mathcal{EV}$ is an isomorphism. In particular, if $X$ is a $K(\pi,1)$, the graded algebra $HW^*_{\bm \hbar}(T^*_{[x]}[X/G])$ is concentrated in degree $0$, and therefore the orbifold Hecke algebra gives the full computation of $HW^*_{\bm \hbar}(T^*_{[x]}[X/G])$.
\end{theorem}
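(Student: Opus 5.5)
The strategy follows Abouzaid's and Abbondandolo--Schwarz's comparison of wrapped Floer cohomology of a cotangent fiber with chains on the based loop space, adapted equivariantly. Lift everything to $T^*X$ with its $G$-action. As a vector space, $CW^*_{\bm\hbar}(T^*_{[x]}[X/G])$ is generated by Hamiltonian chords from $T^*_xX$ to $T^*_{gx}X$ for $g\in G$, for a quadratic Hamiltonian coming from a $G$-invariant metric. The degree-zero generators correspond to $G$-equivariant homotopy classes of paths, that is, to elements of the orbifold fundamental group $\pi_1^{\mathrm{orb}}([X/G],[x])$. This group is the quotient of the braid group $\pi_1(X^{\operatorname{reg}}/G)$ by the normal closure of the $\gamma^{m_Y}$, $\gamma\in C_Y$.

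\textbf{Construction of $\mathcal{EV}$.} I would define $\mathcal{EV}$ by counting orbidisks with one boundary puncture asymptotic to a chord, boundary on $T^*_{[x]}[X/G]$ on one side and on the zero section $[X/G]$ on the other, and output a path in $X$ from $x$. After pushing off the singular locus, this path gives a class in $\pi_1(X^{\operatorname{reg}}/G)$. Orbidisks may also carry interior stacky points mapped to the $A_{m_Y-1}$-inertia components, weighted by the bulk parameters $\hbar^Y_j$. Each such stacky point contributes a local winding $\gamma^j$ around $[G\cdot Y/G]$, so the count is naturally valued in $\C[\pi_1(X^{\operatorname{reg}}/G)]\llbracket\bm\hbar\rrbracket$.

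\textbf{Multiplicativity.} I would show $\mathcal{EV}$ is an algebra map modulo the Hecke relations~\eqref{eq: hbar-Hecke-relation-intro} with $\hbar_0^Y=0$. The key input is the boundary of one-dimensional moduli spaces of orbidisks with boundary on the zero section. Besides the usual breaking, these have boundary components where an orbidisk near $T^*Y$ degenerates through the stacky locus. This is exactly where Theorem~\ref{theorem: obstruction-for-Am-main} enters.
\begin{enumerate}
\item[(a)] It rules out interior orbighosts with stacky nodes and boundary ghosts with at least $2$ stacky points. For generic adapted $J$ the jet or derivative vanishing conditions are of positive codimension, so these configurations do not occur in index $\le 1$ families. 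This is the same mechanism that makes Theorem~\ref{theorem: well-defined-ainfty-cat} hold.
\item[(b)] The remaining degenerations are local disks in the $\C^2/\mu_{m_Y}$ slice, which I would compute by hand. A loop $\gamma^{m_Y}$ bounding a smooth disk through $T^*Y$ can be rewritten as a combination of $\gamma^{j}$ weighted by $\hbar_j$. This produces precisely the relation $\gamma^{m_Y}=\sum_{j\ge1}\hbar_j\gamma^j+1$.
\end{enumerate}
The $\hbar_0$ term is absent because a single stacky point with trivial winding corresponds to a constant orbighost. Such a ghost is excluded by part (1) of the obstruction analysis, or is cancelled by the regular-node contributions.

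\textbf{Surjectivity and injectivity.} Surjectivity comes from a filtration argument: modulo $\bm\hbar$, $\mathcal{EV}$ reduces to the $G$-equivariant version of Abouzaid's isomorphism $HW^0\cong\C[\pi_1^{\mathrm{orb}}]$. Every generator $\gamma\in\pi_1(X^{\operatorname{reg}}/G)$ is hit by the chord in its orbifold class, and the result lifts by $\bm\hbar$-adic completeness. For injectivity when $\pi_2(X)\otimes\Q=0$, I would compare ranks order by order in $\bm\hbar$ through a spectral sequence for the $\bm\hbar$-adic filtration. The $E_1$-page is $H_*$ of the orbifold loop space, which in degree $0$ is $\C[\pi_1^{\mathrm{orb}}]$. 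The vanishing of $\pi_2(X)\otimes\Q$ makes $H_1$ of the based loop space of $X$ vanish rationally, so no degree-zero differentials or extension issues arise. Then $HW^0_{\bm\hbar}$ is a flat deformation of $\C[\pi_1^{\mathrm{orb}}]$, as is the Hecke algebra at $\hbar_0=0$ (Etingof's flatness results, or directly from the filtration). A surjection between such deformations that is an isomorphism modulo $\bm\hbar$ is then an isomorphism, by a derived Nakayama argument. The $K(\pi,1)$ statement follows because the higher loop-space homology vanishes, so $HW^*$ is concentrated in degree $0$.

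\textbf{Main obstacle.} I expect the hardest step to be (b): showing that the codimension-one boundary near $T^*Y$ contributes exactly the Hecke relation, with correct signs and multiplicities, and no extra terms from multiply covered orbidisks. I would first carry out the model computation in $[\C^2/\mu_m]\times\C^{n-2}$ using explicit holomorphic maps $z\mapsto z^j$ on branched covers. I would then justify the gluing using the $A_{m-1}$ index equality noted in the remark above, namely that the ghost contributes with dimension zero.
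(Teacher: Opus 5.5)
Your outline follows the paper's proof: $\mathcal{EV}$ counts equivariant half-strips with outgoing boundary on the zero section; obstructed ghosts are killed by Theorem~\ref{theorem: obstruction-for-Am-main} plus jet transversality; the Hecke relation comes from one-stacky-point boundary ghosts; surjectivity is Abouzaid's isomorphism at $\bm\hbar=0$ plus Lemma~\ref{lemma: Nakayma}; the $K(\pi,1)$ case is a derived Nakayama argument. Two steps need repair, the first being injectivity. Your spectral sequence, using $H_1(\Omega X;\Q)\cong\bigoplus_{\pi_1(X)}\pi_2(X)\otimes\Q=0$, correctly shows that $HW^0_{\bm\hbar}$ is topologically free. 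However, flatness of the \emph{source} of a surjection that is an isomorphism mod $\mathfrak m_{\bm\hbar}$ gives no injectivity, as $\C\llbracket\hbar\rrbracket\to\C\llbracket\hbar\rrbracket/(\hbar^2)$ shows. Indeed, with your flat source, the kernel $K$ satisfies $K/\mathfrak m_{\bm\hbar}K\cong\operatorname{Tor}_1^{R_{\bm\hbar}}(A,\C)$ for the target $A$. So injectivity is equivalent to flatness of $\mathcal{H}_{\bm\hbar}|_{\bm\hbar_0=0}([X/G])$, which is how Lemma~\ref{lemma: flatness-means-injective} is used. That flatness cannot come ``directly from the filtration'': the filtration lives on the Floer side, and transporting it would require already knowing $\mathcal{EV}$ is injective. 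It is Etingof's Theorem~\ref{theorem: etingof-flatness}, which is where $\pi_2(X)\otimes\Q=0$ really enters. You also need Lemma~\ref{lemma: change-of-variables} to move from the $\bm\tau$- to the $\bm\hbar$-presentation, and then base change to $\hbar_0=0$. The Floer-side flatness is therefore redundant; drop the filtration alternative.

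Second, step (b) is missing its mechanism and its key estimate. There is no separate local disk to compute. The relevant events are curves $u^0$ in the one-dimensional moduli space whose outgoing boundary crosses $G\cdot Y$ transversely, where the evaluation jumps from $\Gamma$ to $\tilde\gamma^{m_Y}\Gamma$. At the crossing point of such a $u^0$ one attaches a ghost orbidisk with a single stacky point of monodromy $g_Y^j$, for $1\le j\le m_Y-1$. This ghost is unobstructed because its invariant cokernel has dimension $2b-2=0$ (Lemma~\ref{lemma: index-computation-for-An-singularity}). Gluing gives an $\hbar^Y_j$-weighted arc ending at the same point, and the $m_Y+1$ ends of this ``fountain'' give the relation. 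The hard point is that the glued curves evaluate to $\tilde\gamma^{j}\Gamma$ rather than $\tilde\gamma^{j+am_Y}\Gamma$. Equivariance and your local model $z\mapsto z^{j'}$ only fix the winding mod $m_Y$ and give lower bounds: at least $j$ zeros of the $\C_+$-component $u_+$ and at least $m_Y-j$ of $u_-$. The paper gets $a=0$ from a homological argument on the doubled curves together with positivity of intersections with $V_{\pm\chi}$ for the adapted $J$ (via Pomerleano--Seidel). This shows the total intersection with $V_\chi\cup V_{-\chi}$ equals that of $u^0$, so there are no further zeros and the boundary winds exactly $j$ times. Finally, your account of the missing $\hbar_0$ term is off, since nothing has to be excluded. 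Bulk parameters are attached only to the nontrivial elements $s_Y^j$, $1\le j\le m_Y-1$, because a point with trivial isotropy is not stacky. Hence the fountain has $m_Y-1$ ghost branches and the constant term is $1$. Part (1) of Theorem~\ref{theorem: obstruction-for-Am-main} imposes no constraint at all; regular-node interior ghosts are removed by the codimension count of Case~1 in Theorem~\ref{theorem: well-defined-ainfty-cat}.
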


    The proof of Theorem~\ref{theorem: comparison result} is based on an analysis of the boundary degenerations in the moduli spaces of stacky half-strips, with the requirement that the outgoing end of such a half-strip is mapped to the $0$-section (here we are using Abouzaid's terminology \cite{abouzaid2011cotangent}). Using Theorem~\ref{theorem: obstruction-for-Am-main} we rule out all boundary degenerations except for the one corresponding to the formation of a nodal orbidisk with a unique stacky point. These degenerations give rise to the Hecke relation~\eqref{eq: hbar-Hecke-relation-intro}.

    Let us conclude the introduction with some remarks on Theorem~\ref{theorem: comparison result}:

\begin{remark}
    Theorem~\ref{theorem: comparison result} significantly extends the scope of 
    the main theorem of \cite{HTY2026}, which also follows from the results of \cite{hkty2025morse}. In \cite{CHT, hkty2025morse}, the authors defined an analog of our bulk-deformed orbifold wrapped Fukaya category in the special case of symmetric products. The approach used --- which goes back to the work of Lipshitz \cite{lipshitz2006cylindrical} --- is the \emph{cylindrical reformulation} of the orbifold wrapped Fukaya category for symmetric products. 
    This is based on the classical observation \cite[Proposition 4.2.2, Theorem 4.3.2]{ACV2003} that the stack of twisted $S_k$-covers is equivalent to the \emph{properly defined} stack of admissible $k$-fold branched covers. The same idea is used in \cite{KY2023} to define a version of symplectic cohomology for symmetric products.
\end{remark}

\begin{remark}
    The classical references for orbifold Gromov-Witten invariants and orbifold Floer theory in the symplectic literature include \cite{Chen2002, Cho2014}. Recently, several groups of authors have developed the foundational aspects further, among which we note \cite{gironellazhou2021exact, mak2025orbifoldhamiltonianfloertheory}. There is also ongoing work by McLean and Ritter aimed at developing a framework for orbifold Gromov-Witten theory based on global Kuranishi charts.
    There have been several recent developments and applications in orbifold Gromov-Witten and Floer theories, which include \cite{LCHL2024, polterovichshelukhin2023, FLYZ, maksmith2021}.
\end{remark}

\begin{remark}
    It would be interesting to see whether Theorem~\ref{theorem: comparison result} can be proved using virtual methods.  It is asserted in \cite{mss2025orbifoldspectral} that one can define the regular (wrapped) Fukaya category of orbifolds by combining their ideas together with \cite{hirschihugtenburg2025} and \cite{rabah2024}. However, performing the computation provided by Theorem~\ref{theorem: Hecke-isomorphism}  requires working with a $0$-section, which is a singular Lagrangian, and the methods of these papers alone do not suffice. Recently, Xu \cite{xu2026reduced} obtained a definition of reduced closed Gromov-Witten invariants by using a new type of stratified transversality result, which is similar in spirit to FOP transversality developed in \cite{baixu2022transversality}. This at least suggests that one may attempt to reprove Theorem~\ref{theorem: comparison result} using virtual methods combined with stratified transversality methods, which may be helpful for generalizations of such computations to other quotient singularities.
\end{remark}

\begin{remark}
    Theorem~\ref{theorem: comparison result} only gives a computation in degree $0$. The work \cite{hkty2025morse} suggests how one could build a Morse chain model computing such an $A_\infty$-algebra by using Morse theory on the space of $G$-paths. It is still unclear whether such a Morse-theoretic approach provides a way to compute this $A_\infty$-algebra in many cases of interest. Therefore, a robust definition of such a derived orbifold Hecke algebra is warranted, and one may anticipate methods of string topology to provide an efficient way of computing such an algebra. 
\end{remark}

\s\n 
{\em Organization of the paper.} In Section~\ref{section: review of orbifolds} we introduce some standard notions from the general theory of orbifolds and orbifold Floer theory, and in Section~\ref{section: orbifold Floer theory} we define the wrapped Fukaya category of $T^*[X/G]$. In Section~\ref{section: transversality-for-jets} we introduce the notion of equivariant $J$-holomorphic jets, and establish the necessary transversality results for maps evaluating jets of equivariant curves at the preimages of the stacky points. Theorem~\ref{theorem: main-obstruction-result} is proved in Section~\ref{section: obstructions to smoothings} and Theorem~\ref{theorem: comparison result} is proved in Section~\ref{section: A_infty and orbifold Hecke}. In Section~\ref{section: examples}, we provide several examples for which the isomorphism of Theorem~\ref{theorem: Hecke-isomorphism} recovers the orbifold Hecke algebras in terms of the wrapped Fukaya category.

\s\n
{\em Acknowledgments.}  We thank Pavel Etingof and Rapha\"{e}l Rouquier for enlightening conversations about Hecke algebras and helpful comments on an earlier draft. RK is grateful to Mohammed Abouzaid, Shaoyun Bai, Aleksander Doan, Tobias Ekholm,  Cheuk Yu Mak, Ivan Smith, Sobhan Seyfaddini, Semon Rezchikov, and Mohan Swaminathan for useful conversations and comments about orbifold Floer theory and gluing, and to the organizers of the Symplectothon 2024 workshop, which served as a great introduction to orbifolds and orbifold Floer theory. RK is especially grateful to John Pardon for suggesting the idea of localization tricks that we implement in Section~\ref{section: localization tricks}. RK is indebted to Gleb Terentiuk and Burt Totaro for patiently explaining various foundational aspects of stacks, and also thanks Ilya Dumanski, Eugene Gorsky, and Monica Vazirani for helpful conversations about representation theory.  TY thanks Cheuk Yu Mak, Kaoru Ono, and Zhengyi Zhou for helpful conversations.

\section{Basic notions for global quotient orbifolds} \label{section: review of orbifolds}

In this section, we give a brief review of some topological and geometric aspects of orbifolds, focusing mainly on global quotients. We expect the reader's familiarity with the general theory of orbifolds and orbifolds with corners, and we usually understand an orbifold $\mathcal{X}$ as a stack over the category of smooth manifolds (with corners) which admits an atlas $U \to \mathcal{X}$, with the associated groupoid $U \times_{\mathcal{X}}U \rightrightarrows U$ being a proper \'{e}tale Lie groupoid; see \cite[Remark 4.33]{Lerman2010stacks}. For more details on the general theory of orbifolds, differentiable stacks and orbispaces, we refer the reader to \cite{Lerman2010stacks,  Adem-Leida-Ruan, moerdijkmrcun2003groupoids, solomontukachinsky2024forms, zernik2017moduli, metzler2003smoothstacks, behrendxu2011, joyce2017kuranishi1, henriques2005thesis, pardon2022enough}.  We note, however, that for global quotient orbifolds, most of the paper can be understood from the simpler equivariant perspective.

\subsection{Some definitions and examples}

We use the notation $[X/G]$ for the \emph{global quotient orbifold} associated to an $n$-dimensional manifold $X$ with an action of a finite group $G$. Given $x \in X$, we denote its stabilizer by $G_x \subset G$. We also use the notation $X^g$ to denote the fixed point set of an element $g \in G$. We set
\begin{equation}
    X^{\operatorname{sing}}=\bigcup_{g \in G\setminus \{e\}}X^g, \quad X^{\operatorname{reg}}=X\setminus X^{\operatorname{sing}}.
\end{equation}

\begin{remark}
    We notice that $[X^{\operatorname{sing}}/G] \subset [X/G]$ is the suborbifold, consisting of all points with nontrivial isotropy. As notation suggests, we treat these points as \emph{singular}. Nevertheless, these points are \emph{smooth}, since they admit orbifold charts in $[X/G]$, and we also refer to them as \emph{smooth stacky} points.
\end{remark}
To any orbifold, Chen-Ruan \cite{Chen2004} assign its \emph{inertia orbifold}. In the case of a global quotient orbifold $[X/G]$, it has a simple description
\begin{equation}\label{eq: inertia-for-global-quotient}
    \mathcal{I}[X/G]=\big[\left(\sqcup_{g\in G} X^g\right)/G\big]\cong\sqcup_{(g) \in \operatorname{Conj}(G)}[X^g/C(g)],
\end{equation}
where $G$ acts on the disjoint union by $h(x,g)=(hx, hgh^{-1})$, $\operatorname{Conj}(G)$ is the set of conjugacy classes of $G$, and $C(g)$ is the centralizer of $g \in G$; see \cite[Example 3.1.3]{Chen2004} for more details. We denote the conjugacy class of an element $g\in G$ by $(g)\in \operatorname{Conj}(G)$. 

    Connected components of the inertia orbifold will be called {\em inertia components}. 
    Let $Z \subset X^g$ be a connected component of $X^g$. We assign to it an inertia component
    \begin{equation}
        (Z, (g)) \coloneqq [Z/C(g)] \subseteq [X^g/C(g)] \subseteq \mathcal{I}[X/G].
    \end{equation}

\begin{remark}
    In the orbifold literature, connected components of the inertia orbifold are usually called \emph{twisted sectors}. On the other hand, in the symplectic literature, there are \emph{Liouville sectors}, which play an important role. To avoid a potential clutter of terminology, the connected components of $\mathcal{I}[X/G]$ will be called \emph{inertia components}.\footnote{In this paper, we mostly work with cotangent bundles of closed global quotient orbifolds, but these issues would occur if we were also to work with cotangent bundles of global quotient orbifolds with boundary.}
\end{remark}

We denote by $\mu_m$ the cyclic group of roots of unity of order $m \in \Z_{\ge 1}$, with preferred generator $\zeta_m=e^{2\pi i/m}$.
The complex irreducible representations $\operatorname{Irr}(\mu_m)$ of $\mu_m$ are $1$-dimensional and are in bijection with the characters of $\mu_m$; they will be denoted $\chi^m_j=\zeta_m^j$, 
for $j\in \{0,1,\dots, m-1\}$.

Let $\operatorname{Irr}^{\R}(\mu_m)$ be the set of \emph{real irreducible representations} of $\mu_m$.  This set consists of the trivial $1$-dimensional representation, the $1$-dimensional sign representation for even $m$, and approximately $ [\frac{m}{2}]$ real $2$-dimensional representations (corresponding to the standard complex $1$-dimensional representations).

\s
Now we recall the definition of the \emph{age grading}; cf.\ \cite[Section 4.2]{Adem-Leida-Ruan}. Let $(M,J)$ be an almost complex manifold of real dimension $2n$ with a $G$-action on $M$ which preserves $J$.

\begin{definition}[Age grading]
    Let $(Z, (g))$ be an inertia component for $[M/G]$ with $\operatorname{ord}(g)=m$. Its \emph{age grading} is given by:
    \begin{equation}
        \iota(Z, (g))\coloneqq\textstyle\sum_{j=1}^n \tfrac{m_j}{m},
    \end{equation}
    where the action of $g$ on $T_xX$ for any $x\in Z$ diagonalizes in some $\C$-basis (with respect to $J$) as:
    \begin{equation}
        g_*(x)=\operatorname{diag}(\zeta_m^{m_1}, \dots, \zeta_m^{m_n}),
    \end{equation}
    with $0 \le m_i<m$.
\end{definition}

For any connected submanifold $Z \subset X^g$, $TX|_Z$ admits an \emph{isotypic decomposition}
\begin{equation}
    TX|_Z= \bigoplus_{\rho \in \operatorname{Irr}^{\R}(\mu_m)} V_{\rho},
\end{equation}
where $\operatorname{ord}(g)=m$ and $V_\rho|_x$, $x\in Z$, is a direct sum of copies of $\rho$ as representations of $\langle g\rangle\cong \mu_m$. Given the choice of a $G$-invariant metric, the isotypic components are orthogonal with respect to such a metric by Schur's lemma.

\begin{example} \label{ex-rotation}
   \normalfont Let $X$ be an $n$-dimensional manifold with a $G$-action. Let $(Z, (g))$ be an inertia component of $\mathcal{I}[X/G]$ for which the summand $V_\sigma$ of the isotypic decomposition of $TX|_Z$ corresponding to the sign representation $\sigma$ has even rank (which is the case if $G$ preserves an orientation on $X$). In particular, $Z$ is a real codimension $2k$ submanifold for some $k>0$. Hence, at each $x \in Z$ there exists an $\R$-basis of $T_xX$ such that
    \begin{equation}
        g_*(x)=\operatorname{diag}(\widetilde{\zeta_m^{m_1}},\dots, \widetilde{\zeta_m^{m_k}}, 1, \dots, 1),
    \end{equation}
    where $\widetilde{\zeta_m^j}$ is the $2\times 2$ orthogonal matrix corresponding to a $\frac{2\pi j}{m}$-rotation for $j \in \{1, \dots, \lfloor\frac{m}{2}\rfloor\}$. 
    
    Next, let $M=T^*X$ be the cotangent bundle of $X$ with the standard symplectic form $\omega_{\operatorname{std}}$. We assume a choice of a $G$-invariant metric on $X$, which induces a $G$-invariant metric $g$ on $T^*X$.
    Let $J$ be an almost complex structure which is compatible with $\omega_{\operatorname{std}}$ and $g$. Then in the complexification of the $\R$-basis above, the action of $g$ on $T_{(x,0)}M$ diagonalizes as
    \begin{equation}
        g_*((x,0))=\operatorname{diag}(\zeta^{m_1}_m, \zeta_m^{m-m_1}, \dots, \zeta_m^{m_k}, \zeta_m^{m-m_k}, 1,\dots, 1).
    \end{equation}
    The age grading for $(T^*Z, (g)) \subset [M/G]$ is therefore given by
    \begin{equation}
        \iota(T^*Z, (g))=k.
    \end{equation}
\end{example}

\begin{example}[Hypersurface inertia and type $A$ inertia components] \label{exmpl: hypersurface-inertia}
    Let $X$ be a manifold with an effective action of a finite group $G$ by orientation-preserving diffeomorphisms.  Let $Y$ be a connected component of $X^g$ for some nontrivial $g \in G$ such that $Y$ is a real codimension $2$ submanifold of $X$.
    We call $(Y, (g))$ a \emph{hypersurface inertia component}. The stabilizer $H_Y \subset G$ is necessarily a cyclic subgroup, isomorphic to some $\mu_m$.
    
    Now consider 
    $$(N_Y, (g))\coloneqq (T^*Y, (g)) \subset \mathcal{I}[T^*X/G].$$
    We call $(N_Y, (g))$ an \emph{inertia component of type} $A$. Its normal bundle $\nu_{N_Y}$ admits an isotypic decomposition $V_{\chi^m_j} \oplus V_{\chi^m_{m-j}}$ for some $j \in \{1, \ldots, m-1\}$ with $\gcd (j,m)=1$, as a complex $H_Y$-bundle with respect to the standard almost complex structure on $T^*X$, induced by a $G$-invariant metric (note that the above depends on a particular choice of generator of $H_Y$).
    The computation from Example~\ref{ex-rotation} gives:
    \begin{equation}\label{eq: age-for-hypersurface}
            \iota(N_Y, g)=1.
    \end{equation}
\end{example}

\begin{remark}
    When $X$ is a complex manifold and $G$ acts by biholomorphisms, the fixed point sets of the smallest possible codimension are complex hypersurfaces, hence the name.
\end{remark}

\begin{example}[Cyclic quotient singularity]\label{exmpl: cyclic-quotient}
\normalfont Let $(M, \omega)$ be a symplectic manifold equipped with an action of a finite group $G \subset \operatorname{Symp}(M, \omega)$, and let $\mathcal{Y} \subset M^H$ be a connected component of the fixed point set of the action of a cyclic subgroup $H \cong \mu_m$ of $G$. We call the suborbifold $[G \cdot \mathcal{Y}/G]$ a \emph{cyclic quotient singularity}. 

Near any $y \in \mathcal{Y}$,  by the equivariant Darboux theorem, there exists an $H$-equivariant chart $U$ of $M$ symplectomorphic to an open ball in $V$, where $V$ is a \emph{complex} representation of $\mu_m$. In particular, we have
\begin{equation}
    (U, \omega) \cong (V_{\chi^m_0} \times \dots \times V_{\chi^m_{m-1}}, \omega_{\operatorname{std}}^0 \oplus \dots\oplus \omega_{\operatorname{std}}^{m-1}),
\end{equation}
where $V_{\chi^m_j}$ is the subrepresentation of $V$ corresponding to the character $\chi^m_j$. Observe that $\mathcal{Y} \cap U$ corresponds to $V_{\chi^0_m} \times \{0\}$. Moreover, near a generic point $y \in \mathcal{Y}$, the orbifold $[G \cdot \mathcal{Y}/G]$ is symplectomorphic to $[V/\mu_m]$.

A cyclic quotient singularity of particular interest in this paper is a \emph{compound $A_{m-1}$-singularity}, which corresponds to the case where $V_{\chi^m_j}$ and $V_{\chi^m_{m-j}}$ are $1$-dimensional, while $V_{\chi^m_{j'}}$ vanishes for all other $j\neq j' \in \{1, \dots, m-1\}$. In particular, the submanifolds $N_Y \subset T^*X$ as in Example~\ref{exmpl: hypersurface-inertia} provide $A_{m-1}$-singularities.
\end{example}

\subsection{Maps from orbicurves}

In this paper, we are concerned with smooth orbifold maps
\begin{equation}\label{eq: orbicurve-map}
    [u] \colon \mathscr{C} \to [M/G],
\end{equation}
where $[M/G]$ is an effective global quotient symplectic orbifold (i.e., $G$ is a subgroup of the symplectomorphism group $\operatorname{Symp}(M)$) and $\mathscr{C}$ is a (closed) nodal orbicurve or a nodal orbidisk. We refer the reader to \cite{taams2025} for a detailed introduction to orbicurves in the context of algebraic geometry and to  \cite{Cho2014} for a discussion of orbidisks from the perspective of smooth topology. All orbicurves and orbidisks under consideration are orientable, and hence the main difference between them and nodal curves/disks is the potential presence of \emph{stacky points}, i.e., points with nontrivial isotropy groups. 

The stacky points are either {\em interior stacky points} with a local chart of the form $[\C/ \mu_m]$ for some $m>0$, or \emph{balanced interior stacky nodes} with a local chart of the form
\begin{equation}
    [\{zw=0\}/\mu_m] \subset [\C^2/\mu_m],
\end{equation}
where the action of $\zeta_m \in \mu_m$ on $\C^2$ is of $A_{m-1}$-type and is given by
$$\zeta_m(z,w)=(\zeta_m z, \zeta_m^{-1}w).$$
The balancing condition, in fact, makes a nodal orbidisk into a \emph{twisted} nodal orbidisk, and we only consider twisted orbidisks.
Notice that there is no need to consider stacky points on the boundary of an orbidisk since we are only interested in orientable orbidisks. 

All smooth nodal maps~\eqref{eq: orbicurve-map} that we study in this paper are assumed to be \emph{injective on isotropy groups}, i.e., the associated maps on isotropy groups are assumed to be injective. In particular, any such $u$ maps stacky points of $\mathscr{C}$ to singular points of $[M/G]$.

Let us assume for a moment that a map $[u]$ comes from an honest functor between proper \'{e}tale Lie groupoids representing these orbifolds
$$U \colon [C_1 \rightrightarrows C_0] \to [M \times G \rightrightarrows M],$$
i.e., $U$ is a functor between these two categories. Then injectivity on isotropy groups for $U$ is equivalent to its \emph{faithfulness}. However, orbifold morphisms do not always come from morphisms of Lie groupoids on the nose; see \cite[Section 2.1]{gironellazhou2021exact}. 

As observed in \cite{mak2025orbifoldhamiltonianfloertheory}, we may interpret a smooth nodal curve that is injective on isotropy groups in a global quotient orbifold as an equivariant map:

\begin{lemma}\label{lemma: orbifold=equivariant}
    The data of a smooth nodal orbicurve $[u] \colon \mathscr{C} \to [M/G]$ that is injective on isotropy groups is equivalent to the data of the commutative diagram
    \begin{equation}
    \begin{tikzcd}
	\Sigma & M\\
	\mathscr{C} & {[M/G]}
	\arrow["u",from=1-1, to=1-2]
	\arrow["\tilde{\pi}", from=1-1, to=2-1]
	\arrow[from=1-2, to=2-2]
	\arrow["{[u]}",from=2-1, to=2-2]
    \end{tikzcd}
    \end{equation}
    where $\Sigma$ is a nodal Riemann surface with boundary equipped with a $G$-action, such that $[\Sigma/G]=\mathscr{C}$, and $\tilde{\pi}$ is the atlas for $\mathscr{C}$ associated with a $G$-cover $$\pi \colon \Sigma \to C,$$
    where $C$ is the coarse curve of $\mathscr{C}$, and the branched points of $\pi$ are exactly the stacky points of $\mathscr{C}$. The map $u$ is a $G$-equivariant smooth nodal map, and the $G$-action on $\Sigma$ is balanced at the nodes which project to the stacky nodes. 
\end{lemma}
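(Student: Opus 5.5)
The approach is to build the two directions of the equivalence separately and check that they are mutually inverse up to the natural notion of isomorphism.

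\emph{From an orbicurve to the diagram.} Start with $[u]\colon \mathscr{C}\to[M/G]$ and form the pullback of the principal $G$-bundle $M\to[M/G]$ along $[u]$. Set
\[
\Sigma \coloneqq \mathscr{C}\times_{[M/G]} M .
\]
Then $\Sigma\to\mathscr{C}$ is a principal $G$-bundle over the stack $\mathscr{C}$, so $[\Sigma/G]=\mathscr{C}$. The projection $\Sigma\to M$ is $G$-equivariant and serves as $u$. The key point is that $\Sigma$ is a genuine (nodal) manifold rather than a stack; here I use injectivity on isotropy groups.

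This can be checked locally. Take a chart $[\C/\mu_m]$ around an interior stacky point, or $[\{zw=0\}/\mu_m]$ around a balanced node. Represent $[u]$ on that chart by a $\mu_m$-equivariant map into $M$, intertwined through an injective homomorphism $\mu_m\hookrightarrow G$. The pullback over this chart is then
\[
(\C\times G)/\mu_m ,
\]
with $\mu_m$ acting freely on $\C\times G$ through the $G$-factor. This quotient is a manifold, namely the induced space $G\times_{\mu_m}\C$. The same computation with $\{zw=0\}$ in place of $\C$ gives a nodal curve. At such a node the $G$-action is balanced, because the $A_{m-1}$-type action on $\{zw=0\}$ is.

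Away from stacky points the bundle is a trivializable free $G$-cover. The complex structure and the boundary pull back along the étale map $\Sigma\to\mathscr{C}$. Composing $\Sigma\to\mathscr{C}$ with the coarse map $\mathscr{C}\to C$ gives $\pi$. The local model $G\times_{\mu_m}\C\to\C/\mu_m$ shows that $\pi$ is branched exactly at the stacky points, with stabilizer the image of the isotropy group. Finally, $\tilde\pi$ is an atlas because $\Sigma\to\mathscr{C}$ is a $G$-torsor, hence representable, surjective and étale.

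\emph{From the diagram to an orbicurve.} Conversely, start from a $G$-equivariant nodal map $u\colon\Sigma\to M$ with $[\Sigma/G]=\mathscr{C}$. The functor of action groupoids
\[
[\Sigma\times G\rightrightarrows\Sigma]\longrightarrow[M\times G\rightrightarrows M],\qquad (s,g)\mapsto(u(s),g),
\]
induces a map of stacks $[u]$. Its map on isotropy groups is the inclusion $G_s\hookrightarrow G$, hence injective. Commutativity of the square is immediate.

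\emph{Inverse constructions.} To see that the two constructions are inverse, first apply the second construction and then the first. The result is canonically isomorphic to $\Sigma$, since $\Sigma\times_{[\Sigma/G]}\Sigma\cong\Sigma\times G$ and therefore $\mathscr{C}\times_{[M/G]}M\cong\Sigma$ over $M$. In the other order, the standard equivalence between principal $G$-bundles over a stack with an equivariant map to $M$ and maps to $[M/G]$ returns $[u]$ up to a 2-isomorphism.

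\emph{Main obstacle.} I expect the local verification at stacky nodes and at the boundary to be the hardest step. One must check three things there:
\begin{itemize}
\item that the pullback is a nodal surface with boundary of the expected type;
\item that the induced $G$-action is balanced at the nodes;
\item that no stacky points lie on the boundary.
\end{itemize}
No boundary stacky points is guaranteed by orientability and twistedness, so the boundary charts are locally free $G$-covers. A further subtlety is that $[u]$ need not come from a strict groupoid functor. I would handle this by working with generalized morphisms (bibundles): the pullback $\Sigma$ is precisely the bibundle for $[u]$, which sidesteps the issue.
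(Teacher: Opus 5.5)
Your argument is correct and is essentially the paper's: the paper just invokes \cite[Theorem 2.45]{Adem-Leida-Ruan}, whose content (as its remark on representability explains) is that $\mathscr{C}\times_{[M/G]}M$ has trivial isotropy, and hence is a manifold, exactly when $[u]$ is injective on isotropy groups, which is what you verify by hand in the charts $[\C/\mu_m]$ and $[\{zw=0\}/\mu_m]$. The one step to tighten is the mutual-inverse check: $\mathscr{C}\times_{[M/G]}M\cong\Sigma$ holds because the induced map $\Sigma\to[\Sigma/G]\times_{[M/G]}M$ is a $G$-equivariant morphism of $G$-torsors over $\mathscr{C}$, hence an isomorphism, and not because $\Sigma\times_{[\Sigma/G]}\Sigma\cong\Sigma\times G$.
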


\begin{proof}
    This is immediate from \cite[Theorem 2.45]{Adem-Leida-Ruan}.
\end{proof}

\subsection{Remark on representability}

We recall that a morphism 
$$f \colon \mathscr{X} \to \mathscr{Y}$$ 
of stacks over a site $\mathscr{S}$ (which in our case is the category $\op{Man}$ of smooth manifolds) is \emph{representable} if for any $Y\in \mathscr{S}$ and any morphism $Y \to \mathscr{Y}$ the fibered product stack $\mathscr{X} \times_{\mathscr{Y}}Y$ is representable by an object of $\mathscr{S}$. Here, by the standard abuse of notation, we denote by $Y$ the stack $\operatorname{Mor}(-, Y)$ over $\mathscr{S}$ associated with $Y$.

When $\mathscr{S}$ is the category $\op{Top}$ of topological spaces, a map $f \colon \mathscr{X} \to \mathscr{Y}$ between two orbispaces is representable if and only if it is injective on isotropy groups by \cite[Corollary 3.6]{pardon2022enough}. It is well-known, however, that $\op{Man}$ is not closed under fiber products, and hence a smooth map between two smooth manifolds $f \colon M \to N$ is representable if and only if $f$ is a submersion; see e.g., \cite[Lemma 71]{metzler2003smoothstacks}. Therefore, one should not expect mere injectivity on isotropy groups to be sufficient for representability.

Nonetheless, there is a notion of \emph{weak representability}, in which $Y \to \mathscr{Y}$ is assumed to be an atlas instead of an arbitrary morphism; see \cite[Definition 2.34]{behrendxu2011}. The generalization of \cite[Theorem 2.45]{Adem-Leida-Ruan} then states that a morphism of orbifolds injective on isotropy groups is weakly representable if and only if it is injective on isotropy groups. This statement, which was observed in \cite[Remark 2.37]{behrendxu2011}, follows almost immediately from \cite[Theorem 11.39]{joyce2017kuranishi2}, since any atlas is a submersion in the sense of \cite[Definition 2.5]{behrendxu2011}. The main observation here is that the fibered product of transversal morphisms of orbifolds is again an orbifold (for a concise discussion of this fact see  \cite[Section 2.1.3]{gironellazhou2021exact}), and hence in our case $\mathscr{X} \times_{\mathscr{Y}}Y$ is an orbifold since $Y \to \mathscr{Y}$ is a submersion. Then computation of the isotropy groups in \cite[Theorem 11.39]{joyce2017kuranishi2}, which is equivalent in this case to the proof of \cite[Theorem 2.45]{Adem-Leida-Ruan}, shows that isotropy groups of $\mathscr{X} \times_{\mathscr{Y}}Y$ are trivial, implying that this orbifold is representable by a manifold.

We also note that this lack of fiber products is the reason why one may wish to work with the category of $C^{\infty}$-schemes, and as \cite[Theorem 7.21]{joyce2017kuranishi1} shows, the equivalence between representability and injectivity on isotropy groups holds for stacks in this category.

Historically, in orbifold Floer theory, smooth maps injective on isotropy groups are called \emph{representable} after \cite{Chen2002}. As our discussion above indicates, this name is not satisfactory from the stacky perspective. 

\section{Wrapped Fukaya category of a global quotient orbifold} \label{section: orbifold Floer theory}

\subsection{Geometric preliminaries for Fukaya categories of orbifolds}\label{section: orbifold-Fukaya-prelim}

In this paper, we are mainly interested in studying global quotients of Liouville manifolds, but we emphasize that most of the results of this section are applicable in a more general setting. The following geometric discussion is an adaptation of the orientation package of Seidel \cite[Section 11]{seidel2008fukaya} to the equivariant setting. Our discussion is similar in spirit to \cite{bao2021, cho2013finitegroupactionslagrangian, CDW2020}.

Recall that a {\em Liouville manifold} is a pair $(M, \lambda)$ consisting of a manifold $M$ and a {\em Liouville $1$-form} $\lambda$ on $M$ such that $\omega=d\lambda$ is symplectic and, 
outside of a compact region, $(M,\lambda)$ is identified with a cylindrical end $([1, +\infty)_r \times \partial M,r\alpha)$, where $\alpha$ is a contact form on $\bdry M$. 

\s\n
{\em Assumptions on $(M,\lambda)$, $G$ and $L$.} Let $(M,\lambda)$ be a Liouville manifold and $G$ a finite group acting on $M$. We assume the following:
\begin{itemize}
\item[(A1)] the action of $G$ on $M$ preserves $\lambda$ and the action of $G$ on the cylindrical end is induced from the action of $G$ on $\partial M$; and
\item[(A2)] $c_1(M)=0$ and there is a choice of a $G$-invariant complex volume form $\eta$ on $M$ with respect to some compatible $G$-invariant almost complex structure.
\end{itemize}
Then there is an associated phase map $\alpha_M\colon Gr^+(TM) \to S^1$ given by
\begin{equation*}
    v_1\wedge\dots\wedge v_n\mapsto \frac{\eta(v_1 \wedge \dots \wedge v_n)}{|v_1\wedge\dots\wedge v_n|},
\end{equation*}
for any choice of oriented basis $(v_1,\dots,v_n)$ for a given oriented Lagrangian in the oriented Lagrangian Grassmannian $Gr^+(TM)$.

We further assume that any Lagrangian submanifold $L \subset M$ under consideration satisfies the following:
\begin{enumerate}
    \item[(A3)] $L$ is \emph{exact}, that is, there exists a function $f_L \colon L \to \mathbb{R}$ such that $\lambda|_L = df_L$;
    \item[(A4)] $L$ is \emph{cylindrical at infinity}, that is, there exists a Legendrian submanifold $\Lambda$ of $\partial M$ such that $L$ takes the form $[1, \infty) \times \Lambda$ on the cylindrical end $[1, \infty) \times \partial M$.
\end{enumerate}

\s
It is well-known that with the above assumptions (A1)--(A4), one can define the wrapped Fukaya category over $\Z$; we refer the reader to \cite{seidel2008fukaya, Seidel2000graded} for more details.

An \emph{equivariant Lagrangian} is a $G$-invariant collection $\mathcal{L}$ of oriented Lagrangian submanifolds of $M$; by abuse of notation, we also write $\mathcal{L}$ for the union of its elements. 

\begin{remark}
    Although $\mathcal{L}$ itself is not necessarily a Lagrangian submanifold of $M$, 
    the suborbifold $[\mathcal{L}/G] \subset [M/G]$ can be given a structure of a Lagrangian suborbifold of the symplectic orbifold $[M/G]$ in the sense of \cite{COW2024twisted}. The simplest example is the orbit of a line passing through the origin under the standard rotation action of $G=\mu_m$ on $M=\R^2$. This orbit is an equivariant Lagrangian, but only immersed in $\R^2$. 
\end{remark}

The tangent space of an equivariant Lagrangian $\mathcal{L}$ is a collection of tangent spaces to Lagrangians that belong to $\mathcal{L}$, and, at a point $p \in \mathcal{L}$ with stabilizer $G_p$, the tangent space to $\mathcal{L}$ is identified with a $|G_p|$-tuple of Lagrangian subspaces of $T_pM$ related by the $G_p$-action.

The main example of interest to us is the following:

\begin{example}[Cotangent bundle] \label{example: cotangent bundle}
    Let $M=T^*X$ with a $G$-action induced from a $G$-action on $X$. The standard Liouville form $\lambda$ and hence the symplectic structure $d\lambda$ are $G$-invariant. Fixing a choice of an equivariant metric on $X$, we have a $G$-invariant Hamiltonian
    \begin{equation}
        H_0(q,p)=\tfrac{1}{2}|p|^2, \quad q\in X, \, p\in T^*_qX.
    \end{equation} 
    The equivariant Lagrangians $\mathcal{L}_0=X \subset T^*X$ and $\mathcal{L}_x=\cup_{g \in G} T^*_{gx}X$ can be viewed as embedded Lagrangian submanifolds of $M$.
\end{example}

Fix a vector bundle $E_b$ on $M$ whose second Stiefel-Whitney class represents a background class $b \in H^2(M; \Z/2)$. An object in the wrapped Fukaya category $\mathcal{W}([M/G])$ is given by an \emph{oriented equivariant Lagrangian brane}, i.e., a tuple $(\mathcal{L}, \alpha^{\#}, \mathfrak{s}^{\#})$ where:
\begin{enumerate}
    \item[(1)] $\mathcal{L}$ is an oriented equivariant Lagrangian;
    \item[(2)] $\alpha^{\#}$ is a $G$-invariant \emph{grading} on $\mathcal{L}$, i.e., a function $\alpha^{\#} \colon \mathcal{L} \to \R$ such that $e^{2\pi i \alpha^{\#}(x)}=\alpha_M(T_x\mathcal{L})$ for any $x \in \mathcal{L}$;
    \item[(3)] $\mathfrak{s}^{\#}$ is a $G$-invariant spin structure on $E_b\oplus T\mathcal{L}|_{\mathcal{L}}$, that we call a {\em relative spin structure}.
\end{enumerate}

As shown in \cite[Section 6]{cho2013finitegroupactionslagrangian}, Condition (2) is easily satisfied. Condition (3), however, does not hold for any $G$-action even if it preserves the isomorphism type of the spin structure; see \cite[Section 5]{cho2013finitegroupactionslagrangian} and \cite[Section 3.6]{bao2021}. 

We now discuss the two main examples of interest:

\begin{example} 
    Let $M=T^*X$ and let $E_b=\pi^*TX$ where $\pi \colon T^*X \to X$ is the natural projection. By \cite[Section 2.1]{abouzaid2011cotangent}, there are relative spin structures on any cotangent fiber and on the $0$-section. Consider an orientation-preserving $G$-action on $X$. If $x\in X$ and $G_x$ is trivial, then the action immediately lifts to an action on the relative spin structure on $\mathcal{L}_x$. If $G_x$ is nontrivial, then we need to lift the action of $G_x$ to a spin structure on $E_b \oplus TT_x^*X=TT^*X|_{T_x^*X}$. Since the cotangent fiber is $G_x$-equivariantly contractible, it suffices to lift the action to the fixed spin structure on $T_{(x,0)}T^*X=V \oplus V^*$, where $V=T_xX$ is a $G_x$-representation.  In other words, we are given $\rho\colon G_x \to SO(V)$, and we need to lift 
    $$(\rho, \rho^*) \colon G_x \to SO(V \oplus V^*).$$ 
    We can lift any generator $g$ of $G_x$ (for some presentation) arbitrarily to some $\widetilde{\rho}(g) \in Spin(V)$.  Then any relation $r$ for these generators lifts to $\pm1 \in Spin(V)$, and therefore $$(\widetilde{\rho}, \widetilde{\rho}^*)(r)=\widetilde{\rho}(r)\widetilde{\rho}^*(r)=1 \in Spin(V \oplus V^*),$$ providing a desired lift. We can also apply an argument using the second equivariant Stiefel-Whitney class (as in the proof of Claim~\ref{claim: zero equivariant Stiefel-Whitney}).
\end{example}
    
\begin{claim} \label{claim: zero equivariant Stiefel-Whitney}
    Any spin structure on $(E_b \oplus TX)|_{\mathcal{L}_0}$ admits a $G$-equivariant lift. 
\end{claim}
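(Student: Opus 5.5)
The plan is to reduce everything to the structure group of $TX\oplus TX$ and then treat an arbitrary spin structure as a twist of one canonical, manifestly $G$-equivariant one. On $\mathcal{L}_0=X$ we have $E_b|_{X}=TX$ and $T\mathcal{L}_0=TX$, so the bundle in question is $V\oplus V$ with $V=TX$. Since $G$ acts by orientation-preserving diffeomorphisms, after choosing a $G$-invariant metric it acts on the oriented orthonormal frame bundle $\operatorname{Fr}(V)$ through $SO(n)$, diagonally on $V\oplus V$.

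\textbf{Canonical equivariant spin structure.} The diagonal homomorphism
$$\Delta\colon SO(n)\to SO(2n), \qquad A\mapsto (A,A),$$
induces multiplication by $2$ on $\pi_1=\Z/2$ (for $n\ge 3$; the cases $n\le 2$ are checked by hand). Hence $\Delta$ lifts to a continuous homomorphism $\delta\colon SO(n)\to Spin(2n)$. This is exactly the sign-cancellation $\widetilde\rho(r)\widetilde\rho^*(r)=1$ from the preceding example, made global. Then $P_0:=\operatorname{Fr}(V)\times_{\delta}Spin(2n)$ is a spin structure on $V\oplus V$. Because it is functorially built from $\operatorname{Fr}(V)$, the $G$-action on $\operatorname{Fr}(V)$ induces a $G$-action on $P_0$ covering the diagonal action. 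Equivalently, $w_2^G(V\oplus V)=2w_2^G(V)+w_1^G(V)^2=0$.

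\textbf{Arbitrary spin structures.} Any other spin structure is $P=P_0\otimes\xi$ for a real line bundle (double cover) $\xi$ with class in $H^1(X;\Z/2)$. I would then construct the lift of $P$ as follows. For each $g$, the spin structure $g^*P$ is again of the form $P_0\otimes g^*\xi$, since $P_0$ is $G$-equivariant. A $G$-equivariant lift of $P$ is therefore the same as a $G$-linearization of the double cover $\xi$, tensored with the canonical lift on $P_0$. The key point I would try to establish is that we may pick $\xi$ as the pullback of a class from the orbifold, i.e.\ that $[\xi]$ lies in the image of $H^1_G(X;\Z/2)\to H^1(X;\Z/2)$. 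To do this, I would realize $\xi$ as an orientation double cover, using that a spin structure on $V\oplus V$ restricted to the second summand relates to a choice on $E_b$. The background bundle $E_b=\pi^*TX$ carries its own $G$-action, and the twist $\xi$ can be absorbed into the relative datum. Concretely, I would act by $\xi$ on $E_b$ through the central $\pm1$, so that the diagonal construction above applies to the pair $(E_b|_X\otimes\xi, TX)$ as a twisted diagonal.

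\textbf{Main obstacle.} I expect the hardest step to be this twist. An ordinary double cover $\xi$ need not be $G$-invariant, and even if it is, its $G$-linearization is obstructed by $d_2\colon H^1(X;\Z/2)^G\to H^2(G;\Z/2)$ in the Serre spectral sequence of $X\to X_{hG}\to BG$. My first attempt would be to show that the relevant class lies in the image of $H^1_G$ by the absorption into $E_b$ described above. Failing that, I would check that the sign ambiguity at each isotropy point is killed by the same $\widetilde\rho\,\widetilde\rho^*$ cancellation, applied over each $G_x$-invariant contractible neighbourhood. I would then patch these local lifts, using a $G$-invariant good cover, into a global action.
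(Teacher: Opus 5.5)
Your first step is correct and is a concrete form of the paper's argument. The paper cites McLaughlin's criterion (an equivariant spin structure exists iff $w_2^G(TX\oplus TX)=0$) and computes $w_2^G(TX\oplus TX)=2w_2^G(TX)+w_1^G(TX)^2=0$. Your lift $\delta\colon SO(n)\to Spin(2n)$ of the diagonal embedding exhibits the equivariant spin structure $P_0$ explicitly, so it replaces the citation with an elementary argument. Orientation-preservation of $G$ is what makes $\delta$ available, just as it makes $w_1^G$ vanish.

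\textbf{The gap is your second step, and it cannot be closed.} The reading you try to prove there, that \emph{every} spin structure lifts, is false in general. Your reduction is right: lifts of $P_0\otimes\xi$ correspond to $G$-linearizations of $\xi$. Hence $P_0\otimes\xi$ lifts iff $[\xi]\in\operatorname{im}\bigl(H^1_G(X;\Z/2)\to H^1(X;\Z/2)\bigr)$, i.e.\ iff $[\xi]$ is $G$-invariant and $d_2[\xi]=0$. But this image can be a proper subgroup.

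Take $X=\C/\Lambda$ with $\Lambda=\Z\oplus\Z\tau$, and $G\cong\Z/2$ acting by the holomorphic translation $z\mapsto z+\tfrac12$. Then $TX\oplus TX$ is trivialized by a translation-invariant frame on which $G$ acts only through the base. Since the action is free, equivariant spin structures are exactly spin structures on the descended trivial bundle over $X/G=\C/\Lambda'$, where $\Lambda'=\tfrac12\Z\oplus\Z\tau$. Pulling back along $\Lambda\subset\Lambda'$ sends $\phi\in\operatorname{Hom}(\Lambda',\Z/2)$ to a homomorphism with $\phi(1)=2\phi(\tfrac12)=0$. So only two of the four spin structures on $TX\oplus TX$ admit a $G$-equivariant lift.

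This example also shows why your fallbacks fail:
\begin{itemize}
\item All stabilizers are trivial, so local lifts over $G_x$-invariant neighbourhoods exist trivially. The obstruction is a purely global class in $H^2(G;\Z/2)$, which patching over a good cover cannot remove.
\item ``Absorbing $\xi$ into $E_b$'' either changes the fixed background bundle $E_b$, or still requires a $G$-linearization of $\xi$, which is exactly the obstructed datum.
\end{itemize}

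The paper's $w_2^G$ argument likewise establishes only existence. What is true is that the canonical structure $P_0$ is equivariant; more generally, $P_0\otimes\xi$ lifts exactly under the stated condition on $[\xi]$. You should stop after your first step and state the result in that form.
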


\begin{proof}
    The existence of a $G$-equivariant lift is equivalent to the vanishing of the second equivariant Stiefel-Whitney class $w_2^G(TX \oplus TX) \in H^2_G(X; \Z/2)$ by \cite[Proposition 4.1(1)]{mclaughlin1992orient} (notice that the proof of the result provided in loc. cit. applies to any finite group action). The claim then follows, since 
    $$w_2^G(TX \oplus TX)=2w_2^G(TX)+w_1^G(TX)^2=0.$$
    \vskip-.2in
\end{proof}

{\em From now on, we restrict to the case of Example~\ref{example: cotangent bundle}, namely $M=T^*X$ with a $G$-action induced from a $G$-action on $X$ and $ H_0(q,p)=\tfrac{1}{2}|p|^2$ is the $G$-invariant Hamiltonian.}\footnote{Note that much of the presentation is applicable to an arbitrary Liouville domain equipped with an effective action of a finite group.}

\s
In what follows, we assume that all Hamiltonian chords of $H_0$ between Lagrangians are nondegenerate.  Let $\psi^\rho \colon M \to M$ be
the time-$\op{log}(\rho)$ flow of the standard Liouville vector field $|p| \partial_{|p|}$. 
Given two oriented equivariant Lagrangians $\mathcal{L}_0$ and $\mathcal{L}_1$, a \emph{Hamiltonian orbichord} $\hat{x}$ from $\mathcal{L}_0$ to $\mathcal{L}_1$ is an unordered $|G|$-tuple of Hamiltonian chords from Lagrangians in $\mathcal{L}_0$ to Lagrangians in $\mathcal{L}_1$, which are all related by the $G$-action. Alternatively, given a Hamiltonian chord $x$ connecting $L_0 \in \mathcal{L}_0$ to $L_1 \in \mathcal{L}_1$, we may associate to it the Hamiltonian orbichord $\hat{x}:= \{ g\cdot x~|~g\in G\}$. 
Any such tuple $\hat{x}$ provides a representable Hamiltonian chord in $[M/G]$; compare with \cite{mak2025orbifoldhamiltonianfloertheory} for the closed string case. We treat $\hat{x}$ as a map 
$$\hat{x} \colon \hat{I}=\sqcup_{g \in G}I_g \to M,$$
where $I_g \cong [0,1]$ and the action on $\hat{I}$ is induced from the left $G$-action on itself. Two such maps that differ only by a $G$-action represent the same orbichord $\hat{x}$. Let $\chi(\mathcal{L}_0, \mathcal{L}_1)$ be the set of Hamiltonian orbichords from $\mathcal{L}_0$ to $\mathcal{L}_1$.

Given $\hat{x}\in \chi(\mathcal{L}_0, \mathcal{L}_1)$ we choose an equivariant path of graded Lagrangian subspaces $\Lambda_{\hat{x}} \subset \hat{x}^*TM$ connecting $T_{\hat{x}(0)}\mathcal{L}_0$ to $T_{\hat{x}(1)}\mathcal{L}_1$ and a relative equivariant spin structure on $\Lambda_{\hat{x}} \oplus E_b|_{\hat{x}}$. To make these choices, it suffices to make them along one of the chords in the tuple $\hat{x}$ and then apply the $G$-action. We refer the reader to \cite[Section (12b)]{seidel2008fukaya} and \cite[Section 2.1]{abouzaid2010geometric} for details regarding these choices. Recall that there are exactly $2$ relative spin structures along a given path $\Lambda_{\hat{x}}$. 

Each chord in the tuple $\hat{x}$ is equipped with a Maslov index. This index is the same for each chord in $\hat{x}$ by our assumption on the gradings, and we denote it $|\hat{x}|$.  Using the path $\Lambda_{\hat{x}}$ we define an elliptic operator $D_{\hat{x}}$ on a $|G|$-tuple of punctured disks. Each such disk corresponds to a chord in $\hat{x}$, and there is a natural $G$-action on this disjoint union of disks, which matches the action on $\hat{x}$. We denote by $D_{\hat{x}}^G$ the equivariant part of this elliptic operator. We denote by $o_{\hat{x}}$ the determinant line of $D_{\hat{x}}^G$, and point out that it is isomorphic to the standard orientation line $o_x$ for any chord $x$ in the tuple $\hat{x}$.

We set 
\begin{equation}
    CW^i(\mathcal{L}_0, \mathcal{L}_1)=\bigoplus_{\hat{x}\in \chi(\mathcal{L}_0, \mathcal{L}_1), \,|\hat{x}|=i}\Bbbk \otimes_{\Z}|o_{\hat{x}}|,
\end{equation}
where $|\cdot|$ denotes the rank $1$ abelian group associated with the orientation line as usual, and $\Bbbk$ is a field with $\operatorname{char} \Bbbk=0$.

\subsection{Bulk parameters for type $A$ inertia components}

Let $Y \subset X^{\operatorname{sing}}$ be a connected submanifold of real codimension $2$ in $X$, whose stabilizer $H_Y$ (i.e., the stabilizer of a generic point) is isomorphic to the cyclic group $\mu_{m_Y}$ for some $m_Y\in \Z_{\geq 2}$. 
By 
Example~\ref{exmpl: hypersurface-inertia}, for any $g \in H_Y\setminus\{e\}$ the age grading of $(N_Y, (g)):=(T^*Y, (g))$ is
\begin{equation}
    \iota([G\cdot N_Y/G], g)=1.
\end{equation}

To any $G$-orbit of a hypersurface $Y$ as above (i.e., to any $A_{m-1}$-singularity in $[T^*X/G]$, for some $m \in \Z_{\ge 2}$), we fix a preferred generator $s_Y$ in the stabilizer $H_Y$ and associate a tuple of \emph{bulk parameters} 
$$\hbar^Y_{s_Y^1}, \dots, \hbar^Y_{s_Y^{m_Y-1}};$$
we also write $\hbar^Y_{s_Y^j}=\hbar^Y_j$.

One can view $\hbar_g^Y$ as the variable associated with the connected component $(Y, (g))$ of the inertia $\mathcal{I}[X/G]$. 
Let $\bm \hbar$ be the collection of all such variables.

We introduce the \emph{bulk-deformed} morphisms by $A_{m-1}$-singularities as the module 
\begin{equation}
    CW^*_{\bm \hbar}(\mathcal{L}_0, \mathcal{L}_1)\coloneqq CW^*(\mathcal{L}_0, \mathcal{L}_1)\llbracket \bm \hbar \rrbracket,
\end{equation}
i.e., the $\mathfrak{m}_{\bm \hbar}$-adic completion of the graded module $CW^*(\mathcal{L}_0, \mathcal{L}_1)$ over the ring $\Bbbk\llbracket \bm \hbar \rrbracket$ with respect to the maximal ideal $\mathfrak{m}_{\bm \hbar}$, which is generated by the variables $\bm \hbar$.

\subsection{Moduli spaces with \texorpdfstring{$G$}{G}-equivariant Floer data}

In this subsection, we set up the moduli spaces used to define the regular wrapped Fukaya category of the orbifold $[M/G]$, using the global equivariant perspective in view of Lemma~\ref{lemma: orbifold=equivariant}.

\subsubsection{Hurwitz moduli spaces}\label{subsection: hurwitz moduli spaces}

A \emph{Hurwitz datum} is an ordered tuple $\bm h=((h_1), \ldots, (h_b))$ of $b$ elements $(h_i) \in \operatorname{Conj}(G)$, for some $b\in\Z_{>0}$.
Let $\mathscr{H}^G_{0, \bm h, d}$ be the \emph{rigidified} stack of smooth \emph{admissible} $G$-covers 
$$(\pi \colon \Sigma \to C; q_1, \dots, q_b)$$ 
of a disk with $d+1$ boundary punctures, ordered counterclockwise --- the $0$th puncture is negative, and the rest are positive --- whose Hurwitz datum is $\bm h$.
By this we mean 
$(h_i)$ is the conjugacy class of an element of $G_p$ for $p\in \pi^{-1}(q_i)$ acting via $\zeta_{|G_p|}$ on $T_p\Sigma$. This is also given by the conjugacy class of a deck transformation of $\Sigma$ corresponding to a small loop around $q_i$ and hence is sometimes called the {\em local monodromy around $q_i$.}

We assume that $d+2b \ge 2$, which implies that the marked curves $C$ are stable. 
Since we consider the rigidified stack, the stabilizers are trivial, and this stack is representable by a smooth manifold. The forgetful map 
$$\mathscr{H}_{0, \bm h, d}^G \to \mathcal{R}_{d,b}$$ 
is a smooth finite cover, where $\mathcal{R}_{d,b}$ is the moduli space of $(d+1)$-punctured disks with $b$ interior marked points.
 
We always denote by $\mathscr{C}$ the orbidisk associated to $(\pi \colon \Sigma \to C; q_1, \dots, q_b)$, which has $q_1, \dots, q_b$ as its \emph{smooth stacky points}, and we treat $C$ as the coarse curve of $\mathscr{C}$. 

\begin{remark}
    The non-rigidified stack decomposes into components corresponding to elements of the set of conjugacy classes of subgroups of $G$, corresponding to the image $H$ in $G$ of the orbifold fundamental group $\pi_1(\mathscr{C})$ under the monodromy action. Such an image in particular determines the number of connected components $|G|/|H|$ in the arbitrary cover in the given component. The group of global automorphisms of this component is isomorphic to the centralizer $C_G(H)$ of $H$ in $G$ (see \cite[Proposition 2.12]{Galeotti2022}) and is quotiented out upon rigidification. In particular, for the substack of connected $ G$-covers, one quotients out of the action of the centralizer $Z(G)$. For more details on rigidification, we refer the reader to \cite[Section 5]{ACV2003}.
\end{remark}

We denote by $\mathscr{H}^G_{0, \bm h, \emptyset}= \mathscr{H}^G_{0, \bm h, -1}$ the Hurwitz moduli space of $G$-covers over a disk without punctures.

The stack $\mathscr{H}^G_{0, \bm h, d}$ is an open substack of the stack of admissible covers of a nodal $(d+1)$-punctured disk
$$\overline{\mathscr{H}}^G_{0, \bm h, d}=\operatorname{Adm}^G_{0, \bm h, d} \subset \operatorname{Adm}^G_{0,b,d}.$$
In the algebro-geometric setting, the stack $\overline{\mathscr{H}}^G_{0, \bm h}$ of admissible covers over a closed genus $0$ curve was introduced by Abramovich-Corti-Vistoli \cite[Section 4.3]{ACV2003} (see also \cite[Section 2.3]{Galeotti2022}).  

\begin{claim}\label{claim: orbifold-with-corners}
The stack $\overline{\mathscr{H}}^G_{0, \bm h, d}$ is an orbifold with corners.
\end{claim}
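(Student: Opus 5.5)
The plan is to reduce the claim to the known closed genus $0$ case by \emph{doubling}, as the introduction suggests. By the results of Abramovich--Vistoli, Abramovich--Corti--Vistoli and Olsson, the stack $\overline{\mathscr{H}}^G_{0,\bm h', b'+d+1}$ of admissible $G$-covers over nodal genus $0$ curves with marked points is a smooth proper Deligne--Mumford stack. After rigidification, and passing to the underlying complex analytic space, it is a complex orbifold. Its boundary is a normal crossings divisor, because the stack $\operatorname{Adm}^G$ is log smooth (Olsson) over $\overline{\mathcal{M}}_{0,n}$.

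\textbf{Step 1: double the cover.} Given an admissible $G$-cover $\pi\colon \Sigma\to C$ of a nodal $(d+1)$-punctured disk with Hurwitz datum $\bm h$, form the double
$$\Sigma^{\mathbb C}=\Sigma\cup_{\partial\Sigma}\overline{\Sigma}, \qquad C^{\mathbb C}=C\cup_{\partial C}\overline{C}.$$
The $G$-action and the cover extend to a $G$-cover of the doubled genus $0$ curve, and the anti-holomorphic involution $\sigma$ commutes with $G$. The branch points are $q_i$ with monodromy $(h_i)$ and $\bar q_i$ with monodromy $(h_i^{-1})$, since complex conjugation reverses the orientation of loops. The boundary punctures become real marked points, which are unramified. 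Boundary nodes double to real nodes. These are unramified, because boundary points have trivial stabilizers, and we only consider orientable orbidisks with no stacky points on the boundary. Interior nodes double to conjugate pairs of nodes, with balanced (twisted) structure preserved. This gives a map from $\overline{\mathscr{H}}^G_{0,\bm h,d}$ to the real locus of $\overline{\mathscr{H}}^G_{0,\bm h\cup\bar{\bm h},\,d+1}$ with respect to the induced anti-holomorphic involution $\sigma_*$, together with the choice of a real structure on the cover.

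\textbf{Step 2: local identification.}
\begin{itemize}
\item[(a)] The fixed locus of an anti-holomorphic involution on a complex orbifold is a real orbifold of half dimension. This follows by linearizing $\sigma_*$ in a local uniformizing chart with finite group $\Gamma$, using an averaged $\sigma_*$-compatible chart.
\item[(b)] The disk moduli space is the closure of one connected component of the real locus of the main stratum, cut out by the cyclic order of the boundary punctures and by the choice of "half" $\Sigma\subset\Sigma^{\mathbb C}$.
\end{itemize}

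\textbf{Step 3: corners.} Near a boundary stratum, the deformation space of the doubled admissible cover is governed by smoothing parameters $t_\nu\in\mathbb C$, one for each node $\nu$ of $C^{\mathbb C}$, via the standard local model $zw=t$, or $[\{zw=t\}/\mu_m]$ with $t=s^m$ at a twisted node. Everything is pulled back from the coarse curve's deformation space through the étale-local coordinates $s$. The involution acts as follows:
\begin{itemize}
\item at a real (boundary) node, $\sigma_*$ fixes $t_\nu$ up to conjugation, giving a real parameter, and the disk condition forces $t_\nu\ge0$;
\item at a pair of conjugate interior nodes, $(t_\nu,t_{\bar\nu})\mapsto(\bar t_{\bar\nu},\bar t_\nu)$, giving one free complex parameter (or, in the twisted case, the parameter $s$ with its $\mu_m$-action).
\end{itemize}
Hence the local model is $[\mathbb R_{\ge0}^k\times\mathbb R^{N}/\Gamma]$, with $\Gamma$ acting linearly and preserving the quadrant. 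Interior sphere-bubble strata where all special points are interior also need to be treated. There the real codimension is $2$, with no corner contribution.

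\textbf{Main obstacle.} I expect the main obstacle to be Step 3 at twisted interior nodes, and compatibility with rigidification. One must check that the $\mu_m$ ghost automorphisms at a twisted node, together with $\sigma$, act compatibly so that the quotient is still an orbifold chart rather than something with nonlinear identifications. One must also check that rigidification by $C_G(H)$ commutes with taking the real locus. I would first handle this by working directly with the explicit twisted-node model $[\{zw=s^m\}/\mu_m]$, and verifying that $\sigma$ normalizes $\mu_m$, acting by $\zeta\mapsto\zeta^{-1}$. The stabilizer of the chart is then the extension $\mu_m\rtimes\langle\sigma\rangle$ restricted to $\sigma$-fixed data, which is finite. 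Gluing these charts is then the usual orbifold-with-corners atlas argument.
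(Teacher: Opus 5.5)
Your proposal is correct and follows the paper's own argument. You double the admissible $G$-cover to a real point of $\overline{\mathscr{H}}^G_{(0,d+1),\bm h\cup\overline{\bm h}}$, with inverted monodromies at the conjugate points and boundary nodes becoming real non-stacky nodes, and read off local charts $[\C^k\times\R^{\ell}_{\ge 0}/\Gamma]\times\R^{2(b-k)+d-2-\ell}$ from the gluing parameters, where $\Gamma\subset\prod_{\ddot n}G_{\ddot n}$ acts only on the interior-node factors. One small correction to your ``main obstacle'' paragraph: $\sigma$ does not belong to the local isotropy group; at a conjugate pair of twisted nodes the relevant group is the $\sigma$-compatible diagonal copy of $\mu_m$ in $\mu_m\times\mu_m$, which is exactly the factor $G_{\ddot n}$ acting on the single complex gluing parameter.
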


\begin{proof}[Sketch of proof] 
The stack $\overline{\mathscr{H}}^G_{0, \bm h}$ is a smooth Deligne-Mumford stack \cite[Theorem 3.0.2]{ACV2003} and the corresponding stack over the smooth category $\op{Man}$ is therefore an orbifold.  The claim is proved by doubling:  Indeed, each element of $\overline{\mathscr{H}}^G_{0, \bm h, d}$ is a $G$-cover over a tree of disks and $\mathbb{P}^1$'s attached to this tree, i.e., an element of $\overline{\mathcal{R}}_{d,b}$ defined in \cite{siegel2021}. Any such curve doubles in the obvious way to an element of the Deligne-Mumford stack $\overline{\mathcal{M}}_{0, 2b+(d+1)}$ contained in the locus fixed by the anti-complex involution. This doubling extends to the doubling of the whole $G$-cover, and one obtains an element of $\overline{\mathscr{H}}^G_{(0, d+1), \bm h\cup \overline{\bm h}}$ of the stack of admissible $G$-covers with $(d+1)$ marked (non-stacky) points downstairs and Hurwitz datum $\bm h \cup \overline{\bm h}$, where local monodromies in $\overline{\bm h}$ are obtained by inverting those of $\bm h$. 
\end{proof}
\begin{figure}[ht]
    \centering
\includegraphics[width=0.8\linewidth]{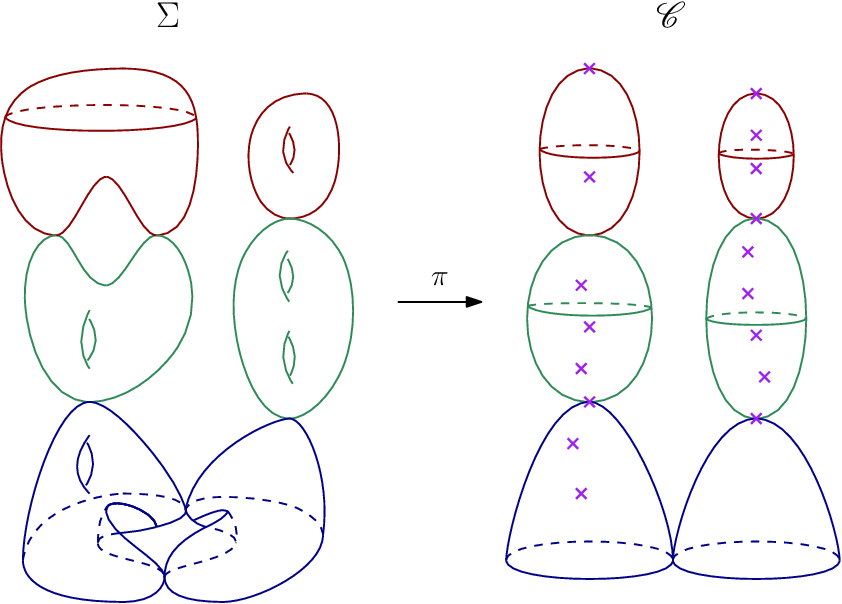}
    \caption{An example of an element of $\overline{\mathscr{H}}_{0, \bm h, \emptyset}^{\mu_2}$ with $b=14$ (the stacky nodes do not contribute to $b$) and $\mathscr{C}$ having $1$ boundary node and $4$ interior nodes, $3$ of which are stacky. Looking at the normalization $\Sigma$, the generators of the stabilizer $\Gamma \cong \mu_2 \times \mu_2 \times \mu_2$ correspond to $\pi$-rotations of the red component on the right, the green component on the right, and the union of the green and red components on the left.}
    \label{fig: mu_2_cover}
\end{figure}
The above is a lift of the strategy outlined in \cite[Lemma 9.2]{seidel2008fukaya}, but one can take a more direct route and use the deformation theory of curves with boundary of \cite[Section 3.3]{Liu2020moduli} to extend the original argument of \cite[Theorem 3.0.2]{ACV2003}. In either approach, near an element $(\pi \colon \Sigma \to C; q_1, \dots, q_b)$ of $\overline{\mathscr{H}}^G_{0, \bm h, d}$, one obtains a local orbifold chart, associated with local gluing parameters for (potentially stacky) nodes, of the form 
\begin{equation}\label{eq: gluing-coordinates}
    [U/\Gamma] \subset [\C^k \times \R_{\ge 0}^{\ell}/\Gamma]\times \R^{2(b-k)+d-2-\ell},
\end{equation}
where $k$ and $\ell$ are the numbers of interior and boundary nodes of $C$, respectively. 
By \cite[Lemma 7.3.1]{ACV2003}, the group $\Gamma$ can be identified with a subgroup of a product of cyclic groups
\begin{equation}\label{eq: inclusion-automorphism}
\Gamma \subseteq \prod_{\ddot{n} \in \mathscr{C}^{\operatorname{node}}}G_{\ddot{n}}=\hat{\Gamma},
\end{equation}
where $\mathscr{C}^{\operatorname{node}}$ is the set of nodes of $\mathscr{C}$. Notice that only interior nodes have nontrivial stabilizers. Standard nontrivial $1$-dimensional representations of $G_{\ddot{n}}$ for each interior node $\ddot{n}$, give rise to the action of $\hat{\Gamma}$ on $\C^k$ (regarded as a product of these representations), and hence of $\Gamma$. The group $\Gamma$ acts trivially on the coordinates corresponding to boundary gluing, since in our doubling construction from the proof of Claim~\ref{claim: orbifold-with-corners} the boundary nodes double to interior non-stacky nodes.

\begin{remark}
    A sufficient condition that makes the inclusion~\eqref{eq: inclusion-automorphism} into an isomorphism is given by \cite[Lemma 7.3.3]{ACV2003}: for a connected nodal $G$-cover $\pi \colon \Sigma \to \mathscr{C}$ satisfying a technical condition called {\em characteristic} (see \cite[Definition 7.3.2]{ACV2003}), in a smoothing $\mathscr{C}'$ of $\mathscr{C}$ a Dehn twist around a neck in $\mathscr{C}'$ corresponding to a given node $\ddot{n} \in \mathscr{C}$ lifts to an automorphism of the corresponding smoothed $G$-cover $\pi' \colon \Sigma' \to \mathscr{C}'$. One immediately deduces that there is a corresponding automorphism of the initial $G$-cover $\pi$.
    
    In general, however, an automorphism of the orbicurve $\mathscr{C}$ which fixes the coarse curve $C$, need not lift to an automorphism of the $G$-cover $\pi \colon \Sigma \to C$.
    For instance, in the case $G=\mu_2$, the above condition is satisfied for any connected cover, and therefore the stabilizer is the product of $k'$ copies of $\mu_2$, where $k'$ is the number of the stacky interior nodes of a given $G$-cover $\pi \colon \Sigma \to \mathscr{C}$. We illustrate a cover with $\Gamma=\hat{\Gamma}\cong\mu_2 \times \mu_2 \times \mu_2$ in $\overline{\mathscr{H}}_{0, \bm h, \emptyset}^{\mu_2}$ in Figure \ref{fig: mu_2_cover}, with each of the $3$ stacky nodes contributing a copy of $\mu_2$ to the stabilizer.
\end{remark}

There are universal curves $\mathfrak{S}_{0, \bm h, d}^G  \to \mathscr{H}_{0, \bm h, d}^G$ and $\mathfrak{C}_{0, \bm h, d}^G \to \mathscr{H}_{0, \bm h, d}^G$ fibered over the Hurwitz moduli space $\mathscr{H}_{0, \bm h, d}^G$ corresponding to the domains and codomains of admissible covers, i.e., the fibers of $\mathfrak{S}_{0, \bm h, d}^G$ and $\mathfrak{C}_{0, \bm h, d}^G$ above $(\pi \colon \Sigma \to C; q_1, \dots, q_b)$ are given by the curves $\Sigma$ and $C$, respectively.
The space $\mathfrak{S}_{0, \bm h,d}^G$ admits a fiberwise $G$-action and $\mathfrak{C}_{0, \bm h, d}^G$ comes equipped with marked point sections $q_1, \ldots, q_b \colon \mathscr{H}^G_{0, \bm h,d} \to \mathfrak{C}_{0, \bm h,d}^G$. Also let $\partial^v\mathfrak{S}_{0, \bm h, d}^G  \to \mathscr{H}_{0, \bm h, d}^G$ and $\partial^v\mathfrak{C}_{0, \bm h, d}^G \to \mathscr{H}_{0, \bm h, d}^G$ be the fibrations obtained by restricting to the boundary fiberwise. All of these fibrations naturally extend to fibrations over $\overline{\mathscr{H}}^G_{0, \bm h,d}$.

The data of an admissible nodal $G$-cover $(\pi \colon \Sigma \to C; q_1, \ldots, q_b) \in \overline{\mathscr{H}}^G_{0, \bm h, d}$ is equivalent to the data of the underlying \emph{nodal orbidisk} $(\mathscr{C}, q_1, \ldots, q_b)$, where $q_i$ is a stacky point of order $\operatorname{ord}(h_i)$, together with a \emph{representable} orbifold map $\mathscr{C} \to BG$; see e.g.\ \cite[Theorem 4.3.2]{ACV2003} in the closed case. A nodal orbidisk $\mathscr{C}$ is allowed to have necessarily \emph{balanced} stacky interior nodes and standard boundary nodes. Such orbidisks are called \emph{twisted}.

We choose strip-like ends, finite strips, and finite cylinders on the coarse curves $C$ of points of $\overline{\mathscr{H}}^G_{0, \bm h, d}$ varying smoothly over $\mathscr{H}^G_{0, \bm h, d}$ and such that the ends, strips, and cylinders are pairwise disjoint and disjoint from open neighborhoods of stacky points.  Using the coordinates~\eqref{eq: gluing-coordinates} near the boundary, these ends, strips, and cylinders can be made to coincide with those coming from the analogous data picked for lower-dimensional strata as in the standard references \cite{seidel2008fukaya, ganatra2013thesis, siegel2021}. These ends, strips, and cylinders give rise to ends, strips, and cylinders on each $G$-cover $\pi: \Sigma\to C$ by pullback and are therefore equipped with a $G$-action.
We denote by $\epsilon^i$ the strip-like end corresponding to the $i$th boundary puncture on $C$.

\subsubsection{Almost complex structures}

We now describe a specific space of almost complex structures on $M=T^*X$, which we use to set up the counts of pseudoholomorphic curves. First, let us fix a $G$-invariant almost complex structure $J_0$ on $M$, such that $J_0=J_0' \operatorname{exp}(Z_0)$, where $J_0'$ is $\omega$-compatible and $Z_0\in C^{\infty}(\operatorname{End}(TM))$ is such that $\sup_{x \in M}\|Z_0(x)\|<\varepsilon$ for a fixed small constant $\varepsilon>0$, and we are measuring with respect to the metric induced by $J_0'$ and $\omega$. In addition, we require $J_0$ to satisfy the following conditions:
\begin{enumerate}
    \item[(J1)] $J_0$ is \emph{$\omega$-tame}. 
    \item[(J2)] $J_0$ is \emph{of contact type} near $\partial_\infty M$, i.e.,  $\lambda \circ J=dH_0$.
    \item[(J3)]  $J_0$ is \emph{adapted to type $A$} in the sense of Definition~\ref{defn: adapted J}.
\end{enumerate}

The condition (J3) loosely means that $J_0$ is sufficiently nice near the fixed point sets $T^*Y$ for each fixed hypersurface $Y \subset X$. 
Following the approach of \cite[Section 3.3]{perutzsheridan2023relative} we define
\begin{gather}\label{eq: almost complex structures}
    \mathcal{J}=\{J_0\exp(Z) \mid Z \in \mathcal{Z}\}, \text{ where }\\
    \mathcal{Z}=\{Z \in C^\infty(\operatorname{End}(TM)) ~|~ ZJ_0+J_0Z=0,  \|Z\|_{C^0}< \log\tfrac{3}{2}-\varepsilon, \,  \, dH_0 \circ Z=0 \text{ near }\partial_{\infty}M\}. \nonumber
\end{gather}
By \cite{perutzsheridan2023relative}, the bound on the $C^0$-norm in the definition of $\mathcal{Z}$ implies that all almost complex structures in $\mathcal{J}$ are $\omega$-tame. We write $\mathcal{J}^k$ and $\mathcal{Z}^k$ for the similarly defined spaces with $Z$ of class $C^k$. We have a natural smooth $G$-action on both $\mathcal{J}$ and $\mathcal{Z}$. The conditions on the elements of $\mathcal{Z}$ imply that each almost complex structure in $\mathcal{J}$ is $\omega$-tame and of contact type (one may put more conditions to guarantee that any such $J \in \mathcal{J}$ would be adapted to type $A$ inertia components, but we do not require this).

We also make a choice of a family $J_t, \, t\in[0,1]$, of almost complex structures in $\mathcal{J}$.

\begin{remark}\label{remark: maximum principle}
    The argument of \cite[Lemma B.1]{abouzaid2010geometric}, which is based on the Abouzaid-Seidel maximum principle \cite[Lemma 7.2]{abouzaidseidel2010}, is also valid for $\omega$-tame almost complex structures of contact type. 
    This observation and its variants have been made by several authors; see \cite[Section 3.1]{perutzsheridan2023relative} and \cite[Section 13]{oh2021reduced}, and also \cite[Proposition 13.2]{wendl2020sft} for a similar statement in a slightly different context.
\end{remark}

\subsubsection{Conformally consistent $G$-equivariant Floer data}

We briefly explain what we mean by a choice of \emph{conformally consistent $G$-equivariant Floer data} over all $\overline{\mathscr{H}}^G_{0, \bm h, d}$.

Choose a \emph{consistent collection} of strip-like ends, finite strips, and finite cylinders on the coarse curves $C$ of elements $(\pi \colon \Sigma \to C; q_1, \dots, q_b)$ of $\overline{\mathscr{H}}_{0, \bm h, d}^G$. This can be done as in \cite[Section 4.1.4]{siegel2021} using gluing coordinates~\eqref{eq: gluing-coordinates}. We assume that all such ends, strips, and cylinders do not contain any stacky points. The choice of such ends, strips, and cylinders provides a $G$-equivariant choice of such ends, strips, and cylinders on each cover $\Sigma$ by pulling back these data from $C$ via $\pi$.

The following is designed so that the choice of Floer datum for $(\pi \colon \Sigma \to C; q_1, \dots, q_b)$ only depends on the underlying marked curve $(C; q_1, \dots, q_b)$ 

\begin{definition}\label{defn: floer data}
    A \emph{Floer data} on $\overline{\mathscr{H}}_{0, \bm h, d}^G$ is a tuple 
    $$(\rho, \alpha, H, J),$$
    consisting of a time-shifting map $\rho$, a basic $1$-form $\alpha$, a domain-dependent Hamiltonian $H$, and a domain-dependent almost complex structure $J$, given as follows:
\end{definition}

\begin{enumerate}
    \item Let $\partial^v \overline{\mathfrak{C}}_{0, \bm h, d}^G$ be the vertical boundary of the fibration $\overline{\mathfrak{C}}_{0, \bm h, d}^G \to \overline{\mathscr{H}}^G_{0, \bm h, d}$, whose fiber over $(\pi \colon \Sigma \to C; q_1, \dots, q_b)$ is $\bdry C$. A \emph{time-shifting map} $\rho \colon \partial^v \overline{\mathfrak{C}}_{0, \bm h, d}^G \to [1, +\infty)$ is smooth, constant near the boundary punctures of each disk $(C, q_1, \ldots, q_b)$ --- we call these values {\em weights} of $(\pi \colon \Sigma \to C; q_1,\dots, q_b)$ --- and constant on each component of $\bdry C$ without any boundary punctures.  Let $w_{k}$ be the function on $\overline{\mathscr{H}}_{0,\bm h, d}^G$ which recovers the weight at the $k$th boundary puncture.

    \item A \emph{basic $1$-form} is a choice of \emph{vertical} $\alpha \in \Omega^1(\overline{\mathfrak{C}}_{0, \bm h, d}^G)$.  We assume that $\alpha$ vanishes 
    along $\partial^v \overline{\mathfrak{C}}_{0, \bm h, d}^G$, 
    on a neighborhood $\mathcal{N}$ of the sections $q_1, \ldots, q_b$, and 
    on some auxiliary open subset which is disjoint from $\mathcal{N}$ and intersects each fiber $C$ in an open subset $U_C$ which is disjoint from the strip-like ends, finite strips, and cylinders. The restriction $\alpha|_C$ to any fiber $C$ is required to be a sub-closed $1$-form, i.e., $d\alpha|_C \le 0$.

  \item An {\em equivariant domain-dependent Hamiltonian} $H\in\mathcal{H}_{\bm h,d}(M)$, where the latter is the space of $C^\infty$-maps
    $$H:\overline{\mathfrak{C}}_{0, \bm h, d}^G \times M \to \R,$$
    such that $H=H_0$ on a neighborhood of  
    $\partial_\infty M$, 
    a neighborhood of the sections $q_1,\dots, q_b$, and 
    each component of a given domain $C$ not containing any punctures, where $C$ is the fiber over a point $(\pi \colon \Sigma \to C; q_1, \dots, q_b) \in \overline{\mathscr{H}}_{0, \bm h, d}^G$. 
    Away from some small interpolation region near the boundary on the $k$th strip-like end we require 
    $$X_H\otimes \alpha= X_{\frac{H_0}{w_k^2}\circ\psi^{w_k}} \otimes (w_kdt).$$
    
    \item A {\em domain-dependent almost complex structure} $J \in \mathcal{J}_{\bm h, d}(M)^G$, where the latter space is defined as follows: 
    
    First, let $\mathcal{J}_{\bm h,d}(M)$ be the space of \emph{domain-dependent} almost complex structures, i.e., the space of smooth maps
    $$J \colon \overline{\mathfrak{S}}_{0, \bm h,d}^G \to \mathcal{J},$$
    whose restrictions to the $G$-orbit of any finite strip or a strip-like end of the fiber curve $\Sigma$ over a point $(\pi \colon \Sigma \to C; q_1, \dots, q_b) \in \overline{\mathscr{H}}^G_{0, \bm h, d}$ coincide with $(\psi^{w_k})^*J_t,$ for the fixed family $J_t$, $t\in [0,1]$ (specified above). In addition, any such almost complex structure $J$ is required to coincide with $J_0$ (fixed above) on a neighborhood of the preimages of the marked points, on finite cylinders, and on all components not containing punctures of $\Sigma$.
    
    Then let $\mathcal{J}_{\bm h,d}(M)^G \subset \mathcal{J}_{\bm h,d}(M)$ be the subspace of domain-dependent almost complex structures that are $G$-invariant with respect to the diagonal action of $G$ on $\overline{\mathfrak{S}}_{0, \bm h, d}^G \times M$, i.e., satisfy
    \begin{equation}\label{eq: J-equivariance}
        J(z, x)(V)=dg\circ J_{(g^{-1}z, g^{-1}x)}\circ dg^{-1}(V),
    \end{equation}
    for any $(z, x) \in \overline{\mathfrak{S}}_{0, \bm h, d}^G \times M, \, V \in T_xM, \, g \in G$.
\end{enumerate}

\begin{remark}
    A triple $( \rho, \alpha,H)$ as above naturally gives rise to a $G$-equivariant triple $(\tilde{\rho}, \tilde{\alpha} , \tilde{H})$ on  $\overline{\mathfrak{S}}_{0, \bm h, d}^G \times M$ via pullback. We note that we could also define a $G$-equivariant triple $(\tilde{\rho}, \tilde{\alpha} , \tilde{H})$ first, and then define $( \rho, \alpha,H)$ via descent, since for each $(\pi \colon \Sigma \to C; q_1, \dots, q_b) \in \overline{\mathscr{H}}_{0, \bm h, d}^G$ these functions and $1$-forms are locally constant near the ramification points.
\end{remark}
    
\begin{definition}
A choice of Floer data for all $\overline{\mathscr{H}}_{0, \bm h, d}^G$ over all $d \ge 2$ and all Hurwitz data $\bm h$ is \emph{universal and conformally consistent} if its restriction to a boundary stratum is \emph{conformally equivalent} in the sense of \cite[Definition 4.2]{abouzaid2010geometric} to those coming from lower-dimensional moduli spaces. In particular, along a boundary stratum, the Floer data agrees to infinite order, with respect to the gluing coordinates, with the Floer data obtained by gluing Floer data from lower-dimensional moduli spaces.  
\end{definition}

We note that the equivariance of the data $(\rho, \alpha, H, J)$ guarantees that glued data near a boundary stratum is invariant under symmetries of the local symmetry group $\Gamma$ as in~\eqref{eq: gluing-coordinates}.

Let us fix a choice of universal and conformally consistent Floer data for all $\overline{\mathscr{H}}^G_{0, \bm h, d}$ with $d \ge 1$, $b \ge 0$ and $\bm h \in \operatorname{Conj}(G)^b$. This can be done via an inductive procedure as in \cite[Lemma 4.3]{abouzaid2010geometric}.

\subsubsection{Moduli spaces}

Consider a tuple of equivariant Lagrangians $\mathcal{L}_0, \dots, \mathcal{L}_d $ in $\mathcal{W}^{\operatorname{reg}}([M/G])$, a tuple of orbichords $\overrightarrow{\hat{x}}=(\hat{x}_d, \ldots, \hat{x}_1)$ with $\hat{x}_i \in \chi(\mathcal{L}_{i-1}, \mathcal{L}_{i})$ for $i=1,\dots,d$, and an orbichord $\hat{x}_0 \in \chi(\mathcal{L}_0, \mathcal{L}_d)$. Further, we pick a connected component $\bm m$ of $\mathcal{I}[M/G]^b$ contained in 
$$\prod_{i=1}^b[M^{h_i}/C(h_i)],$$
using the decomposition given by \eqref{eq: inertia-for-global-quotient}. Assume $(\pi \colon \Sigma \to C; q_1, \dots, q_b) \in \overline{\mathscr{H}}_{0, \bm h, d}^G$ and $u \colon \Sigma \to M$ is a $G$-equivariant map representing an orbidisk $[u] \colon \mathscr{C} \to [M/G]$. By abuse of notation we denote by $(h_i)$ the generator of the stabilizer $G_{q_i}$ of the stacky point $q_i$, and we point out that in the global quotient case $\mathscr{C}=[\Sigma/G]$ the stabilizer is defined as the stabilizer of any point in $\pi^{-1}(q_i)$ which is a well-defined group and its generator corresponds to the conjugacy class $(h_i)$. Let us also note that in the global quotient case, a component $\bm m$ as above clearly specifies the corresponding Hurwitz datum.

We define the \emph{inertia evaluation map} $\mathcal{E}^{\mathcal{I}}$ at $q_i$ via
\begin{equation}
\mathcal{E}^\mathcal{I}([u], q_i)=[u]^{\mathcal{I}}(q_i, (h_i)) \in 
\mathcal{I}[M/G],
\end{equation}
where $(q_i, (h_i))$ is a point of inertia $\mathcal{I}\mathscr{C}$, and $$[u]^\mathcal{I} \colon \mathcal{I}\mathscr{C} \to \mathcal{I}[M/G]$$
is the map on the inertia orbifolds induced by $[u]$.
For the whole curve $[u]$, we also use the notation
$$\mathcal{E}^{\mathcal{I}}([u]) \coloneqq(\mathcal{E}^\mathcal{I}([u], q_1), \dots, \mathcal{E}^\mathcal{I}([u], q_b)) \in \mathcal{I}[M/G]^b.$$

\begin{definition}\label{defn: moduli-of-orbidisks}
    The moduli space $\mathcal{M}(\overrightarrow{\hat{x}}, \hat{x}_0; \bm m)$ is the space of $G$-equivariant maps
    $$u \colon \Sigma \to M$$
    for $(\pi \colon \Sigma \to C; q_1, \dots, q_b) \in \mathscr{H}_{0, \bm h, d}^G$ satisfying
    \begin{gather}\label{eq: Floer-equation}
        (du-\pi^*(X_H \otimes \alpha)|_C)^{0,1}_{J|_C}=0,\\
        \mathcal{E}^{\mathcal{I}}([u]) \in \bm m, \nonumber
    \end{gather}
    and
    \begin{gather}
        u(z) \in \psi^{\rho(z)|_C}\mathcal{L}_i \quad \text{ for }z\in \pi^{-1}(w) \text{ with } w\in \partial C \text{ between }\xi^i \text{ and } \xi^{i+1}, \\
        \lim_{s\to\pm \infty}u|_ {\pi^{-1}(\epsilon^i(s, \cdot))}=\psi^{w_k}\circ\hat{x}_i.
    \end{gather}
\end{definition}
In the special case $\overrightarrow{\hat{x}}=\hat{x}_1$ (i.e., $d=1$) and $\bm h =\emptyset$, we define $\mathcal{M}(\hat x_1, \hat x_0)$ to be the quotient of the similarly defined space (in this case the Floer data is just $(J_t, H_0)$). 
\begin{remark}
 Two such maps $u$ and $u'$ from the same domain in the Hurwitz moduli space are identified if they only differ by a $G$-action on the domain.
\end{remark}

\begin{proposition}\label{prop: dimension-transverse}
    For a comeager set of Floer data and any choice of Lagrangians $\mathcal{L}_0, \dots, \mathcal{L}_d$, orbichords $\overrightarrow{\hat{x}}$, $\hat{x}_0$, and components of the inertia orbifold $\bm m$, the moduli space $\mathcal{M}(\overrightarrow{\hat{x}}, \hat{x}_0; \bm m)$ is a smooth manifold of dimension
    \begin{equation} \label{equation: dim}
        \operatorname{dim} \mathcal{M}(\overrightarrow{\hat{x}}, \hat{x}_0; \bm m)=|\hat{x}_0|-|\overrightarrow{\hat{x}}|-2\iota(\bm m)+2b+(d-2).
    \end{equation}
\end{proposition}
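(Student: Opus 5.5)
The plan is to prove the two assertions (transversality and the dimension formula) separately, in both cases reducing to the standard non-orbifold theory on the cover $\Sigma$ and then passing to $G$-invariant parts.

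\textbf{Linearization.} For a $G$-equivariant solution $u\colon\Sigma\to M$ over $(\pi\colon\Sigma\to C;q_1,\dots,q_b)$, the linearized operator $D_u$ of \eqref{eq: Floer-equation} on the cover is $G$-equivariant: $G$ acts on $W^{1,p}(\Sigma,u^*TM)$ with totally real boundary conditions $u^*T\mathcal{L}_i$, and on $L^p(\Lambda^{0,1}\otimes u^*TM)$. The orbifold operator is the restriction $D_u^G$ to $G$-invariant sections, together with the tangent directions of $\mathscr{H}^G_{0,\bm h,d}$. Since $\mathscr{H}^G_{0,\bm h,d}\to\mathcal{R}_{d,b}$ is a finite cover, this contributes $2b+(d-2)$ real dimensions. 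Requiring $\mathcal{E}^{\mathcal{I}}([u])\in\bm m$ is automatic for an equivariant map, because equivariance forces $u(p)\in M^{h_i}$ for $p\in\pi^{-1}(q_i)$. So
\[
\dim\mathcal{M}=\operatorname{ind}D_u^G+2b+(d-2).
\]

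\textbf{Index.} I would compute $\operatorname{ind}D^G_u$ as the index of the descended Cauchy--Riemann operator on the orbibundle $[u]^*T[M/G]$ over $\mathscr{C}$, using the orbifold Riemann--Roch theorem with boundary (Chen--Ruan, Cho--Poddar style). This gives $|\hat{x}_0|-|\overrightarrow{\hat{x}}|$ minus twice the sum of ages at the stacky points, so the correction is $-2\iota(\bm m)$, where $\iota(\bm m)=\sum_i\iota(\mathcal{E}^{\mathcal{I}}([u],q_i))$.

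Concretely, near $p\in\pi^{-1}(q_i)$ with $h_i$ acting by $z\mapsto\zeta_{d_i}z$, an invariant section in an eigenline of character $\zeta_{d_i}^{m_j}$ must vanish to order $m_j$ at $p$. This is exactly the leading-term analysis used in Section~\ref{section: transversality-for-jets}. Comparing with the operator in which these vanishing conditions are dropped changes the index by $\sum_j m_j/d_i$ complex dimensions per stacky point, which equals the age. The Maslov contributions of the ends are the same for every chord in an orbichord, so they give $|\hat{x}_0|-|\overrightarrow{\hat{x}}|$, normalized so that this holds in the case $b=0$, $G=1$ from \cite{seidel2008fukaya}.

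\textbf{Transversality.} I would use the universal moduli space over $\mathcal{J}_{\bm h,d}(M)^G$ with equivariant perturbations $Z$ and apply Sard--Smale in the usual way. The key claim is that the universal linearized operator restricted to invariant sections is surjective. Suppose $\eta$ is an invariant cokernel element annihilating all equivariant variations $Y\cdot du\circ j$.
\begin{itemize}
\item Choose a point $z\in\Sigma$ where $du(z)\neq0$ and $\eta(z)\neq0$, such that $z$ has trivial stabilizer and $z\notin$ the regions where $J$ is forced to equal $J_0$.
\item This requires somewhere-injectivity modulo $G$: $u(z)\notin u(\Sigma\setminus Gz)$. It holds on an open dense set because each component of $\mathcal{M}$ consists of curves with nonconstant punctured components (they have chord asymptotics and a Hamiltonian term), as in \cite{seidel2008fukaya}.
\item Construct $Y$ supported near $u(z)$ and average it over $G$ on $\Sigma\times M$. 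Because the orbit $Gz$ is free, the averaged variation pairs with $\eta$ as $|G|$ times the local pairing, which can be made nonzero. This is a contradiction.
\end{itemize}
Intersecting the resulting comeager sets over the countably many data $(\bm h,d,\overrightarrow{\hat{x}},\hat{x}_0,\bm m)$ finishes the transversality argument.

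\textbf{Main obstacle.} I expect the hard part to be the injectivity step, because $J$ is frozen to $J_0$ near the stacky preimages and on unpunctured components. I would handle it by the standard argument that on the punctured components the strip-like ends force non-constancy, and that the injective points are open and dense away from the frozen regions.
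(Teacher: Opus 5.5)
Your overall architecture matches the paper's. You compute the index of the invariant operator from the orbifold index formula with the age correction. The paper does this by gluing the strip-like ends and applying Cho--Shin's orbifold Maslov index. Your ``vanishing to order $m_j$'' bookkeeping is only heuristic: it holds for holomorphic invariant sections but not for Sobolev ones, and it produces fractional shifts. The rigorous version is the desingularization comparison of Lemma~\ref{lemma: ker and coker}. For transversality you use a universal moduli space over $\mathcal{J}_{\bm h,d}(M)^G$, averaging over $G$, and Sard--Smale, as the paper does.

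The problem is the step you single out as the main obstacle. As justified, it fails. Nonconstancy of the punctured components does not produce points with $u(z)\notin u(\Sigma\setminus Gz)$. The only places where Floer--Hofer--Salamon-type asymptotic analysis does produce injective points are the strip-like ends, and there $J$ is frozen to $(\psi^{w_k})^*J_t$ and cannot be perturbed.

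More to the point, the step is unnecessary. Elements of $\mathcal{J}_{\bm h,d}(M)^G$ are functions on $\overline{\mathfrak{S}}^G_{0,\bm h,d}\times M$, so you may take $Y$ supported in $N\times M$, where $N$ is a small disk around a point $z$ with trivial stabilizer in the unfrozen region. The translates $gN$ are pairwise disjoint. After averaging, the pairing with an invariant $\eta$ is therefore $|G|\int_N\langle Y\circ(du-X_{\tilde H}\otimes\tilde\alpha)\circ j,\eta\rangle$, however $u$ self-intersects.

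All you need is an open set of such points $z$ where $du-X_{\tilde H}\otimes\tilde\alpha\neq 0$ and $\eta\neq 0$. The paper gets the first condition by working in the auxiliary region where $\alpha\equiv 0$: there the term is $Y\circ du\circ j$, and $du\neq 0$ by nonconstancy. The second comes from unique continuation for $D_u^*$. An equivalent formulation is Lemma~\ref{lemma: basic-surjectivity-on-tangents}: the non-equivariant universal operator $D^{\mathscr X}$ is surjective by the Cieliebak--Mohnke argument. Since $D^{\mathscr X}$ is $G$-equivariant, averaging a preimage of an invariant target gives an invariant preimage.

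A genuine regular-point argument is needed only in a case your proposal does not treat separately: strips, with $d=1$ and $\bm h=\emptyset$. There the Floer data is just the $t$-dependent family $J_t$; no $s$-dependence is allowed because of the $\mathbb{R}$-quotient, so you cannot localize in the domain. Instead you need Floer--Hofer--Salamon regular points for all $|G|$ strips $g\cdot u_e$ at once. That is, you need points $(s,t)$ with $\partial_s u_e\neq 0$ and $u_e(s,t)\notin u_e(\mathbb{R}\setminus\{s\},t)\cup\bigcup_{g\neq e}g\,u_e(\mathbb{R},t)$. Such points exist because $G$ acts freely on regular equivariant Lagrangians, so the strips $g\cdot u_e$ are pairwise distinct. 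This is the content of Lemma~\ref{lemma: transversality-for-strips}, via Khovanov--Seidel. With these two corrections your argument becomes the paper's.
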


\begin{proof}
    The dimension count \eqref{equation: dim} follows from the index formula proved in Proposition~\ref{proposition: index}. The regularity of the moduli space follows from Lemmas~\ref{lemma: transversality-for-strips} and \ref{lemma: basic-surjectivity-on-tangents}, proved in the next section.
\end{proof}

\begin{remark}
    If all the components in $\bm m$ correspond to some $A_{m-1}$-singularities, then
    $$2\iota(\bm m)=2b,$$
    by Example~\ref{exmpl: hypersurface-inertia} and the corresponding orbifold bulk parameters are \emph{``dimensionless''}.
\end{remark}

Given a curve $u \in \mathcal{M}(\overrightarrow{\hat{x}}, \hat{x}_0; \bm m)$ representing an orbicurve $[u]$ such that $\operatorname{dim}\mathcal{M}(\overrightarrow{\hat{x}}, \hat{x}_0; \bm m)=0$, by Proposition~\ref{proposition: orientations} there is an induced map
\begin{equation}
    \partial_{[u]} \colon o_{\overrightarrow{\hat{x}}} \to o_{\hat{x}_0}.
\end{equation}

\subsection{Definition of the regular, bulk-deformed, wrapped Fukaya category}

Let 
$$(\bm Y, \bm h)\coloneqq((Y_1, (h_1)), \dots, (Y_b, (h_b))) \subset \mathcal{I}[X/G]^b$$ 
be a tuple of connected components in the inertia orbifold corresponding to a collection of connected hypersurfaces $Y_i \subset X^{\operatorname{sing}}$. We denote the set of such inertia components of type $A$  by $\mathcal{H}^{\mathcal{I}}([X/G])$. In $M=T^*X$ we associate 
\begin{equation}
    m_{\bm Y, \bm h}\coloneqq(T^*\bm Y, \bm h)=((T^*Y_1, (h_1)), \dots, (T^*Y_b, (h_b)))
\end{equation}
to a given $(\bm Y, \bm h) \in \mathcal{H}^{\mathcal{I}}([X/G])^b$. We will also use notation $\mathcal{H}([X/G])$ for connected hypersurfaces in $X^{\operatorname{sing}}$.

\begin{definition}
    The {\em regular wrapped Fukaya category of $[T^*X/G]$ bulk-deformed by hypersurfaces} is the $A_\infty$-category $\mathcal{W}^{\operatorname{reg}}_{\bm \hbar} ([T^*X/G])$ with regular equivariant Lagrangians as objects, $CW^*_{\bm \hbar}(\mathcal{L}, \mathcal{L}')$ as morphisms between  any equivariant regular $\mathcal{L}$ and $\mathcal{L}'$, and $A_\infty$-operations
    \begin{equation}
        \mu^d \colon CW_{\bm \hbar}^*(\mathcal{L}_{d-1}, \mathcal{L}_d) \otimes \dots \otimes CW_{\bm \hbar}^*(\mathcal{L}_0, \mathcal{L}_1) \to CW^*_{\bm \hbar}(\mathcal{L}_0, \mathcal{L}_d)
    \end{equation}
\begin{gather}
    [\hat{x}_d]\otimes \dots\otimes [\hat{x}_1] \mapsto (-1)^{\dagger_d}\sum_{\substack{\hat{x}_0 \in \chi(\mathcal{L}_0, \mathcal{L}_d), \, b \in \Z_{\ge 0}\\ (\bm Y, \bm h) \in \mathcal{H}^{\mathcal{I}}([X/G])^b,\\ |\hat{x}_0|=|\overrightarrow{\hat{x}}|+2\iota(\bm m)-2b-(d-2),
    [u] \in \mathcal{M}(\overrightarrow{\hat{x}}, \hat{x}_0; \bm m_{\bm Y, \bm h})}}\frac{1}{b!}\prod_{i=1}^b\hbar^{Y_i}_{h_i}\partial_{[u]}([\hat{x}_d], \dots, [\hat{x}_1]), \nonumber
\end{gather}
extended linearly to all elements of $CW^*_{\bm \hbar}(\mathcal{L}_{i-1}, \mathcal{L}_i)$. Here $[\hat{x}_i] \in o_{\hat{x}_i}$ is some generator.   
\end{definition}

\begin{remark}
    The notion of a bulk-deformed $A_\infty$-category by such orbifold hypersurfaces is closely related in spirit to the \emph{relative Fukaya category} that has been studied extensively in the works of Perutz and Sheridan \cite{sheridan2015CYhypersurfaces, sheridan2020versality, perutzsheridan2023relative, sheridan20225bigrelative}. In fact, Sheridan envisioned a potential definition of the relative orbifold Fukaya category in a different setting \cite[Remark 5.1.6]{sheridan2015CYhypersurfaces}.
\end{remark}

It remains to verify that the above indeed gives rise to an $A_\infty$-category, modulo some results that are proven in later sections.

\begin{theorem}\label{theorem: well-defined-ainfty-cat}
    The operations $\{\mu^d\}_{d \ge 1}$ are well-defined and satisfy the $A_\infty$-relations.
\end{theorem}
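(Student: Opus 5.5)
We follow the standard strategy of \cite[Section 9]{seidel2008fukaya} and \cite[Section 4]{abouzaid2010geometric}: well-definedness comes from compactness plus zero-dimensional transversality, and the $A_\infty$-relations from identifying the boundary of the compactified one-dimensional moduli spaces.

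\emph{Well-definedness.} For fixed inputs $\overrightarrow{\hat{x}}$, output $\hat{x}_0$ and fixed $b$, there are only finitely many tuples $(\bm Y,\bm h)$, so the coefficient of each monomial $\prod_i\hbar^{Y_i}_{h_i}$ has to be a finite count. By Proposition~\ref{prop: dimension-transverse} and the identity $2\iota(\bm m)=2b$, the zero-dimensional spaces $\mathcal{M}(\overrightarrow{\hat{x}},\hat{x}_0;\bm m_{\bm Y,\bm h})$ are cut out transversally. Compactness follows from three ingredients:
\begin{itemize}
\item exactness and the action bound;
\item the maximum principle of Remark~\ref{remark: maximum principle}, applied equivariantly (it holds for each representative $u$);
\item Gromov compactness for $G$-equivariant maps over $\overline{\mathscr{H}}^G_{0,\bm h,d}$, which is compact by Claim~\ref{claim: orbifold-with-corners}.
\end{itemize}
Exactness also rules out nonconstant sphere and disk bubbles, since all objects avoid the singular locus and lift to exact Lagrangians in $M$. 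The only remaining limits are strip breakings at chords, which have codimension one and so do not occur in dimension zero, and nodal degenerations of the domain whose components are ghosts.

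For the sums over $\hat{x}_0$, note that for each $b$ the energy identity bounds the action of $\hat{x}_0$ by that of the inputs. Because the coefficients lie in $\Bbbk\llbracket\bm\hbar\rrbracket$, we only need finiteness at each fixed order in $\bm\hbar$. The factor $1/b!$ accounts for the $S_b$-symmetry obtained by reordering the marked points. Orientations come from Proposition~\ref{proposition: orientations} together with the equivariant relative spin structures of Claim~\ref{claim: zero equivariant Stiefel-Whitney}.

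\emph{$A_\infty$-relations.} Consider the one-dimensional moduli spaces and classify the codimension-one strata of their Gromov compactification.
\begin{enumerate}
\item Strip breaking and boundary nodes of the coarse disk separating punctures produce the terms $\mu^{k}(\ldots,\mu^{\ell}(\ldots),\ldots)$. The stacky points distribute between the two components, and the combinatorial identity $\frac{1}{b!}\binom{b}{b_1}=\frac{1}{b_1!\,b_2!}$ gives the correct coefficients.
\item Degenerations with a ghost component: here a subset of stacky points collides, either in the interior or at the boundary. Energy forces the bubbled component $\mathscr{C}_{\mathghost}$ to be constant, and injectivity on isotropy forces it to map into a singular point of type $A$.
\end{enumerate}

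\emph{The main obstacle} is showing that the degenerations in (2) contribute nothing in the one-dimensional spaces; equivalently, that the resulting nodal configurations have codimension at least two, or are absent for generic data. The ghost contributes zero index (the ages are dimensionless), so the main component $[u_\bullet]$ lies in a moduli space of the same expected dimension but with fewer marked points and with the node as an extra constraint. We apply Theorem~\ref{theorem: obstruction-for-Am-main} case by case.
\begin{itemize}
\item \textbf{Regular interior node, case (1).} The main component carries a regular marked point mapped to $T^*Y$. This is a real codimension-two condition for $\dim=1$ families, so it does not occur.
\item \textbf{Stacky interior node, case (2).} The jet of $\pi_\chi\circ u_\bullet$ must vanish to one order beyond the expected one for some character $\chi$. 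The jet transversality results of Section~\ref{section: transversality-for-jets} make this codimension at least two.
\item \textbf{Boundary ghost with at least two stacky points, case (3).} The constraint $\langle du_\infty(p),v\rangle=0$ for $v\in T_yL\cap\nu_{\mathcal{Y}}$ imposes additional codimension on top of the boundary condition, which again rules these configurations out generically.
\end{itemize}
The one exception would be a boundary ghost with a single stacky point. Since all Lagrangians here are regular (they avoid the singular locus), no boundary can map into $T^*Y$, so this case is vacuous for $\mathcal{W}^{\operatorname{reg}}$.

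It remains to check the genericity statements of Section~\ref{section: transversality-for-jets} over the strata with ghosts. The step I expect to need the most care is confirming that the domain-dependent $J$, which is required to equal $J_0$ near the stacky points, still achieves jet transversality there. The plan is to perturb $J$ away from these neighborhoods and use unique continuation of the equivariant curve.
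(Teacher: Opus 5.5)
Your overall plan matches the paper's proof in its main steps:
\begin{itemize}
\item compactness;
\item breaking produces the $A_\infty$ terms, with the $\frac{1}{b!}$ shuffle count;
\item interior ghost bubbling is excluded by the obstruction theorem plus jet and incidence transversality.
\end{itemize}
However, your case analysis of ghost bubbling has a hole. You claim that injectivity on isotropy forces the constant ghost to land at ``a singular point of type $A$'', and then you invoke Theorem~\ref{theorem: obstruction-for-Am-main}. That theorem, like Theorem~\ref{theorem: main-obstruction-result}, needs the ghost to land at a \emph{generic} point $y\in\mathcal{Y}^{\circ}$, whose stabilizer is exactly the cyclic group $H_Y$. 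Only at such a point do the needed ingredients exist: the local model is $[\C^2/\mu_m]\times\C^{n-2}$, the isotypic projections $\pi_{\pm\chi}$ of the adapted $J_0$ are defined (they live only on a neighborhood of the top stratum), and Lemma~\ref{lemma: 1-forms-on-cyclic-covers} on cyclic covers applies. Two situations escape this. First, the colliding stacky points may be labelled by hypersurfaces in different $G$-orbits, so the ghost lands in $T^*Z$ with $Z=Y_{i_1}\cap\dots\cap Y_{i_{b_2}}$ of real codimension $>2$ in $X$. Second, all labels may come from one $Y$ while the image lies in $Y\setminus Y^{\circ}$. In both cases the stabilizer can be larger, even non-cyclic, and no obstruction result from Section~\ref{section: obstructions to smoothings} is available.

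The paper closes this gap with a dimension count rather than an obstruction argument (Case 1, and the first reduction in Case 2, of its proof). Suppose $T^*Z$ is a whole component of $(T^*X)^{h_{\ddot n}}$. Then $h_{\ddot n}$ rotates at least two complex normal directions of $Z$, so $\iota(T^*Z,(h_{\ddot n}))\ge 2$ by Example~\ref{ex-rotation}. By the dimension formula, the main component then lies in a moduli space whose dimension is at least two less than that of the original family, hence negative. Otherwise, the node is forced into $T^*Z$, a codimension $\ge 2$ symplectic submanifold of the fixed component $W$ that actually contains the node's image; this also covers $h_{\ddot n}=e$. Proposition~\ref{prop: proposition-jets-transversality-with-submanifolds} then again gives negative dimension. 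Only after this reduction are you in the generic setting where your cases (1) and (2) apply.

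A minor point: your use of case (3) is unnecessary. A constant ghost disk carrying a stacky point must map into the singular locus, while its boundary lies on a regular Lagrangian. So regularity alone excludes every orbidisk bubble, which is exactly how the paper disposes of them.
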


\begin{proof}
    The proof follows the standard analysis of the compactifications of $\mathcal{M}(\overrightarrow{\hat{x}}, \hat{x}_0; \bm m_{\bm Y, \bm h})$ with $$\operatorname{dim}\mathcal{M}(\overrightarrow{\hat{x}}, \hat{x}_0; \bm m_{\bm Y, \bm h})=1.$$ 
    By Gromov compactness and the maximum principle of \cite[Lemma B.1]{abouzaid2010geometric} (see  Remark~\ref{remark: maximum principle}), there is a compactification $\overline{\mathcal{M}}(\overrightarrow{\hat{x}}, \hat{x}_0; \bm m_{\bm Y, \bm h})$ which consists of curves $u$ as in Definition~\ref{defn: moduli-of-orbidisks} with domains $(\pi \colon \Sigma \to C; q_1, \dots, q_b)$ representing elements of $\overline{\mathscr{H}}_{0, \bm h, d}^G$ and all the moduli spaces involved in computing $\mu^d$ consist of finitely many points. We notice that the maximum principle works for tame contact type almost complex structures, as was observed by several authors; see, for instance, \cite[Lemma 3.2]{perutzsheridan2023relative}.

    A curve $u$ representing a boundary point has its domain in the image of the natural embeddings 
    \begin{gather}
        \overline{\mathscr{H}}_{0, \bm h_1, d_1}^G \times \overline{\mathscr{H}}^G_{0, \bm h_2, d_2} \hookrightarrow \overline{\mathscr{H}}^G_{0, \bm h, d}, \quad d=d_1+d_2-1, \, \bm h=(\bm h_1, \bm h_2);\label{eq: breaking} \\ 
        \overline{\mathscr{H}}_{0, \bm h_1, d}^G \times \overline{\mathscr{H}}^G_{0, \bm h_2} \hookrightarrow \overline{\mathscr{H}}^G_{0, \bm h, d}, \quad \bm h=\bm h_1 \#_{(h_{\ddot{n}})} \bm h_2. \label{eq: interior-bubbling}
    \end{gather}
    Here $\bm h=(\bm h_1, \bm h_2)$ is shorthand for equality up to a shuffle permutation; $\bm h_1 \#_{(h_{\ddot{n}})} \bm h_2$ is the result of the operation which takes two vectors $\bm h_1, \bm h_2$ sharing a common conjugacy class $(h_{\ddot{n}})$, concatenates them, and forgets the two instances of $(h_{\ddot{n}})$; and $\overline{\mathscr{H}}^G_{0, \bm h_2}$ is the moduli space of $G$-covers over a closed genus $0$ curve with Hurwitz datum $\bm h_2$. The first case \eqref{eq: breaking} corresponds to the splitting of the underlying orbidisk $\mathscr{C}$ into two orbidisks $\mathscr{C}_1$ and $\mathscr{C}_2$ glued at matching punctures (notice that we allow the unstable case, in which for some $i=1,2$ one has $(d_i,b_i)=(1,0)$), and the second case \eqref{eq: interior-bubbling} corresponds to the formation of a closed genus $0$ orbicurve bubble attached at the (stacky) node $(\ddot{n}, (h_{\ddot{n}}))$. We notice that the formation of orbidisk bubbles (i.e., those attached along boundary nodes) is ruled out since the Lagrangians are regular.

    Let us consider the second case~\eqref{eq: interior-bubbling}, which corresponds to the collision of several stacky points in the interior of $\mathscr{C}$. Since $T^*X$ is exact, the preimage of the genus $0$ bubble under $\pi$ must be a ghost curve in $T^*X$. For simplicity, we assume that the limit curve $$u_\infty \colon \Sigma=\Sigma_{\bullet}\cup \Sigma_{\mathghost} \to T^*X$$
    has domain $\Sigma$ representing $[\Sigma/G]=\mathscr{C}=\mathscr{C}_{\bullet}\cup\mathscr{C}_{\mathghost}$ with only one node $\ddot{n} \in \mathscr{C} \cap \mathscr{C}_{\mathghost}$ and with branching profiles $\bm h_1$ and $\bm h_2$ for $G$-covers $\Sigma_{\bullet}\to C_{\bullet}$ and $\Sigma_{\mathghost}\to C_{\mathghost}$, respectively. Let $(q_{i_1}, \ldots, q_{i_{b_2}})$ be the stacky points of $\mathscr{C}_{\mathghost}$ with corresponding hypersurface inertia components 
    $$((Y_{i_1}, (h_{i_1})), \dots, (Y_{i_{b_2}}, (h_{i_{b_2}})).$$ 
    Notice that at the node $\ddot{n}$ the corresponding datum is given by $(h_{\ddot{n}})=(h_{i_{b_2}}^{-1}\dots h_{i_1}^{-1})$. Depending on the evaluation maps at the stacky points on $\mathscr{C}_{\mathghost}$, there are several cases that we treat separately.

    \s\n
    {\em Case 1 (Ghost is mapped to a lower-dimensional stratum):} Suppose two of the $G$-orbits of the hypersurfaces $Y_{i_j}$ are different.  Then the whole ghost and $\ddot{n}$ in particular are mapped to the component of the inertia $\mathcal{I}[T^*X/G]$ containing $(T^*Z, (h_{\ddot{n}}))$ with $Z$ of real codimension greater than $2$. 
    Without loss of generality, we can assume that $Z = Y_{i_1} \cap \dots \cap Y_{i_{b_2}}$. 
    
    First assume that $T^*Z$ is the entire connected component of $(T^*X)^{h_{\ddot{n}}}$.
    Let $D_{h_{\ddot{n}}}$ be the diagonalization of the action of $h_{\ddot{n}}$ on $TX|_Z$ at some point of $Z$. As in Example~\ref{ex-rotation}, the diagonalization of the action of $h_{\ddot{n}}$ on $TT^*X|_{T^*Z}$ is
    $$\begin{pmatrix}
        D_{h_{\ddot{n}}} & 0 \\
        0 & D_{h_{\ddot{n}}}^{-1}
    \end{pmatrix}.
    $$
    Since $D_{h_{\ddot{n}}}$ has at least two entries which are not equal to $1$, it follows that 
    \begin{equation}\label{eq: index-several-inertia-collide}
        \iota(T^*Z, (h_{\ddot{n}})) \ge 2.
    \end{equation}
    Hence $u_{\bullet}$ is a curve in $\mathcal{M}(\overrightarrow{\hat{x}},\hat{x}_0; \bm m_{\bm Y_1, \bm h_1})$, where $\bm Y_1$ consists of type $A$ inertia components $(T^*Y_j, (h_j))$, with $j \notin \{i_1, \ldots, i_{b_2}\} \subseteq \{1, \dots, b\}$, in addition to $(T^*Z, (h_{\ddot{n}}))$.
    By Proposition~\ref{prop: dimension-transverse} and \eqref{eq: index-several-inertia-collide}, the dimension of this moduli space is negative, so such a curve $u_{\bullet}$ does not exist.

    Now if we drop the assumption that $T^*Z$ forms a connected component of $(T^*X)^{(h_{\ddot{n}})}$ (this can happen for instance if $h_{\ddot{n}}=e \in G$, which is the case if the node $\ddot{n}$ is non-stacky), and is a subset of some such component $(W, (h_{\ddot{n}}))$, it may so happen that $u_{\bullet}$ belongs to $\mathcal{M}(\overrightarrow{\hat{x}},\hat{x}_0; \bm m_{\bm Y_1, \bm h_1})$ of nonnegative dimension $\le1$, where $(\bm Y_1, \bm h_1)$ is a tuple defined as before with $(W, (h_{\ddot{n}}))$ instead. Nevertheless, $u_{\bullet}(\pi^{-1}(\ddot{n}))$ has to be mapped to $T^*Z$ which is a symplectic submanifold of $W$ of real codimension $\ge 2$. This imposes an additional condition on $u_{\bullet}$, and by Proposition~\ref{prop: proposition-jets-transversality-with-submanifolds} it must belong to a manifold of negative dimension.

    \s\n
    {\em Case 2 (Ghost is mapped to a generic point of type $A$ inertia):} Suppose all the $G$-orbits of the hypersurfaces $Y_{i_j}$ are identical.  This is the more difficult case. Without loss of generality, we assume that $Y=Y_{i_1}=\dots=Y_{i_{b_2}}$ and that $h_{i_1}, \dots, h_{i_{b_2}} \in H_Y$, where $H_Y$ is the stabilizer of $Y$. If the node $\ddot{n}$ is not stacky, we can argue as before. In the case of a stacky node, we may assume that $u_{\bullet}(p)$ for some $p \in \pi^{-1}(\ddot{n})$ has $H_Y$ as its stabilizer, as otherwise it would belong to a submanifold of $T^*Y$ of codimension at least $2$, and as before there are additional dimension constraints on the moduli space of such curves $u_\bullet$. Hence, we may assume that $$\Sigma_{\mathghost}=\bigsqcup_{[g] \in G/H_Y}g\Sigma_{m_Y},$$
    where $p \in \Sigma_{m_Y}$ and $\pi \colon \Sigma_{m_Y} \to C_{\mathghost}$ is a cyclic $\mu_{m_Y}$-cover.
    
    By Lemma~\ref{lemma: 1-forms-on-cyclic-covers} below, there is a holomorphic $1$-form on $\Sigma_{m_Y}$ corresponding to the character $\chi^{m_Y}_1$ or $\chi^{m_Y}_{m_Y-1}=\chi^{m_Y}_{-1}$, which has a zero of order $k(\chi^{m_Y}_{\pm 1}, \Sigma_{\mathghost}, \ddot{n})-1$. Here $k(\chi, \Sigma_{\mathghost}, \ddot{n})\in \{1, \dots, m_{\ddot{n}}-1\}$ satisfies
    $$\chi(h_{\ddot{n}})=\zeta_{m_{\ddot{n}}}^{k(\chi, \Sigma_{\mathghost}, \ddot{n})};$$
    see~\eqref{eq: local character} for more details. Applying the {\em localization trick} of Lemma~\ref{lemma: localization trick} and Theorem~\ref{ref: theorem-isotypic-direction-jet-vanishing}, which gives an {\em obstruction to smoothing interior ghosts,} we conclude that $u_{\bullet}$ has a vanishing condition on its equivariant jet in the isotypic direction corresponding to either $\chi_1^{m_Y}$ or $\chi^{m_Y}_{m_Y-1}$ (i.e., a map of the form~\eqref{eq: isotypic-jet-evaluation} evaluates to zero on $u_{\bullet}$). By Corollary~\ref{cor: moduli-spaces-with-condtions} such $u_{\bullet}$ does not exist.

    \s
    The above analysis eliminates the possibility of \eqref{eq: interior-bubbling} and the only curves that appear in the limit are those coming from~\eqref{eq: breaking}.  Moreover, any such limit curve $u_\infty=(u_\infty^1, u_\infty^2)$ consists of two smooth curves. They contribute to the counts in $\mu^{d_1}$ and $\mu^{d_2}$; the sign verification is analogous to that of \cite[Proposition 12.3]{seidel2008fukaya} and will be omitted. The $\frac{1}{b!}$ coefficient comes from shuffle permutation ambiguity in the equality $\bm h=(\bm h_1, \bm h_2)$; cf.\ \cite[Section 4.1.8]{siegel2021} or \cite[Section 5.1]{sheridan2015CYhypersurfaces}.
\end{proof}

\begin{remark}
    In the proof of the preceding theorem, we used the fact that one can choose sufficiently \emph{regular} universal and conformally consistent Floer data. Once the claimed regularity (i.e., the transversality of the moduli spaces together with submersive evaluation and jet evaluation maps) is established, showing the existence of such Floer data is routine and is omitted here. Section~\ref{section: transversality-for-jets} is devoted to showing the necessary regularity for generic data.
\end{remark}

\begin{lemma}\label{lemma: 1-forms-on-cyclic-covers}
    Let $\pi \colon \Sigma_m \to C$ be a cyclic $\mu_m$-cover with $b \ge 3$ branch points over a closed genus $0$ curve $C$.  Then:
    \be
    \item $H^{1,0}(\Sigma_m)_{\chi_1^m} \neq 0$ or $H^{1,0}(\Sigma_m)_{\chi_{m-1}^m} \neq 0$; and
    \item if $H^{1,0}(\Sigma_m)_\chi \neq0$, then for any branch point $n\in C$ and $p\in \pi^{-1}(n)$, there exists a holomorphic $1$-form $\eta \in H^{1,0}(\Sigma_m)_\chi$ with a zero of order exactly $k(\chi, \Sigma_m, n)-1$ at $p$.
    \ee
\end{lemma}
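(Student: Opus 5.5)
The plan is to reduce everything to the classical eigensheaf decomposition of a cyclic cover of $\P^1$. This is the Chevalley--Weil / Esnault--Viehweg description, and it gives both the dimensions of the isotypic pieces of $H^{1,0}(\Sigma_m)$ and the local shape of the forms in each piece.

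\textbf{Setup and dimension formula.} Let $n_1,\dots,n_b\in C\cong\P^1$ be the branch points. Let $h_i\in\mu_m$ be the local monodromy at $n_i$, and write $h_i=\zeta_m^{a_i}$ with $a_i\in\{1,\dots,m-1\}$ and $\sum_i a_i\equiv 0 \pmod m$. I will first treat the case where $\Sigma_m$ is connected, i.e.\ the $a_i$ generate $\Z/m$. The disconnected case reduces to the connected one: $\Sigma_m$ is then induced from a connected $\mu_{m'}$-cover, and $H^{1,0}$ is the induced representation.

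Decompose $\pi_*\mathcal{O}_{\Sigma_m}$, and hence $\pi_*\omega_{\Sigma_m}$, into $\mu_m$-eigensheaves. For each character $\chi=\chi^m_j$ one gets a line bundle $\mathcal{L}_\chi$ on $\P^1$ with $H^{1,0}(\Sigma_m)_\chi=H^0(\P^1,\mathcal{L}_\chi)$. By Chevalley--Weil,
\[
\deg\mathcal{L}_{\chi^m_j}=-2+\textstyle\sum_{i=1}^b\langle \tfrac{-j a_i}{m}\rangle ,
\]
where $\langle\cdot\rangle$ denotes the fractional part; the sign convention is fixed by how $\mu_m$ acts on forms. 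Hence
\[
\dim H^{1,0}(\Sigma_m)_{\chi^m_j}=\max\bigl(0,\,-1+\textstyle\sum_i\langle \tfrac{-ja_i}{m}\rangle\bigr).
\]
Concretely, one can write $\Sigma_m$ birationally as $y^m=\prod_i(x-n_i)^{a_i}$. The forms in a given eigenspace are then $P(x)\,y^{-j}\prod_i(x-n_i)^{c_i}\,dx$ for suitable integers $c_i$ and polynomials $P$ of bounded degree, and I would verify the formula directly from this model.

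\textbf{Part (1).} Take $j=1$ and $j=m-1$. Since every $a_i\not\equiv 0\pmod m$, we have $\langle a_i/m\rangle+\langle -a_i/m\rangle=1$ for each $i$. Therefore
\[
\bigl(-1+\textstyle\sum_i\langle \tfrac{a_i}{m}\rangle\bigr)+\bigl(-1+\textstyle\sum_i\langle \tfrac{-a_i}{m}\rangle\bigr)=b-2\ge 1 .
\]
So one of the two expected dimensions is at least $1$, and the corresponding isotypic piece is nonzero.

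\textbf{Part (2).} Fix a branch point $n$ and a point $p\in\pi^{-1}(n)$ with stabilizer $\mu_d$. Choose an equivariant local coordinate $t$ at $p$ with $t^d=x-n$ and generator acting by $t\mapsto\zeta_d t$. An $\eta=g(t)\,dt$ in the $\chi$-eigenspace must satisfy the equivariance constraint, which forces every exponent occurring in $g$ to be $\equiv k(\chi,\Sigma_m,n)-1 \pmod d$, with $k\in\{1,\dots,d\}$. Consequently $\operatorname{ord}_p\eta=(k-1)+d\cdot\operatorname{ord}_n(s)$, where $s\in H^0(\mathcal{L}_\chi)$ is the section corresponding to $\eta$; the normalization of $\mathcal{L}_\chi$ at $n$ is chosen so this holds.

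If $H^{1,0}(\Sigma_m)_\chi\ne0$, then $\deg\mathcal{L}_\chi\ge 0$. On $\P^1$ a line bundle of nonnegative degree has a section that does not vanish at $n$, and the corresponding $\eta$ has a zero of order exactly $k-1$ at $p$. Since $\eta$ is $\mu_m$-equivariant, the same order is attained at every point of $\pi^{-1}(n)$.

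\textbf{Main obstacle.} The delicate step is the bookkeeping in the eigensheaf decomposition. One must pin down precisely the twist of $\mathcal{L}_\chi$ at each branch point, i.e.\ match the residue $k-1$ of vanishing orders with the normalization of $k(\chi,\cdot,\cdot)$ in~\eqref{eq: local character}. One must also get the sign convention right so that $\chi^m_{\pm1}$ corresponds to $\langle\pm a_i/m\rangle$. I would settle both by direct computation in the explicit model $y^m=\prod_i(x-n_i)^{a_i}$, checking the local expansion at a single branch point.
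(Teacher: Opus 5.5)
Your proposal is correct and follows essentially the same route as the paper: part (1) uses the Chevalley--Weil formula together with $\{x\}+\{-x\}=1$ to get $h^1(\chi^m_1)+h^1(\chi^m_{m-1})=b-2\ge 1$, and part (2) uses the rank-one eigensheaf $(\pi_*K_{\Sigma_m})_\chi$ on $C\cong\P^1$, which has nonnegative degree and is therefore basepoint-free. Your explicit local bookkeeping $\operatorname{ord}_p\eta=(k-1)+d\cdot\operatorname{ord}_n(s)$ and your reduction of the disconnected case to a connected $\mu_{m'}$-cover via induction spell out details that the paper leaves implicit.
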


\begin{proof}
    (1) To simplify the exposition, let us assume that $C$ is smooth. Let $\zeta^{a_1}_m, \dots, \zeta^{a_b}_m$ 
    be the local monodromies 
    at $q_1, \dots, q_b$, respectively. By the Chevalley-Weil formula \cite{CW1934formula} (see e.g. \cite[Lemma 3.1]{tamborini2022shimura} for the statement in the cyclic case, and also \cite{arapura2022residues} for a modern exposition), the dimension of $H^{1,0}(\Sigma_m)_{\chi_j^m}$ is given by the formula
    \begin{equation}
        h^1(\Sigma_m, \chi_j^m)=\sum_{i=1}^b\big\{\tfrac{ja_i}{m}\big\}-1,
    \end{equation}
    where $\{a\}$ denotes the fractional part of $a\in \Q$.
    We make a simple observation that for any $x \in \Q \setminus \Z$ one has $\{x\}+\{-x\}=1$. Hence
    $$h^1(\Sigma_m, \chi_1^m)+h^1(\Sigma_m, \chi_{m-1}^m)=\sum_{i=1}^b(\big\{\tfrac{a_i}{m}\big\}+\big\{\tfrac{-a_i}{m}\big\})-2 \ge 1,$$
    which implies (1).
    
    (2) Observe that $\pi_*K_{\Sigma_m}$ is a locally free sheaf with a $\mu_m$-action.  Hence, it decomposes into isotypic components 
    $$\pi_*K_{\Sigma_m}= \oplus_{j=0}^{m-1}(\pi_*K_{\Sigma_m})_{\chi_j^m},$$
    each of which is also locally free as the image of the standard projection on the $\chi_j^m$-component. Now, we claim that each isotypic component is a rank $1$ sheaf. Indeed, over any open subset $U \subset C$ which is disjoint from the branch points we have $$\pi_*K_{\Sigma_m}|_{U} \cong \oplus_{j=0}^{m-1}K_C|_{U}$$
    with the $G$-action on the right-hand side coinciding with the left $G$-action on $G$ itself. This follows since $\pi$ is a cover over $U$. From this decomposition, the claim follows since the right-hand side is just the tensor product of $K_C$ with the regular representation of $\mu_m$, which contains each character once.

    Finally, we observe that there is an isomorphism
    \begin{equation}\label{eq: iso-for-global-section}
        H^0(\Sigma_m, K_{\Sigma_m})_\chi \cong H^0(C, (\pi_*K_{\Sigma_m})_\chi).
    \end{equation}
    Notice that a nonzero holomorphic $1$-form in $H^0(\Sigma_m, K_{\Sigma_m})_\chi$ with a zero of degree at least $k(\chi, \Sigma_m, \ddot{n})$ at $p \in \pi^{-1}(n)$ corresponds to a nontrivial section $\tilde{\eta}$ of $(\pi_*K_{\Sigma_m})_\chi$ that vanishes at $n$. The isomorphism~\eqref{eq: iso-for-global-section} and our assumption imply that $H^{0}(C, (\pi_*K_{\Sigma_m})_\chi)\neq0.$  By the existence of the section $\tilde{\eta}$,  $(\pi_*K_{\Sigma_m})_\chi$ is a sheaf of nonnegative degree on $C$ of genus $0$, and hence is basepoint-free. The claim then follows.
\end{proof}

\section{Transversality for equivariant jet evaluation}\label{section: transversality-for-jets}
Let $M$ be a smooth manifold equipped with a $G$-equivariant almost complex structure $J$.
For a $G$-equivariant $J$-holomorphic curve $u \colon \Sigma \to M$ at a point $p \in \Sigma$ with a nontrivial stabilizer, the equivariance itself usually imposes a strong vanishing condition of coefficients in its Taylor series. For instance, a holomorphic curve representing a morphism of orbifolds
$$[\C/ \mu_m] \to [\C/{\mu_m}],$$
with actions by characters $\chi_1^m$ and $\chi_k^m$ on the domain and codomain, has a Taylor expansion of the form
$$u(z)=\sum_{j=0}^{\infty}c_{k+jm}z^{k+jm}.$$
For this reason, the mere vanishing of the first or even second derivative does not prevent a certain curve $u$ from appearing in the moduli space of $G$-equivariant curves of a fixed dimension. 

The goal of this section is to systematically study equivariant $J$-holomorphic jets, extending the ideas of \cite{Zehmisch2015}, and show that the subspace of curves with vanishing of the first nontrivial equivariant $J$-holomorphic jet is a submanifold of positive codimension.

\subsection{Equivariant $J$-holomorphic jets}\label{section: equivariant-j-jets} 

Let $G$ be a finite group and let $(M,J)$ be an almost complex manifold with a $J$-holomorphic $G$-action. Consider a subgroup $H \subset G$, its fixed point submanifold $M^H$, and a connected component $\mathcal{Y} \subset M^H$. This submanifold is almost complex, since $G$ preserves the almost complex structure, and, moreover, we have an \emph{isotypic splitting}
\begin{gather}\label{eq: isotypic}
    TM|_{\mathcal{Y}} \cong \bigoplus_{\rho \in \operatorname{Irr}_{\C}(H)} \nu_{\rho}, \\
    \nu_\rho \cong \Hom_H(\tilde{\rho}, TM|_{\mathcal{Y}}) \otimes \tilde{\rho}, \nonumber
\end{gather}
such that $\tilde{\rho}$ is a trivial bundle over $\mathcal{Y}$ with action of $H$ given by the irreducible representation $\rho$. In particular, each fiber of $\nu_{\rho}$ as an $H$-representation is a sum of finitely many copies of $\rho$. Observe that $T\mathcal{Y}$ corresponds to the trivial representation. We mainly focus on the case of a cyclic subgroup $H \cong \mu_m$, and we assume that $H$ is an essential subgroup, i.e., the stabilizer of a generic point of $\mathcal{Y}$ is exactly $H=\langle g \rangle$ for some generator $g \in H$.

We point out that the existence of the splitting~\eqref{eq: isotypic} can be shown by using $H$-linear charts (e.g., the ones given by exponential maps with respect to some $G$-invariant metric). 
Explicitly, given $H$-equivariant charts
$$ \varphi_x \colon U_x \to V_x \subset T_xM, \quad \varphi_y \colon U_y \to V_y \subset T_yM,$$
where $U_x$ and $U_y$ are open neighborhoods of points $x, y \in M^H$, and $V_x, V_y$ are neighborhoods of $0$, we note that $D(\varphi_y \circ \varphi_x^{-1})_p$ (whenever it is well-defined) is $H$-equivariant, where $p \in V_x \cap T_xM^H$ is an arbitrary such point. 
By Schur's lemma, $D(\varphi_y \circ \varphi_x^{-1})_p$ splits into isotypic components and is an isomorphism. This is exactly the transition function for $TM|_{M^H}$ for these charts.

We assume that $J$ is \emph{adapted to the inertia component $(\mathcal{Y}, (g))$} in the sense of Definition~\ref{defn: adapted J}. That is, there is an auxiliary choice of an $H$-invariant tubular neighborhood $U$ of $\mathcal{Y}$ together with a choice of almost complex immersed $H$-invariant submanifolds 
\begin{equation}
    V_{\chi_1}, \dots, V_{\chi_{m-1}},
\end{equation}
containing $\mathcal{Y}$, which come equipped with $J$-holomorphic projections
\begin{equation}
    \pi_{\chi_j}\colon U \to V_{\chi_j}, \quad \pi_{\chi_j}|_{V_{\chi_j}}=\operatorname{id},
\end{equation}
and these submanifolds are tangent to the corresponding isotypic components, i.e.,
\begin{equation}
    TV_{\chi_j}|_{\mathcal{Y}}=T\mathcal{Y} \oplus\nu_{\chi_j}.
\end{equation}

\s 
We consider a two-dimensional disk $(D,0)$ with the standard complex structure $j$ and the standard action of the cyclic group $\mu_d$ with generator $\zeta_d$ acting by multiplication by $\zeta_d$, and an injective homomorphism 
\begin{equation}
    \rho_0 \colon \mu_d \to H.
\end{equation}
Notice that $d \mid m$.
Our goal is to describe the notion of $J$-holomorphic jets of orbifold morphisms 
$$[D/\mu_d] \to [M/G]$$ 
whose underlying map on topological spaces sends the origin to the class of a point $x \in \mathcal{Y}$, with associated homomorphism on isotropy groups given by $\rho_0$.

Let us recall the notion of \emph{$J$-holomorphic jets} $\J^{k,J}(D,M)$ of smooth curves.

\begin{definition}
    We say that two $J$-holomorphic maps $u_1, u_2 \colon D \to M$ such that $u_1(z_0)=u_2(z_0)=x$  are \emph{$k$-jet equivalent} at $z_0 \in D$ if for any smooth curve $\gamma \colon \R \to D$ with $\gamma(0)=z_0$ the curves $u_1 \circ \gamma$ and $u_2 \circ \gamma$ have \emph{$k$th order of contact} at $0$, i.e., for any smooth function $\varphi \colon M \to \R$ the difference $$\varphi\circ u_1 \circ \gamma- \varphi \circ u_2 \circ \gamma$$ has first $k$ derivatives vanishing at $0 \in \R$. 
    
    A \emph{$J$-holomorphic $k$-jet} at $(z_0, x) \in D \times M$ is an equivalence class of $J$-holomorphic maps $u \colon D \to M$ with $u(z_0)=x$ under $k$-jet equivalence relation.
\end{definition} 

We refer the reader to \cite[Section 12]{KMS1993} for a detailed introduction to jets of smooth maps. Let $\J^k(D,M)$ be the set of standard $k$-jets, which is known to be a smooth fiber bundle over $D \times M$, and let $\J^k_z(D,M)_x$ denote the fiber above $(z,x) \in D \times M$.

In \cite{Zehmisch2015} it is shown that the set of $J$-holomorphic $k$-jets $\J^{k,J}(D,M)$ is a smooth submanifold of $\J^k(D,M)$.
In particular, in \cite[Section 2.1]{Zehmisch2015}  it is explained that in local coordinates $z=x+iy$ on $D$ the $J$-holomorphic jet $j^k_0u$ is determined by the first $k$ partial derivatives with respect to the $x$-coordinate, i.e., all the remaining mixed partial derivatives $$\partial_x^a\partial_y^bu(0) \text{ with } a+b \le k$$
can be uniquely recovered. In other words, $\J\tensor*[^{k,J}_{z}]{(}{}D,M)_x$ is a $(k+1)\cdot \operatorname{dim}_{\R} M$-dimensional linear subspace of $\J^{k}_z(D,M)_x$ given by a system of linear equations, whose coefficients are determined by derivatives of $J$ at $x$ up to order $k$. Moreover, it is shown in \cite[Proposition 2.1]{Zehmisch2015} that any element of this subspace locally gives rise to a $J$-holomorphic map $u$.

We use the notation 
\begin{equation}
    \pi_{k-1}^k \colon \J^k(D,M) \to \J^{k-1}(D,M)
\end{equation}
for the standard restriction map, which is known to be an affine fiber bundle modeled on the pullback of $$TM \otimes \operatorname{Sym}^k(T^*D)$$ to $\J^{k-1}(D,M)$
via $\pi^{k-1}_0$; see \cite[Proposition 12.11]{KMS1993}. We also have a \emph{constant $k$-jet section} 
\begin{equation}
    s_c^k \colon D \times M \to \J^k(D,M),
\end{equation}
 where $s_c^{k}(z,x)$ is the $k$-jet of the constant map sending $D$ to $x \in M$.

Recall that $TM \otimes \operatorname{Sym}^k(T^*D)$ can be realized as a submanifold of $\J^k(D,M)$ as the preimage under $\pi_{k-1}^k$ of the image  $s_c^{k-1}(D \times M)$ of the constant $(k-1)$-jet section.
Clearly, the projection $\pi_{k-1}^k$ restricts to $J$-holomorphic jets providing an affine bundle 
\begin{equation}
    \pi_{k-1}^k \colon \J^{k, J}(D,M) \to \J^{k-1, J}(D,M),
\end{equation}
modeled on a subbundle $(TM \otimes \operatorname{Sym}^k(T^*D))^J$ of $TM \otimes \operatorname{Sym}^k(T^*D)$ given by the fixed points of the endomorphism
$$-J \otimes\operatorname{Sym}^k(\operatorname{Id} \otimes \dots \otimes \operatorname{Id} \otimes j^*) \in \operatorname{End}(TM \otimes \operatorname{Sym}^k(T^*D)).$$ 
To see this, we use the identification $$TM \otimes \operatorname{Sym}^k(T^*D)=(\pi_{k-1}^k)^{-1}(s_c^{k-1}) \subset \J^{k, J}(D,M)$$ and then differentiate $(k-1)$ times the $J$-holomorphic equation $du=-J \circ du \circ j$ for $u$ with $j^{k-1}u \in s_c^{k-1}(D \times M)$ to obtain
\begin{equation}
    D^ku(X_1, \dots, X_k)=-J(u)D^k(X_1, \dots, jX_k),
\end{equation}
which is equivalent to the condition stated above. Notice that there are no terms depending on the derivatives of $J$ precisely because of our assumption on $j^{k-1}u$. In local coordinates $(dx^{\odot a}  \odot dy^{\odot b})_{a+b=k}$ any vector of $(TM \otimes \operatorname{Sym}^k(T^*D))^J$ is explicitly given by
\begin{equation}\label{eq: J-holomorphic k-jet}
    (a, Ja, -a, -Ja, \dots, J^{\circ k}a)\in TM\otimes \operatorname{Sym}^k(T^*D), \quad a \in TM.
\end{equation}

\begin{definition}
    The set of \emph{equivariant $J$-holomorphic jets} $\J^{k, J, \rho_0}(D, M)$ is the 
 smooth submanifold of $\J^{k, J}(D, M)$ given by the fixed point set of the following action of $\mu_d$ on $\J^{k}(D,M)$
\begin{equation}\label{eq: jet-action}
    j^ku \mapsto (j^k\rho_0(\gamma))\circ j^ku\circ (j^k\gamma^{-1}),
\end{equation}
which preserves $J$-holomorphic jets, where $\gamma$ and $\rho_0(\gamma)$ are treated as diffeomorphisms of $D$ and $M$, respectively.
\end{definition} 

We refer the reader to \cite[Section 12.4]{KMS1993} for more details on actions on jets. 

\begin{remark}
    It is immediate from the definition that $\J^{k, J, \rho_0}(D, M) \subset \J\tensor*[^{k,J}_{z_0}]{(}{}D,M)_x$ with $z_0=0$, and, moreover, it is fibered over components of $M^H$.
\end{remark}

Clearly, $\J^{k, J, \rho_0}(D, M)=\J^{k, J, \rho_0}(D, U)$ and 
$$\iota_{i}^k \colon \J^{k, J, \rho_0}(D, V_{\chi_i}) \hookrightarrow \J^{k, J, \rho_0}(D, U)$$ is a submanifold of the latter, where we use restriction of $J$ to $V_{\chi_i}$; see \cite{CieliebakMohnke2007} for more details in the non-equivariant case. Since we have a $J$-holomorphic projection $\pi_{\chi_i} \colon U \to V_{\chi_i}$, there is a natural induced projection
$$\pi^{k, \chi_i} \colon \J^{k, J, \rho_0}(D, U) \to \J^{k, J, \rho_0}(D, V_{\chi_i}),$$
satisfying $\pi^{k,\chi_i} \circ \iota_{i}^k= \operatorname{Id}$. This immediately implies that $\pi^{k, \chi_i}$ is a submersion.

First, we show the following equivariant version of \cite[Proposition 2.1]{Zehmisch2015}.
\begin{lemma}\label{lemma: equivariant jet lift}
    Any equivariant jet in $\J^{k, J, \rho_0}(D, M)_x$ for $x \in \mathcal{Y}$ can be represented as a $k$-jet of an equivariant $J$-holomorphic map $u \colon D \to M$ with $u(z_0)=x$.
\end{lemma}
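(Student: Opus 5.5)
The plan is to run the proof of \cite[Proposition 2.1]{Zehmisch2015} inside $\mu_d$-equivariant function spaces. The point to check at each stage is that every construction can be made to commute with the action \eqref{eq: jet-action}. Averaging a given $J$-holomorphic representative over $\mu_d$ is not available, since $J$-holomorphicity is a nonlinear condition. Instead I will linearize the target equivariantly and then use an equivariant implicit function theorem.

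\emph{Step 1: linear models.} Choose a $G$-invariant metric on $M$. Let $\varphi_x$ be the exponential chart at $x\in\mathcal{Y}$; it is $H$-equivariant, with $H$ acting linearly on $T_xM$. In this chart $J$ becomes an $H$-invariant almost complex structure on a neighborhood of $0\in T_xM\cong\C^n$, with $J(0)=J_x$. The domain $\mu_d$ acts on $D$ by rotation, and on $T_xM$ linearly through $\rho_0$. Let $\sigma\in\J^{k,J,\rho_0}(D,M)_x$ be given. By \cite{Zehmisch2015}, $\sigma$ is represented by a polynomial map $P\colon D\to T_xM$ of degree $\le k$; its coefficients are determined by the $x$-derivatives and recovered linearly using the derivatives of $J$ at $0$. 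Replace $P$ by
\[
\bar P=\tfrac1d\textstyle\sum_{\gamma\in\mu_d}\rho_0(\gamma)\circ P\circ\gamma^{-1}.
\]
The action \eqref{eq: jet-action} on jets is induced by these linear maps, and $\sigma$ is fixed by it, so $j^k_0\bar P=\sigma$. Hence $\bar P$ is an equivariant polynomial representing $\sigma$ with $\bar\partial_J\bar P=O(|z|^{k})$ at $0$. It is not $J$-holomorphic in general.

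\emph{Step 2: equivariant correction.} Look for $u=\bar P+\xi$, where $\xi$ lies in the closed subspace $W^{k+1}$ of a Sobolev space $W^{\ell,p}(D_\epsilon,\C^n)$, $\ell\ge k+2$, consisting of maps vanishing to order $k+1$ at $0$. Solve $\bar\partial_J(\bar P+\xi)=0$ on a small disk $D_\epsilon$. Following Zehmisch, rescale $z\mapsto\epsilon z$, so that the error term and the nonlinearity become small as $\epsilon\to0$. The linearization at $\xi=0$ is then a small perturbation of the standard operator $\bar\partial_{J_x}$. That operator has a bounded right inverse $Q$ mapping the target space of forms vanishing to order $k$ into $W^{k+1}$; Taylor subtraction plus a Cauchy-transform argument gives this. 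The new input is equivariance. The map $\xi\mapsto\bar\partial_J(\bar P+\xi)$ commutes with $\xi\mapsto\rho_0(\gamma)\circ\xi\circ\gamma^{-1}$, because $J$ is $H$-invariant in the chart and $\bar P$ is equivariant. Replace $Q$ by its average $\bar Q=\frac1d\sum_\gamma \gamma\cdot Q\cdot\gamma^{-1}$. This is still a right inverse, since the linearized operator is equivariant. It also still preserves the vanishing-order subspaces, since the action does. The Newton iteration $\xi_{j+1}=\xi_j-\bar Q\,\bar\partial_J(\bar P+\xi_j)$ then stays in the $\mu_d$-fixed subspace, and its limit $\xi$ is equivariant. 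So $u=\varphi_x^{-1}\circ(\bar P+\xi)$ is an equivariant $J$-holomorphic map with $u(0)=x$ and $j^k_0u=j^k_0\bar P=\sigma$. Elliptic regularity makes $u$ smooth. Rescaling back gives a map on a small disk, which is all the statement requires locally; if a map on all of $D$ is wanted, precompose with an equivariant radial reparametrization.

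\emph{Main obstacle.} I expect the delicate point to be Step 2's functional-analytic claim: a right inverse of $\bar\partial$ that maps forms vanishing to order $k$ at $0$ into maps vanishing to order $k+1$, with bounds uniform after rescaling. This is exactly the content of Zehmisch's argument, and the plan is to import it verbatim. Once it is in hand, the equivariant upgrade is formal, because averaging over the finite group $\mu_d$ preserves all the relevant properties. Smallness of the nonlinear error is the standard rescaling estimate.
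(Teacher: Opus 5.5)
Your argument is correct, and it takes a different route from the paper.

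\textbf{The paper's route.} The paper works on the whole disk with a totally real boundary condition $F$. It applies the implicit function theorem to the single map $u\mapsto(\overline{\partial}_J u,\, j^k_0u)$ on $W^{k+1,p}_F$, whose differential at the constant map, $\xi\mapsto(D_F\xi,\,j^k_0\xi)$, is asserted to be invertible following the Zehmisch/McDuff--Salamon boundary-value computation. Equivariance enters only by restricting this isomorphism to $\rho_0$-invariant parts.

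\textbf{Your route.} You fix the jet first, by averaging a polynomial representative to an equivariant $\bar P$. You then correct it by a map with vanishing $k$-jet on a small disk, with no boundary condition, using an averaged right inverse of $\overline{\partial}_{J_x}$ (Cauchy transform plus Taylor subtraction).

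\textbf{What each buys.} The paper's route gives a local diffeomorphism between nearby solutions and jets. So uniqueness and the tangent-space description of Remark~\ref{remark: tangent-space-to-jets} fall out of the same argument. Yours gives existence only, which is all the lemma asks; the linear version of your argument recovers the tangent statement. In exchange, your equivariant upgrade really is formal: the space of maps with vanishing $k$-jet is automatically invariant under $\xi\mapsto\rho_0(\gamma)\circ\xi\circ\gamma^{-1}$. The paper's restriction to invariant parts tacitly needs $\rho_0(\gamma)F_{\gamma^{-1}z}=F_z$. For $F_{e^{i\theta}}=\R^n e^{ik\theta/2}$ this holds only if $2r\equiv k \pmod d$ for every exponent $r$ with which $\rho_0(\zeta_d)$ acts. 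So in the paper's setup $F$ would have to be chosen blockwise, compatibly with the characters.

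\textbf{Two small corrections to your write-up.} First, state explicitly that $\xi\mapsto\overline{\partial}_J(\bar P+\xi)$ sends maps with $j^k_0\xi=0$ to forms that are $O(|z|^k)$. This follows from $\overline{\partial}_J\bar P=O(|z|^k)$, and without it $\bar Q$ cannot be applied along the iteration.

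Second, drop the closing remark about reaching all of $D$ by reparametrization. Precomposing with a holomorphic map $D\to D_\epsilon$ fixing $0$ changes every nonconstant jet, since its derivative at $0$ has modulus at most $\epsilon$. A non-holomorphic reparametrization destroys $J$-holomorphicity. The remark is also unnecessary: a representative on a small disk is what the lemma needs. The paper's implicit function argument likewise reaches only jets near the constant one unless one rescales.
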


\begin{proof}
    Since the statement is local, we use local coordinates 
    $$\C^{n_0} \times \C^{n_1} \times \dots \times \C^{n_{m-1}} =\C^{n_0+ \dots+ n_{m-1}}$$ 
    near $x$, on which $H$ acts linearly (here the $\C^{n_0}$ component is the local coordinate for $\mathcal{Y}$, and $\C^{n_0} \times \C^{n_i}$ is the coordinate for $V_{\chi_i}$). We consider a totally real subbundle $F$ of the trivial bundle along the boundary $\partial D$ given by $F_{e^{i\theta}}=\R^ne^{ik\theta/2}$, and denote by $D_F$ its linearized $\overline{\partial}$-operator. We also consider  the map 
    \begin{gather*}
       \mathcal{F} \colon W^{k+1, p}_F(D, \C^{n_0+\dots+n_{m-1}}) \to W^{k,p}(D, \C^{n_0+\dots+n_{m-1}}) \times \J^{k,J}_0(D, \C^{n_0+\dots+n_{m-1}})_0,\\ 
       u \mapsto (\overline{\partial}_Ju, j^k_0u).
    \end{gather*}
    As in the proof of \cite[Proposition 2.1]{Zehmisch2015}, combining Steps 2 and 4 of the proof of \cite[Theorem C.4.1]{mcduffsalamon2004} one can show that $\mathcal{F}$
    has an invertible differential at $0$: $$D\mathcal{F}(0)[\xi]=(D_F\xi, j_0^k\xi).$$ 
    
    Notice that since $\mathcal{F}$ is $\rho_0$-equivariant with respect to conjugation actions on the domain and codomain (see~\eqref{eq: jet-action} for the action on the second coordinate, and see the proof of Lemma~\ref{lemma: basic-surjectivity-on-tangents} below, where equivariance of the linearized operator is verified), we immediately obtain that $D\mathcal{F}(0)$ is also invertible when restricted to the $\rho_0$-invariant parts. The lemma now follows from the implicit function theorem.
\end{proof}

\begin{remark}\label{remark: tangent-space-to-jets}
    The proof of the above lemma also implies that any tangent vector in $T_{j^k_{0}u}\J^{k, J, \rho_0}(D,M)$ can be given as a $k$-jet $j^k_0 \xi$ of a $\rho_0$-equivariant section of $W^{k+1}_F(D, \C^{n_0+\dots+n_{m-1}})$ such that $D_F\xi=0$. 
\end{remark} 

\begin{lemma} \label{lemma: dimension}
    Let $(\rho_0(\zeta_d)^i) \in H\cong \mu_m$ be represented by $\zeta_d^{r_i}$ for some $r_i \in \{ 1, \ldots, d\}$. Then the space $\J^{r_i, J, \rho_0}(D, V_{\chi_i})$ is a smooth manifold of dimension
    $$\operatorname{dim}\J^{r_i, J, \rho_0}(D, V_{\chi_i})= \operatorname{dim}V_{\chi_i}= \operatorname{dim}\nu_{\chi_i} + \operatorname{dim}\mathcal{Y},$$
    i.e., $\operatorname{dim}\J^{r_i, J, \rho_0}(D, V_{\chi_i})_x= \operatorname{dim}V_{\chi_i}$ for any $x \in \mathcal{Y}$, while $\J^{k, J, \rho_0}(D, V_{\chi_i}) \cong \mathcal{Y}$ for $k< r_i$.
\end{lemma}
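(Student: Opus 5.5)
We will compute the equivariant jet spaces directly in the $H$-linear local coordinates $\C^{n_0}\times\C^{n_i}$ for $V_{\chi_i}$ near $x\in\mathcal{Y}$ used in the proof of Lemma~\ref{lemma: equivariant jet lift}. Here $\C^{n_0}$ is the coordinate on $\mathcal{Y}$, and $\rho_0(\zeta_d)$ acts trivially on $\C^{n_0}$ and by the scalar $\chi_i(\rho_0(\zeta_d))=\zeta_d^{r_i}$ on $\C^{n_i}$. The argument is an induction on $k$ along the affine tower
\[
\pi^k_{k-1}\colon \J^{k,J,\rho_0}(D,V_{\chi_i})\to \J^{k-1,J,\rho_0}(D,V_{\chi_i}),
\]
using the description~\eqref{eq: J-holomorphic k-jet} of the fibers of the non-equivariant tower.

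First, at order $0$ an equivariant $0$-jet is a point fixed by $\rho_0(\mu_d)$. We may shrink $U$ so that the fixed locus of $\rho_0(\mu_d)$ in $V_{\chi_i}$ is $\mathcal{Y}$; this uses $r_i\neq d$, i.e.\ that $\chi_i$ is nontrivial on $\rho_0(\mu_d)$. If $r_i=d$ the statement is read with $\J^{d}$ and the same argument applies. Hence $\J^{0,J,\rho_0}\cong\mathcal{Y}$.

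Next comes the inductive step. Suppose $j^{k-1}u$ is an equivariant jet which is the constant jet at $x$. Then the fiber of $\pi^k_{k-1}$ over it, inside the non-equivariant $J$-holomorphic jets, is the linear space $(TV_{\chi_i}\otimes\operatorname{Sym}^k T^*D)^J$. This space is identified with $T_xV_{\chi_i}$ via $a\mapsto(a,Ja,\dots)$, which is the complex-linear map $a\mapsto a\,dz^k$. Under the action~\eqref{eq: jet-action}, $a\,dz^k$ is sent to $\zeta_d^{-k}\rho_0(\zeta_d)_*a\,dz^k$. By Schur's lemma the fixed vectors are therefore the $a\in T\mathcal{Y}$ with $\zeta_d^{k}=1$, together with the $a\in\nu_{\chi_i}$ with $\zeta_d^{r_i-k}=1$. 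For $1\le k\le r_i$ the first condition fails. The second holds only for $k=r_i$.

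The step that needs care is ensuring that, for $k<r_i$, the equivariant $(k-1)$-jets are exactly constant jets, so that the linear description above applies at every stage. The fiber over a non-constant jet is only affine, and its equivariant part could a priori be nonempty with a different dimension. Using the induction hypothesis $\J^{k-1,J,\rho_0}\cong\mathcal{Y}$ (the constant jets $s_c^{k-1}$), the fiber over each point is exactly the linear space just computed. So the equivariant $k$-jets for $k<r_i$ are again only the constant ones, giving $\J^{k,J,\rho_0}(D,V_{\chi_i})\cong\mathcal{Y}$. At $k=r_i$ the equivariant fiber is $\nu_{\chi_i,x}$. This gives a vector bundle over $\mathcal{Y}$ of total dimension $\dim\mathcal{Y}+\dim\nu_{\chi_i}=\dim V_{\chi_i}$.

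Finally, smoothness holds because this set is the fixed locus of a finite group acting smoothly on the manifold $\J^{r_i,J}(D,V_{\chi_i})$, hence a submanifold. Its tangent space agrees with the bundle computed above, and by Remark~\ref{remark: tangent-space-to-jets} each such vector is realized by an equivariant holomorphic section.
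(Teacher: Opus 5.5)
Your argument is correct for $r_i<d$, which is the only case the paper uses later ($r_{ij}\in\{1,\dots,d_i-1\}$). It obtains the vanishing of the lower jets by a different route from the paper.

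The paper first represents a given equivariant jet by an actual equivariant $J$-holomorphic map, via Lemma~\ref{lemma: equivariant jet lift}. It then uses the leading-term structure $u(z)=(a_1,a_2)z^\ell+O(|z|^{\ell+1})$ from \cite[Lemma 2.4.1]{mcduffsalamon2004}. Comparing leading coefficients under $\rho_0$, it concludes that the $(r_i-1)$-jet is constant and the degree-$r_i$ coefficient is normal. Only then does it compute the linear $\mu_d$-action on $(\pi^{r_i}_{r_i-1})^{-1}(s^{r_i-1}_c(\mathcal{Y}))$.

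You instead run that same weight computation at every level of the tower, using equivariance of $\pi^k_{k-1}$ to stay over constant jets. So your whole proof lives in the jet bundle and needs neither the lifting lemma nor the analytic leading-term lemma. It also keeps track of the $T\mathcal{Y}$-directions, which have weight $\zeta_d^{-k}\neq 1$ for $1\le k<d$ and so never survive.

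What the paper's analytic route buys is a statement about actual equivariant curves. Leading exponents are $\equiv r_i$ or $\equiv 0 \bmod d$, with coefficient in $\nu_{\chi_i}$ or $T\mathcal{Y}$ respectively. The paper reuses this when it cites this proof for Taylor expansions in the localization trick and in Proposition~\ref{proposition: obstruction-on-higher-jets}.

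There are two caveats.

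First, what you actually prove is that the fiber over $x$ is $\nu_{\chi_i,x}$. So the total space has dimension $\dim V_{\chi_i}$, but the fiber has dimension $\dim\nu_{\chi_i}$, not $\dim V_{\chi_i}$ as the ``i.e.'' clause literally states. Your computation is the accurate one. The paper's closing identification with all of $(TV_{\chi_i}|_{\mathcal{Y}}\otimes\operatorname{Sym}^{r_i}(T^*_{z_0}D)_0)^J$ would also contain the $T\mathcal{Y}$-directions, which are not invariant when $r_i<d$. Consistently with your answer, the proof of Proposition~\ref{prop: jet-evaluation-submersion} only realizes normal vectors $v\in\C^{n_j}$. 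You should say this explicitly rather than pass over that clause.

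Second, your remark that for $r_i=d$ ``the same argument applies'' does not hold up.
\begin{itemize}
\item In that case $\rho_0(\mu_d)$ acts trivially near $\mathcal{Y}$ in $V_{\chi_i}$. So $0$-jets lie in $\mathcal{Y}$ only by the convention on base points, and your order-$0$ step fails.
\item At $k=d$, your own weight count lets all of $T_xV_{\chi_i}$ survive. This gives total dimension $\dim\mathcal{Y}+\dim V_{\chi_i}$, not the displayed formula.
\end{itemize}
The displayed dimension formula should simply be stated for $r_i<d$.
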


\begin{proof}
    Let us first show that $$\operatorname{dim}\J^{r_i, J, \rho_0}(D, V_{\chi_i})_x \le \operatorname{dim}V_{\chi_i}=n_i.$$ Applying Lemma~\ref{lemma: equivariant jet lift} we pick a $\rho_0$-equivariant $J$-holomorphic map $u \colon D \to V_{\chi_i}$ with $u(0)=x$ representing a given jet in $\J^{r_i, J, \rho_0}(D, V_{\chi_i})_x$. We pick a local coordinate chart $\C^{n_i} \times \C^{n_0}$ near $x \in V_{\chi_i}$ with generator $g$ of $H$ acting trivially on $\C^{n_0}$, and via scalar $\chi_i(g)$ on $\C^{n_i}$, and $J(0)=i$; here $n_0=\operatorname{dim} \mathcal{Y}$. By the same argument as in the proof of \cite[Lemma 2.4.1]{mcduffsalamon2004} $u$ admits presentation
    \begin{equation}
        u(z)=(a_1, a_2)z^\ell+O(|z|^{\ell+1}), \quad (a_1,a_2) \in \C^{n_i} \times \C^{n_0}
    \end{equation}
    for some $\ell \in \Z_{\ge 1}$. Using the $\rho_0$-equivariance we obtain
    \begin{equation}\label{eq: equivariance-jets-condition}
        (a_1, a_2)\zeta_d^\ell z^\ell+O(|z|^{\ell+1})=(\zeta_d^{r_i}a_1, a_2)z^\ell+O(|z|^{\ell+1}).
    \end{equation}

    Hence, if $u$ is a nonconstant equivariant $J$-holomorphic map, its leading term has to be of degree $r_i$, and with its only nontrivial component in the $\C^{n_i}$-direction, as $a_2$ in~\eqref{eq: equivariance-jets-condition} has to vanish. We also see immediately that  $\J^{k, J, \rho_0}(D, V_{\chi_i}) \cong \mathcal{Y}$ for $k< r_i$, and it coincides with the image of $\mathcal{Y}$ under the section 
    \begin{equation}\label{eq: constant-map-section}
        s_c^k \colon V_{\chi_i} \to \J^k_{z_0}(D, V_{\chi_i})
    \end{equation} 
    given by sending a point $x \in U_{\chi}$ into the $k$th jet of the constant map with the image being the point $x$. 

     \n In other words, $$\J^{r_i, J, \rho_0}(D, V_{\chi_i}) \subset (\pi_{r_i-1}^{r_i})^{-1}(s^{r_i-1}_c(\mathcal{Y}))=TV_{\chi_i}|_{\mathcal{Y}} \otimes \operatorname{Sym}^{r_i}(T^*D)_0.$$
     
     We note that the action~\eqref{eq: jet-action} of $\mu_d$ on this subspace is given by the linear map $$D\rho_0(\gamma) \otimes \operatorname{Sym}^{r_i}(D^*\zeta_d).$$ For this computation, it is crucial that $\mu_d$ acts linearly on the domain. Since $D\zeta_d \colon T_{z_0}D \to T_{z_0}D$ is a rotation by the angle $\frac{2\pi}{d}$, we have that its dual is a rotation by the angle $-\frac{2\pi}{d}$, and $v= (1, i)$ is its eigenvector in the complexification. We notice that $\operatorname{Sym}^{r_i}(D^*\gamma)$ then has
    $$v^{\odot r_i}=(1, i, -1, \dots, i^{r_i})\in \C \otimes \operatorname{Sym}^{r_i}(T_{z_0}^*D)$$ as its eigenvector of eigenvalue $\zeta^{-r_i}_d=e^{\frac{-2\pi r_i}{d}i}$. Using $J(x)=i$ we may regard $T_xV_{\chi_i} \otimes \operatorname{Sym}^{r_i}(T^*D)$ as a direct sum of $\frac{\operatorname{dim}V_{\chi_i}}{2}$ copies of $\C \otimes \operatorname{Sym}^{r_i}(T^*D)$, and hence, since $D\rho_0(\zeta_d)$ acts via multiplication by $\zeta^{r_i}_d$, any vector of the form~\eqref{eq: J-holomorphic k-jet} is invariant under $\mu_d$, which provides the identification
    $$\J^{r_i, J, \rho_0}(D, V_{\chi_i})= (TV_{\chi_i}|_{\mathcal{Y}}\otimes \operatorname{Sym}^{r_i}(T_{z_0}^*D)_0)^J.$$
    \vskip-.25in
\end{proof}

\begin{remark}\label{remark: tangent-jet-space-first}
    We claim that Remark~\ref{remark: tangent-space-to-jets} and Lemma~\ref{lemma: dimension} imply that any tangent vector at $T_{j_0^{r_i}u}\J^{r_i, J, \rho_i}(D, V_{\chi_i})$ is not only given by the $r_i$-jet of some $\xi$ with $D_F\xi=0$, but is moreover determined only by its $r_i$th partial derivative $\partial_x^{r_i}\xi$ with respect to $x$-coordinate. Indeed, since we may assume $\xi$ to have a vanishing $(r_i-1)$-jet, and since in local coordinates we have the expression
    $$D_u\xi=\partial_x\xi+J(z)\partial_y\xi+A(z)\xi,$$
    with $A(z)$ a linear operator depending on $z$; see \cite[Proposition 3.1.1]{mcduffsalamon2004}. The claim then follows using the same argument as in the case of $J$-holomorphic jets, and therefore $T_{j_0^{r_i}u}\J^{r_i, J, \rho_i}(D, V_{\chi_i})$ can be identified with
    $$(T_{u(z_0)}V_{\chi_i}\otimes \operatorname{Sym}^{r_i}(T_{z_0}^*D)_0)^J.$$
\end{remark}

\subsection{Transversality} 

\subsubsection{Transversality for strips}
In this section, we assume the setup of Section~\ref{section: orbifold-Fukaya-prelim}, and work under the assumption $M=T^*X$, while keeping the discussion sufficiently general.

We first show that there is a choice of a family $J_t, t\in[0,1]$, with each $J_t \in \mathcal{J}$ such that the moduli spaces $\mathcal{M}(\hat{x}_1, \hat{x}_0; \emptyset)$ are transversely cut out for all choices of $\hat{x}_1$ and $\hat{x}_0$. Observe that $\mathscr{H}_{0, \emptyset, 1}$ consists of just one point corresponding to the trivial $G$-cover over a strip.
 
\begin{lemma}\label{lemma: transversality-for-strips}
    There exists a family $J_t, t \in [0,1]$, such that all moduli spaces $\mathcal{M}(\hat{x}_1, \hat{x}_0; \emptyset)$ are transversely cut out.
\end{lemma}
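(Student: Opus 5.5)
The plan is to reduce the statement to the standard non-equivariant transversality result for Floer strips with time-dependent almost complex structures, as in Floer--Hofer--Salamon and \cite[Section (8)]{seidel2008fukaya}, or \cite{abouzaid2010geometric} in the wrapped setting. The key observation is that for $d=1$ and $\bm h=\emptyset$ the cover $\Sigma\to C$ is trivial. We therefore have $\Sigma=\sqcup_{g\in G} gS$ for a strip $S=\R\times[0,1]$, and a $G$-equivariant map $u\colon \Sigma\to M$ is determined by its restriction $u_e=u|_{S}$. The restriction $u_e$ is an ordinary Floer strip for $(H_0,J_t)$, with boundary on some $L_0\in\mathcal{L}_0$ and $L_1\in\mathcal{L}_1$, and asymptotic to chords $x_1\in\hat x_1$ and $x_0\in\hat x_0$. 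Conversely, any such strip extends uniquely to an equivariant map via $u|_{gS}=g\circ u_e\circ g^{-1}$. This uses that $H_0$ is $G$-invariant and that $J_t$ would need to be $G$-invariant for the translates to solve the equation.

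So the first step is to set up the right function space. We take $\mathcal{J}^G_t$ to be the space of smooth families $J_t\in\mathcal{J}$ that are $G$-invariant, which makes $\mathcal{M}(\hat x_1,\hat x_0;\emptyset)$ well defined. Then we identify $\mathcal{M}(\hat x_1,\hat x_0;\emptyset)$ with a finite disjoint union, over pairs of representatives modulo the diagonal $G$-action, of ordinary moduli spaces $\mathcal{M}(x_1,x_0)$ in $M$, quotiented by $\R$. Under this identification the equivariant linearized operator $D^G_u$ is isomorphic to the ordinary linearized operator $D_{u_e}$. Indeed, an equivariant section $\xi$ of $u^*TM$ is determined by $\xi|_S$, and $D_u$ commutes with the $G$-action. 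Surjectivity of $D_u^G$ is therefore equivalent to surjectivity of $D_{u_e}$.

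The second step is the usual universal moduli space argument, now over $G$-invariant $t$-dependent perturbations $Z_t\in\mathcal{Z}^k$. We show that the universal operator $(\xi,Y)\mapsto D_{u_e}\xi+\tfrac12 Y_t(u_e)\,du_e\circ j$ is onto. Suppose $\eta$ is in the annihilator. Standard arguments give that $\eta$ vanishes on an open set of somewhere-injective points. The key input is that nonconstant Floer strips with $t$-dependent $J_t$ have a dense set of regular points $(s,t)$, meaning $\partial_s u_e(s,t)\neq0$ and $u_e(s,t)\notin u_e(\R\setminus\{s\},t)$ \cite{FHS1995}. We also need a $G$-invariant perturbation $Y$ localized near $u_e(s,t)$ with prescribed value there. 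For this we need the $G$-translates $g\cdot u_e(s,t)$ either to be different points, or to impose a compatible constraint. This is the step I expect to be the main obstacle. It is a genuine issue only where the strip passes through the singular locus, or where $u_e(s,t)=g\,u_e(s',t)$ for some $g\neq e$.

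To handle it, note that the objects are regular equivariant Lagrangians, so the chords and Lagrangians avoid $M^{\operatorname{sing}}$. Hence near the boundary of $S$ the map $u_e$ lands in $M^{\operatorname{reg}}$, where $G$ acts freely. We then choose the regular point near $\partial S$ and also away from the finitely many translates $g\,u_e(\cdot,t)$. If $u_e(s,t)=g\,u_e(s',t)$ on an open set, unique continuation would force $u_e$ and $g u_e$ to coincide up to an $s$-shift. That is incompatible with distinct boundary Lagrangians in the disjoint orbit, or with the asymptotics. In this situation a bump $Y$ supported near $u_e(s,t)$, averaged over $G$, has disjoint translates and pairs nontrivially with $\eta$. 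The Sard--Smale theorem applied to the projection of the universal moduli space, followed by Taubes' trick to pass from $C^k$ to $C^\infty$, then gives a comeager set of $G$-invariant $J_t$. For countably many pairs $(\hat x_1,\hat x_0)$, all the moduli spaces are transversely cut out.
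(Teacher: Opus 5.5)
Your proposal is correct and follows essentially the paper's route. The paper also reduces each equivariant curve to a $G$-orbit of an ordinary strip, uses regularity of the Lagrangians to ensure no strip lies in a fixed locus, and then defers to the equivariant universal-moduli-space argument of \cite[Proposition 5.13]{seidelkhovanov2002} (written there for $\mu_2$, and asserted to generalize to arbitrary $G$). You carry out that argument yourself, supplying the details the paper omits: the free action near $\partial S$, and ruling out $u_e(s,t)=g\,u_e(s',t)$ on open sets via unique continuation together with disjointness and regularity of the boundary Lagrangians.
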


\begin{proof}[Sketch of proof]
  First observe that we only consider pairs $(\mathcal{L}, \mathcal{L}')$ of regular equivariant Lagrangians, hence curves in $\mathcal{M}(\hat{x}_1, \hat{x}_0; \emptyset)$ for any pair $\hat{x}_1, \hat{x}_0 \in \chi(\mathcal{L}, \mathcal{L}')$ are not contained in the fixed point set of any subgroup of $G$. Any such curve corresponds to a $G$-orbit of a $J_t$-holomorphic strip connecting a fixed Hamiltonian chord $x \in \hat{x}_0$ to an arbitrary chord $y \in \hat{x}_1$.
  The statement of the lemma in the case $G=\mu_2$ then essentially follows from \cite[Proposition 5.13]{seidelkhovanov2002}. The same proof can be repeated for an arbitrary finite group $G$, and we omit the details. We also refer the reader to \cite[Theorem 5.2.7]{MMMR2019}, which treats arbitrary $G$ for Hamiltonian Floer homology. 
\end{proof}

\subsubsection{Transversality for general $G$-equivariant curves}\label{section: transversality for G-equivariant curves}

Consider the case $\bm h \neq \emptyset$ or $d>2$. 

By the usual deformation theory, a neighborhood $U_\pi$ of $(\pi \colon \Sigma \to C; q_1, \dots, q_b)$ in $\mathscr{H}^G_{0, \bm h, d}$ is diffeomorphic to $(\mathbb{C}^{b} \times \R^{d-2})_y$ and is given by varying the almost complex structure on $C$ away from the boundary and the marked points $(q_1, \ldots, q_b)$, with $j_0$ being the standard complex structure on the unit disk. These almost complex structures naturally induce almost complex structures $j_y$ on $\Sigma$ for $y \in \mathbb{C}^{b} \times \R^{d-2}$ within a small neighborhood of the origin, and we regard $U_\pi$ as the set of such almost complex structures. Clearly, this is identified with a variation of $\pi^*j_0$ in the set of $G$-equivariant almost complex structures on $\Sigma$.  The universal curve $\mathfrak{S}_{0, \bm h, d}^G$ admits a trivialization over $U_\pi$, i.e., 
$$\mathfrak{S}_{0, \bm h,d}^G|_{U_\pi} \cong \Sigma \times U_\pi.$$

Let us fix a collection of orbichords $\overrightarrow{\hat{x}}=(\hat{x}_d, \ldots, \hat{x}_1), \hat{x}_0$ connecting a $(d+1)$-tuple of regular $G$-equivariant Lagrangians $\mathcal{L}_0, \dots, \mathcal{L}_d$. We also fix a connected component $\bm m$ of $\mathcal{I}[M/G]^b$, consisting of some components of fixed point sets of the elements of $\bm h$.
We then consider the following Banach manifold to model the solutions of the Floer equations locally
\begin{equation}
    \mathscr{X}_{k,p}^G=  W^{k,p}(\Sigma, M; \overrightarrow{\hat{x}},\hat{x}_0; \bm m)^G \times U_\pi \times \mathcal{J}_{\bm h,d}(M)^G|_{U_\pi},
\end{equation}
which is the fixed point set of the diagonal action on
\begin{equation}
    \mathscr{X}=   W^{k,p}(\Sigma, M; \overrightarrow{\hat{x}},\hat{x}_0; \bm m) \times U_\pi  \times \mathcal{J}_{\bm h,d}(M)|_{U_\pi}
\end{equation}
for $kp>2$. Here $W^{k,p}(\Sigma, M; \overrightarrow{\hat{x}},\hat{x}_0)$ is the Banach manifold of $W^{k,p}$-curves with boundary components mapped to $\mathcal{L}_0, \dots, \mathcal{L}_d$ and convergence near the punctures to chords specified by $\overrightarrow{\hat{x}}$ and $\hat{x}_0$ in the $W^{k,p}$-sense. The subspace $$W^{k,p}(\Sigma, M; \overrightarrow{\hat{x}},\hat{x}_0; \bm m)^G \subset W^{k,p}(\Sigma, M; \overrightarrow{\hat{x}},\hat{x}_0)^G$$
of the space of $G$-equivariant maps in $$W^{k,p}(\Sigma, M; \overrightarrow{\hat{x}},\hat{x}_0)$$ is given by those maps whose associated evaluation $\mathcal{E}^{\mathcal{I}}$ at the stacky points of the underlying orbicurve takes values in $\bm m$.

We use the notation 
$$\mathscr{B}=\mathscr{B}^{k,p}=W^{k,p}(\Sigma, M; \overrightarrow{\hat{x}},\hat{x}_0).$$ 
The tangent space $T_u\mathscr{B}$ is given by $W^{k,p}_F(\Sigma, u^*TM)$ with real boundary condition $F$ imposed by the equivariant Lagrangians $\mathcal{L}_0, \dots, \mathcal{L}_d$.
We have a Banach vector bundle $\mathscr{E} \to \mathscr{X}$ with fiber at $(u, y,  J) \in \mathscr{X}$ given by
$$\mathscr{E}_{u,y, J}=W^{k-1,p}(\Sigma, \Omega^{0,1}(u^*TM)).$$
We also have a Banach vector bundle $\mathscr{E}^G \to \mathscr{X}^G$ over the fixed points, with fiber $$\mathscr{E}_{u,y, J}^G=W^{k-1,p}(\Sigma, \Omega^{0,1}(u^*TM))^G$$ at $(u,y,J) \in \mathscr{X}^G$, consisting of $G$-invariant sections of $\Omega^{0,1}(u^*TM)$. The vector bundle $\mathscr{E}^G$ is obtained from $\mathscr{E}$ by restricting to $\mathscr{X}^G$ and taking the $G$-invariant subbundle. The action on a $1$-form $\eta$ is given by
\begin{equation}\label{eq: action-on-1-forms}
    g^*\eta_z(V)=dg_{u(g^{-1}z)}\circ\eta_{g^{-1}z}\circ dg_z^{-1}[V],
\end{equation}
where we rely on the presence of the $G$-action on $u^*TM$ for an equivariant map $u \colon \Sigma \to M$.
\begin{remark}
    Strictly speaking, one has to work with a Banach space of almost complex structures $\mathcal{J}_{\bm h, d}(M)$. To remedy this, one can replace $\mathcal{J}$ from~\eqref{eq: almost complex structures} with $\mathcal{J}^{\varepsilon}$, defined by using $Z \in C^{\varepsilon}(\operatorname{End}TM)$ in the space of Floer's $C^{\varepsilon}$-sections for some sequence $\varepsilon=(\varepsilon_i)_{i \in \Z_{\ge1}}$. We do not go into further details and omit these technicalities. For more details, we refer the reader to \cite[Appendix B]{wendl2020sft}.
\end{remark}
 The moduli space of solutions to the Floer equation is locally given by
\begin{equation}
    \mathscr{M}=\mathfrak{s}^{-1}(0) \subset \mathscr{X},
\end{equation}
where $$\mathfrak{s}(u, y, J)=\overline{\partial}_{J, H, j_y}u=(du-X_{\tilde{H}} \otimes \tilde{\alpha})^{0,1}_{(J, j_y)}.$$
Here we denote by $\tilde{H}$ and $\tilde{\alpha}$ the pullbacks 
along the projection $$\mathfrak{S}^G_{0, \bm h, d} \to \mathfrak{C}^G_{0, \bm h, d}$$
of fixed data of a domain-dependent Hamiltonian $H \in \mathcal{H}_{\bm h,d}(M)$ and a basic $1$-form $\alpha$ as in Definition~\ref{defn: floer data}. The domain-dependent Hamiltonian $\tilde{H}$ and the 1-form $\tilde{\alpha}$ are $G$-invariant when restricted to $(\Sigma, j_y)$.
The space of $G$-invariant solutions is denoted $\mathscr{M}^G=\mathscr{M} \cap \mathscr{X}^G$. We recall that the linearization of $\mathfrak{s}$ along $\mathscr{M}$ is given by the formula
\begin{equation}\label{eq: full-linearized-d-bar}
     D^{\mathscr{X}}_{u, y, J}(\xi, \upsilon, Z) \coloneqq D_{u, y}\xi+\tfrac{1}{2}Z(u,z, y) \circ (du-X_{\tilde{H}} \otimes \tilde{\alpha}) \circ j_y+\tfrac{1}{2}J\circ (du-X_{\tilde{H}} \otimes \tilde{\alpha})\circ\upsilon,
\end{equation}
where $D_{u, y}$ is the linearization of  the operator $\overline{\partial}_{J, H, j_y}$. 
\begin{lemma}\label{lemma: basic-surjectivity-on-tangents} $\mbox{}$
   \begin{enumerate}
       \item  The spaces $\mathscr{M}$ and $\mathscr{M}^G$ are smooth Banach manifolds, whose tangent spaces are given by
    \begin{align}
        T_{(u, y, J)}\mathscr{M}&=\{(\xi, \upsilon, Z) \in  T_u \mathscr{B} \oplus T_yU_\pi \oplus T_{J} \mathcal{J}_{\bm h,d}(M) \mid D^{\mathscr{X}}_{u, y, J}(\xi, \upsilon, Z)=0  \},\\ 
        T_{(u, y, J)}\mathscr{M}^G&=\{(\xi, \upsilon, Z) \in T_u \mathscr{B}^G \oplus T_yU_\pi \oplus T_{J} \mathcal{J}_{\bm h,d}(M)^G \mid D^{\mathscr{X}}_{u, y, J}(\xi, \upsilon, Z)=0\}. 
    \end{align}
\item The following restrictions of $D^{\mathscr{X}}$
\begin{gather}
    D^{\mathscr{X}}_{u,y,J} \colon T_u \mathscr{B} \oplus T_{J} \mathcal{J}_{\bm h,d}(M) \to \mathscr{E}_{u,y, J}, \quad (u, y, J) \in \mathscr{M},
\\
D^{\mathscr{X}}_{u,y,J} \colon T_u \mathscr{B}^G \oplus T_{J} \mathcal{J}_{\bm h,d}(M)^G \to \mathscr{E}_{u,y, J}^G, \quad (u, y, J) \in \mathscr{M}^G,
\end{gather}
are surjective, and $D^\mathscr{X}$ is equivariant for $(u, y, J) \in \mathscr{X}^G$.
     \end{enumerate}
\end{lemma}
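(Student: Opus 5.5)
We prove the three assertions of Lemma~\ref{lemma: basic-surjectivity-on-tangents} in reverse order. We start with equivariance of $D^{\mathscr{X}}$, deduce the equivariant surjectivity from the non-equivariant one by averaging, and finally apply the implicit function theorem to obtain the Banach manifold structures and their tangent spaces.

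\textbf{Step 1: equivariance.} For $(u,y,J)\in\mathscr{X}^G$ and $g\in G$, the group $G$ acts on $T_u\mathscr{B}$ by $(g\cdot\xi)(z)=dg_{u(g^{-1}z)}\xi(g^{-1}z)$. It acts on $T_J\mathcal{J}_{\bm h,d}(M)$ by the conjugation~\eqref{eq: J-equivariance}, and on $\mathscr{E}_{u,y,J}$ by~\eqref{eq: action-on-1-forms}. The section $\mathfrak{s}$ commutes with these actions, because:
\begin{itemize}
\item $\tilde H$ and $\tilde\alpha$ are pulled back from $C$ and hence $G$-invariant;
\item $j_y=\pi^*j$ is $G$-invariant;
\item $J$ satisfies~\eqref{eq: J-equivariance}, so $g\circ u\circ g^{-1}$ solves the Floer equation whenever $u$ does.
\end{itemize}
Differentiating $\mathfrak{s}(g\cdot(u,y,J))=g\cdot\mathfrak{s}(u,y,J)$ at a $G$-fixed point gives $D^{\mathscr{X}}\circ g=g\circ D^{\mathscr{X}}$. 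One can also check this term by term in~\eqref{eq: full-linearized-d-bar}, using that $D_{u,y}$ involves only $\nabla$ of a $G$-invariant metric and $G$-invariant $J$, $X_{\tilde H}$.

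\textbf{Step 2: non-equivariant surjectivity.} This is the standard argument (McDuff--Salamon, Seidel). $D_{u,y}$ is Fredholm, so the image of $D^{\mathscr{X}}$ is closed. If it were not surjective, there would be a nonzero $\eta\in L^{q}$ annihilating it. Then $D_{u,y}^*\eta=0$, so $\eta$ is smooth and, by unique continuation, cannot vanish on an open set. Pairing $\eta$ with the term $\tfrac12 Z\circ(du-X\otimes\alpha)\circ j_y$ and choosing $Z$ supported near a point $(z_0,u(z_0))$ then gives a contradiction. For this we need a point $z_0$ with three properties:
\begin{itemize}
\item $z_0$ lies outside the region where $J$ is forced to be $J_0$ or $(\psi^{w_k})^*J_t$;
\item $du-X\otimes\alpha\neq 0$ at $z_0$;
\item $\eta(z_0)\neq 0$.
\end{itemize}
Such $z_0$ exists because regular Lagrangians prevent $u$ from being constant on the free region (exactness and nonconstancy of the strip-like asymptotics), and the zero sets involved are discrete.

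\textbf{Step 3: equivariant surjectivity by averaging.} Let $\eta\in\mathscr{E}^G$. By Step~2 there is $(\xi,Z)$ with $D^{\mathscr{X}}(\xi,Z)=\eta$. Set
$$(\bar\xi,\bar Z)=\tfrac1{|G|}\sum_{g\in G} g\cdot(\xi,Z).$$
By Step~1, $D^{\mathscr{X}}(\bar\xi,\bar Z)=\tfrac1{|G|}\sum_g g\cdot\eta=\eta$. The averaged $\bar\xi$ satisfies the boundary condition $F$, since the collection $\mathcal{L}_i$ is $G$-invariant. The averaged $\bar Z$ still lies in $T_J\mathcal{J}_{\bm h,d}(M)^G$, since the constraints defining $\mathcal{Z}$ are $G$-invariant: anticommutation with $J_0$, the contact-type condition, the prescribed values on ends and near marked points, and the open $C^0$ bound, where only linearity of the tangent space matters.

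This averaging step replaces the usual worry about equivariant transversality. The point is that we perturb $J$ in the domain-dependent, $G$-equivariant class, so perturbations on $\Sigma\times M$ with $G$ acting freely on $\Sigma$ generically are allowed. Near the fixed points of $G$ on $\Sigma$ we never need to perturb, because $z_0$ can be chosen among the free points.

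The main obstacle is checking that the evaluation constraint $\mathcal{E}^{\mathcal{I}}\in\bm m$ defines a submanifold of $\mathscr{B}^G$. For this we show that $u\mapsto u(p)$ for ramification points $p$ with stabilizer $\langle h_i\rangle$ is a submersion of $\mathscr{B}^G$ onto $M^{h_i}$, whose components form $\bm m$; local equivariant cutoffs of $H$-invariant vector fields along $M^{h_i}$ do this. This is also what Step~2 requires, because $\eta$ must pair against $\xi$ tangent to this constraint.

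\textbf{Step 4: Banach manifold structures.} Given surjectivity, the implicit function theorem on $\mathscr{X}$, respectively on the fixed-point Banach manifold $\mathscr{X}^G$ (which is a manifold via equivariant exponential charts for a $G$-invariant metric), shows that $\mathscr{M}$ and $\mathscr{M}^G=\mathscr{M}\cap\mathscr{X}^G$ are smooth Banach manifolds. Their tangent spaces are the stated kernels of $D^{\mathscr{X}}$ restricted to the respective spaces, which proves~(1).
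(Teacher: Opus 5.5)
Your proposal is correct and follows essentially the paper's route: non-equivariant surjectivity via the Cieliebak--Mohnke unique-continuation argument using only $J$-perturbations on the unconstrained region, equivariance of $D^{\mathscr{X}}$, and averaging. The paper checks equivariance term by term and only remarks that it follows from equivariance of $\mathfrak{s}$, which is your argument. It obtains $\mathscr{M}^G$ as the fixed-point set of the smooth $G$-action on $\mathscr{M}$ rather than by the implicit function theorem on $\mathscr{X}^G$, and the two amount to the same thing. The $\bm m$-constraint you flag as the main obstacle is automatic: equivariance forces $u(\tilde p)\in M^{\tilde h_i}$, and the component containing it is locally constant in $u$, so $\mathcal{E}^{\mathcal{I}}\in\bm m$ cuts out an open and closed subset of $\mathscr{B}^G$ and plays no role in your Step 2.
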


\begin{proof}
    Let us denote by $(\pi \colon \Sigma \to C; q_1, \dots, q_b)$ the element of the Hurwitz moduli space associated with any curve $u$ under discussion.
    The fact that $\mathscr{M}$ is smooth is standard, and is a combination of the proof of \cite[Proposition 4.3]{CieliebakMohnke2007} and the proof presented in \cite[Section (9k)]{seidel2008fukaya}. Let us briefly spell out the main difference: since we only use the perturbation of an almost complex structure, in order to apply a unique continuation type argument, we need to be able to show that the term of the form
    $$Z \circ (du-X_{\tilde{H}} \otimes \tilde{\alpha}) \circ j_y$$
    can be picked to be sufficiently arbitrary on some open subset of the domain $\Sigma$ in order to use it to approximate a Dirac $\delta$-type section. Since $\bm h \neq \emptyset$ or $d>2$, the curve $u$ is nonconstant on an open dense subset, and by intersecting this open set with the preimage of the auxiliary open subset $U \subset C$ where $\alpha$ vanishes (see Definition~\ref{defn: floer data}), we obtain the open subset where the above form takes shape $Z \circ du \circ j_y,$
    with $Z$ arbitrary, hence can be used for the above-mentioned purposes.
    
    Since the action of $G$ on $\mathscr{X}$, and hence on $\mathscr{M}$, is smooth, the fixed point set $\mathscr{M}^G$ of the restriction of this action on $\mathscr{M}$ is a smooth submanifold. The description of the tangent spaces follows immediately since $D^\mathscr{X}$ is the linearization of the section $\mathfrak{s}$ along $\mathscr{M}$. The proof of the first surjectivity claim in (2) is also given in the proof of \cite[Proposition 4.3]{CieliebakMohnke2007}, and the surjectivity of the restriction to the $G$-invariant parts follows by averaging once we establish the equivariance of $D^\mathscr{X}$. 

    We show the equivariance of $D^{\mathscr{X}}_{u,y,J}$ for $(u,y,J) \in \mathscr{X}^G$ by verifying that each of the terms in~\eqref{eq: full-linearized-d-bar} is equivariant.
First, using~\eqref{eq: J-equivariance} and~\eqref{eq: action-on-1-forms} we see
    \begin{gather}
        g^*(Z(u,z,y))\circ (du-X_{\tilde{H}} \otimes \tilde{\alpha}) \circ j_y)_z[V]=(g_* \circ Z(u, g^{-1}z, y) \circ (du-X_{\tilde{H}} \otimes \tilde{\alpha}) \circ j_y \circ g_*^{-1})[V] \nonumber\\
        \stackrel{(\text{\ding{106}})}{=}(g_* \circ Z(u, g^{-1}z, y) \circ g_{*}^{-1} \circ (du-X_{\tilde{H}} \otimes \tilde{\alpha}) \circ j_y )[V]=
        (g_*(Z)(u, z, y) \circ (du-X_{\tilde{H}} \otimes \tilde{\alpha}) \circ j_y)[V],
    \end{gather}
    where $(\text{\ding{106}})$ follows from the equivariance of $u, {\tilde{H}}, \tilde{\alpha}$ and $j_y$. The $G$-invariance of the last term in~\eqref{eq: full-linearized-d-bar} follows immediately from $G$-invariance of $(J, \tilde{H}, \tilde{\alpha})$, $u$, and $\upsilon$.

By extending the computation in \cite[Equation (3.1.2)]{mcduffsalamon2004}, one obtains the following formula for $D_{u,y}\xi$ in local coordinates
\begin{equation}\label{eq: linear-d-bar-local-formula}
    D_{u,y} \xi=\overline{\partial}_{J, j_y}\xi-\tfrac{1}{2}(J \partial_\xi J)(u)\partial_{J,j_y, H}(u)-(\partial_\xi X_{\tilde{H}} \otimes \tilde{\alpha})^{0,1}.
\end{equation}
It suffices to consider the local situation where
$u$ is a map from a disjoint union of finitely many disks, regarded as a subdomain of $\Sigma$, to several copies of $\R^{2n}$, with the subgroup of $G$ fixing a given copy acting linearly.

Let us show that the first term $\overline{\partial}_{J, j_y}\xi=\tfrac{1}{2}(d\xi-J\circ d\xi \circ j_y)$ is $G$-equivariant. Indeed,
\begin{equation}
    d(g_*\xi)[V]=d(g\circ \xi \circ g^{-1})[V]\stackrel{(\text{\ding{105}})}{=}g_*d\xi[g^{-1}_*V]=g^*d\xi[V],
\end{equation}
where $(\text{\ding{105}})$ follows from the chain rule and the linearity of the $G$-action on $u^*TM$, and
\begin{align}\label{eq: equivariance-of J-j}
    g^*(J \circ d\xi \circ j_y)[V]=&dg \circ J\circ d\xi \circ j_y \circ dg^{-1}[V]\stackrel{(\text{\ding{109}})}{=}J\circ dg \circ d\xi \circ dg^{-1} \circ j_y[V]\\ \nonumber
    =&J\circ g^*(d\xi) \circ j_y[V], 
\end{align}
where $(\text{\ding{109}})$ follows from the equivariance of $J$ and $j_y$. 

To show the equivariance of the second term in~\eqref{eq: linear-d-bar-local-formula} it suffices to verify the equivariance of $\partial_\xi J=dJ(\xi)$, i.e., to check that
\begin{equation}
    dJ[dg(\xi)]_{gz}=dg\circ(dJ[\xi]_z) \circ dg^{-1} 
\end{equation}
for all $z\in \Sigma$. It follows by differentiating the condition~\eqref{eq: J-equivariance} on the equivariance of $J$ itself.

It remains to show the equivariance of the third term in~\eqref{eq: linear-d-bar-local-formula}. 
Using the equivariance of $\tilde{H}$, an analogous argument shows that $\partial_{\xi} X_H$ is equivariant in $\xi$, i.e.,
\begin{equation}
    (\partial_{dg(\xi)}X_{\tilde{H}})_{gz}=dg([\partial_\xi X_{\tilde{H}}]_z)
\end{equation}
for any $z\in \Sigma$. Combining this with the equivariance of $\tilde{\alpha}$ one obtains that the $1$-form $\partial_\xi X_{\tilde{H}} \otimes \tilde{\alpha}$ is $G$-equivariant, and hence its $(0,1)$-part by a computation analogous to the one in~\eqref{eq: equivariance-of J-j}.
\end{proof}

\begin{remark}
    The proof of equivariance can also be carried out using an extension of the global expression for $D_{u,y}$ from \cite[Proposition 3.1.1]{mcduffsalamon2004} by utilizing the Levi-Civita connection induced by an equivariant metric on $M$. One can avoid direct computations entirely, since the above is a variation of an equivariant section $\mathfrak{s}$, but we find this direct computation illustrative.
\end{remark}

\begin{remark}
    Equivariant transversality of this kind has been studied before. Besides the already mentioned \cite{MMMR2019}, the case of closed curves was considered in \cite{rothgang2024}. In particular, \cite[Theorem 4.3]{rothgang2024} shows that equivariant transversality can be achieved by picking a generic $G$-invariant almost complex structure on $M$ which is not domain-dependent.
\end{remark}

\subsubsection{Extending the transversality to jet evaluations}

In what follows, we assume that
\[
    \bm h=((h_1),\ldots,(h_b)), \qquad h_i=g_i^{k_i},
\]
where $g_i\in G$ has order $m_i$ and $M^{g_i}=M^{h_i}$ with the stabilizer of a generic point of $M^{g_i}$ being the cyclic subgroup $\langle g_i \rangle$. We set 
\begin{equation}
    d_i \coloneqq \operatorname{ord}(h_i)=\frac{m_i}{\gcd(k_i, m_i)}.
\end{equation}
A point $\tilde{p} \in \pi^{-1}(q_i)$ determines an element $\tilde{h}_i \in (h_i)$, which generates the stabilizer $G_{\tilde{p}}\cong \mu_{d_i}$ of $\tilde{p}$, and acts via multiplication by $\zeta_{d_i}$ on $T_{\tilde{p}}\Sigma$; see \cite[Remark 2.30]{Galeotti2022}. Any equivariant map $u \colon \Sigma \to M$ takes $\tilde{p}$ to a point in $\mathcal{Y}_i \subset M^{\tilde{h}_i}=M^{\tilde{g}_i}$ for some connected component $\mathcal{Y}_i$ and $\tilde{g}_i \in (g_i)$. 
Hence the inclusion 
\begin{equation}
    \rho_i \colon G_{\tilde{p}} \hookrightarrow G_{u(\tilde{p})}, \quad \tilde{h}_i \mapsto \tilde{g}_i^{k_i}
\end{equation}
is the homomorphism at the point $q_i$ associated with the orbifold map $$[u] \colon \mathscr{C} \to [M/G],$$ where we assume that $u(\tilde{p})$ is generic in $M^{\tilde{g}_i}$ and hence $G_{u(\tilde{p})} \cong \mu_{m_i}$. 
 
The inertia evaluation $\mathcal{E}^{\mathcal{I}}([u])$ therefore belongs to the component
 \begin{equation}
     \bm m=((\mathcal{Y}_1, (g_{1}^{k_1})), \dots, (\mathcal{Y}_b, (g_{b}^{k_b}))).
 \end{equation}
The induced inertia map $[u]^{\mathcal{I}}$ maps a pair $(q_i, (\tilde{h}_i)) \in \mathcal{I}\mathscr{C}$
to 
$$([u(\tilde{p})], (\rho_i(\tilde{h}_i))) \in \mathcal{I}[M/G].$$ We emphasize that $[u]^{\mathcal{I}}$ does not depend on the choice of $\tilde{p} \in \pi^{-1}(q_i)$.

The choice of $\tilde{p}$ gives rise to a \emph{jet evaluation} map at $q_i$
\begin{gather}\label{eq: jet-evaluation-map-on-moduli}
    \mathcal{J}ev_{q_i} \colon\mathscr{M}^G \to \J^{l, J_0, \rho_i}(\Sigma, M),\\
    (u,y,J) \mapsto j^{l}_{\tilde{p}}u, \nonumber
\end{gather}
and this map is well-defined by elliptic regularity.
The differential of this map is given by
\begin{gather}
    D\mathcal{J}ev_{q_i} \colon T_{(u, y, J)}\mathscr{M}^G\to T_{j^{l}_{\tilde{p}}u}\J^{l, J_0, \rho_i}(\Sigma, M),\\
    (\xi, \upsilon, Z) \mapsto j^{l}_{\tilde{p}}\xi.
\end{gather}

More generally, given a subset $I \subset \{1, \dots, b\}$, a collection of nonnegative integers $(l_i)_{i \in I}$, and a collection of points $(\tilde{p}(q_i) \in \pi^{-1}(q_i))_{i\in I}$, there is a jet evaluation map
\begin{gather}
 \mathcal{J}ev_{I} \colon \mathscr{M}^G \to \prod_{i \in I}\J^{l_i, J_0, \rho_i}(\Sigma, M), \\
    \mathcal{J}ev_{I}(u, y, J)=(j^{l_i}_{\tilde{p}(q_i)}u)_{i \in I} . \nonumber
\end{gather}

\begin{remark}
    There is an ambiguity in the choice of a point $\tilde{p} \in \pi^{-1}(q_i)$ which is slightly unsatisfactory. Instead, one could talk about $G$-equivariant jets over $q_i$ by looking at $G$-invariant subset of
    $$\prod_{g \in G} \J^{l, J_0}(\Sigma,M),$$
    with the $G$-action given by
    $$g((j^lu_h))_{h \in G}=(j^l(g\circ u_h \circ g^{-1}))_{h\in G}.$$
 It is straightforward to verify that the fiber above $q_i \in C$ of the above $G$-invariant set is naturally isomorphic to $\J^{l, J_0, \rho_i(z)}(\Sigma, M)$ for any $\tilde{p} \in \pi^{-1}(q_i)$. 
\end{remark}
 We set $r_{ij}\in \{1, \dots, d_i-1\}$ to satisfy 
\begin{equation}
    \zeta_{d_i}^{r_{ij}}=\chi^{m_i}_j(\rho_i(\zeta_{d_i}))
\end{equation} for $j=1, \dots, {m_i-1}$. 
We denote by $\mathscr{M}^G(q_i, \chi^{m_i}_j)$ the preimage of the section 
\begin{equation}
    s^{r_{ij}}_c \colon \mathcal{Y}_i \to  \J^{r_{ij}, J_0, \rho_i}(\Sigma, V_{\chi^{m_i}_j}^i),
\end{equation}
under the composition of jet evaluation $\mathcal{J}ev_{q_i}$ with the projection
$$\pi^{r_{ij}, \chi_j^{m_i}} \colon \mathfrak{J}^{l, J, \rho_i}(\Sigma, U_i) \to \J^{r_{ij}, J_0, \rho_i}(\Sigma, V_{\chi^{m_i}_j}^i),$$
where we use the notation of Section~\ref{section: equivariant-j-jets}. Here we denote by $V_{\chi^{m_i}_j}^i$ the auxiliary isotypic component submanifold associated with $\mathcal{Y}_i$ and via $U_i$ the tubular neighborhood admitting projections onto these components.

Similarly, given a subset $I \subset \{1, \dots, b\}$, a collection of connected components $(\mathcal{Y}_i \subset M^{\tilde{h}_i})_{i \in I}$ and a collection of characters $\chi_I=(\chi^{m_i}_{j(i)})_{i \in I}$ for each $i \in I$, we denote by $\mathscr{M}^G(I, \chi_I)$ the preimage of the section
\begin{equation} 
s_c^{I, \chi_I} \colon \prod_{i \in I}\mathcal{Y}_i \to \prod_{i \in I}\J^{r_{ij(i)}, J_0, \rho_i}(\Sigma, V_{\chi^{m_i}_{j(i)}}^i)
\end{equation}
under composition of $\mathcal{J}ev_I$ with a tuple of projections $(\pi^{r_{ij(i)}, \chi^{m_i}_{j(i)}})_{i \in I}$.

\begin{proposition}\label{prop: jet-evaluation-submersion}
    The evaluation map \begin{equation}\label{eq: isotypic-jet-evaluation}
    \pi^{r_{ij}, \chi_j^{m_i}} \circ\mathcal{J}ev_i \colon \mathscr{M}^G \to \J^{r_{ij}, J_0, \rho_i}(\Sigma, V_{\chi^{m_i}_{j}}^i)
    \end{equation}
    is a submersion. In particular, $\mathscr{M}^G(q_i, \chi^{m_i}_j)$ is a smooth Banach submanifold of $\mathscr{M}^G$ of codimension 
    \begin{equation}
        \operatorname{codim}\mathscr{M}^G(q_i, \chi^{m_i}_j)=\operatorname{dim}_{\R}V^i_{\chi^{m_i}_j}.
    \end{equation} 
    Moreover, $\mathscr{M}^G(I, \chi_I)$ is a smooth Banach submanifold of codimension 
    \begin{equation}
        \sum_{i \in I} \operatorname{dim}V_{\chi^{m_i}_{j(i)}}^i.
    \end{equation}
\end{proposition}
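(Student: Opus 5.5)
The plan is to prove that the differential of $\pi^{r_{ij},\chi_j^{m_i}}\circ\mathcal{J}ev_{q_i}$ is onto at every $(u,y,J)\in\mathscr{M}^G$. Granting this, the codimension statements follow from the implicit function theorem. The preimage of the section $s^{r_{ij}}_c(\mathcal{Y}_i)$ has codimension equal to the fiber dimension of $\J^{r_{ij},J_0,\rho_i}(\Sigma,V^i_{\chi^{m_i}_j})\to\mathcal{Y}_i$, so we must also account for the base directions in $\mathcal{Y}_i$. By Lemma~\ref{lemma: dimension} this fiber is identified with $(T_xV_{\chi_j}\otimes\operatorname{Sym}^{r_{ij}}(T^*D))^J$, which has real dimension $\dim V^i_{\chi^{m_i}_j}$.

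More precisely, I would argue as follows. By Remark~\ref{remark: tangent-jet-space-first}, tangent vectors to the target are determined by a vector $a\in T_{u(\tilde p)}V_{\chi_j}$ through~\eqref{eq: J-holomorphic k-jet}, together with a base variation along $\mathcal{Y}_i$. The base variation is already realized by the inertia evaluation: since $\bm m$ imposes only that $u(\tilde p)\in\mathcal{Y}_i$, moving along $\mathcal{Y}_i$ is realized by tangent vectors of $\mathscr{M}^G$ by the standard submersivity of evaluation, proved via $\delta$-type approximations as in Lemma~\ref{lemma: basic-surjectivity-on-tangents}. It therefore suffices to realize the leading-order term.

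Next, fix a local equivariant section $\xi_0$ supported near $G\cdot\tilde p$. Choose it as the $G$-average of a cutoff times $z^{r_{ij}}a$, written in the $H$-linear chart of the proof of Lemma~\ref{lemma: equivariant jet lift}. The equivariance computation~\eqref{eq: equivariance-jets-condition} shows that $z^{r_{ij}}a$ with $a\in\nu_{\chi_j}$ is $\rho_i$-equivariant, so $\xi_0\in T_u\mathscr{B}^G$ and $j^{r_{ij}}_{\tilde p}\xi_0$ is the desired jet. The error $\eta=D_{u,y}\xi_0$ lies in $\mathscr{E}^G$. By surjectivity (Lemma~\ref{lemma: basic-surjectivity-on-tangents}(2)) we seek $(\xi_1,Z)$ with $D^{\mathscr{X}}(\xi_1,Z)=-\eta$ and with $j^{r_{ij}}_{\tilde p}\xi_1$ having zero $\chi_j$-projection.

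The key point, and the expected main obstacle, is that $J$ is required to equal $J_0$ near the preimages of marked points, so $Z$ vanishes there. To handle this, I would follow \cite[Proposition 4.3]{CieliebakMohnke2007}. Consider the restricted operator on the Banach subspace of $T_u\mathscr{B}^G\oplus T_J\mathcal{J}^G$ consisting of $\xi$ with vanishing $\chi_j$-jet of order $r_{ij}$ at $\tilde p$. Suppose its image had a nonzero $G$-invariant annihilator $\beta$. Varying $Z$ on the open set where $u$ is nonconstant and $\alpha=0$ forces $\beta=0$ there. Unique continuation (applied to $D^*\beta=0$ away from $G\cdot\tilde p$) then forces $\beta$ to be supported at $G\cdot\tilde p$, that is, to be a finite combination of derivatives of $\delta$-distributions. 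Such distributions pair nontrivially only with finitely many jets. Averaging over $G$ and using the weighted (or higher-regularity) spaces with $kp>2+r_{ij}$ identifies these obstructions with the finite-dimensional jet space. This yields that the cokernel of the unrestricted operator composed with the jet map is trivial. Equivalently, the combined map $(D^{\mathscr{X}},\pi^{\chi_j}\circ j^{r_{ij}}_{\tilde p})$ is onto $\mathscr{E}^G\times(T V_{\chi_j}\otimes\operatorname{Sym}^{r_{ij}})^J$. Equivariance is preserved throughout by averaging, using the equivariance of $D^{\mathscr{X}}$ shown in Lemma~\ref{lemma: basic-surjectivity-on-tangents}.

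For the multi-point statement with index set $I$, I would run the same argument with the cutoffs for distinct $q_i$ chosen with disjoint supports. The deficiency analysis then decouples point by point, and the codimensions add to $\sum_{i\in I}\dim V^i_{\chi^{m_i}_{j(i)}}$.
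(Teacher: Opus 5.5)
Your architecture matches the paper's. You take the local section $z^{r_{ij}}a$ with $a\in\nu_{\chi_j}$, extend it $G$-equivariantly, and correct it by a pair $(\xi_1,Z)$ through a Cieliebak--Mohnke argument in which $Z$ is supported away from $\pi^{-1}(q_i)$, recovering equivariance by averaging. The gap is in the correction step, which is the only nontrivial part.

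Unique continuation only tells you that an annihilator $\beta$ is a finite sum of derivatives of $\delta$ at $G\cdot\tilde p$. Such functionals can be continuous and nonzero on $\mathscr{E}^G$. Your sentence about ``identifying these obstructions with the finite-dimensional jet space'' shows neither $\beta=0$ nor $\beta(\eta)=0$. Whether your restricted operator is onto depends on which coefficients you freeze. Because $D^{\mathscr{X}}$ is onto, its restriction to the kernel of a finite-rank map $L$ is onto if and only if $L$ has the same image on $\ker D^{\mathscr{X}}$ as on the whole domain.

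Since $Z$ vanishes near $\pi^{-1}(q_i)$, elements of $\ker D^{\mathscr{X}}$ satisfy $D_{u,y}\xi=0$ near $\tilde p$. This fixes the $\bar z$-monomials of their Taylor expansion at $\tilde p$ in terms of lower-order data. Yet equivariance allows such monomials, e.g.\ $\bar z^{\,d_i-r_{ij}}v$ with $v\in\nu_{\chi_j}$ whenever $2r_{ij}\ge d_i$ (already $\bar z v$ for $\mu_2$). So if ``vanishing $\chi_j$-jet'' means the full order-$r_{ij}$ Taylor polynomial, the restricted operator is in general not onto $\mathscr{E}^G$. An example is when $J_0$ is constant in a linear chart at $u(\tilde p)$, so that $D_{u,y}=\bar\partial$ near $\tilde p$. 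In that case the contradiction you are aiming for does not exist.

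There are two ways to close the step.

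\emph{The paper's route.} It keeps your $\xi_0$ but changes the operator. In a chart with $J_0(u(\tilde p))=i$ the section $z^{r_{ij}}a$ is holomorphic and the remaining terms of $D_{u,y}$ are $O(|z|^{r_{ij}})$. Hence $\eta=D_{u,y}\xi_0$ vanishes to order $r_{ij}-1$ along $\pi^{-1}(q_i)$, so $-\eta$ lies in the space $E_{r_{ij}}$ of forms with vanishing $(r_{ij}-1)$-jet there. Then \cite[Lemma 6.6]{CieliebakMohnke2007} says that $(\xi,Z)\mapsto D_{u,y}\xi+\tfrac12 Z\circ(du-X_{\tilde H}\otimes\tilde\alpha)\circ j_y$ maps sections with vanishing $r_{ij}$-jet along $\pi^{-1}(q_i)$ onto $E_{r_{ij}}$. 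In that setting point-supported annihilators are harmless: the domain restriction caps their order at $r_{ij}-1$, and such $\delta$-derivatives vanish on $E_{r_{ij}}$. This operator is $G$-equivariant, so $-\eta$ has a $G$-invariant preimage, and the correction does not change the $r_{ij}$-jet of $\xi_0$.

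\emph{Your route, repaired.} Freeze only $\pi_{\nu_{\chi_j}}\partial_z^{r_{ij}}\xi(\tilde p)$. This is the only reading under which your combined map into $\mathscr{E}^G\times(TV_{\chi_j}\otimes\operatorname{Sym}^{r_{ij}})^J$ makes sense, since full jets of arbitrary $\xi$ are not $J$-holomorphic. You then need the order count you skipped. By $G$-invariance it suffices to work near $\tilde p$. With $z=s+it$, one has $D_{u,y}=\partial_{\bar z}+B\partial_t+A$ there, with $B(\tilde p)=0$. Suppose $\beta=\sum_{a+b\le N}L_{ab}\circ\partial_z^a\partial_{\bar z}^b|_{\tilde p}$ has exact order $N$. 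The functional $\beta\circ D_{u,y}$ must vanish on the frozen subspace, so it is a multiple of $\pi_{\nu_{\chi_j}}\partial_z^{r_{ij}}|_{\tilde p}$. But its order-$(N+1)$ part is $\sum_{a+b=N}L_{ab}\circ\partial_z^a\partial_{\bar z}^{b+1}|_{\tilde p}$, which is nonzero and involves $\partial_{\bar z}$. This is a contradiction unless $\beta=0$.

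A smaller point concerns the codimension. By~\eqref{eq: equivariance-jets-condition} the $T\mathcal{Y}$-component of the leading coefficient vanishes. So the fiber of $\J^{r_{ij},J_0,\rho_i}(\Sigma,V^i_{\chi_j})\to\mathcal{Y}_i$ is $(\nu_{\chi_j}\otimes\operatorname{Sym}^{r_{ij}})^J$, not $(T_xV_{\chi_j}\otimes\operatorname{Sym}^{r_{ij}})^J$. That is why $a\in\nu_{\chi_j}$ suffices. It also means the codimension is the real rank of $\nu_{\chi_j}$, which is how $\dim_{\R}V^i_{\chi_j}$ has to be read (compare $\dim V_{\chi_i}=n_i$ in the proof of Lemma~\ref{lemma: dimension}).
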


\begin{proof}
   We adapt the proof of \cite[Lemma 6.5]{CieliebakMohnke2007} to the equivariant setting. Let $k-2/p>k'+1$ for some $k' \ge 1$ and $p>2$. Recall from \cite[Lemma 6.6]{CieliebakMohnke2007} that for each $(u, y, J) \in \mathscr{M}$ there is a surjective linear operator
\begin{gather*}
    \mathcal{F}^{k'} \colon B^{k,p}_{k'} \oplus T_J \mathcal{J}_{\bm h,d}(M) \to E^{k,p}_{k'},\\
    (\xi, Z) \mapsto D_{u,y}\xi+\tfrac{1}{2}Z(u)\circ (du-X_{\tilde{H}} \otimes \tilde{\alpha}) \circ j_y,
\end{gather*}
where $B^{k,p}_{k'} \subset T_{u}\mathscr{B}$ is a subset consisting of those sections $\xi \in W^{k,p}_F(\Sigma, u^*TM)$ for which the $k'$-jet $j^{k'}_{\tilde{p}'}\xi$ vanishes for each $\tilde{p}' \in \pi^{-1}(q_i)$, and $E^{k,p}_{k'} \subset \mathscr{E}_{(u,y,J)}$ consists of $1$-forms $\eta$, whose $(k'-1)$-jet $j^{k'-1}_{\tilde{p}'} \eta$ vanishes for each $\tilde{p}' \in \pi^{-1}(q_i)$. We notice that the first step in the proof \cite[Lemma 6.6]{CieliebakMohnke2007} in our setting goes through as in our proof of Lemma~\ref{lemma: basic-surjectivity-on-tangents}, which allows one to show that any distribution in the orthogonal complement of $\operatorname{Im}(\mathcal{F}^{k'})$ is supported on $\pi^{-1}(q_i)$. The remainder of the proof proceeds verbatim.

Since $\mathcal{F}^{r_{ij}}$ is a restriction of $D^{\mathscr{X}}$ to a subrepresentation, Lemma~\ref{lemma: basic-surjectivity-on-tangents} implies that $\mathcal{F}^{r_{ij}}$ is $G$-equivariant and, therefore, we have a surjective linear operator 
$$
\mathcal{F}^{r_{ij}} \colon (B^{k,p}_{r_{ij}})^G \oplus T_J \mathcal{J}_{\bm h,d}(M)^G \to (E^{k,p}_{r_{ij}})^G
$$
for any $(k,p)$ such that $k-2/p>r_{ij}+1$, $k \ge 1$ and $p>2$.

    Now, given $(u, y, J) \in \mathscr{M}^G$ we consider $G_{u(\tilde{p})}$-equivariant local coordinates $\C^{n_0} \times \dots \times\C^{n_{m_i-1}}$ near $u(z) \in M$ with $u(\tilde{p}(q_i))=0$, and $\C^{n_0}$ being a local coordinate of $\mathcal{Y}_i$, while $\C^{n_s}$ trivializes fibers of $V^i_{\chi^{m_i}_s}$. Notice that in these coordinates $J_0$ is standard at the origin, which corresponds to $u(\tilde{p})$. Let $v \in \C^{n_j}$ be an arbitrary vector, and consider 
    $$\tilde{\xi}(z)=z^{r_{ij}}v$$
    in a small neighborhood $D$ of $\tilde{p}(q_i)$. First, we notice that $\tilde{\xi} \in W^{k,p}(D, u^*TM)$ is $\rho_i$-equivariant since
    $$\tilde{\xi}(\tilde{h}_iz)=\zeta_{d_i}^{r_{ij}}z^{r_{ij}}v=z^{r_{ij}}\chi^{m_i}_j(\rho_i(\zeta_{d_i}))v=\chi^{m_i}_j(\tilde{g}_i^{k_i})\tilde{\xi}(z).$$
    
    We extend $\tilde{\xi}$ to the $G$-orbit $G \cdot D \subset \Sigma$ in a natural way to guarantee its $G$-equivariance on $G \cdot D$, and further extend it to the whole $\Sigma$, guaranteeing its $G$-equivariance. As in the proof of \cite[Lemma 6.6]{CieliebakMohnke2007}, the section $\tilde{\eta}=D_u\tilde{\xi}$ has a vanishing $(r_{ij}-1)$th jet, and, moreover, it is $G$-invariant. Hence, there is a pair $$(\xi', Z) \in (B^{k,p}_{r_{ij}})^G \oplus T_J \mathcal{J}_{\bm h,d}(M)^G$$ such that $\mathcal{F}^{r_{ij}}(\xi', Z)= -\tilde{\eta}$. We conclude that $(\tilde{\xi}+\xi', 0, Z) \in T_{(u, y, J)} \mathscr{M}^G$ and $$D \mathcal{J}ev_i(\tilde{\xi}+\xi', 0, Z)=(v, Jv, \dots, \J^{\circ r_{ij}}v),$$ where we use the coordinates explained in Remark~\ref{remark: tangent-jet-space-first}. This proves surjectivity of $D(\pi^{r_{ij}, \chi_j^{m_i}} \circ\mathcal{J}ev_i)$, which implies the statement.

    The proof of the last statement is just the adaptation of the proof above to the case of multiple points, and we leave the details to the reader.
\end{proof}

\begin{remark}
    The choice of the auxiliary open set where the form $\alpha$ vanishes allowed us to avoid using Hamiltonian perturbations as in \cite[Section (9k)]{seidel2008fukaya}. The only reason we do this is to simplify the exposition; an analogous argument works for Hamiltonian perturbations.
\end{remark}

 \begin{corollary}\label{cor: moduli-spaces-with-condtions}
     There exists a comeager subset $\mathcal{J}^{\operatorname{reg}}_{\bm h,d}(M) \subset \mathcal{J}_{\bm h,d}^G(M)$ such that for any $J \in \mathcal{J}^{\operatorname{reg}}_{\bm h,d}(M)$ the space $\mathcal{M}(\overrightarrow{\hat{x}}, \hat{x}_0 ;\bm m)$ is a smooth manifold of dimension 
     \begin{equation}
         |\hat{x}_0|-|\overrightarrow{\hat{x}}|-2\iota(\bm m)+2b+(d-2).
     \end{equation}
     Moreover, for any $I \subset [b]$ the moduli space
     $$\mathcal{M}(\overrightarrow{\hat{x}}, \hat{x}_0 ; \bm m, (I, \chi_I))$$ 
     is its smooth submanifold of codimension 
     \begin{equation}
         \sum_{i \in I}\operatorname{dim}V^i_{\chi^{m_i}_{j(i)}}.
     \end{equation}
 \end{corollary}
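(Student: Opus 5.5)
The plan is the standard Sard–Smale argument applied to the universal moduli space. We combine Lemma~\ref{lemma: basic-surjectivity-on-tangents} with Proposition~\ref{prop: jet-evaluation-submersion} and then run a Taubes-type argument to pass from $C^\varepsilon$ to smooth data.

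First, by Lemma~\ref{lemma: basic-surjectivity-on-tangents} the universal moduli space $\mathscr{M}^G$ is a smooth Banach manifold, taken locally over charts $U_\pi$ of $\mathscr{H}^G_{0,\bm h,d}$ with $W^{k,p}$ maps and $C^\varepsilon$ domain-dependent structures. Consider the projection $\Pi\colon\mathscr{M}^G\to \mathcal{J}_{\bm h,d}(M)^G|_{U_\pi}$. Its differential has kernel and cokernel isomorphic to those of the restricted operator $D_{u,y}\oplus(\text{variation of } y)$ acting on $T_u\mathscr{B}^G\oplus T_yU_\pi$. This operator is Fredholm, because $D_{u,y}$ restricted to $G$-invariant sections is the equivariant part of a Fredholm operator, and $U_\pi$ is finite dimensional. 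Hence $\Pi$ is a Fredholm map. Its index is computed by Proposition~\ref{proposition: index}: the equivariant index is $|\hat{x}_0|-|\overrightarrow{\hat{x}}|-2\iota(\bm m)$, the domain variation adds $2b+(d-2)$, and the constraint $\mathcal{E}^{\mathcal{I}}\in\bm m$ is already built into $\mathscr{B}^G$. Sard–Smale then gives a comeager set of regular values $J$. For each such $J$, the fiber $\Pi^{-1}(J)$, which is the local piece of $\mathcal{M}(\overrightarrow{\hat{x}},\hat{x}_0;\bm m)$, is a smooth manifold of the stated dimension. The smoothness of these pieces is independent of $(k,p)$ by elliptic regularity.

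Second, we restrict $\Pi$ to the submanifold $\mathscr{M}^G(I,\chi_I)$. By Proposition~\ref{prop: jet-evaluation-submersion} this is a Banach submanifold of codimension $\sum_{i\in I}\dim V^i_{\chi^{m_i}_{j(i)}}$, cut out transversally as the preimage of the constant-jet section under a submersion. The restriction $\Pi|_{\mathscr{M}^G(I,\chi_I)}$ is therefore Fredholm, with index equal to the index of $\Pi$ minus this codimension. A second application of Sard–Smale gives a comeager set of $J$ for which $\Pi^{-1}(J)\cap\mathscr{M}^G(I,\chi_I)$ is smooth of the right dimension.

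To identify this fiber as a submanifold of $\mathcal{M}(\overrightarrow{\hat{x}},\hat{x}_0;\bm m)$ of the stated codimension, we use that at a regular value the jet-evaluation map restricted to the fiber is still transverse to the constant section. This follows from the submersion property on the total space together with surjectivity of $d\Pi$ on the fiber, by the standard linear algebra of a transverse fiber product.

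Finally, we intersect over countably many choices: the orbichords, the components $\bm m$, the subsets $I$ with characters $\chi_I$, a countable cover of $\mathscr{H}^G_{0,\bm h,d}$ by charts $U_\pi$, and energy bounds. A countable intersection of comeager sets is comeager, which gives $\mathcal{J}^{\operatorname{reg}}_{\bm h,d}(M)$.

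The main obstacle is passing from $C^\varepsilon$ or $C^\ell$ Banach spaces to smooth $G$-invariant data while keeping the constraint conditions near the stacky points and on ghost components (where $J=J_0$). For this we use Taubes' argument: for each energy bound and compact subset of $U_\pi$, the regular set is open and dense. The subtle point is that perturbations $Z$ must be supported away from the neighborhoods of the stacky points where $J=J_0$. The proof of Proposition~\ref{prop: jet-evaluation-submersion} already handled this, because it achieved surjectivity using the explicit section $\tilde{\xi}=z^{r_{ij}}v$ and placed the perturbations $Z$ on the auxiliary open set where $\alpha$ vanishes. So the surjectivity statements survive restriction to this class of $J$, and $G$-averaging keeps all constructions $G$-invariant.
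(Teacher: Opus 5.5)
Your proposal is correct and follows the same route as the paper, whose proof simply says the result is an immediate consequence of Proposition~\ref{prop: jet-evaluation-submersion} and the Sard--Smale theorem. You spell out what the paper leaves implicit: the Fredholm property and index of the projection $\Pi$, the transversality argument showing the jet-constrained fiber is a submanifold of the unconstrained one of the stated codimension, the countable intersection of comeager sets, and the Taubes-type passage from $C^\varepsilon$ to smooth $G$-invariant data.
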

 
\begin{proof}
    This is an immediate consequence of Proposition~\ref{prop: jet-evaluation-submersion} and the Sard-Smale theorem. 
\end{proof}

Consider the Hurwitz moduli space $\mathscr{H}_{0, (\bm h, \ell), d}^G$ of $G$-covers $(\pi \colon \Sigma \to C; q_1, \dots, q_b)$ with $\ell$ additional marked points on $C$. In other words, the underlying orbicurve $\mathscr{C}$ comes equipped with $\ell$ regular markers. This moduli space, among other things, comes equipped with the space of domain-dependent almost complex structures $\mathcal{J}^{\operatorname{reg}}_{\bm h, d, \ell}(M)$ defined in a similar way as $\mathcal{J}^{\operatorname{reg}}_{\bm h,d }(M)$.
Let us fix a collection of almost complex $G$-invariant submanifolds (not necessarily connected) $\bm Z=(Z_1, \ldots , Z_\ell)$. In addition, with $\bm m$ as above, we consider some submanifolds $Z'_1 \subset \mathcal{Y}_1, \dots, Z_b' \subset \mathcal{Y}_b$, and denote their tuple by $\bm Z'$.
We consider the moduli space 
\begin{equation}
    \mathcal{M}_{\ell}(\overrightarrow{\hat{x}}, \hat{x}_0 ; \bm m, (I, \chi_I), \bm Z', \bm Z)
\end{equation}
defined similarly as $\mathcal{M}(\overrightarrow{\hat{x}}, \hat{x}_0 ; \bm m, (I, \chi_I))$ using  $\mathscr{H}_{0, (\bm h, \ell), d}^G$ as the space of domains and requiring the condition of passing through $Z_i$ at the preimage of the $i$th marker on the orbicurve, and also requiring passing through the $G$-orbit of $Z'_i$ at the points in the preimage of $q_i$. Using a simpler version of the proof above, one obtains the following:

\begin{proposition}\label{prop: proposition-jets-transversality-with-submanifolds}
    There is a comeager subset $\mathcal{J}^{\operatorname{reg}}_{\bm h, d, \ell}(M) \subset \mathcal{J}_{\bm h, d, \ell}^G(M)$
    such that for any $J \in \mathcal{J}^{\operatorname{reg}}_{\bm h, d, \ell}(M)$ the space $\mathcal{M}_{ \ell}(\overrightarrow{\hat{x}}, \hat{x}_0 ; \bm m, (I, \chi_I), \bm Z)$ is a smooth submanifold of the manifold $\mathcal{M}(\overrightarrow{\hat{x}}, \hat{x}_0 ;\bm m)$ of codimension
    \begin{equation}
        \sum_{i \in I} \operatorname{dim}V^i_{\chi^{m_i}_{j(i)}}+\left(\sum_{i=1}^{\ell}\operatorname{codim}_MZ_i-2\ell\right)+\sum_{i=1}^{b}\operatorname{codim}_{\mathcal{Y}_i}(Z_i').
    \end{equation}
\end{proposition}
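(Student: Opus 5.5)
The plan is to run the argument of Proposition~\ref{prop: jet-evaluation-submersion} with the extra point constraints added as further components of one evaluation map, and then apply Sard--Smale. Work on the universal moduli space $\mathscr{M}^G$, now built over $\mathscr{H}^G_{0,(\bm h,\ell),d}$ with the $\ell$ additional regular markers $w_1,\dots,w_\ell \in C$; the Banach manifold and the surjectivity of $D^{\mathscr{X}}$ from Lemma~\ref{lemma: basic-surjectivity-on-tangents} carry over verbatim. Choose lifts $\tilde w_s\in\pi^{-1}(w_s)$ and $\tilde p(q_i)\in\pi^{-1}(q_i)$, and form the combined map
\[
\mathcal{E}\colon \mathscr{M}^G \to \prod_{i\in I}\J^{r_{ij(i)},J_0,\rho_i}(\Sigma,V^i_{\chi^{m_i}_{j(i)}})\times \prod_{i=1}^b \mathcal{Y}_i \times \prod_{s=1}^{\ell} M,
\]
given by the isotypic jet projections at the $q_i$, $i\in I$, the point evaluations $u(\tilde p(q_i))$, and the values $u(\tilde w_s)$ with $\tilde w_s$ allowed to vary in $\Sigma$. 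The $G$-invariance of the $Z_s$ and $Z_i'$ makes the constraints independent of the chosen lifts. The target submanifold is then $s_c^{I,\chi_I}$ intersected with $\prod Z_i'$ over the $\mathcal{Y}_i$-factors, times $\prod Z_s$.

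The key step is to show that $\mathcal{E}$ is a submersion, or at least transverse to this target. For the jet and $\mathcal{Y}_i$-components at the stacky points, proceed as in Proposition~\ref{prop: jet-evaluation-submersion}. Take an arbitrary target vector in $T\mathcal{Y}_i\oplus \nu_{\chi_{j(i)}}$. Realize it locally by the sum of a constant section $v_0\in\C^{n_0}$, which is $\rho_i$-equivariant, and $z^{r_{ij}}v$. Extend this $G$-equivariantly, with cutoffs supported in small disjoint neighborhoods of the orbits $G\cdot\tilde p(q_i)$. Then correct the error using the surjective operator $\mathcal{F}^{r_{ij}}$ on $G$-invariant parts. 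This operator has vanishing jets at all stacky points, so the correction does not change the prescribed values.

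At the markers, the points $\tilde w_s$ have trivial stabilizer. There one takes an arbitrary $w\in T_{u(\tilde w_s)}M$, extends it to the free orbit $G\cdot\tilde w_s$ by $g_*w$, and corrects in the same way. Equivalently, one uses the non-equivariant argument of \cite{CieliebakMohnke2007}, since $G$ acts freely near the orbit. Moving $\tilde w_s$ in $\Sigma$ contributes the extra $2\ell$ dimensions, which accounts for the $-2\ell$ term. Because all the supports are disjoint, the constructions can be done simultaneously, which gives surjectivity of $D\mathcal{E}$ onto the full product.

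It then follows that $\mathcal{E}^{-1}(\text{target})$ is a Banach submanifold whose codimension is the codimension of the target. Summing gives $\sum_{i\in I}\dim V^i_{\chi_{j(i)}}$ from the jets, $\sum_s(\operatorname{codim}_M Z_s)-2\ell$ from the markers, and $\sum_i\operatorname{codim}_{\mathcal{Y}_i}Z_i'$ from the stacky point constraints. When $i\in I$, the jet condition only constrains the $\nu_\chi$ part, because the base point $u(\tilde p(q_i))$ is tracked by the $\mathcal{Y}_i$-factor, so the two kinds of conditions are independent.

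Finally, project the universal moduli space to the space of almost complex structures. This projection is Fredholm, with the same index shift, so Sard--Smale yields a comeager set $\mathcal{J}^{\operatorname{reg}}_{\bm h,d,\ell}(M)$; as in Corollary~\ref{cor: moduli-spaces-with-condtions}, one intersects over the countably many choices of data.

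The main obstacle I expect is the compatibility of the two constraints at a single stacky point $q_i$ with $i\in I$. The jet space $\J^{r_{ij},J_0,\rho_i}$ is fibered over $\mathcal{Y}_i$, so one must verify that the varying base point in $Z_i'$ is transverse to the section $s_c$ fibrewise. I plan to handle this with the splitting from Lemma~\ref{lemma: dimension}, identifying $\J^{r_{ij}}$ with $(TV_\chi|_{\mathcal{Y}}\otimes\operatorname{Sym}^{r}T^*D)^J$, together with Remark~\ref{remark: tangent-jet-space-first}. A secondary concern is ensuring that $J$ remains equal to $J_0$ near the stacky preimages. This is harmless, because the correction terms $Z$ need only be supported where $du\neq0$ and away from those neighborhoods, as in Lemma~\ref{lemma: basic-surjectivity-on-tangents}.
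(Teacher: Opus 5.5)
Your overall route is the paper's. The paper gives no separate proof, only the remark that the statement follows from a simpler version of the proof of Proposition~\ref{prop: jet-evaluation-submersion} together with Sard--Smale. Your combined evaluation map, with point constraints at the free marker orbits accounting for the $-2\ell$, is the natural way to carry that out. Two steps, however, do not work as written; both are easy to repair.

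First, for $i\in I$ the base point of $j^{r_{ij}}_{\tilde p}(\pi_\chi\circ u)$ is $\pi_\chi(u(\tilde p))=u(\tilde p)$. So your $\mathcal{Y}_i$-component duplicates the base-point projection of the jet component, and $D\mathcal{E}$ can never be surjective onto the full product. You can fix this in either of two ways:
\begin{itemize}
\item drop that factor for $i\in I$ and use the target $s_c(Z_i')\subset\J^{r_{ij},J_0,\rho_i}(\Sigma,V^i_{\chi})$;
\item keep it and prove only transversality to $\operatorname{Im}(s_c^{I,\chi_I})\times\prod_i Z_i'\times\prod_s Z_s$. This holds because the base point of $s_c$ ranges over all of $\mathcal{Y}_i$.
\end{itemize}
Either way, this is what resolves the ``compatibility'' issue you list as the main obstacle. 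With either formulation the jet condition contributes only the fiber dimension of $V^i_\chi\to\mathcal{Y}_i$, i.e.\ the real rank of $\nu_\chi$. That is how $\dim V^i_\chi$ in the formula has to be read; the paper does the same implicitly when it writes $\dim V_{\chi_i}=n_i$ in the proof of Lemma~\ref{lemma: dimension}.

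Second, your construction $\tilde\xi=v_0+z^{r_{ij}}v$ followed by a correction through $\mathcal{F}^{r_{ij}}$ breaks down for the constant piece $v_0$. The operator $\mathcal{F}^{r_{ij}}$ only reaches $1$-forms whose $(r_{ij}-1)$-jet vanishes along $\pi^{-1}(q_i)$. But $D_u v_0$ is the zeroth-order term $-\tfrac12 (J\partial_{v_0}J)(u)\,\partial_J u$ (cf.\ \eqref{eq: linear-d-bar-local-formula}). Its $(r_{ij}-1)$-jet at $\tilde p$ is in general nonzero, since $du$ need not be $O(|z|^{r_{ij}})$ there. Instead, treat the two kinds of directions separately:
\begin{itemize}
\item for the base directions, use a plain point-evaluation argument, correcting with $G$-equivariant sections that vanish only on $\pi^{-1}(q_i)$ (the $k'=0$ analogue of \cite[Lemma 6.6]{CieliebakMohnke2007});
\item for the fiber directions, use $z^{r_{ij}}v$ with $\mathcal{F}^{r_{ij}}$, exactly as in the paper.
\end{itemize}
Since the second correction leaves the base point fixed, the two together give surjectivity onto $T\J^{r_{ij},J_0,\rho_i}(\Sigma,V^i_{\chi})$. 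With these changes the rest of your argument (markers, disjoint supports, Sard--Smale) goes through.
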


\section{Obstructions to smoothing curves with ghosts on singularities}\label{section: obstructions to smoothings}

\subsection{Regularization near the fixed points}\label{neighborhoods-of-fixed-sets}

We first introduce the notion of an almost complex structure that is \emph{adapted to the singular locus}. 

\begin{definition}\label{defn: adapted J}
    A $G$-equivariant almost complex structure on the Liouville manifold $M$ is \emph{adapted} to a collection $\{(\mathcal{Y}_i, (g_i))\}_{i\in \mathcal{I}}$ of inertia components of $\mathcal{I}[M/G]$ if the following hold:
    \begin{enumerate}
        \item $J$ is $\omega$-tame and of contact type in a neighborhood of the boundary at infinity $\partial_{\infty}M$;
        \item for each $(\mathcal{Y}_i, (g_i))$ with isotypic decomposition of $TM|_{\mathcal{Y}_i}$ (as a $J$-complex bundle) with respect to the subgroup $\langle g_i\rangle$ of order $m_{\mathcal{Y}_i}$, given by
        \begin{equation}
            TM|_{\mathcal{Y}_i}=T\mathcal{Y}_i \oplus \bigoplus_{j=1}^{m_{\mathcal{Y}_i}-1}\nu_{\chi_j^{m_{\mathcal{Y}_i}}},
        \end{equation}
        there is an auxiliary collection of (potentially empty) almost complex $G$-invariant immersed submanifolds 
        \begin{equation}
            V_{\chi_1^{m_{\mathcal{Y}_i}}}, \ldots, V_{\chi_{m_{\mathcal{Y}_i}-1}^{m_{\mathcal{Y}_i}}}
        \end{equation}
        contained in a $G$-invariant tubular neighborhood $U_{\mathcal{Y}_i}$ of the top strata of the restriction of the fixed point stratification to $ G \cdot \mathcal{Y}_i \subset M^{(g_i)}$, such that each of them contains all of the top strata, and for each $j=1, \dots, m_{\mathcal{Y}_i}-1$
        \begin{equation}
(TV_{\chi_j^{m_{\mathcal{Y}_i}}})|_{\mathcal{Y}_i}=T\mathcal{Y}_i \oplus\nu_{\chi_j^{m_{\mathcal{Y}_i}}};
        \end{equation}
        \item there is a collection of $J$-holomorphic $G$-equivariant projections
        \begin{equation}
            \pi_{j}^{\mathcal{Y}_i} \colon  U_{\mathcal{Y}_i}\to V_{\chi_j^{m_{\mathcal{Y}_i}}}.
        \end{equation}
        \end{enumerate}
\end{definition}

The goal of this subsection is to prove that such almost complex structures exist in the case of our main interest, i.e., for the type $A$ inertia components associated with $T^*Y \subset T^*X$, for $X$ a complex manifold equipped with an action of a finite group $G \subset \operatorname{Aut}(X)$, and $Y$ is an element of the collection of the fixed hypersurfaces of this action.

First, we pick a Hermitian $G$-invariant metric $g_X$ on $X$. For each connected hypersurface $Y \subset X^{\operatorname{sing}}$ with stabilizer $H=H(Y)$ we denote by $\nu_Y \subset TX|_Y$ the subbundle $$\pi_Y \colon \nu_Y \to Y$$
orthogonal to $TY$, which is also the only nontrivial isotypic component of $TX|_Y$. This subbundle provides a natural embedding of $T^*Y$ as a subset 
$$\mathcal{Y=} \{(y,p)\in T^*X \mid y\in Y, p(x)=0 \text{ for } x\in \nu_Y\}.$$
Clearly, $\mathcal{Y} \subset T^* X^H$.
The metric $g_X$ provides a diffeomorphism
\begin{equation}
    j_Y \colon|\nu_Y| \to X
\end{equation}
from a neighborhood of the $0$-section in the total space $|\nu_Y|$ onto a tubular neighborhood of $Y$. Since $\nu_Y$ is orthogonal to $TY$, we get a natural Hermitian metric $g_{\nu_Y}$ on $\nu_Y$. Let us pick some complex linear connection $\nabla^{\nu_Y}$ which preserves this metric. Metrics $g_X|_Y$, $g_{\nu_Y}$ together with the connection induce naturally a Hermitian metric $g_{|\nu_Y|}$ on $|\nu_Y|$. We also have a splitting provided by $\nabla^{\nu_Y}$
\begin{equation}\label{eq: splitting-for-nu}
    T|\nu_Y|= \mathcal{H} \oplus \mathcal{V}
\end{equation}
into vertical and horizontal parts, with the vertical part being the pullback of $\nu_Y$. We notice that this metric is automatically $H$-equivariant, since $H$ is a finite group and acts via unitary transformations on $\nu_Y$. {\em In what follows, we write $\nu=\nu_Y$ to simplify notation.} 

The tangent space to the cotangent bundle $T^*|\nu|$  admits a splitting
\begin{equation}\label{eq: splitting-of-cotangent-to-total-space}
    TT^*|\nu|=T^*\mathcal{H} \oplus T^*\mathcal{V}=(\mathcal{H} \oplus \mathcal{V})^H \oplus (\mathcal{H}^* \oplus \mathcal{V}^*)^V.
\end{equation}
where we use the metrics $g_{\nu}$ and $g_{Y}$ separately. In other words, it is the splitting corresponding to the direct sum 
$$\pi_Y^*(\nabla^{\nu})^{\vee} \oplus \pi_Y^*(\nabla_Y)^{\vee}$$
of pullbacks of connections dual to $\nabla^{\nu}$ and the Levi-Civita connection on $Y$. Naturally, the first two components correspond to the horizontal lift of~\eqref{eq: splitting-for-nu}, while the remaining components represent the vertical lift of their duals, which explains the superscript notation.

We warn our reader that this is not the splitting into horizontal and vertical parts of $TT^*|\nu|$ with respect to the metric that we put on it.  However, it has the following advantage: the total space of any subbundle $\nu'$ of $$\pi_{T^*\nu} \colon T^*\nu=\nu \oplus \nu^* \to T^*Y$$ naturally provides a submanifold of $T^*|\nu|$ and its tangent space is given by 
$$T^*\mathcal{H} \oplus \pi^*_{T^*\nu}\nu'.$$
There is a split almost complex structure on $T^*|\nu|$ 
\begin{equation}\label{eq: split-acs}
    J_{\mathcal{Y}}=\begin{pmatrix}
        J_{T^*\nu} & 0 \\
        0 & J_{T^*Y}
    \end{pmatrix},
\end{equation}
which is $H$-equivariant with respect to the induced action on $T^*|\nu|$.
The bundle $T^*\nu$ splits into the direct sum
\begin{equation}\label{eq: splitting-of-normal-bundle}
    \nu^{(1,0)} \oplus \nu^{(0,1)},
\end{equation}
where $\nu^{(1,0)}$ is the graph of the operator
$$J_{T^*\nu}(-I) \colon \nu \to \nu^*,$$
and $\nu^{(0,1)}$ is the graph of $J_{T^*\nu}I$, where $I$ is the complex structure on $\nu$.
The above discussion suggests that $\nu^{(1,0)}$ and $\nu^{(0,1)}$ naturally give rise to submanifolds of $T^*X$, both of which are diffeomorphic to the pullback $\pi_{T^*Y}^*\nu$ for the standard projection $\pi_{T^*Y} \colon T^*Y \to Y$. We denote them by $V_{\chi}$ and $V_{-\chi}$, where $\chi$ is the character of the action of $H$ on $\nu$. We notice that with respect to $J_{\mathcal{Y}}$, these submanifolds are almost complex, and, moreover, we have $J_{\mathcal{Y}}$-holomorphic projections
\begin{equation}
    \pi_{\pm \chi} \colon T^*|\nu| \to V_{\pm \chi}
\end{equation}
induced from the splitting~\eqref{eq: splitting-of-normal-bundle}.

Unless $\nabla^{\nu}$ is flat, $J_{\mathcal{Y}}$ does not coincide with $J_{T^*|\nu|}$, and therefore, one should suspect that $J_{\mathcal{Y}}$ may fail to be compatible with the standard symplectic structure $\omega$. The following lemma shows that it satisfies the properties that we require.

\begin{lemma}
    The split almost complex structure $J$ is of contact type. Moreover, this almost complex structure is $\omega$-tame on a sufficiently small neighborhood of $T^*Y \subset T^*|\nu|$. 
\end{lemma}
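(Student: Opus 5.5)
The plan is to check both claims directly from the explicit form \eqref{eq: split-acs} of $J_{\mathcal{Y}}$. Tameness is an open condition, so it will follow from compatibility along the zero section together with a continuity argument. The contact-type claim concerns the radial Liouville direction at infinity and will follow from the way the splitting \eqref{eq: splitting-of-cotangent-to-total-space} interacts with the fiber scaling.

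\emph{Contact type.} Write $\lambda$ for the canonical Liouville form on $T^*|\nu|$ and $H_0=\tfrac12|p|^2$, where the norm comes from $g_{|\nu|}$. The splitting \eqref{eq: splitting-of-cotangent-to-total-space} identifies $TT^*|\nu|$ with $T^*\mathcal{H}\oplus T^*\mathcal{V}$. Each summand $J_{T^*\nu}$ and $J_{T^*Y}$ is the standard metric almost complex structure on a cotangent-type bundle: it sends a horizontal vector to the vertical covector dual to it under the metric, and back with a sign. The steps are as follows.
\begin{enumerate}
\item Check that $\lambda$ vanishes on the horizontal lifts $(\mathcal{H}\oplus\mathcal{V})^H$ of the dual connection. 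This holds because $\lambda(v)=p(d\pi v)$ and the lift is chosen so that $d\pi$ is the identity onto $\mathcal{H}\oplus\mathcal{V}$.
\item Check that $dH_0$ vanishes on horizontal vectors, since the dual connection preserves the metric on $T^*|\nu|$.
\item Note that $J_{\mathcal{Y}}$ maps the vertical Liouville field $p\partial_p$ to the horizontal vector metrically dual to $p$.
\item Compare the two sides of $\lambda\circ J=dH_0$ on vertical and on horizontal vectors separately. Both sides are then computed by the same pairing with $p$.
\end{enumerate}
Items 1--4 reduce the identity to the standard computation for the metric almost complex structure on a cotangent bundle, carried out fiberwise in the metric $g_{|\nu|}$.

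\emph{Tameness near $T^*Y$.} Along the zero section $|\nu|$ of $T^*|\nu|$, the horizontal distribution of the dual connection agrees with the tangent space of the zero section. Along the fibers it agrees with the vertical directions, so at these points $J_{\mathcal{Y}}$ coincides with the metric almost complex structure $J_{T^*|\nu|}$. The latter is $\omega$-compatible, so we get $\omega(v,J_{\mathcal{Y}}v)>0$ at every point of $T^*Y$ viewed as the subset $\{p|_{\nu}=0\}$ over $Y$.

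This equality of the two structures actually holds along a larger locus: the splitting \eqref{eq: splitting-of-cotangent-to-total-space} differs from the Levi-Civita splitting only through the curvature of $\nabla^{\nu}$ and the second fundamental form of $Y$. These corrections vanish on $\mathcal{Y}$ itself, where the $\nu$-coordinate and the $\nu^*$-momentum are both zero, so the discrepancy $J_{\mathcal{Y}}-J_{T^*|\nu|}$ is small in a neighborhood of $\mathcal{Y}$. Positivity of $\omega(v,Jv)$ on unit vectors is an open condition in $J$ and in the base point, so $J_{\mathcal{Y}}$ is $\omega$-tame on a sufficiently small neighborhood.

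\emph{Main obstacle.} This neighborhood must be uniform in the unbounded $p_Y$-direction of $T^*Y$, and compactness alone does not give that. I would handle this by exploiting homogeneity under fiber scaling. After rescaling by $\psi^{\rho}$, the connection correction terms are linear in the $\nu$-coordinates and in $p_{\nu}$, but only bounded-homogeneous in $p_Y$. One therefore obtains tameness on a conical region $\{|x_\nu|+|p_\nu|<\epsilon(1+|p_Y|)^{-1}\}$, or on a uniform tube in the rescaled metric. If that fails, I would shrink the neighborhood conically, which is harmless because the paper only needs the structure near the singular locus.
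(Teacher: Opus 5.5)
Your argument has a wrong step in each half. The contact-type part is fixable in one line. The tameness part is missing the computation that makes the lemma easy.

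\textbf{Contact type.} Your Step 1 is backwards, and as written it breaks the computation. The Liouville form $\lambda=p(d\pi\,\cdot)$ vanishes on the \emph{vertical} summand $(\mathcal{H}^*\oplus\mathcal{V}^*)^V$, not on the horizontal lifts. Precisely because $d\pi$ maps $(\mathcal{H}\oplus\mathcal{V})^H$ isomorphically onto $T|\nu|$, one has $\lambda(X^H,U^H,0,0)=p(X)+w(U)\neq 0$. With your Step 1, a vertical vector $\alpha^V$ would give $\lambda(J_{\mathcal Y}\alpha^V)=0$, since $J_{\mathcal Y}\alpha^V$ is horizontal. But $dH_0(\alpha^V)=\langle p,\alpha\rangle$, which contradicts the identity you want. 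Once Step 1 is corrected, your Steps 2--4 become the paper's computation: $\lambda(X^H,U^H,\alpha^V,\xi^V)=p(X)+w(U)$, $dh=\langle p,\alpha\rangle+\langle w,\xi\rangle$, and $J_{\mathcal Y}$ exchanges the two summands by metric duality.

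\textbf{Tameness.} The equality $J_{\mathcal Y}=J_{T^*|\nu|}$ you actually derive holds on the zero section $\{p=0,\,w=0\}$ of $T^*|\nu|$, i.e.\ on $|\nu|$. That is not $T^*Y=\{v=0,\,w=0\}$, so ``positivity at every point of $T^*Y$'' does not follow. Equality over $\{v=0\}$ is true, since the Levi-Civita connection of $g_{|\nu|}$ differs from $\pi_Y^*\nabla_Y\oplus\pi_Y^*\nabla^{\nu}$ by terms linear in $v$, but you only assert it.

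More seriously, the discrepancy $J_{\mathcal Y}-J_{T^*|\nu|}$ is of order $|v|\,(|p|+|w|)$. It is not small on any tube of fixed width. Comparison plus openness therefore gives at best a region shrinking like $(1+|p|)^{-1}$. That would still be a neighborhood, but only once you prove the bound, and you leave it as a hedge.

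The missing idea is that you never need to compare with $J_{T^*|\nu|}$: compute $\omega=d\lambda$ directly in the split frame. From $\lambda=p(X)+w(U)$ and $[X^{\mathcal H},Y^{\mathcal H}]=[X,Y]^{\mathcal H}-(R^{\nabla^{\nu}}(X,Y)v)^{\mathcal V}$ one gets
\[
\omega(\Phi,\Theta)=\beta(X)+\eta(U)-\alpha(Y)-\xi(Z)-w\bigl(R^{\nabla^{\nu}}(X,Y)v\bigr).
\]
So $\omega(\Phi,J_{\mathcal Y}\Phi)$ is the standard positive-definite expression in $(X,U,\alpha,\xi)$ plus a single error bounded by $C\,|v|\,|w|\,(|X|^2+|\alpha|^2)$, with $C$ controlled by a bound on $R^{\nabla^{\nu}}$. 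Hence $J_{\mathcal Y}$ is tame wherever $|v|\,|w|$ is small, uniformly in $p$.

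The ``main obstacle'' you identify is an artifact of the comparison. The only torsion of the connection defining the splitting is the vertical term $R^{\nabla^{\nu}}(X,Y)v$, which pairs with $w$ alone. So the $p$-dependent part of $J_{\mathcal Y}-J_{T^*|\nu|}$ never enters $\omega(\cdot,J_{\mathcal Y}\cdot)$.
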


\begin{proof}
    Consider a point $(x, v, p, w) \in T^*|\nu|$ and a tangent vector $$(X^H, U^H, \alpha^V, \xi^V) \in TT^*_{(x, v, p, w)}|\nu|$$
    with respect to our splitting~\eqref{eq: splitting-of-cotangent-to-total-space}. 
    First, we observe that the standard Hamiltonian function $h=\frac{1}{2}|\cdot|^2_{g_{|\nu|}}$ computed with respect to $g_{|\nu|}$ in these coordinates is given by
    $$h(x,v,p,w)=\tfrac{1}{2}(|p|^2_{g_{M}}+|w|^2_{g_\nu}).$$
    Therefore,
    \begin{equation}
        dh_{(x,v,p,w)}(X^H, U^H, \alpha^V, \xi^V)=\langle p, \alpha\rangle+\langle w, \xi \rangle .
    \end{equation}
    On the other hand, the standard Liouville $1$-form $\lambda$ in these coordinates is given by
    \begin{equation}
        \lambda_{(x,v,p,w)}(X^H, U^H, \alpha^V, \xi^V)=p(X)+w(U).
    \end{equation}
    These two formulas imply the identity $\lambda \circ J=dh$.

    As for the tameness, one computes that $\omega=d\lambda$ in these coordinates for a pair of vector fields $$\Phi=(X^H, U^H, \alpha^V, \xi^V) \text{ and } \Theta=(Y^H, Z^H, \beta^V, \eta^V)$$ is given by
    \begin{equation}
        \omega(\Phi, \Theta)=\beta(X)+\eta(U)-\alpha(Y)-\xi(Z)-w(R^{\nabla^{\nu}}(X,Y)v),
    \end{equation}
    where the last term comes from the formula for commuting the horizontal vector fields
    $$[X^{\mathcal{H}},Y^{\mathcal{H}}]=[X, Y]^{\mathcal{H}}-(R^{\nabla^\nu}(X,Y)v)^{\mathcal{V}},$$
    with respect to the decomposition~\eqref{eq: splitting-for-nu}.
    Hence, evaluating $\omega(\cdot, J\cdot)$ introduces one potentially negative term which depends on the curvature tensor $R^{\nabla^{\nu}}$, and is sufficiently small whenever the norms $|v|, |w|$ are small.
\end{proof}

\begin{remark}
    This proof shows that compatibility is out of reach since $\omega$ and $R^{\nabla^{\nu}}$ are of completely different natures. In the study of relative invariants with respect to a symplectic divisor, some miracles occur due to which one is able to pick the curvature of type $(1,1)$ with respect to an adapted $J$ as in \cite[Remark 6.2.6]{pomerleanoseidel2024}. Our Liouville submanifold $T^*Y$ is of codimension $4$, so there is no reason for such miracles. This is the main reason for working with tame almost complex structures in the context of orbifold Floer theory.
\end{remark}

\begin{remark}\label{remark: coordinates-for-isotypic-decomposition}
   Let us describe our almost complex submanifolds $V_{\pm \chi}$ using the above coordinates. These submanifolds are given by
   $$V_{\pm\chi}=\{(x, v_{\pm}, p, B_{\pm}v_{\pm}) \mid B_{\pm}=J_{T^*\nu}(\mp I)\}$$
   and the projections $\pi_{\pm \chi}$ are given by the formulas
   $$(x,v,p,w) \mapsto (x, \frac{1}{2}(v+B_{\pm}^{-1}w), p, \frac{1}{2}(B_{\pm}v+w)).$$
   Consider a local chart $\R^{2n-2}$ for $Y$. Then we get a local chart for $T^*|\nu|$ near $T^*Y$ given by  
   $$\C_{+} \times \C_{-} \times T^*\R^{2n-2},$$
   where the complex structures on the first two coordinates coincide with the pullback of $J_{\mathcal{Y}}$, and are given by $i_{\pm}=\pm I$ on $\C_{v_{\pm}}$, along $T^*Y$, the almost complex structure $J_{\mathcal{Y}}$ is then given by $i_+\oplus i_- \oplus J_{\R^{2n-2}}$, with $J_{\R^{2n-2}}$ arising from $T^*Y$.
   Let us now denote by $\alpha$ the pullback to $T^*Y$ of the $1$-form expressing the connection $\nabla^{\nu}$ in a local chart $\R^{2n-2}$. A direct computation shows that in these coordinates $J_{\mathcal{Y}}$ is given by the following matrix
   \begin{equation}
   \begin{pmatrix}
       i_+&0&v_+(\alpha+i_+\circ\alpha\circ J_{\R^{2n-2}}) \\
       0&i_-&v_-(\alpha+i_- \circ \alpha \circ J_{\R^{2n-2}}) \\
       0&0&J_{\R^{2n-2}}
   \end{pmatrix},
    \end{equation}
    and from this description it is immediate that projections $\pi_{\pm \chi}$ to $V_{\pm \chi}=\{v_{\pm}=0\}$ are $J_{\mathcal{Y}}$-holomorphic. Treating each $V_{\pm}$ as an almost complex divisor, we also see that $J_{\mathcal{Y}}$ is of the form specified in \cite[Equation (6.1.8)]{pomerleanoseidel2024} with respect to these divisors.
\end{remark}

Let us pick a nonnegative function $\delta_{\mathcal{Y}}$ on $T^*Y$ (and we feel free to shrink it for future constructions) such that $J_\mathcal{Y}$ is $\omega$-tame in the $\delta_\mathcal{Y}$-neighborhood of $T^*Y$. We define an almost complex structure $\widetilde{J}_{\mathcal{Y}}$ on this neighborhood  by setting it to be equal to $J_{\mathcal{Y}}$ on the $\delta_Y/3$-neighborhood, on the region $|(v,w)| \in [\frac{2}{3}\delta_{\mathcal{Y}}, \delta_{\mathcal{Y}})$ we set it to be equal to $J_{T^*|\nu_Y}$, and for $s \in [\frac{1}{3}\delta_{\mathcal{Y}}, \frac{2}{3}\delta_{\mathcal{Y}})$ we define it via the standard polarization interpolation
\begin{equation}
    \widetilde{J}_{\mathcal{Y}}|_{|(v,w)|=s}=J_s(-J_s^2)^{-1/2},
\end{equation}
where 
$$J_s=(1-\tfrac{s-\delta_{\mathcal{Y}}/3}{\delta_{\mathcal{Y}/3}})J_{\mathcal{Y}}+\tfrac{s-\delta_{\mathcal{Y}}/3}{\delta_{\mathcal{Y}/3}}J_{T^*\nu_Y}.$$

Tameness guarantees that this interpolation is well-defined; see \cite[Section 2.2]{Wendl2010} for more details. Since $J_{\mathcal{Y}}$ and $J_{T^*\nu}$ coincide along $T^*Y$, we can assume that $\widetilde{J}_{\mathcal{Y}}=J_{T^*\nu}\exp(Z)$ for $Z \in C^\infty(TT^*\nu)$ with $$\|Z\|_{C^0}< \varepsilon'$$ for any small $\varepsilon'>0$ (one needs to shrink $\delta_{\mathcal{Y}}$ in response to $\varepsilon'$).

Now that we understand how to construct an adapted almost complex structure on a $\delta_{\mathcal{Y}}$-neighborhood of $T^*Y$ inside $T^*|\nu_Y|$, it only remains to build a $G$-equivariant almost complex structure on the whole manifold $M$.

To construct such an almost complex structure, for each $G$-orbit $G\cdot Y$ of any such hypersurface $Y$, we pick a function $\varepsilon_Y \colon Y \to \R_{\ge 0}$ satisfying the following:
\be
\item[(i)] $\varepsilon_Y$ is positive on $Y^{\circ}$ and vanishes on $\partial Y^{\circ}$, where $Y^{\circ} \subset Y$ is the maximal stratum of the fixed point stratification, i.e., $Y^{\circ}=\{y \in Y \mid G_y=H\}$;
\item[(ii)] the images of $\varepsilon_Y$-neighborhoods of the $0$-sections of $\nu_Y$ under $j_Y$'s do not intersect away from $X^{\operatorname{sing}}$.
\ee
We define a Hermitian metric $\widetilde{g}_X$ on $X$ via an interpolation between $g_X$ and the pushforward of $g_{|\nu_Y|}$ in the $\varepsilon_Y$-neighborhood away from the $\varepsilon_Y/2$-neighborhood for each hypersurface $Y$. This metric is automatically $G$-equivariant. 

Finally, setting $\delta_{\mathcal{Y}}=\varepsilon_Y/2$ we define $\widetilde{J}$ by interpolating between the almost complex structure induced on $M$ by $\widetilde{g}_X$ and pushforwards of $J_{\mathcal{Y}}$'s as explained above.  

The construction we just presented proves the following:

\begin{proposition} \label{prop: existence of adapted almost complex structure}
     The cotangent bundle $M=T^*X$ of a complex manifold $X$ admits an almost complex structure $J_0$ adapted to all type $A$ inertia components. Moreover, we can assume that such $J_0$ agrees with the standard almost complex structure $J_{\operatorname{std}}$ induced by the metric away from the type $A$ inertia, and is given by $J_0=J_{\operatorname{std}}\exp(Z)$, for $\|Z\|_{C^0} < \varepsilon$ for any fixed small $\varepsilon>0$.
\end{proposition}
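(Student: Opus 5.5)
The plan is to assemble the $G$-equivariant adapted structure from local models near each $\mathcal{Y}=T^*Y$, glued to the standard structure elsewhere. This follows the construction just given, and there are three things to check.

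\textbf{Step 1: local model near each $T^*Y$.} I would fix a $G$-invariant Hermitian metric $g_X$ and, for each $G$-orbit of a connected hypersurface $Y$, build the split structure $J_{\mathcal{Y}}$ of \eqref{eq: split-acs} on $T^*|\nu_Y|$. By the preceding lemma it is of contact type and $\omega$-tame on a $\delta_{\mathcal{Y}}$-neighborhood of $T^*Y$. Remark~\ref{remark: coordinates-for-isotypic-decomposition} then supplies the data needed for adaptedness: the submanifolds $V_{\pm\chi}$, which are almost complex and tangent along $\mathcal{Y}$ to $T\mathcal{Y}\oplus\nu_{\pm\chi}$, and the $J_{\mathcal{Y}}$-holomorphic projections $\pi_{\pm\chi}$. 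For the $A_{m-1}$ case these are the only nontrivial isotypic components; all other $V_{\chi_j}$ may be taken empty, which Definition~\ref{defn: adapted J} allows. $H$-equivariance holds because $H$ acts unitarily and preserves $\nabla^\nu$. I would transport everything to $T^*X$ via $j_Y$ and spread it over $G\cdot Y$ by the $G$-action. This is consistent because $gY$ carries the pushed-forward data.

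\textbf{Step 2: interpolation.} On the shell $|(v,w)|\in[\delta_{\mathcal{Y}}/3,2\delta_{\mathcal{Y}}/3]$ I would interpolate between $J_{\mathcal{Y}}$ and the metric structure using the normalized convex combination $J_s(-J_s^2)^{-1/2}$. This is well defined because both endpoints tame $\omega$, so the convex combination still tames $\omega$ and has no real eigenvalues. The result is $G$-invariant since every ingredient is. On the inner $\delta_{\mathcal{Y}}/3$-neighborhood the structure equals $J_{\mathcal{Y}}$, so the conditions of Definition~\ref{defn: adapted J} hold there with $U_{\mathcal{Y}}$ taken to be that neighborhood of the top stratum. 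The cutoff functions $\varepsilon_Y$ satisfy (i)–(ii), so the neighborhoods for different orbits do not meet away from $X^{\operatorname{sing}}$. Away from these neighborhoods the structure is the one induced by the interpolated metric $\widetilde g_X$. Contact type near infinity needs only fiberwise radial behaviour. Both $J_{\mathcal{Y}}$ (by the lemma) and metric-induced structures satisfy $\lambda\circ J=dH_0$, and $H_0$ is defined with respect to the interpolated metric. I would need to check that the interpolation preserves this identity. It does, because it is linear in $J$ and $\lambda\circ(\cdot)=dH_0$ is an affine condition; the normalization $(-J_s^2)^{-1/2}$ commutes with the relevant parts where the endpoints agree on the radial and Reeb directions.

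\textbf{Step 3: smallness.} $J_{\mathcal{Y}}$ and the metric structure agree along $T^*Y$, and by the matrix in Remark~\ref{remark: coordinates-for-isotypic-decomposition} their difference is $O(|v_\pm|)$. Hence shrinking $\delta_{\mathcal{Y}}$, and hence $\varepsilon_Y$, makes $J_0=J_{\operatorname{std}}\exp(Z)$ with $\|Z\|_{C^0}<\varepsilon$.

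\textbf{Main obstacle.} The main difficulty is that $Y^\circ$ is noncompact near lower strata, and $\varepsilon_Y\to0$ at $\partial Y^\circ$. The estimates must hold uniformly, and the gluing must stay smooth near $X^{\operatorname{sing}}\setminus Y^\circ$ where neighborhoods of different $Y$ accumulate. I would handle this by choosing $\varepsilon_Y$ decaying fast enough, for example comparable to the square of the distance to $\partial Y^\circ$. That choice makes all interpolation derivatives tend to $0$, so $\widetilde J$ extends smoothly by $J_{\operatorname{std}}$ across the lower strata.
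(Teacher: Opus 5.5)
Your construction is the paper's: the split model $J_{\mathcal{Y}}$ with $V_{\pm\chi}$ and $\pi_{\pm\chi}$, the polarized interpolation on a shell, and the cutoffs $\varepsilon_Y$ with the interpolated metric $\widetilde g_X$. The paper's sketch is silent on smoothness along the deeper strata, and the argument you add there does not work.

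\textbf{Why the decay argument fails.} Near $T^*Y^{\circ}$ one has $\widetilde J=J_{\mathcal{Y}}$, while on the complement of the shrinking neighbourhoods $\widetilde J$ is the metric structure. So for $d\widetilde J$ to be continuous at $T^*(\partial Y^{\circ})$, the first-order (normal) discrepancy between $J_{\mathcal{Y}}$ and the metric structure along $T^*Y$ must tend to $0$ at $\partial Y^{\circ}$. This is a condition on the local data, and no decay rate of $\varepsilon_Y$ affects it.

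Generically the condition fails. The Levi-Civita horizontal distribution of $g_{|\nu|}$ differs from the split one by terms such as $p(A_XU)$. Up to a constant these equal $g_\nu(U,R^{\nabla^{\nu}}(X,p^\sharp)v)$, where $A$ is the O'Neill tensor of $|\nu|\to Y$. Hence $J_{\mathcal{Y}}-J_{T^*|\nu|}$ vanishes only to first order along $T^*Y$, with linear coefficient given by $R^{\nabla^\nu}$ paired against the fiber coordinate $p$.

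On a shell of width $\sim\varepsilon_Y$, the $k$-th normal derivative of the interpolation is then of size $\varepsilon_Y^{1-k}$ times this coefficient. Faster decay therefore makes higher derivatives worse, and the same happens for $\widetilde g_X$, whose discrepancy with $j_{Y*}g_{|\nu|}$ also vanishes only to finite order along $Y$. So your claim that ``all interpolation derivatives tend to $0$'' is false, and $\widetilde J$ is in general not even $C^1$ across $T^*Z$ for a deeper stratum $Z$.

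\textbf{What is needed instead.} The discrepancy must vanish identically, or to infinite order, near the deeper strata. In general this cannot be arranged by choosing $\nabla^{\nu}$: vanishing of the coefficient along $Z$ forces $\nabla^{\nu}$ to be flat on $\nu_Y|_Z$, which is impossible when $c_1(\nu_Y|_Z)\neq 0$ in real cohomology.

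One has to use, near each deeper stratum, a local model adapted simultaneously to all hypersurfaces through it. For example, take a split model over $T^*Z$ built from a connection on $\nu_Z$ that is equivariant for the generic stabilizer of $Z$; such a connection preserves all the relevant isotypic line subbundles. The models over the various $Y^{\circ}$ must then be chosen compatibly with it, inductively over the stratification, in the spirit of the compatibility mentioned in the remark following the proposition.

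\textbf{Two smaller points.}
\begin{enumerate}
\item The same $p$-dependent term gives $J_{\mathcal{Y}}-J_{\operatorname{std}}=O(|v|(1+|p|))$, not $O(|v_\pm|)$ uniformly. To get $\|Z\|_{C^0}<\varepsilon$ you must let $\delta_{\mathcal{Y}}$ shrink like $(1+|p|)^{-1}$, which is harmless for adaptedness.
\item Your contact-type conclusion is right, but the clean reason is different from the one you give. From $\lambda\circ J_i=dH_0$ one gets $dH_0\circ J_i=-\lambda$, hence $\lambda\circ J_s^2=-\lambda$. So $\lambda$ is fixed by $(-J_s^2)^{-1/2}$, which commutes with $J_s$, and therefore $\lambda\circ \widetilde J=\lambda\circ J_s=dH_0$.
\end{enumerate}
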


\begin{remark}
    Proposition~\ref{prop: existence of adapted almost complex structure} generalizes to the case of a fixed component $Y \subset X$ with a cyclic stabilizer of a generic point, i.e., to a cyclic quotient singularity case. In this case, one simply needs to split $\nu_Y$ into isotypic components and repeat our constructions from above.
\end{remark}

\begin{remark}
    For future applications, one may want to generalize Proposition~\ref{prop: existence of adapted almost complex structure} to arbitrary collections of inertia components in arbitrary Liouville $G$-manifolds, and we expect such a result to hold true.
    One may also want to pick the auxiliary isotypic directions in a compatible way, and one should be able to achieve this using the methods developed in \cite[Appendix A]{baixu2022transversality}. 
\end{remark}

\subsection{Kuranishi model}\label{section Kuranishi model}

In this subsection, we describe the gluing model for an orbicurve with a ghost bubble, closely following the approach of \cite{doan2023}. We also describe the setup of Theorem~\ref{theorem: main-obstruction-result} in more detail.
Our main applications concern the case where $(M,\omega)$ is a Liouville domain, but the arguments below apply to an arbitrary symplectic manifold.

Let $u_0 \colon \Sigma_0 \to M$ be a $G$-equivariant curve equipped with some domain-dependent equivariant almost complex structure $J$ and $(\pi \colon \Sigma_0 \to C_0; q_1, \dots, q_b)$ a $G$-cover in the Hurwitz moduli space $\overline{\mathscr{H}}^G_{0, \bm h, d}$.  Assume $u_0$ is a nodal curve with a decomposition of the domain
\begin{equation}
    \Sigma_0=(\Sigma_{\bullet} \sqcup \Sigma_{\mathghost}, \hat{\nu}),
\end{equation}
such that the restriction of $u_0$ to each component of $\Sigma_{\mathghost}$ is constant. (The notation $\hat\nu$ and $\nu$ for the nodal structure will be explained in the next paragraph.)
The underlying nodal orbidisk, which is injective on isotropy, is denoted
$$
    v_0 \colon \mathscr{C}_0 \to [M/G].
$$
We assume that 
\begin{equation}\label{eq: orbidisk-with-ghost-decomposition}
    \mathscr{C}_0=(\mathscr{C}_{\bullet}\sqcup \mathscr{C}_{\mathghost}, \nu)
\end{equation} is a twisted nodal orbidisk, consisting of two components and a unique node $\ddot{n}=(n_{\bullet}, n_{\mathghost})$ with $n_{*}\in \mathscr{C}_{*} $. The component $\mathscr{C}_{\mathghost}$ is either an orbidisk or an orbisphere. In the orbisphere case, $\ddot{n}$ is an interior node; it may be stacky, and if so, is balanced.  In the orbidisk case, $\ddot{n}$ is a standard boundary node. 

As in \cite[Section 2]{doan2023}, we treat nodal orbicurves as smooth orbicurves equipped with a \emph{nodal structure} $\nu$ (or $\hat{\nu}$), i.e., an involution of the normalization of a nodal orbicurve which identifies the preimages of a given node and is the identity elsewhere. In this paradigm, each node is a pair of points on the curve, and we use the notation $\ddot{n}$ to stress this. Sometimes we treat $\ddot{n}$ as a common point of $\mathscr{C}_\bullet$ and $\mathscr{C}_{\mathghost}$, which is in fact a point of the underlying quotient $\mathscr{C}_0/\nu$.

We assume that the image of $\mathscr{C}_{\mathghost}$ is given by a point in $[G\cdot \mathcal{Y}/G]$, where $[G\cdot \mathcal{Y}/G]$ is a cyclic quotient singularity. In particular, the stabilizer $H=H_\mathcal{Y}$ of the symplectic submanifold $\mathcal{Y}$ is a cyclic subgroup of order $m_\mathcal{Y}$.  We require the stabilizer of this point to be exactly $H_\mathcal{Y}$, i.e., this point stays away from deeper strata of the fixed point stratification. This implies that the order of the stabilizer of any stacky point $z \in \mathscr{C}_{\mathghost}$ divides $m_\mathcal{Y}$. We denote by $m_{\ddot{n}}$ the order of the stabilizer of $\ddot{n}$.

We fix an $\mathcal{Y}$-adapted $\omega$-tame almost complex structure $J_0$ in the sense of Definition~\ref{defn: adapted J} (2) and (3) (we note that the contact-type condition is not required in the proof of obstruction results).
Hence, for each character $\chi \in \operatorname{Irr}_{\C}(H)$ for which the isotypic component $\nu_{\chi}\subset \nu_{\mathcal{Y}}$ is nontrivial, we have an auxiliary choice of a $G$-invariant almost complex submanifold $V_\chi$. Here $\nu_{\chi}$ is regarded as a $J_0$-complex $H$-equivariant bundle over $\mathcal{Y}$.

The domain-dependent $J$ on $\Sigma_0$ is assumed to coincide with $J_0$ near the branch points and on all closed components. Moreover, we assume that $J$ is in fact a domain-dependent almost complex structure over the Hurwitz moduli space, which varies continuously in the $C^\infty$-topology, and we require the difference $|J-J_0|_{C^0}$ to be sufficiently small.

We assume that the boundary $\partial\Sigma_0$ is mapped to a prescribed tuple of regular equivariant Lagrangians intersecting transversely, and boundary punctures are mapped to $G$-orbits of intersection points.  We will assume that, for the metric on $M$ that we fix, all Lagrangians under consideration are totally geodesic. We also note that we allow non-regular equivariant Lagrangians as well. 

\begin{remark}
    Although the current setup slightly differs from that of Section~\ref{section: orbifold-Fukaya-prelim}, we can reduce to the current setup using the localization tricks of Section~\ref{section: localization tricks}. One, of course, could rewrite the following gluing analysis in the presence of full Floer data, but we prefer to work with simplified notation and estimates.
\end{remark}

We assume that the linearized operator $$D_{u_0} \colon W^{1,p}_F(\Sigma_0, u_0^*TM) \to L^p(\Sigma_0, u_0^*TM), \quad p>2,$$
associated with $\overline{\partial}_{J,H}$ (see~\eqref{eq: Floer-equation}) is \emph{Fredholm}. Here $(u_0^*TM, F)$ is the bundle pair with real boundary condition $F$ induced by pulling back tangent spaces to the Lagrangians under consideration. The transversality of intersections of Lagrangians is known to imply the Fredholmness of $D_{u_0}$; see \cite[Section 2.3]{salamon1997} for the classical reference and \cite[Section 6.5]{cant2022thesis} for the proof of this statement in full generality.

\subsubsection{Choosing a lift of the obstruction}\label{subsection: choice-of-the-lift-of-the-obstruction}  

In this subsection, we treat the case of an interior node, leaving the boundary node case to Section~\ref{subsection: boundary-node-gluing}. The interior node case is technically more involved as stacky nodes require careful treatment.

As before, $\pi \colon \Sigma_0 \to C_0$ is a $G$-cover over the coarse curve of $\mathscr{C}_0$,  
where $\Sigma_0=(\Sigma_{\bullet} \sqcup \Sigma_{\mathghost}, \hat{\nu})$, equipped with complex structure $j_0$, with $|G|/m_{\ddot{n}}$ nodes 
(in the preimage of $\ddot{n}$), and $\Sigma_{\mathghost}$ is the disjoint union
\begin{equation}\label{eq: ghost-domain-decomposition}
\Sigma_{\mathghost}=\bigsqcup_{gH \in G/H}g\cdot\Sigma_{m_{\mathcal{Y}}}    
\end{equation}
of $|G|/m_\mathcal{Y}$ copies of a cyclic branched $\mu_{m_\mathcal{Y}}$-cover $\pi_{m_\mathcal{Y}} \colon \Sigma_{m_\mathcal{Y}} \to C_{\mathghost}$ over the coarse curve $C_{\mathghost}$, since by our assumption the point $x=u_0(\Sigma_{m_\mathcal{Y}}) \in \mathcal{Y}$ has $G_x=H$. Let $r$ be the number of components of $\Sigma_{m_\mathcal{Y}}$.

The most challenging case in what follows is that of an interior stacky node $\ddot{n}$, so we mainly treat this case. Let $((h_{n}), (h_{n}^{-1})) \in \operatorname{Conj}(G)^2$ be the conjugacy classes of the local monodromy at $n_{\bullet}$ and $n_{\mathghost}$. We have $m_{\ddot{n}}=|h_n|$, and we introduce the notation $d_{\ddot{n}}\coloneqq \frac{m_\mathcal{Y}}{m_{\ddot{n}}}$.

Fix a node $\ddot{p}=(p_{\bullet}, p_{\mathghost}) \in \pi^{-1}(\ddot{n})$ with $p_{\mathghost}\in \Sigma_{m_\mathcal{Y}}$. There is a natural isomorphism
\begin{equation}
    \operatorname{ker}D_{u_{\mathghost}} \cong \bigoplus_{gH \in G/H}  (T_{u(g\ddot{ p})}M)^{\oplus r},
\end{equation}
since the kernel of the restriction $D_{u_{\mathghost}}=\overline{\partial} \otimes_{\mathbb{C}}\operatorname{Id}$ to each connected component of the ghost curve is given by constant sections. There is also a map
\begin{equation}
    \operatorname{diff} \colon \operatorname{ker}D_{u_{\bullet}} \oplus \operatorname{ker}D_{u_{\mathghost}} \to u_0|_{\pi^{-1}(\ddot{n})}^*TM, 
\end{equation}
$$(\kappa_1, \kappa_2)\mapsto (\kappa_1|_{\pi^{-1}(\ddot{n})}-\kappa_2|_{\pi^{-1}(\ddot{n})}).$$
Since there are exactly $d_{\ddot{n}}$ nodes above $\ddot{n}$ on each component of decomposition~\eqref{eq: ghost-domain-decomposition}, we have
\begin{equation}
    u_0|_{\pi^{-1}(\ddot{n})}^*TM\cong\bigoplus_{gH \in G/H}(T_{u(g\ddot{p})}M)^{\oplus d_{\ddot{n}}},
\end{equation}
We note that $\operatorname{diff}$ is a morphism of $G$-representations, and, therefore, we may regard $\operatorname{coker} \,\operatorname{diff}$ as a \emph{subrepresentation} $W$ of $u|_{\pi^{-1}(\ddot{n})}^*TM$.

Let us denote by $D_{\ddot{n}}$ the two-term complex
$$(u_0|_{\pi^{-1}(\ddot{n})}^*TM\to0).$$ 
Denoting the two-term complexes associated with linearized $\overline{\partial}$-operators of a curve by the same notation, we recall (see e.g. \cite[Remark 2.28]{doan2023}) that there is a short exact sequence of chain complexes:
\begin{equation}
    0 \longrightarrow  D_{u_0} \longrightarrow  D_{u_{\bullet}}\oplus D_{u_{\mathghost}} \longrightarrow  D_{\ddot{n}} \longrightarrow  0, 
\end{equation}
which, by the snake lemma, implies that there is a short exact sequence 
\begin{equation}
    0\longrightarrow \operatorname{coker} \,\operatorname{diff} \longrightarrow  \operatorname{coker} D_{u_0} \longrightarrow  \operatorname{coker} D_{u_\bullet} \oplus   \operatorname{coker} D_{u_{\mathghost}} \longrightarrow  0. 
\end{equation}

Since 
$$\operatorname{coker}(D_{u_{\mathghost}}) \cong \bigoplus_{gH \in G/H} H^{0,1}(g\cdot \Sigma_{m_\mathcal{Y}})\otimes_{\C} T_{u(g\ddot{p})}M,$$
we can choose a lift of $\operatorname{coker}(D_{u_{\mathghost}})$ 
\begin{equation}\label{eq: closed-ghost-obstruction-lift}
    \mathcal{O}_{\mathghost} \subset \bigoplus_{gH \in G/H} \Omega^{0,1}(g\cdot \Sigma_{m_\mathcal{Y}})\otimes_{\C} T_{u(\ddot{gp})}M \subset L^p(\Sigma_{\mathghost}, \Omega^{0,1}_{\mathghost} \otimes u^*_{\mathghost}TM)
\end{equation}
to consist of complex conjugates of holomorphic $1$-forms on $\Sigma_{\mathghost}$, which is a $G$-repre\-sentation. We also choose a $G$-invariant lift $\mathcal{O}_\bullet$ of $\operatorname{coker} D_{u_\bullet}$ to consist of $1$-forms vanishing on a $G$-invariant neighborhood of $\pi^{-1}(\ddot{n})$.

Next choose a bump function $\psi$ on $C_{\mathghost}$ with $\psi(n_{\mathghost})=1$ and supported on a small ball near $n_{\mathghost}$. Then for any $w\in W \cong \operatorname{coker} \, \operatorname{diff}$ 
we have a section $\overline{\partial}(\psi \circ \pi)\otimes w \in L^p(\Sigma_{\mathghost}, \Omega^{0,1}_{\Sigma_{\mathghost}} \otimes u_{\mathghost}^*TM)$.  Notice that $\overline{\partial}(\psi \circ \pi)\otimes w$ makes sense since $\psi \circ \pi=0$ on the complement of a small neighborhood of the nodes. We denote the space of such sections by
$$\mathcal{O}_{\operatorname{diff}}\subset L^p(\Sigma_{\mathghost}, \Omega^{0,1}_{\Sigma_{\mathghost}} \otimes u_{\mathghost}^*TM),$$
and it has a natural $G$-action.

Summarizing the above discussion, we have chosen a lift
\begin{equation}\label{eq: closed-ghost-full-obstruction-lift}
    \mathcal{O}:=\mathcal{O}_{\operatorname{diff}} \oplus \mathcal{O}_\bullet \oplus \mathcal{O}_{\mathghost} \subset L^p(\Sigma_0, \Omega^{0,1}_{\Sigma_0} \otimes u_0^*TM)
\end{equation}
of the cokernel of $D_{u_0}$ as a subrepresentation of $L^p(\Sigma, \Omega^{0,1}_{\Sigma} \otimes u_0^*TM)$. We note that 
\begin{equation}    \mathcal{O}^G=\mathcal{O}_\bullet^G\oplus\mathcal{O}_{\mathghost}^G.
\end{equation}
Hence, we have a surjective Fredholm operator
\begin{gather}
    \overline{D}_{u_0} \colon W^{1,p}_F(\Sigma_0, u_0^*TM) \oplus \mathcal{O} \to L^p(\Sigma_0, \Omega^{0,1}_{\Sigma_0} \otimes u_0^*TM), \\
    (\xi, o) \mapsto D_{u_0}(\xi)+o, \nonumber
\end{gather}
which is a morphism of representations. Moreover, 
$$K:= \operatorname{ker}(\overline{D}_{u_0})=\operatorname{ker}(D_{u_0})$$ 
is a subrepresentation and there is a choice of $G$-linear right inverse
\begin{equation}
    R_{u_0} \colon  L^p(\Sigma_0, \Omega^{0,1}_{\Sigma_0} \otimes u_0^*TM) \to W^{1,p}_F(\Sigma_0, u_0^*TM) \oplus \mathcal{O},
\end{equation}
We can also arrange so that the restriction
\begin{equation}
    R_{u_0}^{\bullet} \colon L^p(\Sigma_{\bullet}, \Omega^{0,1}_{\Sigma_{\bullet}} \otimes u_{\bullet}^*TM) \to W^{1,p}_F(\Sigma_0, u_0^*TM) \oplus \mathcal{O}
\end{equation}
has its image contained in $W^{1,p}_F(\Sigma_{\bullet}, u_{\bullet}^*TM)\oplus \mathcal{O}_\bullet$ (which is embedded via constant extensions over $\Sigma_{\mathghost}$) and is $G$-linear.

\subsubsection{Gluing}

We now review the $2$-step gluing construction from \cite[Section 5]{doan2023}. 
\s\n
{\em Preparation for pregluing.} 
We assume a choice of a $G$-invariant torsion-free connection $\nabla$ on $M$.  Let
\begin{equation}\label{eq: gluing-parameters}
    \Delta=\Delta_1 \times \Delta_2 \subset \R^{2(b-1)+d-2}_\sigma \times \C_\tau
\end{equation}
be a small neighborhood of the origin (compare with~\eqref{eq: gluing-coordinates}). We also assume a choice of $R_0>0$ such that the image of each radius $4R_0$ ball $B_{4R_0}(g\ddot{p})$ (with respect to a metric $g_0$ on $\Sigma_0$) is within the tubular neighborhood of $g\cdot \mathcal{Y}$ that we have chosen in Section~\ref{neighborhoods-of-fixed-sets} (and, in particular, is within the injectivity radius of $u_0(gp)$). 

In what follows we set $ \varepsilon_\tau \coloneqq |\tau|$ and
\begin{equation}    
r_{g\ddot{p}}: \Sigma_0\to \R_{\geq 0}, \quad z\mapsto \min(\operatorname{dist}(g\ddot{p}, z), 4R_0).
\end{equation} 

Given $\tau\in \Delta_2$ we define
\begin{gather*}
    \Sigma_{\tau}\coloneqq\Sigma_\tau^{\circ}/\sim_\tau \text{, where} \\
    \Sigma_\tau^{\circ}=\{z\in \Sigma_0 \colon \,r_{g\ddot{p}}(z) \ge \varepsilon_{\tau}^{1/2} \text{ for all }g\in G\}, \, 
\end{gather*}
and $\sim_\tau$ is the equivalence relation on $\bdry \Sigma_\tau^{\circ}$ associated with an involution $\iota_\tau$ defined as follows: First, we set 
\begin{equation}\label{eq: annuluis-model}
    A_\tau\coloneqq \{z \in \Sigma_0 \colon \, \varepsilon/R_0 < r_{g\ddot{p}}(z)<R_0 \text{ for some }g\in G\}.
\end{equation}
For $z \in A_\tau$ near $gp$, using local $\mu_{m_{\ddot{n}}}$-equivariant charts (recall that the stabilizers of $gp$ and $\hat{\nu}(gp)$
are isomorphic to $\mu_{m_{\ddot{n}}}$)
\begin{equation}\label{eq: charts-near-node}
    \varphi_{gp} \colon U_{gp} \subset\C \to\Sigma_0, \quad \varphi_{\hat{\nu}(gp)}\colon U_{\hat{\nu}(gp)} \subset \C \to \Sigma_0,
\end{equation} 
near $gp$ and $\hat{\nu}(gp)$, which we assume to be isometries with respect to $g_0$ and the Euclidean metric on $\C$, we set
\begin{gather}
    \iota_\tau \colon {A_\tau \cap \varphi_{gp}(U_{gp})} \to A_\tau \cap \varphi_{\hat{\nu}(gp)}(U_{\hat{\nu}(gp)})\\
    z \mapsto \varphi_{\hat{\nu}(gp)}\big(\tfrac{\tau}{\varphi^{-1}_{(gp)}(z)}\big). \nonumber
\end{gather}

\begin{remark}\label{rmk: pregluing-equivariantly}
    We recall that this involution is associated with the map $\C \to \C$ corresponding to the local model of the nodal family $zw=\tau$. We also notice that there is only one gluing parameter for all nodes in $\Sigma_0$ above $\ddot{n}$.
\end{remark}

Given $(\sigma,\tau)\in \Delta$, we define $\Sigma_{\sigma, \tau}=(\Sigma_\tau,j_\sigma)$, where $\{j_\sigma ~|~\sigma \in \Delta_1\}$ is a family of complex structures on a neighborhood of $j_0$ on $\Sigma_0$, obtained by pulling back a family of complex structures on the underlying coarse curve $C_0$ and which coincide with $j_0$ near the stacky points and boundary. The neighborhood $\Delta_1$ gives rise to an orbifold chart near $G$-cover $\pi_0$ for the boundary component of $\overline{\mathscr{H}}_{0, \bm h. d}^G$ containing $\pi_0$; see the discussion in Section~\ref{subsection: hurwitz moduli spaces} and Section~\ref{section: transversality for G-equivariant curves}.

\begin{lemma}\label{lemma: G-action-on-preglued-curve}
    The curve $\Sigma_{\sigma, \tau}$ is equipped with a natural $G$-action induced from the action on $\Sigma_0$. This action is almost complex.
\end{lemma}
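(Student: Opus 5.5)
The plan is to define the action on $\Sigma_{\sigma,\tau}$ by restricting the $G$-action on $\Sigma_0$ to $\Sigma^\circ_\tau$ and checking that it descends through $\sim_\tau$. It then remains to check that it preserves $j_\sigma$.

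\emph{Step 1: $\Sigma^\circ_\tau$ is $G$-invariant.} I would take the metric $g_0$ on $\Sigma_0$ to be $G$-invariant; such a metric exists by averaging. Then $\operatorname{dist}(g\ddot p,hz)=\operatorname{dist}(h^{-1}g\ddot p,z)$ for all $g,h\in G$. Since $r_{g\ddot p}$ runs over all nodes in $\pi^{-1}(\ddot n)$, which form a single $G$-orbit, the defining condition of $\Sigma^\circ_\tau$ is preserved by $G$. The same argument shows that the annulus region $A_\tau$ is $G$-invariant.

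\emph{Step 2: $\iota_\tau$ commutes with $G$.} I would choose the isometric charts \eqref{eq: charts-near-node} equivariantly. Fix the charts $\varphi_p,\varphi_{\hat\nu(p)}$ at one node $\ddot p$, chosen $\mu_{m_{\ddot n}}$-equivariant. By the balancing condition, $G_p=G_{\hat\nu(p)}$ acts by $\zeta$ on one branch and by $\zeta^{-1}$ on the other. Then set $\varphi_{gp}:=g\circ\varphi_p$ and $\varphi_{\hat\nu(gp)}:=g\circ\varphi_{\hat\nu(p)}$. This is well defined on cosets precisely because of the stabilizer equivariance: if $g'=gh$ with $h\in G_p$, then $g'\varphi_p=g\varphi_p\circ(\zeta\cdot)$, so the two charts differ by a rotation of $\C$.

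With these charts, for $z$ near $gp$ we get $h\,\iota_\tau(z)=\iota_\tau(hz)$ for $h\in G$ with $hgp=g'p$. For $h\in G_p$, the key local computation is the following. Write $z\mapsto\zeta z$ on the first branch and $w\mapsto\zeta^{-1}w$ on the second. Then $\tau/(\zeta z)=\zeta^{-1}(\tau/z)$, which is exactly the action on the second branch. This is the balancing condition making $zw=\tau$ invariant, as noted in Remark~\ref{rmk: pregluing-equivariantly}. Since there is a single parameter $\tau$ for all nodes over $\ddot n$, the same relation holds at every node. Hence $\sim_\tau$ is $G$-invariant and the action descends to $\Sigma_\tau$.

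\emph{Step 3: holomorphicity.} The complex structures $j_\sigma$ are pulled back from the coarse curve via $\pi$, so they are $G$-invariant away from the necks. In the neck region, the gluing map $\iota_\tau$ is holomorphic in the equivariant charts. The $G$-action is by rotations in these charts, so $j_\sigma$ is preserved there too. One should also check that the glued structure near the neck comes from the chart coordinates, so that it is consistent with the pullback construction.

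The main obstacle is Step 2: the coset-wise choice of charts and the verification that the balancing condition makes $\iota_\tau$ equivariant under the stabilizer $G_p$. For a non-balanced node, the rule $z\mapsto\tau/z$ would not intertwine the two actions, and the argument would fail.
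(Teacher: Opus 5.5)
Your proposal is correct and follows essentially the same route as the paper. The paper restricts the action to $\Sigma^\circ_\tau$, notes that the removed $\varepsilon_\tau^{1/2}$-balls around the $G$-orbit of nodes are permuted by $G$ because the action near the nodes is standard (built from $G/G_p$ and the $G_p$-rotations in the charts), and then uses the balancing identity $\tau/(\zeta z)=\zeta^{-1}(\tau/z)$ to show that $G$ commutes with $\iota_\tau$. Your $G$-invariant metric, the translated charts $\varphi_{gp}=g\circ\varphi_p$, and your holomorphicity check make explicit what the paper leaves implicit. Note that the charts $\varphi_{gp}=g\circ\varphi_p$ are determined only up to a $G_p$-rotation, not literally well defined; your balancing computation is exactly what shows this ambiguity does not affect $\iota_\tau$.
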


\begin{proof}
      Indeed, on $\Sigma_\tau^0$ we simply have the restricted $G$-action. We note that by our assumptions the action of $G$ near the nodes (i.e., in $\{r_{g\ddot{p}}(z) \le \varepsilon_\tau^{1/2}\}$) is standard, i.e., it is built out of the action of $G$ on $G/G_p$ and the action of $G_p \cong \mu_{m_{\ddot{n}}}$ on $U_{gp_{\bullet}}$ and $U_{(gp_{\mathghost})}$ as in~\eqref{eq: charts-near-node}. Therefore, this action preserves the following subset of $\Sigma_0$ and its boundary (given by the images of balls near the origin under maps~\eqref{eq: charts-near-node}):
      $$\bigsqcup_{gG_p \in G/G_p}\varphi_{gp_{\bullet}}\Big(B_{\varepsilon_\tau^{1/2}}(0) \Big)\sqcup \varphi_{gp_{\mathghost}}\Big(B_{\varepsilon_\tau^{1/2}}(0)\Big).$$
      This set is exactly $\Sigma_0 \setminus \Sigma_\tau^{\circ}$, hence the claim.
      Finally, since the node $\ddot{n}$ is balanced, this action commutes with $\iota_\tau$ on the boundary of $\Sigma_0 \setminus \Sigma_\tau^{\circ}$, which is easily seen in the local coordinates in the light of Remark~\ref{rmk: pregluing-equivariantly}:
      $$\iota_\tau(\zeta_{m_{\ddot{n}}}\cdot z)=\iota_\tau(\zeta_{m_{\ddot{n}}}z)=\frac{\tau}{\zeta_{m_{\ddot{n}}}z}=\zeta_{m_{\ddot{n}}}^{-1}\frac{\tau}{z}=\zeta_{m_{\ddot{n}}} \cdot \frac{\tau}{z}.$$
\vskip-.24in
\end{proof}

\begin{definition}
    We set
    $$\mathcal{X}=\{(z, \sigma, \tau) \in \Sigma_0 \times \Delta \mid z \in \Sigma_{\sigma, \tau}^{\circ}\}/ \sim,$$
    where $(z_1, \sigma_1, \tau_1) \sim (z_2, \sigma_2, \tau_2)$ if and only if $\sigma_1=\sigma_2, \tau_1=\tau_2$ and $z_1 \sim_{\tau_1} z_2$, or $\sigma_1=\sigma_2$, $\tau_1=\tau_2=0$ and $z_1=gp=\hat{\nu}(z_2)$. 
\end{definition}

The following is immediate from Lemma~\ref{lemma: G-action-on-preglued-curve} and \cite[Proposition 4.13]{doan2023}:

\begin{lemma}
    The space $\mathcal{X}$ is a smooth manifold with a complex structure induced from $\Sigma_0 \times M$, and the projection $\mathcal{X} \to \Delta$ is a nodal family. The $G$-action on $\mathcal{X}$ is smooth and $[\mathcal{X}/G] \to \ \Delta$ is a twisted nodal family. The fiber above $(\sigma, \tau)$ is naturally biholomorphic to $\Sigma_{\sigma, \tau}$.
\end{lemma}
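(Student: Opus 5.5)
The total space $\mathcal{X}$ is built by the standard plumbing construction, now carried out $G$-equivariantly. I would first reduce the smoothness and complex-structure claims to the non-equivariant statement \cite[Proposition 4.13]{doan2023}, and then check separately that the $G$-action descends compatibly to $\mathcal{X}$.

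\emph{Step 1: the non-equivariant structure.} Away from the necks, $\mathcal{X}$ is an open subset of $\Sigma_0\times\Delta$. There the complex structure is $j_\sigma\oplus$ the standard structure on $\Delta$. Strictly speaking this structure need not be integrable in the $\sigma$-direction, but it is fiberwise holomorphic, which is the sense used in \cite{doan2023}.

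Near each node $g\ddot p$, the charts \eqref{eq: charts-near-node} identify a neighborhood of the node in $\mathcal{X}$ with the standard local model
\begin{equation*}
\{(z,w,\sigma,\tau)\mid zw=\tau,\ |z|,|w|<R_0\},
\end{equation*}
with $z$ the coordinate near $gp_\bullet$ and $w$ the coordinate near $gp_{\mathghost}$. This identification works because $\iota_\tau(z)=\tau/z$ is exactly the gluing prescribed by the relation $\sim$. The complex structures $j_\sigma$ equal $j_0$ near the nodes, so the chart is holomorphic, and $(z,w,\sigma)$ serve as coordinates on $\mathcal{X}$ there.

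Since $\{zw=\tau\}$ is a smooth complex hypersurface and the map $(z,w)\mapsto zw$ has a nondegenerate critical point only at the origin, the projection $\mathcal{X}\to\Delta$ is a nodal family. Its fiber over $(\sigma,\tau)$ is $\Sigma_{\sigma,\tau}$ by construction. All of this is exactly \cite[Proposition 4.13]{doan2023}, so I would simply invoke it.

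\emph{Step 2: the $G$-action.} The group $G$ acts on $\Sigma_0\times\Delta$ through the first factor and trivially on $\Delta$. The gluing parameter $\tau$ is common to all nodes above $\ddot n$ (Remark~\ref{rmk: pregluing-equivariantly}), so the trivial action on $\Delta$ is consistent.

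By Lemma~\ref{lemma: G-action-on-preglued-curve}, this action preserves $\Sigma^\circ_{\sigma,\tau}$ and commutes with $\iota_\tau$. It also preserves the identification $gp=\hat\nu(\cdot)$ at $\tau=0$, because $\hat\nu$ is $G$-equivariant. Hence the action respects $\sim$ and descends to $\mathcal{X}$.

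Smoothness and holomorphicity can be checked in the plumbing charts. In the chart attached to the node $g\ddot p$, the element $\zeta=\zeta_{m_{\ddot n}}$ of $G_{gp}\cong\mu_{m_{\ddot n}}$ acts by
\begin{equation*}
(z,w)\mapsto(\zeta z,\zeta^{-1}w),
\end{equation*}
by the balancing condition. This is a linear holomorphic map preserving $zw=\tau$. Elements outside $G_{gp}$ map the chart at $g\ddot p$ holomorphically onto the chart at $g'\ddot p$ by the standard identifications. Since $j_\sigma$ is pulled back from $C_0$, it is $G$-invariant away from the necks.

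\emph{Step 3: the quotient is a twisted nodal family.} Near a node, $[\mathcal{X}/G]$ is locally modeled on
\begin{equation*}
[\{zw=\tau\}/\mu_{m_{\ddot n}}]\to\Delta,
\end{equation*}
with the $A_{m_{\ddot n}-1}$-type action. This is precisely the local model of a balanced, i.e.\ twisted, node, and it smooths as $\tau\neq0$. Away from the nodes, the quotient is locally $[\Sigma_0/G]\times\Delta$ near the stacky points, since $j_\sigma=j_0$ near the branch points. The fiber of $[\mathcal{X}/G]$ over $(\sigma,\tau)$ is therefore the orbicurve $[\Sigma_{\sigma,\tau}/G]$.

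The step I expect to need the most care is the compatibility in Step 2 at the central fiber $\tau=0$, where $\sim$ identifies points in different $G$-orbits of nodes. There one must check that the relation is an equivalence relation that is stable under $G$; equivariance of $\hat\nu$ is what guarantees this. The remaining claims are direct local checks in the plumbing charts.
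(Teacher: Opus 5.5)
Your proposal is correct and follows the paper's route: the paper's proof consists of combining \cite[Proposition 4.13]{doan2023}, for the plumbing and nodal-family structure, with Lemma~\ref{lemma: G-action-on-preglued-curve}, for the compatibility of the $G$-action with $\sim_\tau$. You additionally make explicit the local checks the paper leaves implicit, namely the balanced action $(z,w)\mapsto(\zeta z,\zeta^{-1}w)$ on $\{zw=\tau\}$ and the resulting twisted-node model for $[\mathcal{X}/G]\to\Delta$.
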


\begin{remark}
    One can also show that $\mathcal{X}$ is a versal $G$-equivariant deformation in the appropriate sense.
\end{remark}

\s\n
{\em Pregluing.} We briefly review the standard pregluing setup, referring to \cite[Sections 5.2-5.6]{doan2023} for more details.

Recall that for any curve $u \colon \Sigma \to M$ (smooth and potentially nodal) with almost complex structure $j$ on $\Sigma$, equipped with domain-dependent Floer datum on $\Sigma$, there is a map
\begin{gather}\label{eq: dbar-section-local}
    \mathscr{F}_{u}\colon W^{1,p}_F(\Sigma, u^*TM) \to L^p(\Sigma, \Omega^{0,1}_\Sigma \otimes u^*TM), \\
    \mathscr{F}_{u}(\xi) \coloneqq \operatorname{PT}_{\operatorname{exp}_u(\xi) \to u}[d\operatorname{exp}_u(\xi) -X_H\otimes \alpha_\Sigma]^{0,1}_{(J,j)},\nonumber
\end{gather}
where $\operatorname{PT}_{x \to y}$ stands for the parallel transport with respect to $\nabla$ along the geodesic connecting $x$ and $y$. The linearized operator is given by the formula
\begin{equation}
    D_u[\xi] \coloneqq D{\mathscr{F}_{u}}|_{\eta=0}[\xi]=(\nabla \xi-\alpha_\Sigma \otimes \nabla_\xi X_H)^{0,1}+\tfrac{1}{2}(\nabla_\xi J)\circ (du-\alpha_\Sigma \otimes X_H) \circ j.
\end{equation}

Fix a bump function $\chi \colon [0, \infty) \to [0,1]$ such that $$\chi|_{[0,1]}=1, \quad \chi|_{[2, \infty)}=0.$$
We also define $\chi_\tau: \Sigma_\tau \to [0, 1]$ by
\[
\chi_\tau(z) := \chi\left(\frac{r_{\ddot{n}}(z)}{R_0}\right),
\]
where $r_{\ddot{n}}(z)=\operatorname{min}_{g \in G}r_{g\ddot{p}}(z)$. We then define the \emph{preglued curve} $\widetilde{u}_\tau: \Sigma_\tau\ \to M$ by
\begin{equation}\label{eq: curve-pregluing}
\widetilde{u}_\tau(z) = 
\begin{cases} 
\exp_{u_0(gp_i)}(\exp_{u_0(gp_i)}^{-1} \circ u_0(z) + \chi_\tau(z) \cdot \exp_{u_0(gp_i)}^{-1} \circ u_0(\iota_\tau(z))), & \text{if } r_{gp_i}(z) \leqslant 2R_0, \text{with } i=\bullet,\mathghost \\
u_0(z), & \text{otherwise.}
\end{cases}
\end{equation}
Here, $r_{gp_i}(z)$ is the function associated with measuring a distance to $gp_i$ in the normalized curve, as opposed to $r_{g\ddot{p}}$ which measures the distance to the node in the nodal curve. Below, it is understood that $p_i$ is either $p_{\bullet}$ or $p_{\mathghost}$.

We denote by $\widetilde{u}_{\sigma, \tau}$ the associated map with $\widetilde{u}_\tau$ from $\Sigma_{\sigma, \tau}$.

\begin{lemma}
    The curve $\widetilde{u}_{\sigma, \tau}$ is smooth and $G$-equivariant.
\end{lemma}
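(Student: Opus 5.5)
The plan is to check the two properties separately: smoothness away from and inside the gluing region, and $G$-equivariance term by term in \eqref{eq: curve-pregluing}. In both checks the point is that every ingredient of the construction is either smooth or $G$-invariant. These ingredients are the exponential map of the $G$-invariant connection $\nabla$, the bump function $\chi_\tau$, the involution $\iota_\tau$, and the original map $u_0$.

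\emph{Smoothness.} Away from the region $\{r_{\ddot n}\le 2R_0\}$ we have $\widetilde u_\tau=u_0$, which is smooth. On the neck we argue as follows.
\begin{enumerate}
\item Near each $gp_i$, the expression $\exp_{u_0(gp_i)}^{-1}\circ u_0$ is well defined and smooth, because by the choice of $R_0$ the image of $B_{4R_0}(g\ddot p)$ lies inside the injectivity radius of $u_0(gp_i)$.
\item The terms $\chi_\tau$ and $u_0\circ\iota_\tau$ are smooth on the annulus $A_\tau$, where $\iota_\tau$ is a biholomorphism in the charts \eqref{eq: charts-near-node}.
\item The formula is symmetric under $z\leftrightarrow\iota_\tau(z)$. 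On the neck near $gp_\mathghost$ the map $u_\mathghost$ is constant equal to $u_0(gp_\bullet)$, and $\chi_\tau=1$ wherever both descriptions overlap. Hence the two expressions agree under the identification $\sim_\tau$, and $\widetilde u_\tau$ descends to a smooth map on $\Sigma_\tau$.
\item Smoothness in $\sigma$ is automatic, since $j_\sigma$ only changes the complex structure and not the map.
\end{enumerate}
This step is essentially that of \cite[Section 5]{doan2023}, and I would cite it.

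\emph{Equivariance.} Fix $h\in G$ and compare $\widetilde u_\tau(hz)$ with $h\,\widetilde u_\tau(z)$.
\begin{enumerate}
\item If $z$ lies outside the neck, so does $hz$. This holds because $r_{\ddot n}(hz)=r_{\ddot n}(z)$: the metric $g_0$ is $G$-invariant, being pulled back from the coarse curve, and the set of nodes $\pi^{-1}(\ddot n)$ is $G$-invariant. In this case equivariance follows from that of $u_0$.
\item If $r_{gp_i}(z)\le 2R_0$, then $r_{hgp_i}(hz)\le 2R_0$, and $\chi_\tau(hz)=\chi_\tau(z)$.
\item Because $\nabla$ is $G$-invariant, $h$ is an affine map intertwining exponential maps: $h\circ\exp_x=\exp_{hx}\circ dh_x$.
\item By equivariance of $u_0$ we have $u_0(hgp_i)=h\,u_0(gp_i)$.
\item By Lemma~\ref{lemma: G-action-on-preglued-curve}, $\iota_\tau$ commutes with the $G$-action, so $u_0(\iota_\tau(hz))=h\,u_0(\iota_\tau(z))$.
\end{enumerate}
Substituting these facts into \eqref{eq: curve-pregluing} gives
\[
\widetilde u_\tau(hz)=\exp_{hu_0(gp_i)}\bigl(dh(\exp^{-1}_{u_0(gp_i)}u_0(z)+\chi_\tau(z)\exp^{-1}_{u_0(gp_i)}u_0(\iota_\tau z))\bigr)=h\,\widetilde u_\tau(z).
\]

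The main subtlety is at a stacky node. There the stabilizer $G_{gp}\cong\mu_{m_{\ddot n}}$ fixes $gp$ and acts on the neck, and one needs the balancing condition to know that $\iota_\tau$ is $G_{gp}$-equivariant; this is exactly the computation at the end of Lemma~\ref{lemma: G-action-on-preglued-curve}. One also has to check that the charts \eqref{eq: charts-near-node} were chosen $\mu_{m_{\ddot n}}$-equivariant, which is part of the setup, and that $r_{g\ddot p}$ is computed in the normalized curve consistently on both branches. Once these are in place, the remaining verification is formal.
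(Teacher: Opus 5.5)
Your proposal is correct and follows essentially the paper's route --- equivariance of $u_0$, of the exponential map for the $G$-invariant metric, and of $\iota_\tau$ via the balancing computation in Lemma~\ref{lemma: G-action-on-preglued-curve}. The only differences are that the paper reduces to a generator of the stabilizer $G_p$ near the nodes instead of treating a general $h\in G$, and does not spell out smoothness.

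There are two small imprecisions in your sketch. First, \eqref{eq: curve-pregluing} is not literally symmetric under $z\leftrightarrow\iota_\tau(z)$, since the cutoff $\chi_\tau$ multiplies only one term; the two descriptions still agree across the seam $r_{\ddot{n}}=\varepsilon_\tau^{1/2}$, because $u_{\mathghost}$ is constant and $\chi_\tau\equiv 1$ there. Second, $G$-invariance of $g_0$ should simply be imposed (e.g.\ by averaging), since an honest pullback of a metric from the coarse curve degenerates at the ramified node points.
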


\begin{proof}
    This only needs to be proved near the nodes, and it suffices to show equivariance with respect to the $G_p$-action, since 
    \begin{equation}\label{eq: exponential-equivariance}
        g \cdot \exp_{u_0(p_i)}(v)=\operatorname{exp}_{g\cdot u_0(p_i)}(g_*v),
    \end{equation}
    as the metric is $G$-invariant. For $h \in G_p$, a generator, we have 
    \begin{gather}
        h\cdot \exp_{u_0(p_i)}(\exp_{u_0(p_i)}^{-1} \circ u_0(z) + \chi_\tau(z) \cdot \exp_{u_0(p_i)}^{-1} \circ u_0(\iota_\tau(z))) \nonumber \\
        \overset{(1)}{=}\exp_{u_0(p_i)}(h_*[\exp_{u_0(p_i)}^{-1} \circ u_0(z) + \chi_\tau(z) \cdot \exp_{u_0(p_i)}^{-1} \circ u_0(\iota_\tau(z))]) \nonumber \\
        \overset{(2)}{=}\exp_{u_0(p_i)}(\exp_{u_0(p_i)}^{-1} \circ h\cdot[u_0(z)] + \chi_\tau(z) \cdot \exp_{u_0(p_i)}^{-1} \circ h\cdot [u_0(\iota_\tau(z))])\nonumber \\
        \overset{(3)}{=}\exp_{u_0(p_i)}(\exp_{u_0(p_i)}^{-1} \circ  u_0(h\cdot z) + \chi_\tau(z) \cdot \exp_{u_0(p_i)}^{-1} \circ u_0(\iota_\tau(h\cdot z))). \nonumber
    \end{gather}
    Here (1) and (2) follow from the equivariance of the exponential map~\eqref{eq: exponential-equivariance} (and therefore of its inverse), while (3) follows from the equivariance of $u_0$ and the $G_p$-equivariance of $\iota_\tau$, which was shown in the proof of Lemma~\ref{lemma: G-action-on-preglued-curve}.
\end{proof}

\s\n
{\em Construction of a right inverse.}
Let $\Phi_y^x \colon T_xM \to T_yM$ be the differential of the exponential map:
\begin{equation}
    \Phi_y^x(v)\coloneqq d\operatorname{exp}_{w}(v), \, \text{ where }w=\operatorname{exp}^{-1}_x(y),
\end{equation}
which is well-defined if $y$ is within the injectivity radius of $x$. 
We also define another bump function
$$\rho_\tau(z) \coloneqq \chi\Big(\frac{r_{\ddot{n}}(z)}{\varepsilon^{1/4}}\Big).$$
Then we define the \emph{fusing operator} 
\begin{gather}\label{eq: fusing operator}
    \operatorname{fuse}_\tau \colon W^{1,p}_F(\Sigma_0, u_0^*TM) \to W^{1,p}_{F_\tau}(\Sigma_\tau, \widetilde{u}_\tau^*TM),\\
\mathrm{fuse}_{\tau}(\xi)(z):=
\begin{cases}
\Phi_{\widetilde{u}_{\tau}(z)}\Bigl(\Phi_{u_{0}(z)}^{-1}\xi(z)
+\rho_{\tau}(z)\cdot(\Phi_{u_{0}\circ\iota_{\tau}}^{-1}\xi(\iota_{\tau}(z))
-\Phi_{u_{0}(gp_i)}^{-1}\xi(gp_i))\Bigr),
&  r_{gp_i}(z)\le 2\,\varepsilon^{1/4},\\[4pt]
\xi(z), & \text{otherwise.}
\end{cases} \nonumber
\end{gather}
Here $i=1,2$, $g\in G$ is arbitrary, and $\Phi_\star$ stands for $\Phi^{u_0(gp_i)}_\star$.

\begin{remark}
    Here, $F_\tau$ is the real boundary condition corresponding to the same Lagrangian boundary conditions as those of $u_0$ pulled back under the map $u_\tau$.
\end{remark}

We also define the following \emph{push and pull operators}:
\begin{gather}
\mathrm{push}_{\tau}\colon
L^{p}\Omega^{0,1}\!\bigl(\Sigma_{\tau},\,\widetilde{u}_{\tau}^{*}TM\bigr)
\longrightarrow
L^{p}\Omega^{0,1}\!\bigl(\Sigma_{0},\,u_{0}^{*}TM\bigr) \\
\mathrm{push}_{\tau}(\eta)(z):=
\begin{cases}
0, & \text{if } r_{gp_i}(z)<\varepsilon^{1/2},\\[4pt]
\Phi_{u_{0}(z)}\,\Phi_{\widetilde{u}_{\tau}([z])}^{-1}\,\eta([z]),
& \text{if } \varepsilon^{1/2}\le r_{gp_i}(z)\le 2R_{0},\\[4pt]
\eta([z]), & \text{otherwise.}
\end{cases}\nonumber
\end{gather}

\begin{gather}\label{eq: pull-operator}
    \mathrm{pull}_{\tau}\colon
L^{p}\Omega^{0,1}\!\bigl(\Sigma_{0},\,u_{0}^{*}TM\bigr)
\longrightarrow
L^{p}\Omega^{0,1}\!\bigl(\Sigma_{\tau},\,\widetilde{u}_{\tau}^{*}TM\bigr) \\
\mathrm{pull}_{\tau}(\eta)(z):=
\begin{cases}
\Phi_{\widetilde{u}_{\tau}(z)}\,\Phi_{u_{0}(z)}^{-1}\,\eta(z),
& \text{if } \varepsilon^{1/2}\le r_{gp_i}(z)\le 2R_{0},\\[4pt]
\eta(z), & \text{otherwise.}
\end{cases} \nonumber
\end{gather}
The pull operator is used to extend $\overline{D}_{u_0}$ to the preglued curve $\widetilde{u}_\tau$:
\begin{gather}\label{eq: extended-linearized-operator}
\overline{D}_{\widetilde{u}_\tau} \colon W^{1,p}_{F_\tau}(\Sigma_\tau, \widetilde{u}_\tau^*TM) \oplus \mathcal{O} \to L^p(\Sigma_\tau, \Omega^{0,1}_{\Sigma_\tau} \otimes \widetilde{u}_\tau^*TM)\\
    \overline{D}_{\widetilde{u}_\tau}(\xi, o)\coloneqq  D_{\widetilde{u}_\tau}\xi+\operatorname{pull}_\tau(o). \nonumber
\end{gather}

\begin{definition}
    The {\em approximate right inverse} $\widetilde{R}_{\widetilde{u}_\tau}$ to $\overline{D}_{\widetilde{u}_\tau}$ is given by
    \begin{equation}\label{eq: formula-approximate-right-inverse}
        \widetilde{R}_{\widetilde{u}_\tau} \coloneqq (\operatorname{fuse}_\tau \oplus \operatorname{Id}) \circ R_{u_0} \circ \operatorname{push}_\tau.
    \end{equation}
\end{definition}

\begin{definition}\label{defn: right inverse}
    The {\em right inverse} $R_{\widetilde{u}_\tau}$ is the infinite sum
    \begin{equation}\label{eq: right inverse formula}
        R_{\widetilde{u}_\tau}\coloneqq \widetilde{R}_{\widetilde{u}_\tau}\sum_{k=0}^\infty(\operatorname{Id}-\overline{D}_{\widetilde{u}_\tau}\widetilde{R}_{\widetilde{u}_\tau})^k.
    \end{equation}
\end{definition}

\begin{lemma}\label{lemma: aproximate-right-inverse-equivariance}
    The operator $\widetilde{R}_{\widetilde{u}_\tau}$ is $G$-linear.
\end{lemma}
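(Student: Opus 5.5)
Since $\widetilde{R}_{\widetilde{u}_\tau}=(\operatorname{fuse}_\tau\oplus\operatorname{Id})\circ R_{u_0}\circ\operatorname{push}_\tau$ is a composition, I will show that each factor is $G$-linear and then compose. The $G$-actions are the natural ones. On sections $\xi$ of $u_0^*TM$ or $\widetilde{u}_\tau^*TM$, $g$ acts by $(g\cdot\xi)(z)=dg\,\xi(g^{-1}z)$. On $(0,1)$-forms the action is \eqref{eq: action-on-1-forms}. On $\mathcal{O}$ it is the subrepresentation structure fixed in Section~\ref{subsection: choice-of-the-lift-of-the-obstruction}. These actions are well defined on the preglued side because $\widetilde{u}_\tau$ is $G$-equivariant and $\Sigma_\tau$ carries the almost complex $G$-action of Lemma~\ref{lemma: G-action-on-preglued-curve}.

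For the middle factor there is nothing to prove: $R_{u_0}$ was chosen $G$-linear in Section~\ref{subsection: choice-of-the-lift-of-the-obstruction}. This is possible because $\overline{D}_{u_0}$ is a surjective morphism of representations. Starting from any right inverse and averaging over the finite group $G$ gives a $G$-linear one, since the average is still a right inverse when $\overline{D}_{u_0}$ commutes with $G$. The same holds for its restriction $R^\bullet_{u_0}$.

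For $\operatorname{push}_\tau$ and $\operatorname{fuse}_\tau$ the argument is pointwise, and I will split into the same regions used in their definitions.

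\textbf{Ingredients.}
\begin{enumerate}
\item The cutoff functions $r_{\ddot{n}}$, $\chi_\tau$ and $\rho_\tau$ are $G$-invariant. The reason is that the metric $g_0$ on $\Sigma_0$ can be taken $G$-invariant, so $r_{g\ddot{p}}(hz)=r_{h^{-1}g\ddot{p}}(z)$, and the minimum over $g$ is invariant.
\item Also, $G$ permutes the balls $B_{2R_0}(gp_i)$ and the regions $\{\varepsilon^{1/2}\le r_{gp_i}\le 2R_0\}$ among themselves. So the case distinctions in $\operatorname{push}_\tau$, $\operatorname{pull}_\tau$ and $\operatorname{fuse}_\tau$ are $G$-invariant.
\item The map $\Phi^x_y=d\exp_w$, with $w=\exp_x^{-1}(y)$, satisfies $dg\circ\Phi^x_y=\Phi^{gx}_{gy}\circ dg$. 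This follows by differentiating \eqref{eq: exponential-equivariance}, since $\nabla$ and the metric are $G$-invariant. Consequently $g\cdot\Phi^{-1}_{\cdot}$ is also intertwined.
\item The gluing involution $\iota_\tau$ commutes with the action, as shown in the proof of Lemma~\ref{lemma: G-action-on-preglued-curve} using the balancing condition. This is the only place where the stacky node needs care: for $h\in G_p$ the computation there gives $\iota_\tau(hz)=h\,\iota_\tau(z)$. For $g\notin G_p$, $\iota_\tau$ is defined by transporting the local charts \eqref{eq: charts-near-node} along $g$, so equivariance holds by construction. This requires the charts $\varphi_{gp}$ to be chosen as $g$-translates of $\varphi_p$; I would state this explicitly as part of the setup.
\end{enumerate}

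\textbf{Checking the two operators.} With these ingredients, $(g\cdot\operatorname{push}_\tau\eta)(z)=\operatorname{push}_\tau(g\cdot\eta)(z)$ follows directly in each region, exactly as in the proof that $\widetilde{u}_\tau$ is equivariant. The same check works for $\operatorname{fuse}_\tau$. There, the correction term
\[
\rho_\tau(z)\bigl(\Phi^{-1}_{u_0\circ\iota_\tau}\xi(\iota_\tau z)-\Phi^{-1}_{u_0(gp_i)}\xi(gp_i)\bigr)
\]
transforms correctly because all four ingredients above apply to it simultaneously.

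\textbf{Main obstacle.} I expect the hard part to be this fuse term at the points $gp_i$ fixed by $G_p$, where the evaluation $\xi(gp_i)$ has to be handled correctly. The value $\xi(p_i)$ lies in $T_{u_0(p_i)}M$ with $G_p$ acting linearly, and $g\cdot\xi$ evaluated at $gp_i$ equals $dg\,\xi(p_i)$. Once this is checked, equivariance of the fuse term follows. Finally, the $\mathcal{O}$-component passes through $\operatorname{Id}$ unchanged, so $\widetilde{R}_{\widetilde{u}_\tau}$ is a composition of $G$-linear maps and is therefore $G$-linear.
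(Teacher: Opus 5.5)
Your proposal is correct and follows essentially the same route as the paper. Like the paper, you reduce to $G$-linearity of $\operatorname{fuse}_\tau$ and $\operatorname{push}_\tau$, with $R_{u_0}$ $G$-linear by construction. You then check both pointwise, using the intertwining property of $\Phi^x_y$ obtained by differentiating the equivariance of the exponential map and the equivariance of $\iota_\tau$ coming from the balancing condition. Your version is slightly more complete: the paper writes out only the case of $h$ in the stabilizer and leaves implicit both the invariance of the cutoff functions and the use of $g$-translated charts for the remaining group elements.
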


\begin{proof}
    Since $R_{u_0}$ is $G$-linear it suffices to show that $\operatorname{fuse}_\tau$ and $\operatorname{push}_\tau$ are $G$-linear. We focus on the first of these maps. We only need to check the equivariance for points $z \in \Sigma_\tau$ with $r_{gp_i}(z) \le 2\varepsilon^{1/4}$ for some $gp \in \pi^{-1}(\ddot{n})$. For simplicity, we focus on the case of $h \in G$ which belongs to the stabilizer $G_{u_0(gp)}$. First, we observe that, for any $x,y \in M$ and $h \in G_x$, the equivariance of the exponential map and the chain rule imply
    \begin{equation}\label{eq-Jacobi-equivariance}
        h_*\Phi^x_y[v]=\Phi^x_{hy}[h_*v], \quad (\Phi^{x}_{hy})^{-1}[h_*v]=h_*(\Phi_y^x)^{-1}[v], \quad \forall v \in T_xX.
    \end{equation}
    Now we compute
    \begin{gather}
        h\cdot \operatorname{fuse}_\tau(\xi) (z)=h_*\operatorname{fuse}_\tau(\xi)(h^{-1}z) \nonumber\\
        =h_*\Phi_{\widetilde{u}_\tau(h^{-1}z)}\Bigl(\Phi_{u_{0}(h^{-1}z)}^{-1}\xi(h^{-1}z) 
+\rho_{\tau}(z)\cdot(\Phi_{u_{0}\circ\iota_{\tau}(h^{-1}z)}^{-1}\xi(\iota_{\tau}(h^{-1}z))
-\Phi_{u_{0}(gp_i)}^{-1}\xi(gp_i))\Bigr)  \nonumber\\
\overset{(1)}{=}\Phi_{\widetilde{u}_\tau(z)}\Bigl( h_*\Phi_{u_{0}(h^{-1}z)}^{-1}\xi(h^{-1}z) 
+\rho_{\tau}(h^{-1}z)\cdot(h_*\Phi_{u_{0}\circ\iota_{\tau}(h^{-1}z)}^{-1}\xi(\iota_{\tau}(h^{-1}z))
-h_*\Phi_{u_{0}(gp_i)}^{-1}\xi(gp_i))\Bigr)  \nonumber\\
\overset{(2)}{=}\Phi_{\widetilde{u}_\tau(z)}\Bigl( \Phi_{u_{0}(z)}^{-1}h_*\xi(h^{-1}z) 
+\rho_{\tau}(h^{-1}z)\cdot(\Phi_{u_{0}\circ\iota_{\tau}(z)}^{-1}h_*\xi(h^{-1}\iota_{\tau}(z))
-\Phi_{u_{0}(gp_i)}^{-1}h_*\xi(gp_i))\Bigr) \nonumber\\
=\operatorname{fuse}_\tau(h\cdot\xi)(z).
    \end{gather}
    Both steps (1) and (2) above use~\eqref{eq-Jacobi-equivariance}, and for (2) we also use equivariance of $\iota_\tau$, which, as we saw in the proof of Lemma~\ref{lemma: G-action-on-preglued-curve}, follows from the balancing condition. We also use equivariance of $u_0$ and $\widetilde{u}_\tau$. 
    
    The proof of the equivariance of $\operatorname{pull}_\tau$ similarly follows from~\eqref{eq-Jacobi-equivariance}, and we leave the details to the reader.
\end{proof}

\begin{lemma}[{\cite[Proposition 5.26]{doan2023}}]\label{lemma: inequalities-right-invers}
The following properties are satisfied by $\widetilde{R}_{\widetilde{u}_\tau}$ and $R_{\widetilde{u}_\tau}$:
    \begin{gather}
        \|\widetilde{R}_{\widetilde{u}_
       \tau}\|, \|R_{\widetilde{u}_
       \tau}\| \le c\|R_{u_0}\|, \\
       \overline{D}_{\widetilde{u}_
       \tau}R_{\widetilde{u}_
       \tau}=\operatorname{Id}, \, \operatorname{Im}(\widetilde{R}_{\widetilde{u}_
       \tau})=\operatorname{Im}(R_{\widetilde{u}_
       \tau}). \nonumber
    \end{gather}
    Moreover, $R_{\widetilde{u}_\tau}$ is $G$-equivariant.
\end{lemma}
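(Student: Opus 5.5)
The estimates will be imported from the non-equivariant argument of \cite[Proposition 5.26]{doan2023}, and equivariance will then be obtained from the structure of the Neumann series. The point is that every ingredient in the construction of $R_{\widetilde{u}_\tau}$ has already been checked to be either norm-controlled independently of the group action, or $G$-linear.

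\emph{Estimates.} The operators $\operatorname{fuse}_\tau$, $\operatorname{push}_\tau$, $\operatorname{pull}_\tau$ and the pregluing~\eqref{eq: curve-pregluing} agree with those of \cite{doan2023} on each of the $|G|/m_{\ddot{n}}$ necks above $\ddot{n}$. All of these necks share the single gluing parameter $\tau$ (Remark~\ref{rmk: pregluing-equivariantly}), so the local neck analysis carries over verbatim. There are two inputs:
\begin{itemize}
\item the uniform bounds $\|\operatorname{fuse}_\tau\|, \|\operatorname{push}_\tau\| \le c$;
\item the key estimate $\|\operatorname{Id}-\overline{D}_{\widetilde{u}_\tau}\widetilde{R}_{\widetilde{u}_\tau}\| \le \tfrac12$ for $|\tau|$ small.
\end{itemize}
The second is proved by comparing $D_{\widetilde{u}_\tau}\circ \operatorname{fuse}_\tau$ with $\operatorname{pull}_\tau\circ D_{u_0}$. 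Their difference is supported on the annuli $A_\tau$, where it is controlled by $\varepsilon_\tau^{1/4}$-type factors and by the cutoff gradients $|d\rho_\tau|$ and $|d\chi_\tau|$.

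The only modification concerns the obstruction summand. We must verify that $\operatorname{pull}_\tau(o)$ and $\operatorname{push}_\tau\circ\operatorname{pull}_\tau(o)$ differ from $o$ by $o(1)\|o\|$. This holds because:
\begin{itemize}
\item $\mathcal{O}_\bullet$ vanishes near $\pi^{-1}(\ddot{n})$;
\item $\mathcal{O}_{\mathghost}$ consists of smooth forms;
\item $\mathcal{O}_{\operatorname{diff}}$ is supported on the region where $\psi\circ\pi$ is nonconstant, which lies outside the $\varepsilon_\tau^{1/2}$-discs that $\operatorname{push}_\tau$ kills.
\end{itemize}

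\emph{Right-inverse identities.} With the key estimate in hand, the Neumann series~\eqref{eq: right inverse formula} converges. Hence $\|R_{\widetilde{u}_\tau}\| \le 2\|\widetilde{R}_{\widetilde{u}_\tau}\| \le c\|R_{u_0}\|$, and the identity $\overline{D}_{\widetilde{u}_\tau}R_{\widetilde{u}_\tau}=\operatorname{Id}$ follows by telescoping. For the images, we have $\operatorname{Im}R_{\widetilde{u}_\tau}\subseteq \operatorname{Im}\widetilde{R}_{\widetilde{u}_\tau}$ by construction. Conversely, $\widetilde{R}_{\widetilde{u}_\tau}=R_{\widetilde{u}_\tau}\circ(\overline{D}_{\widetilde{u}_\tau}\widetilde{R}_{\widetilde{u}_\tau})$, which gives the reverse inclusion.

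\emph{Equivariance.} By Lemma~\ref{lemma: aproximate-right-inverse-equivariance}, $\widetilde{R}_{\widetilde{u}_\tau}$ is $G$-linear. The operator $\overline{D}_{\widetilde{u}_\tau}=D_{\widetilde{u}_\tau}+\operatorname{pull}_\tau$ is also $G$-linear, for three reasons:
\begin{itemize}
\item $\widetilde{u}_\tau$ is equivariant and the data $(J,H,\alpha)$ are $G$-invariant, so $D_{\widetilde{u}_\tau}$ is equivariant by the computation in Lemma~\ref{lemma: basic-surjectivity-on-tangents};
\item $\operatorname{pull}_\tau$ is equivariant by~\eqref{eq-Jacobi-equivariance};
\item $\mathcal{O}$ is a $G$-subrepresentation.
\end{itemize}
Consequently every term $\widetilde{R}_{\widetilde{u}_\tau}(\operatorname{Id}-\overline{D}_{\widetilde{u}_\tau}\widetilde{R}_{\widetilde{u}_\tau})^k$ is $G$-linear. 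Since the series converges in operator norm and $G$ acts by bounded operators (isometries for $G$-invariant metrics), the limit $R_{\widetilde{u}_\tau}$ is $G$-linear.

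The main obstacle I expect is the key estimate at stacky nodes. There one must check that the local charts~\eqref{eq: charts-near-node} can be taken $\mu_{m_{\ddot{n}}}$-equivariant and isometric simultaneously. I would first reduce to the ordinary node case by passing to the chart coordinate $z$ on $U_{gp}$. In that coordinate the neck is a standard annulus $\{zw=\tau\}$, and all the estimates of \cite{doan2023} are coordinate-local.
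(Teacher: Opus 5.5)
Your proposal is correct and is essentially the paper's argument. The estimates and the identities $\overline{D}_{\widetilde{u}_\tau}R_{\widetilde{u}_\tau}=\operatorname{Id}$ and $\operatorname{Im}\widetilde{R}_{\widetilde{u}_\tau}=\operatorname{Im}R_{\widetilde{u}_\tau}$ are taken from \cite[Proposition 5.26]{doan2023}, applied on each of the disjoint necks sharing the parameter $\tau$ (the paper justifies this by noting that $\alpha$ vanishes and $J=J_0$ on the ghost components and the necks, so every term to be estimated is a Doan--Walpuski one), and $G$-equivariance follows from Lemma~\ref{lemma: aproximate-right-inverse-equivariance} together with the $G$-linearity of $\overline{D}_{\widetilde{u}_\tau}$, term by term in the Neumann series. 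Your separate check on the obstruction summand is harmless but unnecessary: writing $(\xi,o)=R_{u_0}\operatorname{push}_\tau\eta$ and using $\operatorname{pull}_\tau\circ\operatorname{push}_\tau=\operatorname{Id}$, one gets $(\overline{D}_{\widetilde{u}_\tau}\widetilde{R}_{\widetilde{u}_\tau}-\operatorname{Id})\eta=D_{\widetilde{u}_\tau}\operatorname{fuse}_\tau\xi-\operatorname{pull}_\tau D_{u_0}\xi$, so the $\mathcal{O}$-component cancels exactly.
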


\begin{remark}
    By our setup, the domain-dependent form $\alpha$ is set to vanish on the ghost components and on the gluing neck-region of each curve $\Sigma_{\sigma, \tau}$ corresponding to $A_\tau$. This implies that the proof of Lemma~\ref{lemma: inequalities-right-invers} indeed follows from estimates in \cite{doan2023} without any additional work, as all the terms one has to estimate for all the operators involved in pregluing are supported in the neck region.
\end{remark}

\s\n
{\em Two-step gluing.}
 The above setup lets us \emph{glue} the curve $u_0$, i.e., realize it as an element of a family of curves parametrized by a neighborhood of the origin in $\Delta \times K$. The insight of \cite{doan2023}, which allows for showing the obstruction result on the vanishing of derivatives, is to proceed with a $2$-step gluing, which we describe now. 

{\em First step.} We trivialize the bundle with fiber $\Omega^{0,1}_F(\Sigma_{\sigma, 0}, u_0^*TM)$ over $\Delta_1$, and use it to treat $\mathcal{O}$ as a subspace of $L^p(\Sigma_{\sigma, 0}, u_0^*TM)$. We then define
\begin{equation}
    \overline{D}_{u_{\sigma, 0}} \colon W^{1,p}_F(\Sigma_{\sigma, 0}, u_0^*TM) \oplus \mathcal{O} \to L^p(\Sigma_{\sigma, 0}, u_0^*TM)
\end{equation}
by extending $D_{u_{\sigma, 0}}$ as in~\eqref{eq: extended-linearized-operator}, where the linearized operator is defined with respect to $J_{\sigma,0}$ (similarly, we denote via $\mathscr{F}_{u_{\sigma, 0}}$ the map~\eqref{eq: dbar-section-local} associated with $J_{\sigma, 0}$). Notice that $\overline{D}_{u_{\sigma, 0}}$ remains equivariant. The formulas~\eqref{eq: formula-approximate-right-inverse} and~\eqref{eq: right inverse formula} then define the equivariant right inverse $R_{u_{\sigma, 0}}$ for this operator.

\begin{proposition}\label{proposition: 2-step-gluing-interior node}
    For any $(\sigma, \tau, \kappa) \in \Delta \times K$ near the origin there is a unique map
    \begin{equation}
        \operatorname{glue}_1(u_0, \sigma, \kappa)\coloneqq(\xi, o) \in \operatorname{Im}(R_{u_{\sigma, 0}}) \subseteq W^{1,p}_F(\Sigma_{\sigma, 0},u_0^*TM ) \oplus \mathcal{O} ,
    \end{equation}
    such that
    \begin{equation}
        \mathscr{F}_{\tilde{u}_{\sigma,0}}(\kappa+\xi)+o=0.
        \end{equation}
        Moreover, the following inequality is satisfied
        \begin{equation}
                \|\xi \|_{W^{1,p}} \le c(|\sigma|+\|\kappa\|_{W^{1,p}}+|J_{(0,0)}-J_{(\sigma, \tau)}|_{C^0}).
        \end{equation}
        Finally, the map $\operatorname{glue}_1$ is $G$-equivariant, implying that for $\kappa \in K^G$ the curve $\widetilde{u}_{\sigma, \tau, \kappa}$ is $G$-equivariant.
\end{proposition}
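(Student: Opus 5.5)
The plan is the standard Newton--Picard/implicit function theorem argument, applied on the $\tau=0$ slice (the first step only varies $\sigma$ and $\kappa$, with the node left unglued), followed by averaging over $G$ to get equivariance. Set
\[
\mathcal{G}_{\sigma,\kappa}(\xi,o)\coloneqq \mathscr{F}_{\widetilde{u}_{\sigma,0}}(\kappa+\xi)+o,
\]
and look for a zero of $\mathcal{G}_{\sigma,\kappa}$ of the form $(\xi,o)=R_{u_{\sigma,0}}\eta$ with $\eta\in L^p(\Sigma_{\sigma,0},\Omega^{0,1}\otimes u_0^*TM)$. Since $\overline{D}_{u_{\sigma,0}}R_{u_{\sigma,0}}=\operatorname{Id}$, this becomes the fixed point equation
\[
\eta=-\mathscr{F}_{\widetilde{u}_{\sigma,0}}(\kappa)-N_{\sigma,\kappa}(R_{u_{\sigma,0}}\eta),
\]
where $N_{\sigma,\kappa}(\xi)=\mathscr{F}_{\widetilde{u}_{\sigma,0}}(\kappa+\xi)-\mathscr{F}_{\widetilde{u}_{\sigma,0}}(\kappa)-D_{u_{\sigma,0}}\xi$ is the nonlinear remainder.

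The key steps, in order, are as follows.

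\begin{enumerate}
\item \emph{Uniform right inverse.} By Lemma~\ref{lemma: inequalities-right-invers} applied to the $\tau=0$ family, $\|R_{u_{\sigma,0}}\|\le c\|R_{u_0}\|$ uniformly for small $\sigma$.
\item \emph{Quadratic estimate.} Establish the standard bound
\[
\|N_{\sigma,\kappa}(\xi_1)-N_{\sigma,\kappa}(\xi_2)\|_{L^p}\le c\bigl(\|\xi_1\|+\|\xi_2\|+\|\kappa\|\bigr)\|\xi_1-\xi_2\|_{W^{1,p}},
\]
as in \cite[Section 5]{doan2023} (or \cite[Chapter 3]{mcduffsalamon2004}), using $p>2$ and the Sobolev embedding into $C^0$.
\item \emph{Initial error.} Bound
\[
\|\mathscr{F}_{\widetilde{u}_{\sigma,0}}(\kappa)\|_{L^p}\le c\bigl(|\sigma|+\|\kappa\|+|J_{(0,0)}-J_{(\sigma,\tau)}|_{C^0}\bigr).
\]
Here $\mathscr{F}_{u_0}(0)=0$ and $D_{u_0}\kappa=0$, so the error comes only from three sources: the variation $j_\sigma-j_0$, which is supported away from the nodes and stacky points; the change of almost complex structure; and quadratic terms in $\kappa$.
\item \emph{Contraction.} Conclude that the right-hand side of the fixed point equation is a contraction on a small ball. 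This gives a unique $\eta$, hence a unique $(\xi,o)\in\operatorname{Im}R_{u_{\sigma,0}}$, together with the stated $W^{1,p}$ bound.
\end{enumerate}

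For equivariance, the argument runs as follows.
\begin{itemize}
\item $\mathscr{F}_u$ intertwines the $G$-actions: the metric, $\nabla$, $J$, $H$ and $\alpha$ are $G$-invariant, and parallel transport and $\exp$ commute with $g$ by \eqref{eq: exponential-equivariance} and \eqref{eq-Jacobi-equivariance}.
\item $R_{u_{\sigma,0}}$ is $G$-linear by Lemma~\ref{lemma: aproximate-right-inverse-equivariance} and Lemma~\ref{lemma: inequalities-right-invers}.
\item Since $j_\sigma$ is pulled back from $C_0$, it is $G$-invariant, and $\mathcal{O}$ is a subrepresentation.
\end{itemize}
Consequently, for $g\in G$ the pair $g\cdot\operatorname{glue}_1(u_0,\sigma,\kappa)$ solves the equation for $g\cdot\kappa$ and again lies in $\operatorname{Im}R_{u_{\sigma,0}}$. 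Uniqueness then gives
\[
\operatorname{glue}_1(u_0,\sigma,g\kappa)=g\,\operatorname{glue}_1(u_0,\sigma,\kappa).
\]
In particular, for $\kappa\in K^G$ the solution $\xi$ is $G$-fixed, so $\exp_{u_0}(\kappa+\xi)$ is $G$-equivariant. The second gluing step applied to this curve will be handled later, and it preserves equivariance by the same uniqueness argument.

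The main obstacle I expect is the uniformity of the constants in $\sigma$. The $L^p$ and $W^{1,p}$ norms must be compared across the family $\Sigma_{\sigma,0}$, and this is done through the trivialization over $\Delta_1$. The fact that makes this work is that $j_\sigma$ agrees with $j_0$ near the stacky points and the nodes, so the trivializing diffeomorphisms are $G$-equivariant with uniformly bounded distortion. With that in hand, the remaining estimates are verbatim those of \cite[Proposition 5.26]{doan2023}, because the Floer data vanish on the ghost components and the neck regions.
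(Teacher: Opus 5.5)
Your proposal is correct and follows the paper's route: the paper just invokes the Newton--Picard construction of \cite[Proposition 5.29]{doan2023} and says equivariance follows because every ingredient of the iteration is $G$-equivariant, which is equivalent to your uniqueness argument. One small correction: in the final claim, $\widetilde{u}_{\sigma,\tau,\kappa}$ is the preglued curve $\widetilde{(u_{\sigma,\kappa})}_\tau$ with $u_{\sigma,\kappa}=\exp_{u_{\sigma,0}}(\kappa+\xi)$, so its equivariance follows from the earlier pregluing lemma (equivariance of $\exp$ and of $\iota_\tau$ via the balancing condition) rather than from the second gluing step.
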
     

\begin{proof}
    This follows from \cite[Proposition 5.29]{doan2023}. 
    The equivariance of the gluing map follows from the definition of this map via Newton-Picard iteration. 
\end{proof}

{\em Second step.} As in \cite[Definition 5.32]{doan2023} we first set $$\tilde{u}_{\sigma, \tau, \kappa}\coloneqq\widetilde{(u_{\sigma, \kappa})}_\tau, \, \text{with } u_{\sigma, \kappa}=\operatorname{exp}_{u_{\sigma,0}}(\kappa+\xi)$$ for $\xi$ obtained from $\operatorname{glue}_1(u_0, \sigma, \kappa)$.
Further we set $$\operatorname{pull}_{\sigma, \tau, \kappa} \colon L^p(\Sigma_0, u_0^*TM) \to L^p(\Sigma_{\sigma,\tau}, \widetilde{u}^*_{\sigma, \tau, \kappa}TM)$$ to be the composition of the parallel transport along the geodesics from $u_0$ to $u_{\sigma, \kappa}$ which we denote by
$$\operatorname{pull}_{\sigma,0, \kappa} \colon L^p(\Sigma_0, u_0^*TM) \to L^p(\Sigma_{\sigma, 0}, u_{\sigma, \kappa}^*TM),$$
with $$\operatorname{pull}_{\tau}\colon L^p(\Sigma_{\sigma, 0}, u_{\sigma, \kappa}^*TM) \to L^p(\Sigma_{\sigma,\tau}, \widetilde{u}^*_{\sigma, \tau, \kappa}TM)$$
defined as in~\eqref{eq: pull-operator}.
Since $\operatorname{pull}_{\sigma,0, \kappa}$ is equivariant, the subspace $\operatorname{pull}_{\sigma,0, \kappa}(\mathcal{O})$ is a complemental subrepresentation of $\operatorname{Im} D_{u_{\sigma, \kappa}}$. Hence, applying the construction from Definition~\ref{defn: right inverse} we obtain the right inverse $$R_{\widetilde{u}_{\sigma, \tau, \kappa}} \colon L^p(\Sigma_{\sigma,\tau}, \widetilde{u}^*_{\sigma, \tau, \kappa}TM) \to W_{F_{\sigma, \tau}}^{1,p}(\Sigma_{\sigma,\tau}, \widetilde{u}^*_{\sigma, \tau, \kappa}TM) \oplus \mathcal{O}.$$
\begin{proposition}\label{prop: 2nd-step-gluing}
    For any $(\sigma, \tau, \kappa) \in \Delta \times K$ near the origin there is a unique map
    \begin{equation}
        \operatorname{glue}_2(u_0, \sigma, \tau, \kappa) \coloneqq(\hat{\xi},\hat{o}) \in \operatorname{Im}(R_{\widetilde{u}_{\sigma, \tau}}) \subseteq W_{F_\tau}^{1,p}(\Sigma_{\sigma, \tau},\widetilde{u}_{\sigma, \tau, \kappa}^*TM ) \oplus \mathcal{O}, 
    \end{equation}
    such that
\begin{gather}
    \mathscr{F}_{\widetilde{u}_{\sigma, \tau, \kappa}}(\hat{\xi})+\operatorname{pull}_{\sigma, \tau, \kappa}(o+\hat{o})=0,
\end{gather}
for $(\xi, o)=\operatorname{glue}_1(u_0, \sigma, \tau, \kappa)$.
 Moreover, the following inequality is satisfied
\begin{gather}
    \|\hat{\xi}\|+|\hat{o}| \le c \| \overline{\partial}_{J_{\sigma, \tau}}(\widetilde{u}_{\sigma, \tau, \kappa})+\operatorname{pull}(o)\|_{L^p}. \label{eq: bound-on-obstruction}
\end{gather}
 For $\kappa \in K^G$ the map $\operatorname{glue}_2$ is also $G$-equivariant, implying that for $\kappa \in K^G$ it holds that $$o, \hat{o} \in \mathcal{O}^G=\mathcal{O}_\bullet^G\oplus\mathcal{O}_{\mathghost}^G.$$
\end{proposition}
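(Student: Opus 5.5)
The plan is to obtain $\operatorname{glue}_2$ as the fixed point of a Newton--Picard iteration on the preglued curve, following \cite[Proposition 5.33]{doan2023}, and then to read off equivariance from the equivariance of every ingredient.

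\textbf{Step 1 (setting up the map).} Fix $(\sigma,\tau,\kappa)$ and let $(\xi,o)=\operatorname{glue}_1(u_0,\sigma,\kappa)$ be given by Proposition~\ref{proposition: 2-step-gluing-interior node}. Consider
\[
\mathcal{G}(\hat\xi,\hat o):=\mathscr{F}_{\widetilde{u}_{\sigma,\tau,\kappa}}(\hat\xi)+\operatorname{pull}_{\sigma,\tau,\kappa}(o+\hat o),
\]
restricted to $\operatorname{Im}(R_{\widetilde{u}_{\sigma,\tau,\kappa}})$. Its linearization at the origin is $\overline{D}_{\widetilde{u}_{\sigma,\tau,\kappa}}$, which has the right inverse $R_{\widetilde{u}_{\sigma,\tau,\kappa}}$. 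Lemma~\ref{lemma: inequalities-right-invers}, applied to $u_{\sigma,\kappa}$ in place of $u_0$, gives the uniform bound $\|R\|\le c\|R_{u_0}\|$.

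\textbf{Step 2 (fixed point).} I will verify the quadratic estimate
\[
\|\mathscr{F}(\hat\xi_1)-\mathscr{F}(\hat\xi_2)-D(\hat\xi_1-\hat\xi_2)\|\le c(\|\hat\xi_1\|+\|\hat\xi_2\|)\|\hat\xi_1-\hat\xi_2\|
\]
with $c$ independent of $\tau$. This is the standard estimate from \cite[Section 5]{doan2023}; it holds uniformly because $\alpha$ vanishes on the neck and on the ghost. I also need that $\|\mathcal{G}(0)\|_{L^p}=\|\overline\partial_{J_{\sigma,\tau}}\widetilde u_{\sigma,\tau,\kappa}+\operatorname{pull}(o)\|_{L^p}$ is small. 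This follows because the error is supported in the neck: $\operatorname{glue}_1$ makes $u_{\sigma,\kappa}$ solve the perturbed equation away from it, and the pregluing error there is $O(\varepsilon_\tau^{\text{power}})$.

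The contraction mapping principle, applied to $\hat\eta\mapsto\hat\eta-\mathcal{G}(R\hat\eta)$ on a small ball, then yields a unique zero $(\hat\xi,\hat o)=R\hat\eta$. The bound \eqref{eq: bound-on-obstruction} follows from $\|\hat\eta\|\le 2\|\mathcal{G}(0)\|$ together with the bound on $R$. I expect this step to be the main obstacle: I must ensure the quadratic constant stays uniform under the two-step construction, in particular that $\operatorname{pull}_{\sigma,\tau,\kappa}$ of elements of $\mathcal{O}_{\operatorname{diff}}$, which are supported near the nodes, are controlled. Here I will quote Doan--Walpuski's estimates verbatim rather than redoing them.

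\textbf{Step 3 (equivariance).} Let $\kappa\in K^G$. Then $(\xi,o)$ is $G$-invariant by Proposition~\ref{proposition: 2-step-gluing-interior node}, so $u_{\sigma,\kappa}$ and, by the pregluing lemma, $\widetilde u_{\sigma,\tau,\kappa}$ are $G$-equivariant. Hence $\mathscr{F}_{\widetilde u_{\sigma,\tau,\kappa}}$ is $G$-equivariant: $\exp$, parallel transport, $J$, $H$ and $\alpha$ are all $G$-invariant, exactly as in Lemma~\ref{lemma: basic-surjectivity-on-tangents}. The operators $\operatorname{pull}_{\sigma,\tau,\kappa}$ and $R_{\widetilde u_{\sigma,\tau,\kappa}}$ are $G$-linear by Lemmas~\ref{lemma: aproximate-right-inverse-equivariance} and \ref{lemma: inequalities-right-invers}, since the Neumann series of $G$-linear operators is $G$-linear.

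It follows that $\mathcal{G}$ commutes with the $G$-action, and that $g$ preserves the contraction ball because the norms are $G$-invariant. By uniqueness, $g\cdot(\hat\xi,\hat o)=(\hat\xi,\hat o)$ for all $g$; equivalently, run the iteration starting at $0$ inside the $G$-fixed subspace. Therefore $o,\hat o\in\mathcal{O}^G$, and $\mathcal{O}^G=\mathcal{O}^G_\bullet\oplus\mathcal{O}^G_{\mathghost}$ because $\mathcal{O}_{\operatorname{diff}}^G=0$, as noted after \eqref{eq: closed-ghost-full-obstruction-lift}.
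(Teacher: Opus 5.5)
Your proposal is correct and follows the paper's route. The paper's proof just cites \cite[Proposition 5.33]{doan2023}, which is the Newton--Picard contraction argument for $\hat\eta\mapsto\hat\eta-\mathcal{G}(R\hat\eta)$ with the $\tau$-uniform right inverse and quadratic estimate you describe; equivariance then follows, as you argue, from the $G$-linearity of $\mathscr{F}_{\widetilde{u}_{\sigma,\tau,\kappa}}$, $\operatorname{pull}_{\sigma,\tau,\kappa}$ and $R_{\widetilde{u}_{\sigma,\tau,\kappa}}$ together with uniqueness (your variant of iterating from $0$ inside the closed $G$-fixed subspace is the cleaner version, since it needs no $G$-invariance of the norms).

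One caveat: like the paper, you take the final identity $\mathcal{O}^G=\mathcal{O}^G_\bullet\oplus\mathcal{O}^G_{\mathghost}$, equivalently $\mathcal{O}_{\operatorname{diff}}^G\cong\operatorname{coker}(\operatorname{diff}^G)=0$, from the remark after \eqref{eq: closed-ghost-full-obstruction-lift} without proof. It does hold at stacky nodes of $A_{m-1}$-type, where the target $(T_{x}M)^{G_p}=T_{x}\mathcal{Y}$ of $\operatorname{diff}^G$ is already hit by $G$-invariant constant sections on the ghost, but it is not automatic at regular nodes or for general cyclic quotient singularities.
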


\begin{proof}
    This follows from \cite[Propositions 5.33]{doan2023}. 
\end{proof}

{\em One-step gluing.}
We also note that there is a more standard one-step gluing map in place, where the right inverse $\check{R}_{\widetilde{u}_{\sigma, \tau}}$ is obtained by applying the construction of Definition~\ref{defn: right inverse} to $D_{u_{\sigma, 0}}$ right away.
\begin{proposition}\label{proposition-one-step-gluing}
    For any $(\sigma, \tau, \kappa) \in \Delta \times K$ near the origin there is a unique
    \begin{equation}
        \operatorname{glue}(u_0, \sigma, \tau, \kappa)\coloneqq (\check{\xi}, \check{o}) \in \operatorname{Im}(\check{R}_{\widetilde{u}_{\sigma, \tau}})
    \end{equation}
    such that
    $$\mathscr{F}_{\widetilde{u}_{\sigma, \tau}}(\check{\xi})+ \operatorname{pull}_\tau\check{o}=0.$$
    Moreover, the map
    $$\operatorname{glue}(u_0, \sigma, \tau, \cdot) \colon K \to W^{1,p}_{F_{\tau}}(\Sigma_{\sigma, \tau}, u_{\sigma, \tau}^*TM)$$
    is $G$-equivariant. This, in particular, implies that for $\kappa \in K^G$ the curve $\check{u}_{\sigma, \tau, \kappa}\coloneqq \operatorname{exp}_{\widetilde{u}_{\sigma, \tau}}(\check{\xi})$ is $G$-equivariant.
\end{proposition}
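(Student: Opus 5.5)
The plan is to follow the one-step gluing scheme of \cite[Section 5]{doan2023}, running it in the $G$-equivariant category. The only new inputs are the equivariance statements already established: Lemma~\ref{lemma: G-action-on-preglued-curve} for the preglued domain $\Sigma_{\sigma,\tau}$, equivariance of the preglued map $\widetilde{u}_{\sigma,\tau}$, and Lemma~\ref{lemma: inequalities-right-invers} for the right inverse.

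First I would replace the two-step construction by the right inverse $\check{R}_{\widetilde{u}_{\sigma,\tau}}$. This is obtained by applying Definition~\ref{defn: right inverse} directly to $\overline{D}_{\widetilde{u}_{\sigma,\tau}}$, built from $D_{u_{\sigma,0}}$, the lift $\mathcal{O}$, $\operatorname{push}_\tau$ and $\operatorname{fuse}_\tau$. By Lemma~\ref{lemma: aproximate-right-inverse-equivariance}, the approximate inverse $(\operatorname{fuse}_\tau\oplus\operatorname{Id})\circ R_{u_{\sigma,0}}\circ\operatorname{push}_\tau$ is $G$-linear. Since $\overline{D}_{\widetilde{u}_{\sigma,\tau}}$ is equivariant (Lemma~\ref{lemma: basic-surjectivity-on-tangents}, together with equivariance of $\operatorname{pull}_\tau$), each term of the Neumann series is $G$-linear. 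Hence $\check{R}_{\widetilde{u}_{\sigma,\tau}}$ is $G$-linear and has the uniform bound $\|\check R\|\le c\|R_{u_0}\|$ from \cite[Proposition 5.26]{doan2023}.

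Next I would solve $\mathscr{F}_{\widetilde{u}_{\sigma,\tau}}(\check\xi)+\operatorname{pull}_\tau\check o=0$ with $(\check\xi,\check o)=\check R(\eta)$, where the unknown now includes the kernel shift $\kappa$ inserted through $\operatorname{fuse}_\tau(\kappa)$. This becomes a fixed-point problem for the map
\[
T(\eta)=\eta-\mathscr{F}_{\widetilde{u}_{\sigma,\tau}}\bigl(\operatorname{fuse}_\tau\kappa+\check\xi(\eta)\bigr)-\operatorname{pull}_\tau\check o(\eta).
\]
I would show that $T$ is a contraction on a small ball via the quadratic estimate for $\mathscr{F}$ and the smallness of $\|\mathscr{F}_{\widetilde{u}_{\sigma,\tau}}(0)\|_{L^p}$. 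Both are given by the pregluing estimates of \cite{doan2023}, which apply verbatim because $\alpha$ vanishes on the neck and the ghost. The Newton--Picard iteration then yields existence and uniqueness of $(\check\xi,\check o)$ in $\operatorname{Im}\check R$.

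For equivariance I would argue as follows. The nonlinear map $\mathscr{F}_{\widetilde{u}_{\sigma,\tau}}$ is $G$-equivariant because $\exp$, parallel transport along geodesics for the $G$-invariant connection $\nabla$, $J$, $H$ and $\alpha$ are all $G$-invariant; the computation is the same as \eqref{eq: exponential-equivariance}. Together with $G$-linearity of $\check R$, $\operatorname{fuse}_\tau$ and $\operatorname{pull}_\tau$, this makes $T$ commute with the $G$-action for each $g$, so $g\cdot$(solution for $\kappa$) solves the problem for $g\kappa$. Uniqueness in $\operatorname{Im}\check R$, which is a $G$-invariant subspace, then gives $\operatorname{glue}(u_0,\sigma,\tau,g\kappa)=g\cdot\operatorname{glue}(u_0,\sigma,\tau,\kappa)$. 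For $\kappa\in K^G$, the solution is $G$-fixed, so $\check u_{\sigma,\tau,\kappa}=\exp_{\widetilde{u}_{\sigma,\tau}}(\check\xi)$ is equivariant, again by equivariance of $\exp$.

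The main obstacle I expect is bookkeeping rather than analysis. One point is to check that the contraction ball can be chosen $G$-invariantly; this holds because the norms are $G$-invariant, the metric being $G$-invariant. The other is to check that the uniqueness statement is formulated on the invariant subspace $\operatorname{Im}\check R$, so that the equivariance argument via uniqueness is valid.
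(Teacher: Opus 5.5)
Your proposal is correct and follows essentially the paper's route. The paper gives no separate proof here and treats this proposition like the two-step gluing propositions: Doan--Walpuski's Newton--Picard scheme with a $G$-linear $\check{R}_{\widetilde{u}_{\sigma,\tau}}$ and an equivariant $\mathscr{F}_{\widetilde{u}_{\sigma,\tau}}$, with equivariance read off from the iteration, which is equivalent to your uniqueness argument on a $G$-invariant ball in $\operatorname{Im}\check{R}_{\widetilde{u}_{\sigma,\tau}}$; the only cosmetic point is that, since you insert $\kappa$ via $\operatorname{fuse}_\tau\kappa$, the glued curve is $\exp_{\widetilde{u}_{\sigma,\tau}}(\operatorname{fuse}_\tau\kappa+\check{\xi})$ rather than $\exp_{\widetilde{u}_{\sigma,\tau}}(\check{\xi})$, and it is $G$-equivariant for $\kappa\in K^G$ for the same reason.
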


\begin{remark}\label{remark: continuous-one-step-gluing}
    The present discussion, which follows \cite{doan2023}, does not guarantee all the important properties of the gluing map that are needed to show that it provides a local homeomorphism. The construction of \cite[Appendix B]{pardonvfc}, which is similar in spirit but mildly different in execution, does guarantee all those properties. In Pardon's approach, the particular choice of right inverse is crucial; see \cite[Section B.7]{pardonvfc} and also \cite[Lemma 4.10]{swaminathan2021relcinfty}. To claim that Pardon's gluing map is $G$-equivariant (which suffices to show that the moduli space of $G$-equivariant $J$-curves is a topological manifold) we would like to show that one can pick an equivariant right inverse via a similar construction.

    Here is a brief explanation:
    Let $x_1 \in \mathscr{C}_{\bullet}$ be a smooth point away from the nodes. We note that $W_{x_1}\coloneqq u^*TM|_{\pi^{-1}(x_1)}$ is naturally a finite-dimensional $G$-representation, and
    $$\operatorname{ev}_{x_1} \colon K \to W_{x_1},$$
    $$\xi \mapsto \xi|_{\pi^{-1}(x_1)},$$
    is a $G$-linear map. Now, take an arbitrary $\kappa \in K$ and its image ${\operatorname{ev}}_{x_1}(\kappa)$, which by unique continuation can be assumed to be nonzero. Let $V_{x_1}$ be a complementary subrepresentation of the subrepresentation $\operatorname{Im}(\operatorname{ev}_{x_1})$. Then $K'={\operatorname{ev}}_{x_1}^{-1}(V_1)$ is a proper subrepresentation of $K$. If $K'\neq 0$, we can now pick a point $x_2$ and repeat the above process. Eventually, we obtain a collection of smooth points $x_1, \ldots, x_k$ and a collection of subrepresentations $V_{x_i} \subset W_{x_i}$ such that the $G$-linear evaluation map
    \begin{gather}
        L_{u_0} \colon W^{1,p}_F(\Sigma, u_0^*TM) \oplus \mathcal{O} \to \bigoplus_{i=1}^k(W_{x_i}/V_{x_i}) ,\\
        (\xi, o) \mapsto ([\operatorname{ev}_{x_1}(\xi)], \dots, [\operatorname{ev}_{x_k}(\xi)]) \nonumber
    \end{gather}
    restricts to an isomorphism from $K$ to the codomain. Hence $\operatorname{ker}(L_0)$ is naturally a subrepresentation, and $\overline{D}_{u_0}|_{\operatorname{ker}(L_0)}$ is an isomorphism of representations.
\end{remark}

\subsubsection{Gluing for a boundary node}\label{subsection: boundary-node-gluing}

In this subsection we briefly explain gluing in the boundary node case, highlighting only the crucial differences. 

As in~\eqref{eq: orbidisk-with-ghost-decomposition} we assume a decomposition of $\mathscr{C}$ into two components, but now $\mathscr{C}_{\mathghost}$ is a disk and $\ddot{n}=(n_{\bullet}, n_{\mathghost})$ is a boundary node. We also assume that the image of the ghost component $\mathscr{C}_{\mathghost}$ is a point in $[G \cdot \mathcal{Y}/G]$ away from deeper strata, implying decomposition~\eqref{eq: ghost-domain-decomposition}. The curve $\Sigma_{m_\mathcal{Y}}$ covering $\mathscr{C}_{\mathghost}$ now has exactly $m_\mathcal{Y}$ preimages of $n_{\mathghost}$ and the space of gluing parameters~\eqref{eq: gluing-parameters} is replaced with
\begin{equation}
    \Delta=\R^{2b+d-3}_\sigma \times (\R_{\le 0})_\tau.
\end{equation}

For any node $p=(p_{\bullet}, p_{\mathghost}) \in \pi^{-1}(\ddot{n})$, the neighborhoods $U_{gp_{\bullet}}, U_{gp_{\mathghost}}$ are treated as subsets of the upper half-plane $\mathbb{H}_+$, in these coordinates $\iota_\tau$ is given by
$$z \mapsto \frac{\tau}{z},$$
and such a map indeed maps the upper half-plane back to itself for $\tau \in \R_{\le 0}$.

The definitions of $A_\tau$ (which is now a band instead of an annulus) and $\Sigma_{\sigma, \tau}$ remain unchanged. The $G$-equivariant curve 
$$u_0 \colon \Sigma_{\bullet} \cup \Sigma_{\mathghost} \to M$$
satisfies $u_0({\partial \Sigma_{\mathghost}}) \subset G \cdot L$ for some Lagrangian submanifold $L$. Our choice of metric guarantees that the preglued curve $u_\tau \colon \Sigma_{\sigma, \tau} \to M$ defined by~\eqref{eq: curve-pregluing} satisfies the Lagrangian boundary condition $G \cdot L$ along the boundary component of $\Sigma_{\sigma, \tau}$ corresponding to boundary components of $\Sigma_{\mathghost}$, and that the fusing operator $\operatorname{fuse}_\tau$ respects the boundary conditions.

We choose the lift $\mathcal{O}$ of the obstruction to $u_0$ to consist of three components as in~\eqref{eq: closed-ghost-full-obstruction-lift}, i.e.,
$$\mathcal{O}=\mathcal{O} \oplus \mathcal{O}_{\operatorname{diff}}\oplus \mathcal{O}_{\mathghost},$$
where the first two components are described similarly.  The last component
$$\mathcal{O}_{\mathghost} \subset L^p_{u^*_{\mathghost}TL}(\Sigma_{\mathghost}, \Omega^{0,1}_{\Sigma_{\mathghost}} \otimes u^*_{\mathghost}TM) \subset L^p(\Sigma_{\mathghost}, \Omega^{0,1}_{\Sigma_{\mathghost}} \otimes u^*_{\mathghost}TM)$$
consists of antiholomorphic 1-forms on $\Sigma_{\mathghost}$ with values in $u^*_{\mathghost}TM \cong \Sigma_{\mathghost} \times \C^n$ preserving the real boundary condition given by $u^*_{\mathghost}|_{\partial \Sigma_{\mathghost}}TL \cong \partial \Sigma_{\mathghost} \times \R^n$. This is clearly a $G$-subrepresentation of $L^p(\Sigma_{\mathghost}, \Omega^{0,1}_{\Sigma_{\mathghost}} \otimes u^*_{\mathghost}TM)$, and, moreover, the space $G$-invariant sections $\mathcal{O}_{\mathghost}^G$ can be identified with the space of lifts of antiholomorphic $1$-forms on $C_{\mathghost}$ with values in the desingularization bundle pair $(|(u^*TM)^{\operatorname{orb}}|, |(u^*TL)^{\operatorname{orb}}|)$ of the induced bundle pair on $\mathscr{C}_{\mathghost}$ by the results of Lemma~\ref{lemma: ker and coker}.

With these choices at hand, we have the $\tau$-translate of the linearized operator $D_{u_0}$:
$$\overline{D}_{\widetilde{u}_\tau} \colon W^{1,p}_{F_\tau}(\Sigma_\tau, \widetilde{u}_\tau^*TM) \oplus \mathcal{O} \to L^p(\Sigma_\tau, \Omega^{0,1}_{\Sigma_\tau} \otimes \widetilde{u}_\tau^*TM)$$
and its right inverse $\widetilde{R}_{\widetilde{u}_\tau}$, given by formula~\eqref{eq: formula-approximate-right-inverse}.  By the same proof as in Lemma~\ref{lemma: aproximate-right-inverse-equivariance}  (an easier one, since there are exactly $m_\mathcal{Y}$ nodes in $\pi^{-1}(n_{\mathghost})$ on $\Sigma_{m_\mathcal{Y}}$), the operator $\widetilde{R}_{\widetilde{u}_\tau}$ is equivariant, and hence admits an equivariant right inverse.

We now state the analog of Proposition~\ref{proposition: 2-step-gluing-interior node} in this setting. The proof is analogous, since the quadratic estimates carry over to this setting, and is omitted.

\begin{proposition}
    For any $(\sigma, \tau, \kappa) \in \Delta \times K$ near the origin, there is a unique map
    \begin{equation}
        \operatorname{glue}_1(u_0, \sigma, \kappa)\coloneqq(\xi, o) \in \operatorname{Im}(R_{u_{\sigma, 0}}) \subseteq W^{1,p}_F(\Sigma,u_0^*TM ) \oplus \mathcal{O} ,
    \end{equation} and there is a unique map
    \begin{equation}
        \operatorname{glue}_2(u_0, \sigma, \tau, \kappa) \coloneqq(\hat{\xi},\hat{o}) \in \operatorname{Im}(R_{\widetilde{u}_{\sigma, \tau}}) \subseteq W_{F_\tau}^{1,p}(\Sigma_{\sigma, \tau},\widetilde{u}_{\sigma, \tau, \kappa}^*TM ) \oplus \mathcal{O}, 
    \end{equation}
    such that
\begin{gather}
    \mathscr{F}_{\tilde{u}_{\sigma,0}}(\kappa+\xi)+o=0;\\
    \mathscr{F}_{\widetilde{u}_{\sigma, \tau, \kappa}}(\hat{\xi})+\operatorname{pull}_{\sigma, \tau, \kappa}(o+\hat{o})=0,
\end{gather}
where $\tilde{u}_{\sigma, \tau, \kappa}=\widetilde{(u_{\sigma, \kappa})}_\tau$ with $u_{\sigma, \kappa}=\operatorname{exp}_{u_{0, \sigma}}(\kappa+\xi)$, and $\operatorname{pull}_{\sigma, \tau, \kappa}$ is the composition of parallel transport from $u_0$ to $u_{\sigma, \kappa}$ with $\operatorname{pull}_\tau$ for the curve $u_{\sigma, \kappa}$. Moreover, the following inequalities are satisfied
\begin{gather}
    \|\xi \|_{W^{1,p}} \le c(|\sigma|+\|\kappa\|_{W^{1,p}}); \\
    \|\hat{\xi}\|+|\hat{o}| \le c \| \overline{\partial}_{J_{\sigma, \tau}}(\widetilde{u}_{\sigma, \tau, \kappa})+\operatorname{pull}(o)\|_{L^p}. \label{eq: bound-on-obstruction 2}
\end{gather}
Finally, the map $\operatorname{glue}_1$ is $G$-equivariant, implying that for $\kappa \in K^G$ the curve $\widetilde{u}_{\sigma, \tau, \kappa}$ is $G$-equivariant. For $\kappa \in K^G$ the map $\operatorname{glue}_2$ is also $G$-equivariant, implying that for $\kappa \in K^G$
$$o, \hat{o} \in \mathcal{O}^G=\mathcal{O}_\bullet^G\oplus\mathcal{O}_{\mathghost}^G.$$
\end{proposition}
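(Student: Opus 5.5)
The proof will follow the interior-node case (Propositions~\ref{proposition: 2-step-gluing-interior node} and~\ref{prop: 2nd-step-gluing}) essentially verbatim, with the changes forced by the boundary. The plan has three steps.

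First, I set up the pregluing. I fix the boundary-node gluing parameter $\tau\in\R_{\le 0}$ and the half-plane charts $U_{gp_\bullet},U_{gp_{\mathghost}}\subset\mathbb{H}_+$, in which $\iota_\tau(z)=\tau/z$; this map preserves $\mathbb{H}_+$ and $\R$ when $\tau\le 0$. Because the Lagrangians are totally geodesic for the chosen $G$-invariant metric, the exponential map along $u_0|_{\partial\Sigma}$ carries $T\mathcal{L}$ into $\mathcal{L}$. Hence the preglued curve $\widetilde{u}_{\sigma,\tau}$ satisfies the boundary condition $G\cdot L$, and $\operatorname{fuse}_\tau$, $\operatorname{push}_\tau$, $\operatorname{pull}_\tau$ respect the totally real conditions $F$ and $F_\tau$. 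Equivariance of $\widetilde{u}_{\sigma,\tau}$ and of these operators follows as in Lemma~\ref{lemma: aproximate-right-inverse-equivariance} and the lemma before it. The argument is in fact easier here: boundary nodes have trivial stabilizer, so no balancing computation is needed, and the $G$-action simply permutes the $|G|$ nodes above $\ddot{n}$ while commuting with $\iota_\tau$.

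Second, I construct the equivariant right inverses. The lift $\mathcal{O}=\mathcal{O}_\bullet\oplus\mathcal{O}_{\operatorname{diff}}\oplus\mathcal{O}_{\mathghost}$ is a $G$-subrepresentation. The component $\mathcal{O}_{\mathghost}$ consists of antiholomorphic forms satisfying the real boundary condition $TL$, so $\overline{D}_{u_0}$ is a surjective $G$-linear Fredholm operator. Averaging any right inverse over $G$ gives a $G$-linear right inverse $R_{u_0}$, whose restriction to $\Sigma_\bullet$ I arrange as before. Then $\widetilde{R}_{\widetilde{u}_\tau}=(\operatorname{fuse}_\tau\oplus\operatorname{Id})\circ R_{u_0}\circ\operatorname{push}_\tau$ is $G$-linear. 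The estimates of Lemma~\ref{lemma: inequalities-right-invers} carry over, because the neck is now a band $A_\tau$ whose Sobolev constants are controlled by doubling, that is, by reflecting across $\R$. The Neumann series then gives the $G$-equivariant right inverses $R_{u_{\sigma,0}}$ and $R_{\widetilde{u}_{\sigma,\tau,\kappa}}$.

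Third, I run the Newton--Picard iteration for $\operatorname{glue}_1$ and $\operatorname{glue}_2$. This yields existence and uniqueness in $\operatorname{Im}(R)$ together with the two displayed estimates, using the quadratic estimates of \cite[Propositions~5.29, 5.33]{doan2023}. The $C^0$ term in $J$ that appeared in the interior case drops out here, since $J$ is constant near the node. For equivariance, every map in the contraction iteration is $G$-equivariant when $\kappa\in K^G$, so by uniqueness the fixed point is $G$-invariant. Consequently $o,\hat{o}$ are $G$-invariant, and $\mathcal{O}^G=\mathcal{O}_\bullet^G\oplus\mathcal{O}_{\mathghost}^G$ because $\mathcal{O}_{\operatorname{diff}}^G=0$. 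That vanishing has to be checked by reading $\operatorname{diff}$ on invariant parts, namely that the $G$-invariant part of the cokernel of $\operatorname{diff}$ vanishes.

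I expect the main obstacle to be the uniform quadratic estimate on the boundary band with Lagrangian conditions. The planned approach is to reduce to the interior estimates by doubling $\Sigma_{\sigma,\tau}$ along $\partial$, with the totally geodesic assumption giving a reflection-symmetric setup.
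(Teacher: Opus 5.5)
Your route is the paper's own. The paper's proof of this proposition is just the remark that the interior-node argument carries over because the quadratic estimates do; that argument is Propositions~\ref{proposition: 2-step-gluing-interior node} and~\ref{prop: 2nd-step-gluing}, i.e.\ \cite[Propositions 5.29, 5.33]{doan2023} plus equivariance of the Newton--Picard iteration. Your pregluing, averaged right inverse and doubling of the band are a reasonable way to fill this in.

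The step you explicitly leave open, however, is not a formality, and the check you propose does not give vanishing.

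\begin{itemize}
\item Since taking $G$-invariants is exact, $\mathcal{O}^G_{\operatorname{diff}}\cong(\operatorname{coker}\operatorname{diff})^G=\operatorname{coker}(\operatorname{diff}^G)$.
\item At a boundary node, $G$ acts freely on $\pi^{-1}(\ddot n)$. So the invariant part of the target of $\operatorname{diff}$ is a full copy of $T_xL$.
\item The $G$-invariant locally constant sections on $\Sigma_{\mathghost}$ only contribute $(T_xL)^{H'}$, where $H'\subseteq H$ is the stabilizer of a component of $\Sigma_{m_{\mathcal{Y}}}$.
\item Stability forces a stacky point on $\mathscr{C}_{\mathghost}$, so $H'\neq 1$. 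For the $A$-type actions this gives $(T_xL)^{H'}=T_xL\cap T_x\mathcal{Y}$.
\end{itemize}
Hence
\[
\mathcal{O}^G_{\operatorname{diff}}\cong T_xL\Big/\Bigl((T_xL)^{H'}+\operatorname{ev}_{p_\bullet}\bigl((\ker D_{u_\bullet})^G\bigr)\Bigr).
\]
This vanishes only if the equivariant kernel of the main component evaluates at $p_\bullet$ onto $T_xL/(T_xL)^{H'}\cong T_xL\cap\nu_{\mathcal{Y}}$. That is a surjectivity condition on $u_\bullet$ at the node, and it is not among the hypotheses. It cannot hold, for instance, if $\dim(\ker D_{u_\bullet})^G<\dim(T_xL\cap\nu_{\mathcal{Y}})$.

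This is exactly where the boundary case differs from the interior stacky $A$-type case. There $G_p\neq 1$ acts on $\nu_{\mathcal{Y}}$ without fixed vectors, so $(T_xM)^{G_p}=(T_xM)^{H'}=T_x\mathcal{Y}$, and the ghost constants alone fill the invariant target.

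So your argument gives $o,\hat o\in\mathcal{O}^G$, but not the final identity $\mathcal{O}^G=\mathcal{O}^G_\bullet\oplus\mathcal{O}^G_{\mathghost}$. You need one of two things:
\begin{itemize}
\item an additional transversality hypothesis on $u_\bullet$ at $p_\bullet$; or
\item to carry an $\mathcal{O}^G_{\operatorname{diff}}$-component of $o,\hat o$, which encodes the matching conditions at the nodes, into the later pairing arguments.
\end{itemize}
The paper gives no argument for this identity either; in the interior case it is only asserted.

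A smaller point: your reason for dropping the $|J_{(0,0)}-J_{(\sigma,\tau)}|_{C^0}$ term from the first-step estimate is not right. That term enters through $\mathscr{F}_{u_{\sigma,0}}(0)=\overline{\partial}_{J_{\sigma,0},j_\sigma}u_0$. This contains $\tfrac12(J_{\sigma,0}-J_{0,0})\circ du_0\circ j_0$ wherever the domain-dependent $J$ varies on $\Sigma_\bullet$, not only near the node, so constancy of $J$ near the node is irrelevant. The term is absorbed into $c|\sigma|$ only if $J$ depends Lipschitz-continuously on the domain parameters. You would have to arrange that, since the localized $\widetilde{J}$ of Lemma~\ref{lemma: localization trick for boundary} is only required to vary continuously.
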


\subsection{Localization tricks}\label{section: localization tricks}

Our ultimate goal is to extract information on the leading terms of the Taylor expansion in the isotypic directions of the main component of a limit curve that has a ghost bubble. In the case of an \emph{interior ghost} formation, i.e., when a ghost is attached at an interior node, one would like to be able to project the curves in the sequence onto the isotypic submanifold $V_\chi$ in order to ``isolate" the leading order term corresponding to this isotypic direction, which cannot be done globally. To remedy this, we replace a given sequence of curves with ``localized'' curves on the domain without changing the behavior near the nodes where the ghost is attached.

\begin{remark}
    \cite[Theorem 1.5]{ekholmshende2025ghost} requires the curves in the sequence to be pseudoholomorphic only on the portions of domains corresponding to ghost formation. Our localization tricks may be viewed as a \emph{bridge} from the ``global on the domain" pseudoholomorphicity requirement in \cite[Theorem 1.1]{doan2023} to this ``local on the domain'' condition of Ekholm-Shende.
\end{remark}

\subsubsection{Localization trick for interior ghosts}\label{subsection: interior localization}

Let $u_k \colon \Sigma_k \to M$ be a sequence of curves in 
$$\mathcal{M}([M/G], \vv{ \hat{x}}, \hat{x}_0, \bm m)$$ 
converging to a curve $u_\infty \colon \Sigma_0 \to M$ in the Gromov topology of the form described in Section~\ref{section Kuranishi model}, i.e., $\Sigma_0=(\Sigma_{\bullet} \sqcup \Sigma_{\mathghost}, \hat{\nu})$
and the restriction of $u_\infty$ to $\Sigma_{\mathghost}$ is a ghost curve attached at the interior nodes. Let 
$$x=u_\infty(\Sigma_{m_\mathcal{Y}}) \in \mathcal{Y}$$ 
and let $\mathcal{U}_\chi$ be an $H$-equivariant Darboux chart of $\pi_\chi(x)$ in $V_\chi$.

\begin{lemma}[Localization trick for interior ghosts] \label{lemma: localization trick}
    In the situation as above, for each character $\chi$ of $H$ there exists a sequence of subdomains $\Sigma_k' \subset \Sigma_k$ and a sequence of $H$-equivariant curves $\widetilde{u}_k \colon \widetilde{\Sigma}_k \to \mathcal{U}_\chi$ satisfying the following:
    \begin{enumerate}
        \item there is a sequence of subdomains $\widetilde{\Sigma}_k ' \subset \widetilde{\Sigma}_k $, obtained by removing tubular neighborhoods of the boundary, and diffeomorphisms $\varphi_k \colon \widetilde{\Sigma}_k ' \to \Sigma_k'$ such that $\widetilde{u}_k|_{\widetilde{\Sigma}_k'}=(\pi_{\chi} \circ u_k)|_{\Sigma_k'} \circ \varphi_k$;
        \item there is a Hurwitz datum $\bm h' \subset \bm h$ determined by stacky points which belong to the underlying orbicurve of $\Sigma_\infty'$ with $|\bm h'|=b'$, and we have $(\widetilde{\pi}_k \colon \widetilde{\Sigma}_k \to \widetilde{\mathscr{C}_k}) \in \overline{\mathscr{H}}_{0,\bm h', -1}^H$;
        \item there is a Lagrangian $L_\chi \subset \mathcal{U}_\chi$ such that $\widetilde{u}_k|_{\partial \widetilde{\Sigma}_k} \subset L_\chi$ (see \eqref{eqn: defn of L} for the definition of $L_\chi$);
        \item there is a domain-dependent almost complex structure $\widetilde{J}$ on $\mathcal{U}_\chi$ varying continuously over $\overline{\mathscr{H}}_{0, \bm h', -1}^H$ such that the curves $\widetilde{u}_k$ are $\widetilde{J}$-holomorphic, and $\widetilde{J}|_{\widetilde{\Sigma}_k'}\equiv \widetilde{J}_0$, where $\widetilde{J}_0$ coincides with $J_0$ on a small neighborhood of $\pi_\chi(x)$ in $\mathcal{U}_{\chi}$, and $L_\chi$ is totally geodesic with respect to the metric induced by $\widetilde{J}_0$;
        \item the sequence $\widetilde{u}_k$ is convergent in the Gromov topology on $\overline{\mathcal{M}}([\mathcal{U}_\chi/H], [L_\chi/H],  \bm m'; \widetilde{J})$ to $\widetilde{u}_\infty$, where $\bm m'$ is a connected component of $\mathcal{I}[\mathcal{U}_\chi/H]^{b'}$ determined by the restriction of $\bm m$.
    \end{enumerate}
\end{lemma}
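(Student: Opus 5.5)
The plan is to cut out a small $H$-invariant piece of each domain around the ghost, project it into $V_\chi$, and cap it off with a Lagrangian boundary condition.

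\emph{Choice of subdomains.} Fix the component $\Sigma_{m_\mathcal{Y}}$ of $\Sigma_{\mathghost}$ through the node $\ddot p$. Choose a small closed neighborhood $\Sigma_\infty'$ of $\Sigma_{m_\mathcal{Y}}$ inside the nodal curve $\Sigma_0$, consisting of $\Sigma_{m_\mathcal{Y}}$ together with small $H$-invariant disks $B_{R_0}(h p_\bullet)$, $h\in H$, in $\Sigma_\bullet$ around the nodes attached to it. By continuity of $u_\infty$ and since $u_\infty(\Sigma_{m_\mathcal{Y}})=x$, the image $u_\infty(\Sigma_\infty')$ lies in $\pi_\chi^{-1}(\mathcal{U}_\chi)\cap U$ once $R_0$ is small. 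Gromov convergence identifies $\Sigma_\infty'$, away from the shrinking necks, with subdomains $\Sigma_k'\subset\Sigma_k$. These are $H$-invariant, are disjoint from their $G$-translates for $gH\neq H$, and satisfy $u_k(\Sigma_k')\subset\pi_\chi^{-1}(\mathcal{U}_\chi)$ for $k\gg0$, by $C^0$-convergence. The stacky points of $\Sigma_k'$ are exactly those converging into $\Sigma_{m_\mathcal{Y}}$, which gives $\bm h'$. Since $\pi_\chi$ is $G$-equivariant and $J_0$-holomorphic, and $J_k=J_0$ near the ghost and near branch points, the maps $\pi_\chi\circ u_k|_{\Sigma_k'}$ are $H$-equivariant. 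They are also $J_0$-holomorphic on the part where $J_k=J_0$. On the rest, the maps are only approximately holomorphic, and I handle this with a domain-dependent $\widetilde J$ that is not equal to $\pi_\chi$-pushforward but makes the projected curve holomorphic. Concretely, $\widetilde{J}$ is obtained from $J_k$ on the graph of $\pi_\chi\circ u_k$, which is possible by the $C^0$-smallness of $J_k-J_0$ and by shrinking $\Sigma'_k$ so that $J_k\equiv J_0$ on it.

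\emph{Capping off.} I choose $\Sigma_k'$ so that its boundary consists of circles in the necks or annuli $A_\tau$ of $\Sigma_\bullet$ near $hp_\bullet$. These map into a tiny ball around $\pi_\chi(u_\bullet(hp_\bullet))$. Near $\pi_\chi(x)$, take Darboux coordinates on $\mathcal{U}_\chi\cong\C^{n_0}\times\C^{n_\chi}$ with $H$ acting linearly on the second factor, and set $L_\chi$ to be a real $H$-invariant Lagrangian such as $\R^{n_0}\times\R^{n_\chi}$ rotated suitably; this gives~\eqref{eqn: defn of L}. I attach to each boundary circle of $\Sigma_k'$ a collar $[0,1]\times S^1$, equivariantly under $H$, and extend the map by a path of loops. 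This path interpolates from the boundary loop of $\pi_\chi\circ u_k$ to a loop on $L_\chi$, for instance a linear interpolation in coordinates followed by a projection onto the real part. The extension stays inside $\mathcal{U}_\chi$ and is uniformly $C^1$-small because the boundary loops converge. On each collar I take $\widetilde J$ to be the domain-dependent structure that makes this extension holomorphic; it is close to $J_0$ because $\bar\partial$ of the extension is small. It is then extended over the Hurwitz space $\overline{\mathscr{H}}^H_{0,\bm h',-1}$, with $\widetilde J=\widetilde J_0$ on $\widetilde\Sigma_k'$. The resulting $\widetilde\Sigma_k$ is a union of disk-type $H$-covers lying in $\overline{\mathscr{H}}^H_{0,\bm h',-1}$, which gives (1)--(4). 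For (4), I also modify the metric on $\mathcal{U}_\chi$ away from $\pi_\chi(x)$ so that $L_\chi$ is totally geodesic.

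\emph{Convergence.} Property (5) follows because the $\widetilde u_k$ coincide with projections of a Gromov-convergent sequence on $\widetilde\Sigma_k'$, and on the collars they are explicit extensions that converge smoothly. The limit is therefore $\pi_\chi\circ u_\infty$ on $\Sigma'_\infty$ with the same ghost $\Sigma_{m_\mathcal{Y}}$. The inertia component $\bm m'$ is the image of the relevant entries of $\bm m$ under $\pi_\chi$.

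\emph{Main obstacle.} I expect the hardest step to be the collar construction. A domain-dependent $\widetilde J$ making the interpolated map holomorphic exists only where the map is an immersion, or at least where $d\widetilde u_k\neq0$, and it must also be chosen $H$-equivariantly and continuously over the Hurwitz space. My first attempt would be to place the boundary circles where $du_\bullet$ is nonzero; this holds generically, and if $du_\bullet(hp_\bullet)=0$ I would take circles at a fixed small radius where unique continuation guarantees $d(\pi_\chi\circ u_\bullet)\ne0$. I would then make the interpolation a $C^1$-small graph over a standard immersed annulus ending on $L_\chi$.
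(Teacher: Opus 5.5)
Your choice of $\Sigma_k'$, the Hurwitz datum $\bm h'$, the $H$-equivariance of $\pi_\chi\circ u_k$ and the convergence argument match the paper. The capping step, however, fails, and it is the heart of the lemma.

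First, for $\chi(h)=e^{i\theta}$ with $\sin\theta\neq0$ there is no $H$-invariant linear Lagrangian in $\C^{n_0}\times\C^{n_\chi}$. Such an $L$ must contain some $v$ whose $\C^{n_\chi}$-component $v_2$ is nonzero, and then $\omega(v,hv)=\omega(v_2,e^{i\theta}v_2)=\sin\theta\,|v_2|^2\neq0$.

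More seriously, the standard primitive $\lambda_{\operatorname{std}}$ vanishes on every Lagrangian of the kind you propose (a rotated $\R^{n_0}\times\R^{n_\chi}$, or an $H$-orbit of such). Stokes then gives $\int_{\widetilde\Sigma_k}\widetilde u_k^*\omega=\int_{\partial\widetilde\Sigma_k}\widetilde u_k^*\lambda_{\operatorname{std}}=0$. But $\widetilde u_k=\pi_\chi\circ u_k$ is nonconstant and $J_0$-holomorphic on $\Sigma_k'$ for $k\gg0$, so it has positive area there. Hence your collars must carry negative area, and no tame $\widetilde J$ can make them holomorphic; in particular none close to $J_0$, as you assert.

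The ``projection onto the real part'' is exactly what destroys the positive action $\int_{\partial\Sigma_k'}\widetilde u_k^*\lambda_{\operatorname{std}}>0$ of the boundary loops. So the real obstacle is not merely $d\widetilde u_k\neq0$. You need $\omega(\partial_s\widetilde u_k,\partial_t\widetilde u_k)>0$ on the cap, and you need the relative error $|\partial_t\widetilde u_k-\widetilde J_0\partial_s\widetilde u_k|/|\partial_s\widetilde u_k|$ to be small. Absolute smallness of $\bar\partial$ says nothing here, because $|d(\pi_\chi\circ u_\bullet)|\sim r^{l-1}$ is itself tiny.

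The missing idea is to let the leading Taylor term at $p_\bullet$ dictate both the cap and the Lagrangian. Write $\pi_\chi\circ u_\infty=u^0+T$ with $u^0(z)=(a_1z^{l_1},a_2z^{l_2})$, $l=\min(l_1,l_2)$ and $T=O(|z|^{l+1})$. The paper sets $\widetilde u_k=u^0+\psi(|z|)(\pi_\chi\circ u_k-u^0)$ on the larger region $r\le R+\delta$ of the original domain, with $\psi\equiv1$ on $[0,R]$ and $\psi\equiv0$ on $[R+\delta-\delta^2,\infty)$.

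Near the new boundary the map is the exactly holomorphic monomial $u^0$. Taking all coordinates of $a_1,a_2$ nonzero, it therefore lands on the $H$-invariant product torus $\{|w_i^j|=|a_i^j|(R+\delta)^{l_i}\}$, around whose circle factors the boundary loops wind positively.

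On the interpolation annulus one checks two estimates:
\begin{itemize}
\item $\omega(\partial_r\widetilde u,\partial_\theta\widetilde u)=Cr^{2l-1}+O(r^{2l})+O(r^{2l+1}/\delta)$;
\item the relative error above is $O(R+R^2/\delta)$.
\end{itemize}
So for $R$ and $R^2/\delta$ small the map is symplectic. An $H$-equivariant tame $\widetilde J=\widetilde J_0\exp(Y)$ making it holomorphic is then obtained pointwise from the inverse function theorem.

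This also makes your worry about $du_\bullet(hp_\bullet)=0$ moot, since $|du^0|\sim r^{l-1}\neq0$ for $r>0$.
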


\begin{remark}
Before we give a proof, we would like to point out that curves $\widetilde{u}_k$ may have nontrivial obstructions for the linearized Cauchy-Riemann operator, including the curve $\widetilde{u}_\infty$, and the moduli space $\overline{\mathcal{M}}([\mathcal{U}_\chi/H], [L_\chi/H],  \bm m'; \widetilde{J})$ is only guaranteed to be an orbispace near $\widetilde{u}_\infty$. However, our gluing setup allows for such components in the obstruction space; see~\eqref{eq: closed-ghost-full-obstruction-lift}. In Theorem~\ref{ref: theorem-isotypic-direction-jet-vanishing} below, we obtain a result on the vanishing of the jets of $\widetilde{u}_\infty$ at the nodes on the side of the non-ghost components, which clearly implies the same statement for the initial curve $u_\infty$, which may belong to a smooth manifold, i.e., a zero-set of some transverse section. This allows for dimension-counting arguments, despite the lack of transversality for the localized curves. 
\end{remark}

\begin{proof}$\mbox{}$

\s\n
\emph{Description of a Darboux chart.} We claim that the Darboux chart $\mathcal{U}_\chi$ (possibly after shrinking) admits a presentation $(U \times W^{\varepsilon},\omega_{\operatorname{std}}=\omega^U_{\operatorname{std}} \times \omega^W_{\operatorname{std}})$, where $U \subset \C^{\operatorname{dim}\mathcal{Y}}$, $W^\varepsilon$ is an open ball near the origin in the representation $W$ of $H$ with character $\chi^{\oplus k}$, and $\omega_{\operatorname{std}}$ is a standard product symplectic structure. Indeed, picking a neighborhood $U$ of $\pi_\chi(x)$ in $\mathcal{Y}$ where $V_\chi$ trivializes $V_\chi \cong U \times W$ with $J_0$ constant in the vertical direction (which provides $W$ with the structure of a complex representation of $H$), we first make $\omega|_{TU }$  standard via a local diffeomorphism of $U$ using the standard Darboux theorem, and extend it trivially fiberwise. Since $U$ is contractible, there is a smooth map $L \colon U \to GL_\C(W, J_0|_W)$ such that $L(y)^* \omega^W_{\operatorname{std}}=\omega_{y}|_{T(\{y\} \times W)}$, since both $\omega_{\operatorname{std}}$ and $\omega_{y}$ are $H$-equivariant and the complex structure $J_0|_{\{y\} \times W}$ on $W$ is compatible with both of them; notice that each such $L(y)$ is automatically equivariant since $W$ is a direct sum of $1$-dimensional complex representations. Hence, after applying the diffeomorphism $\tilde{L}=(\operatorname{Id}, L) \colon U \times W \to U \times W$, we can assume $\omega$ to be standard along $U$. The claim follows from the equivariant version of the relative Darboux theorem.

\s\n
\emph{Construction of the localized curves $\widetilde{u}_k$.} The sequence $u_k \colon \Sigma_k \to M$ converges in the Gromov topology to $u_\infty$, so we may assume that for $k \ge N$ for some $N\gg0$ there are $(\sigma_k, \tau_k) \in \Delta$ such that $\Sigma_k=\Sigma_{\sigma_k, \tau_k}$ and the latter is equipped with a natural $G$-action, as explained in Lemma~\ref{lemma: G-action-on-preglued-curve}. By our assumption $u_\infty(p) \in \mathcal{Y}$, so we can pick a small $R>0$ such that for any $z \in \Sigma_{\bullet}$ with $r_{gp}(z) \le R$ for some $g \in H$ it holds that $u_\infty(z)$ is in $\nu_\mathcal{Y}^{\varepsilon}$, and, moreover $\pi_\chi(u_\infty(z)) \in \mathcal{U}_\chi$. We define $$\Sigma_0' \coloneqq\Sigma_{m_\mathcal{Y}} \sqcup \{z \in \Sigma_{\bullet} \mid r_{gp}(z) \le R, \, \text{for }g \in H\},$$
and set $\Sigma'_k$ to be the portion of $\Sigma_k$ which corresponds to $\Sigma_0' \cap \Sigma_{\tau_k}$. We also notice that $Hp$ is exactly the set of all the nodes where $\Sigma_{m_\mathcal{Y}}$ is attached to $\Sigma_{\bullet}$.

Now, we pick a $\delta >0$, set $$\widetilde{\Sigma}_0=\Sigma_0'\cup\{z \in \Sigma_{\bullet} \mid r_{gp}(z) \le R+\delta, \, \text{for }g \in H\},$$ and also set $\widetilde{\Sigma}_k$ to be defined in the similar fashion as $\Sigma_k'$. Observe that the underlying orbicurves $\widetilde{\mathscr{C}_k}$ all have just $1$ boundary component and the stacky points are determined by the stacky points on $\mathscr{C}_{\mathghost}$. This determines the Hurwitz datum $\bm h' \subset \bm h$ and a connected component $\bm m'$ of $\mathcal{I}[\mathcal{U}_\chi/H]^{|\bm h'|}$.
    
By the proof of Lemma~\ref{lemma: dimension}, $\pi_\chi \circ u_\infty$ has a Taylor expansion near $p$ (notice that the Taylor expansion near any other point in $Hp$ differs from the one below by the action of $H$ on $W$, by equivariance) with respect to coordinates $U \times W^{\varepsilon}$ of the form:
\begin{equation}
    \pi_\chi\circ u_\infty(z)=(a_1z^{l_1}, a_2z^{l_2})+(T_1(z), T_2(z))=u^0(z)+T(z),
\end{equation}
where $l_2 \equiv j \operatorname{ mod} m_{\ddot{n}}, \, l_1 \equiv 0 \operatorname{ mod} m_{\ddot{n}}$ with $\chi=\chi_j$ and $T_1(z)=O(|z|^{l_1+1}), T_2=O(|z|^{l_2+1})$. We set $\psi$ to be a nonincreasing bump function such that $\psi|_{[0, R]}\equiv 1$ and $\psi|_{[R+\delta-\delta^2, +\infty)}\equiv 0$, and define 
$$\widetilde{u}_k(z)\coloneqq u^0(z)+\psi(|z|)(\pi_\chi \circ u_k(z)-u^0(z)),$$
which gives rise to an $H$-equivariant smooth curve $\widetilde{u}_k \colon \widetilde{\Sigma}_k \to \mathcal{U}_\chi$ with $\widetilde{\Sigma}_k$ obtained from $\Sigma_k$ by deleting the regions with $r >R+\delta$.

Since we can assume $a_1$ and $a_2$ to be vectors with all of their coordinates nonzero with respect to some basis on $\mathbb{C}^{\operatorname{dim}\mathcal{Y}} \times W$ with coordinates $(w_1, w_2)=(w_1^1, \dots, w_1^{\operatorname{dim}\mathcal{Y}}, w_2^1, \dots, w_2^{\operatorname{dim}W})$ we have that 
\begin{equation} \label{eqn: defn of L}
\widetilde{u}_\infty|_{r=R+\delta} \subset L_\chi=\{(w_1, w_2)\in \mathcal{U}_\chi \mid |w_i^j|=|a_i^j(R+\delta)^{l_i}|\},
\end{equation}
and $H$ acts on $L_\chi$ (the action is free for characters $\chi=\chi_j$ with $\operatorname{gcd}(j, m_\mathcal{Y})=1$). 

\s
The construction of the curves $\widetilde{u}_k$ implies (1)--(3) immediately. Our next task is to prove (4) (and hence (5)). We do it in several steps.

\s\n
\emph{Verifying that curves are symplectic.}   
We first verify that $\widetilde{u}_\infty$ (and hence each $\widetilde{u}_k$ for $k\gg0$) is symplectic. Using polar coordinates $z=r e^{i \theta}$ we have
$$\partial_r\widetilde{u}_\infty(z)=\partial_ru^0(z)+\psi'(r)T(z)+\psi(r)\partial_rT(z), \, \partial_\theta \widetilde{u}_\infty(z)=\partial_\theta u^0(z)+\psi(r)\partial_\theta T(z).$$
We observe that 
\begin{equation}\label{eq: area-estimate}
    \omega_{\operatorname{std}}(\partial_r \widetilde{u}_\infty, \partial_\theta \widetilde{u}_\infty)=\omega_{\operatorname{std}}(\partial_r u^0, \partial_\theta u^0)+C(\delta)\cdot O(r^{2l})
\end{equation}  
on the interpolation region with $l= \min(l_1, l_2)$, and $$\omega_{\operatorname{std}}(\partial_r u^0, \partial_\theta u^0)=r|\partial_ru^0|^2=Cr^{2l-1}+O(|r^{2l}|).$$ 
The following are the estimates for terms that force us to pick a constant depending on $\delta$ in~\eqref{eq: area-estimate}
$$|\omega_{\operatorname{std}}(\psi'(r)T(z),\partial_\theta u^0(z))| \le C' \frac{r^{2l+1}}{\delta}, \quad |\omega_{\operatorname{std}}(\psi'(r)T(z), \psi(r) \partial_\theta T(z))| \le C'' \frac{r^{2l+2}}{\delta}.$$
To make sure that it is dominated by $r^{2l-1}$ on the interpolation region, we require $\frac{R^2}{\delta}$ to be sufficiently small (notice that we are also free to shrink $R$ prior to this choice). For such a choice of $R, \delta>0$, the curve $\widetilde{u}_\infty$ is symplectic.

\s\n
{\em Definitions of $\widetilde{J}_0$ and $\widetilde{J}_k$.}  We now define $\widetilde{J}_0$ and $\widetilde{J}_k$ satisfying (4).

Let $\widetilde{J}_0$ be an $\omega_{\operatorname{std}}$-tame almost complex structure, obtained via an interpolation, which coincides with $J_0$ when $|w| \le |(a_1^1R^{l_1}, \dots, a_2^{\operatorname{dim}W}R^{l_2})|$ and is equal to the standard complex structure near $L_{\chi}$. 

We may choose an equivariant domain-dependent almost complex structure $\widetilde{J}_k$ on $\mathcal{U}_\chi$ for $\widetilde{\Sigma}_k$ such that it coincides with $\widetilde{J}_0$ on a slightly smaller subdomain $\Sigma_k'' \subset \Sigma_k'$, $\widetilde{u}_k$ is $\widetilde{J}_k$-holomorphic, $\widetilde{J}_k$ is $\omega_{\operatorname{std}}$-tame, $L_{\chi}$ is totally geodesic with respect to the induced metric by each $\widetilde{J}_k$, and $\widetilde{J}_k$ converges to $\widetilde{J}_\infty$. 

For each $z \in \widetilde{\Sigma}_k \setminus \Sigma_k'$, we define $\widetilde{J}_k(z)=\widetilde{J}_0\operatorname{exp}Y_k'$ at the point $\widetilde{u}_k(z)$  with $Y_k' \in \operatorname{End}(T_{\widetilde{u}_k(z)}\mathcal{U}_\chi) $ to satisfy the requirement for $\widetilde{u}_k$ to be pseudoholomorphic at $z$, with small norm $|Y_k'|$, which would guarantee tameness. Indeed, a sufficient condition for such 
$Y_k'$ to have a small norm is that
\begin{equation}\label{eq: comparison-vector-norms}
    \frac{|\partial_t\widetilde{u}_k(z)-\widetilde{J}_0\partial_s\widetilde{u}_k(z)|}{|\partial_s\widetilde{u}_k(z)|}
\end{equation}
be small (notice that this is an estimate for the linear term in $e^{Y_k}$, which suffices). This, by an argument similar to the one presented above, is bounded above by $C(R+\frac{R^2}{\delta})$, and hence the choice of small $R, \delta>0$ with $R^2/\delta$ small yields the desired bound. Below we provide a sketch of an argument for the existence of $Y_k'$ under the assumption that the term~\eqref{eq: comparison-vector-norms} is sufficiently small.

The existence of $Y_k'(z)$ of small norm once the quantity in~\eqref{eq: comparison-vector-norms} is small is an application of the inverse function theorem,
and we sketch it here.

We consider
$$\mathscr{Y}=\{Y \in C^\infty(\operatorname{End}(TU_{\chi})) ~|~ Y\widetilde{J}_0+\widetilde{J}_0Y=0, \, \|Y\|_{C^0}< \varepsilon'\},$$
for a fixed small $\varepsilon'>0$. Let us note that there is a natural $H$-action on $\mathscr{Y}$. We assume the trivialization $TU_{\chi} \cong U_{\chi} \times V$, where $V \cong \C^{\operatorname{dim}\mathcal{Y}+\operatorname{dim}W}$, and the latter product is equipped with its natural $H$-bundle structure. Denote by $\mathbb{S}^V$ the unit sphere bundle of $U_{\chi} \times V$ associated with a metric $\widetilde{g}_0$, which is induced by $\widetilde{J}_0$. We consider an $H$-equivariant map
\begin{gather*}
    \mathcal{F} \colon \mathscr{Y} \times \mathbb{S}^V \to V \times \mathbb{S}^V, \\
    (Y, x, n) \mapsto (\operatorname{exp}(Y(x))n-n, x, n),
\end{gather*}
where $n \in \mathbb{S}^V_x$ is a unit tangent vector at $x \in \overline{U}_{\chi}$. Notice that $D\mathcal{F}_{(0, x, n)}(Z, \dot{x}, \dot{n})=(Z(x)n, \dot{x}, \dot{n})$.

We choose a right inverse 
\begin{equation*}
    \Psi \colon V \times \mathbb{S}^V \to \mathscr{Y} \times \mathbb{S}^V
\end{equation*}
to $D\mathcal{F}$ as follows.
First, we set
$$\Psi^{\mathscr{Y}}(v, x, n)(x)[w]\coloneqq \widetilde{g}_0(w,n)v-\widetilde{g}_0(w,\widetilde{J}_0n)\widetilde{J}_0v,$$
where we regard $(v,w)$ as a pair of tangent vectors in $TU_{\chi}|_{x}$. Clearly, $$\Psi^{\mathscr{Y}}(v,x,n)(x)\widetilde{J}_0(x)+\widetilde{J}_0(x)\Psi^{\mathscr{Y}}(v,x,n)(x)=0.$$
We further extend $\Psi^{\mathscr{Y}}(v,x,n)$ to all points in $H$-orbit of $x$ by equivariance (and this coincides with similarly defined $\Psi^{\mathscr{Y}}(v,hx,n)(hx)$ for any $h \in H$ by equivariance of $\widetilde{J}_0$ and $\widetilde{g}_0$).

To extend this from $x \in \overline{U}_{\chi}$ to all other points $y \in \overline{U}_{\chi}$ we apply the projection to the $\widetilde{J}_0(y)$-antilinear part, using the choice of trivialization of $T\overline{U}_\chi$, i.e.
$$\Psi^{\mathscr{Y}}(v, x, n)(y)\coloneqq \frac{1}{2}(\Psi^{\mathscr{Y}}(v,x,n)(x)+\widetilde{J}_0(y)\Psi^{\mathscr{Y}}(v,x,n)(x)\widetilde{J}_0(y)).$$
We define $\Psi(v,x,n)\coloneqq (\Psi^{\mathscr{Y}}(v, x, n), x, n).$
It is straightforward to check that $\Psi$ is $H$-equivariant and that $$D (\mathcal{F} \circ \Psi)_{(0, x, n)}=\operatorname{Id}.$$

The parametrized inverse function theorem (for the \emph{compact} base $\mathbb{S}^V$) implies that there exists an inverse $\mathcal{G}$ to this composition over $B(\varepsilon'') \times \mathbb{S}^V$, where $B(\varepsilon'') \subset V$ is a ball of some small radius $\varepsilon''>0$ in $V$ with respect to the standard Euclidean norm $|\cdot|$.

For a point $z \in \widetilde{\Sigma}_k \setminus \Sigma_k'$ we consider a point
$$a_k(z)=\left(\frac{-\widetilde{J}_0\partial_t\widetilde{u}_k(z)-\partial_s\widetilde{u}_k(z)}{|\partial_s\widetilde{u}_k(z)|_{\widetilde{g}_0}},\widetilde{u}_k(z), \frac{\partial_s\widetilde{u}_k(z)}{|\partial_s\widetilde{u}_k(z)|_{\widetilde{g}_0}}\right) \in V \times \mathbb{S}^V.$$
Since $\widetilde{g}_0$ is a family of metrics on $V$ over a compact ball $\overline{U}_{\chi}$, there exist positive constants $c_1, c_2 >0$ such that 
$$c_1|\cdot| \le |\cdot|_{\widetilde{g}_0(x)} \le c_2|\cdot| $$
for any $x \in \overline{U}_\chi$, where $|\cdot|$ is the norm induced by the standard Euclidean metric. Hence, by our assumption on the smallness of~\eqref{eq: comparison-vector-norms}, we may assume that for our choice of $R, \delta>0$ the point $a_k(z)$ belongs to $B(\varepsilon'') \times \mathbb{S}^V$. Then, for $Y_k'(z)=\Psi^{\mathcal{Y}}\circ \mathcal{G}(a_k(z))$, a direct computation shows
$$\operatorname{exp}(Y_k')\partial_s\widetilde{u}_k(z)=-\widetilde{J}_0\partial_t\widetilde{u}_k(z),$$
implying that $\widetilde{u}_k(z)$ is $J_k'$-holomorphic for $J_k'=\widetilde{J}_0\operatorname{exp}(Y_k')$. By construction, the family $Y_k'(z)$ is $H$-equivariant.

\s\n
{\em Conclusion of proof.}  
To define $\widetilde{J}_k=\widetilde{J}_0\operatorname{exp}(Y_k)$ we choose a family of $H$-invariant bump-functions $\rho_k(z)(\cdot)$ on $\overline{U}_\chi$ and set $Y_k(z)=\rho_k(z)Y'_k(z)$. For $z \in \widetilde{\Sigma}_k \setminus \Sigma_k'$ with $r \in [R, R+\delta-\delta^2]$ these bump-functions are centered at $H$-orbits of $\widetilde{u}_k(z)$ and vanish away from balls of some radius $\hat{R}(r)$ varying smoothly with $r$, and required to be smaller than $\delta^N$ with $N\gg0$ and to vanish near the boundaries of this segment. We then extend $Y_k(z)$ away from this neck by setting it to be equal to $0$.

From the construction, it is clear that each $\widetilde{J}_k$ is equivariant and smooth, as it only depends on the initial choice of $Y^c_k(\widetilde{u}_k(z))$ with $z \in \widetilde{\Sigma}_k$,  and the sequence $\widetilde{J}_k$ converges to $\widetilde{J}_\infty$ in any $C^m$-norm. We also have that $\widetilde{J}_k \equiv \widetilde{J}_0$ on $\Sigma_k''$ corresponding to $|z| < R$.

At last, since $\widetilde{J}_k$ converge to $\widetilde{J}_\infty$, this sequence extends to a continuously varying family $\widetilde{J}$ over $\overline{\mathscr{H}}_{0,\bm h',-1}^H$. Moreover, we can guarantee that $|\widetilde{J}-\widetilde{J}_\infty|_{C^m}$ is small for any $m\ge0$ for any point in the small neighborhood of the point representing $\widetilde{\mathscr{C}}_\infty$.
\end{proof}

\begin{remark}\label{rmk: non-almost-hermitian}
    The Lagrangian $L_\chi$ we constructed in the proof is not totally geodesic with respect to the standard metric. Nevertheless, in the pregluing one can instead use a metric which coincides with the metric induced by $\widetilde{J}_0$ away from $L_\chi$, and for which $L_\chi$ is totally geodesic. This does not affect any of the gluing lemmas that we use.
\end{remark}

\subsubsection{Localization trick for boundary ghosts}

In this subsection, we state and sketch the localization trick for boundary ghosts. Unlike in the previous case, where the reason for localization was forced on us by the necessity of projecting onto isotypic components, our motivation here is simply to avoid dealing with a slightly more involved gluing setup, which involves Hamiltonian data and time-shifting maps.

As in the interior case, $\mathcal{Y}$ is a connected component of $X^H$, where the cyclic group $H$ is the stabilizer of a generic point of $\mathcal{Y}$. Let $L$ be a connected Lagrangian preserved by the action of $H$ and $x \in \mathcal{Y} \cap L'$, where $L'$ is a connected component of $L^H$. Since $J_0$ is $\omega$-tame, we have
\begin{equation}
    \nu_\mathcal{Y} \cong \nu_{L'} \otimes_{\R} \C,
\end{equation}
where $\nu_{L'}$ is the normal bundle for $L' \subset L$, treated as a real $H$-bundle over $L'$. We assume that $\nu_{L'}$ does not have any vectors of eigenvalue $(-1)$, and therefore can be treated as a complex $H$-bundle.

Let $u_k \colon \Sigma_k \to M$ be a sequence of curve with boundary conditions on some equivariant Lagrangians $\mathcal{L}_0, \dots, \mathcal{L}_d$, and on $\mathcal{L}=G \cdot L$, with punctures mapped to either $G$-orbits of intersection points, or Hamiltonian orbichords. We also assume that $\mathcal{L}$ is totally geodesic with respect to the metric induced by $J_0$. We assume, as in the interior case, that domains correspond to points of $\overline{\mathscr{H}}_{0, \bm h, d}^G$, and they are the solutions to the Floer equation~\eqref{eq: Floer-equation} with respect to some continuously varying Floer data $(\rho, \alpha, H, J)$ on $\overline{\mathscr{H}}_{0, \bm h, d}^G$. For what follows, it is important to require that $\alpha$ vanishes near the stacky points and on the closed components, while $J$ coincides with $J_0$ on these neighborhoods.

A particular example of interest is that of $M=T^*X$ and $X=\mathcal{L}$, with the fixed point set $\mathcal{Y}$ being a cyclic quotient singularity $T^*Y$, arising from some submanifold $Y \subset X$, which is a component of $X^H$. This case is studied in more detail in Section~\ref{section: evaluation-map-iso}. 
We assume that the sequence of curves $u_k \colon \Sigma_k \to M$ 
converges to a curve $u_\infty \colon \Sigma_0 \to M$ in the Gromov topology of the form described in Section~\ref{subsection: boundary-node-gluing}, i.e., $\Sigma_0=(\Sigma_{\bullet} \sqcup \Sigma_{\mathghost}, \hat{\nu})$
and restriction of $u_\infty$ to $\Sigma_{\mathghost}$ is a ghost curve attached at boundary nodes mapped to $X$. Let us assume that $x=u_\infty(\Sigma_{m_\mathcal{Y}}) \in \mathcal{Y} \cap L'$. We denote by $\mathcal{U}$ an $H$-equivariant Darboux chart of $x$.

\begin{lemma}\label{lemma: localization trick for boundary}
    In the situation as above, one can pick a sequence of subdomains $\Sigma_k' \subset \Sigma_k$ and a sequence of $H$-equivariant curves $\widetilde{u}_k \colon \widetilde{\Sigma}_k \to \mathcal{U}$ satisfying the following:
    \begin{enumerate}
        \item there is a sequence of subdomains $\widetilde{\Sigma}_k ' \subset \widetilde{\Sigma}_k $ and diffeomorphisms $\varphi_k \colon \widetilde{\Sigma}_k ' \to \Sigma_k'$ such that $\widetilde{u}_k|_{\widetilde{\Sigma}_k'}=u_k|_{\Sigma_k'} \circ \varphi_k$;
        \item there is a Hurwitz datum $\bm h' \subset \bm h$ determined by stacky points which belong to the underlying orbicurve of $\Sigma_\infty'$ with $|\bm h'|=b'$, and we have $(\widetilde{\pi}_k \colon \widetilde{\Sigma}_k \to \widetilde{\mathscr{C}_k}) \in \overline{\mathscr{H}}_{0,1, \bm h'}^H$;
        \item there is a Lagrangian $\widetilde{L} \subset \mathcal{U}$ and two $|H|$-tuples $\hat{e}^+$ and $\hat{e}^-$ of $H$-orbits of intersection points in $L \cap X$ such that curves $\widetilde{u}_k$ have their punctures mapped to $\hat{e}$ and $\hat{e}'$ and their boundaries to $L$ and $\widetilde{L}$;
        \item there is a domain-dependent almost complex structure $\widetilde{J}$ on $\mathcal{U}$ varying continuously over $\overline{\mathscr{H}}_{0, \bm h', 1}^H$ such that $\widetilde{J}|_{\widetilde{\Sigma}_k'}\equiv J_0$ and $\widetilde{u}_k$'s are $\widetilde{J}$-holomorphic;
        \item the sequence $\widetilde{u}_k$ is convergent in Gromov topology on $\overline{\mathcal{M}}([\mathcal{U}/H], [L/H], [\widetilde{L}/H],  \bm m'; \widetilde{J})$ to $\widetilde{u}_\infty$, where $\bm m'$ is a connected component of $\mathcal{I}[\mathcal{U}/H]^{b'}$ determined by restriction of $\bm m$.
    \end{enumerate}
\end{lemma}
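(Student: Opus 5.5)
We will follow the proof of Lemma~\ref{lemma: localization trick} closely, with two simplifications and one new ingredient. The simplifications: there is no projection to an isotypic submanifold, so the curves are localized directly from $u_k$; and the only aim is to remove the Floer data, so only the part of the domain near the boundary nodes $\pi^{-1}(\ddot{n})$ and the ghost needs to be kept. The new ingredient is that the localized domain will be a strip rather than a disk, bounded on one side by the original Lagrangian $L$ and on the other by an auxiliary Lagrangian $\widetilde{L}$, with two punctures at $H$-orbits $\hat{e}^\pm$ of points of $L \cap \widetilde{L}$.

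\emph{Step 1 (Darboux chart).} Near $x \in \mathcal{Y}\cap L'$ we choose an $H$-equivariant Darboux chart $\mathcal{U}\cong \C^n$ in which $L$ becomes the linear real subrepresentation $\R^n$, $\mathcal{Y}$ becomes the fixed subspace, and $J_0(x)=i$. This uses the equivariant relative Darboux (Weinstein) theorem. The assumption that $\nu_{L'}$ has no $(-1)$-eigenvectors makes the isotypic splitting $\nu_\mathcal{Y}\cong\nu_{L'}\otimes_\R\C$ compatible with the linear model.

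\emph{Step 2 (cut-off domain and localized curves).} Let $p$ be a boundary node. We take $\Sigma_0'$ to be $\Sigma_{m_\mathcal{Y}}$ together with half-discs of radius $R$ around $Hp$ in $\Sigma_\bullet$, and $\widetilde{\Sigma}_0$ the slightly larger region of radius $R+\delta$. After choosing $R$ small, $\alpha=0$ and $J=J_0$ there, so $u_k$ is $J_0$-holomorphic on the corresponding subdomains of $\Sigma_k$ for $k\gg0$. On the half-annulus $R\le r\le R+\delta$ we write $u_\infty=u^0+T$ with $u^0(z)=a z^{l}$ the leading term, which lies in $\R^n$ for real $z$. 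We then set
\[
\widetilde{u}_k=u^0+\psi(|z|)(u_k-u^0).
\]
The outer arc $r=R+\delta$ is mapped by $u^0$ into the $H$-invariant Lagrangian $\widetilde{L}=\{|w^j|=|a^j|(R+\delta)^l\}$, adjusted by an equivariant Hamiltonian isotopy so that it is transverse to $\R^n$. Its two corners $r=R+\delta$, $\theta\in\{0,\pi\}$ land in $L\cap\widetilde{L}$ and become the punctures $\hat{e}^\pm$. The diameter part of the boundary stays on $L$: since $L$ is totally geodesic and linear in the chart, the interpolation between two maps sending $\R$ into $\R^n$ preserves $\R^n$. This gives (1)--(3). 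The Hurwitz datum $\bm h'$ is read off from the stacky points on $\mathscr{C}_{\mathghost}$.

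\emph{Step 3 (almost complex structure).} This is the main obstacle, exactly as in the interior case. We first check the area estimate~\eqref{eq: area-estimate}, which requires $R^2/\delta$ small, so that $\widetilde{u}_k$ is an immersion on the neck. We then solve pointwise for $Y_k'$ using the inverse function construction $\mathcal{F},\Psi,\mathcal{G}$ from the proof of Lemma~\ref{lemma: localization trick}, and cut it off with $H$-invariant bump functions to define $\widetilde{J}_k=J_0\exp(Y_k)$. The boundary adds two new points to control. First, along $L$ and $\widetilde{L}$ we also need $\widetilde{J}_k$ to keep these Lagrangians totally real, which holds automatically for small $Y_k$. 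Second, near the corners we need uniform bounds for the quantity~\eqref{eq: comparison-vector-norms}, which we get from the same $C(R+R^2/\delta)$ estimate. Finally, the convergence $\widetilde{J}_k\to\widetilde{J}_\infty$ extends the family continuously over $\overline{\mathscr{H}}^H_{0,\bm h',1}$. Gromov convergence (5) then follows from the $C^\infty_{loc}$ convergence of $u_k$ on the neck together with unchanged behaviour on $\Sigma_k'$.
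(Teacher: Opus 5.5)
Your outline follows the paper: an equivariant Darboux chart with $L$ linear, the interpolation $u^0+\psi(|z|)(u_k-u^0)$, an outer arc on a product torus whose corners become the punctures $e^\pm$, and then the $\mathcal{F},\Psi,\mathcal{G}$ construction of $\widetilde{J}$. However, the one step that is genuinely new in the boundary case is not carried out.

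For $\{|w^j|=|a^j|(R+\delta)^l\}$ to be $H$-invariant, the $w^j$ must be $H$-eigencoordinates $(w_i^+,w_i^-)$. In these coordinates $L$ is not $\R^n$ but $L_{\R}=\{w^-=\overline{w^+}\}$. The torus meets $L_{\R}$ cleanly along an $n/2$-dimensional torus $K$ (for each pair, $\theta_i^-=-\theta_i^+$), not transversally. In a real basis with $L=\R^n$ the same formula does give a torus transverse to $L$, but that torus is not $H$-invariant.

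You notice that an adjustment is needed, but you move only the Lagrangian. The map $u^0$ still sends the arc $r=R+\delta$ into the \emph{unperturbed} torus, so item (3) fails as written. The curve has to be deformed together with $\widetilde{L}$ so that three things hold at once: the outer arc lies on the new Lagrangian, the real segments of $\partial\widetilde{\Sigma}_k$ stay on $L$, and the corners map to the new, transverse, intersection points. For instance, composing with a radially cut-off flow $\phi^F_{\beta(r)}$ works only if $F|_K$ is Morse with critical orbits $He^\pm$ and the flow preserves the rays $\{te^\pm\}$ that $u^0$ traces on the real axis (e.g.\ $X_F=0$ there). Recall that $X_F$ is tangent to $L$ at $q\in L$ only if $d(F|_L)_q=0$. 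This can be arranged, but it is the real content of the step, and your proposal does not address it.

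The paper handles this with an explicit formula. It adds a second collar $R+\delta\le r\le R+2\delta$ and the term $\varepsilon'\psi'(|z|)\frac{z^2-(R+2\delta)^2}{2i(R+2\delta)}\,g(0,a^-)$. On $|z|=R+2\delta$ this equals $\varepsilon'(R+2\delta)\sin\theta\,e^{i\theta}\,g(0,a^-)$, so the arc lies on $\hat{L}=\prod_i\gamma_{a_i^+,0}\times\gamma_{\bar a_i^+,\varepsilon'}$. This torus meets $L_{\R}$ transversely with $\widetilde{u}_k(\pm(R+2\delta))=e^\pm$, and the paper takes $\widetilde{L}=H\cdot\hat{L}$.

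When you write out your version, check the real segments explicitly. As written, the paper's correction is a nonzero imaginary multiple of $(0,a^-)$ for $R+\delta<|x|<R+2\delta$, so it also leaves $L_{\R}$ there. Moreover, the translates $g\hat{L}$ generally intersect one another, since $\sin\phi=\sin(\phi+\alpha)$ is solvable for every $\alpha$.

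Your route is an $H$-equivariant Morse--Bott perturbation of the single invariant torus. An $H$-invariant Morse function on $K$ exists because $H$ acts freely on $K$ by translations. Once the curve is adjusted as above, this would give an embedded $H$-invariant $\widetilde{L}$ on which $H$ acts freely, which is cleaner than $H\cdot\hat{L}$.
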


\begin{proof}[Sketch of proof]
    Let $W_{\R}$ be the representation of $H$ on $T_xL$, obtained from the complex representation $W$ of $H$ via the forgetful functor. Then $T_{x}M$ is equal to $W \oplus \overline{W}$ as a complex representation of $W$, and notice that under this splitting $T_xL=L_{\R}=\{(w, \overline{w}) \in W \oplus \overline{W}\}$. Using similar arguments as in Lemma~\ref{lemma: localization trick}, we may assume that the Darboux chart $\mathcal{U}$ is given by a neighborhood of the origin in $(W \oplus \overline{W}, \omega_{\operatorname{std}}^W \oplus \omega_{\operatorname{std}}^{\overline{W}})$ with $L \cap \mathcal{U}$ given by $L_{\R}$ and $\mathcal{Y}$ given by $W^H \oplus \overline{W}^H$. Since $H$ is cyclic, we have a decomposition $W=W_1 \oplus \dots \oplus W_{n/2}$ into $1$-dimensional complex subrepresentations, hence we have coordinates $(w^+, w^{-})=(w_1^+, w_1^{-}, \dots, w_{n/2}^+, w_{n/2}^-)$ on $\mathcal{U}$.

    In these coordinates $u_\infty$, when restricted to a neighborhood of $p_{\bullet}$ for some node $\ddot{p} \in \Sigma_{m_y}$ with coordinate $z \in \mathbb{H}_+$ near $p_{\bullet}$, has Taylor series
    $$u_\infty(z)=(a_1^+z, a_1^-z, \dots, a_{n/2}^+z, a_{n/2}^-z)+T(z)=u^0(z)+T(z),$$
where all $a_i^{\pm}$ can be assumed to be nonzero. The Taylor expansion at any other node in $Hp$ is obtained via $H$-action, and, in particular, leading terms are given by the orbit of the vector $$(a^+, a^-)=(a_1^+, a_1^-, \dots, a_{n/2}^+, a_{n/2}^-).$$ Notice that $\overline{a}^+_i=a_{i}^-$ since $u_\infty$ maps $\operatorname{Im} z=0$ to $L_{\R}$.
In addition to a bump function $\psi$ such that $\psi|_{[0, R]}\equiv 1$ and $\psi|_{[R+\delta, +\infty)}\equiv 0$, we also pick a bump function $\psi'$ such that 
$$\psi'_{[0, R+\delta]}\equiv 0, \quad \psi'|_{[R+2\delta, +\infty)}=1,$$
and define
$$\widetilde{u}^k(gz)\coloneqq u^0(gz)+\psi(|z|)(u_k(gz)-u^0(gz))+\varepsilon'\psi'(|z|)\cdot \frac{z^2-(R+2\delta)^2}{2i(R+2\delta)}\cdot g(0, a_1^-,\dots, 0, a_{n/2}^-),$$
where $\varepsilon'>0$ is sufficiently small, and $gz$ is treated as a coordinate near $gp_{\bullet}$. Here we use $\psi$ from the previous Lemma.

By inspection, one sees that $\widetilde{u}^k|_{r=R+2\delta}$ is mapped to the $H$-orbit $\widetilde{L}$ of the Lagrangian torus
$$\hat{L}=\{(R+2\delta)\cdot(a_1^+e^{i\theta_1^+}, \overline{a}^+_1(1+\varepsilon'\sin\theta_1^{-})e^{i\theta_1^-}, \dots,a_{n/2}^+e^{i\theta_{n/2}^+}, \overline{a}^+_{n/2}(1+\varepsilon'\sin\theta_{n/2}^{-})e^{i\theta_{n/2}^-}) \mid \theta_i^{\pm} \in [0, 2p)] \},$$
or in other words denoting by $\gamma_{a,\epsilon}$ the Lagrangian
$\gamma_{a, \epsilon}=\{a(1+\epsilon\sin \theta)e^{i\theta} \mid \theta\in [0, 2\pi)\} \subset \C$
we have 
$$\hat{L}=\gamma_{a_1^+, 0} \times \gamma_{\overline{a}_1^+, \varepsilon'} 
\times \dots \times \gamma_{a_{n/2}^+, 0} \times \gamma_{\overline{a}_{n/2}^+, \varepsilon'} \subset W \oplus \overline{W}.$$
By construction it is clear that $\hat{L}$ intersects $L_{\R}$ transversely in finitely many points, and for points $$e^{\pm}=(R+2\delta)\cdot(\pm a_1^+, \pm \overline{a}_1^+, \dots, \pm a_{n/2}^+, \pm \overline{a}_{n/2}^+) \in \hat{L} \cap L_{\R}$$
we have $\widetilde{u}_k(\pm(R+2\delta))=e^{\pm}$. It is clear that under our assumptions, the action of $H$ on $\widetilde{L}$ is free, and the tori in $\widetilde{L}$ are disjoint.

The choice of a continuously varying family of equivariant domain-dependent almost complex structures on $\mathcal{U}$ goes through as in Lemma~\ref{lemma: localization trick}, and we leave it as an exercise.  
\end{proof}

\begin{remark}
    As before in Remark~\ref{rmk: non-almost-hermitian}, we point out that $\widetilde{L}$ is not totally geodesic with respect to the standard almost complex structure. For gluing constructions, one can use a metric which coincides with the one induced by $J_0$ near point $x$, making $L$ totally geodesic there, and such that both $L$ and $\widetilde{L}$ are totally geodesic. Such a metric is sufficient for all the estimates performed in \cite[Section 5]{doan2023}.
\end{remark}

\subsection{Obstructing leading order terms for equivariant maps}

\subsubsection{Interior ghosts}

In this subsection, we work in the setting achieved by the localization trick of Section~\ref{subsection: interior localization}. Therefore, we consider an $H$-equivariant curve $$u_0 \colon (\Sigma_0, \partial \Sigma_0) \to (\mathcal{U}_\chi, L_\chi),$$ where $\mathcal{U}_\chi \cong\C^{n_\chi} \times \C^{n_0}$
with the standard symplectic structure. The group $H \cong \mu_m$ acts linearly on $\C^{n_\chi}$ by the character $\chi$ such that $\chi(\zeta_m)=e^{\frac{2\pi i j}{m}}=\zeta_m^j$, and trivially on $\C^{n_0}$. Moreover, $\Sigma_0=\Sigma_\bullet \cup \Sigma_{\mathghost}$, where $\Sigma_{\mathghost}$ a closed curve, 
and $u_0(\Sigma_{\mathghost})=x_0=\{0\} \subset \mathcal{U}_\chi.$ The underlying orbicurve $\mathscr{C}_0$ has a unique node $\ddot{n}$ with local monodromy of order $m_{\ddot{n}}$. 

Let $p=(p_\bullet, p_{\mathghost}) \in \pi^{-1}(\ddot{n})$ be one of the $d_{\ddot{n}}=m/m_{\ddot{n}}$ nodes of $\Sigma_0$ connecting the main and the ghost components. Let $g(p)=\zeta_m^{d_{\ddot{n}}\cdot a}$ be the generator of the stabilizer $G_p$ prescribed by the Hurwitz datum, where $a \in \{1, \dots, m_{\ddot{n}}\}$ with $\gcd(a, m_{\ddot{n}})=1$. 
 We define
 \begin{equation}\label{eq: local character}
     k(\chi,\Sigma_{\mathghost})=k\in\{1,\dots,m_{\ddot{n}}\}
 \end{equation} to be the number satisfying $$k \equiv ja \pmod{m_{\ddot{n}}}.$$ 
 Equivalently, for the generator $$g(p)=\zeta_m^{d_{\ddot{n}}\cdot a}=\zeta_{m_{\ddot{n}}}^a \in G_p \subset H$$ we have $\chi(g(p))=\zeta_{m_{\ddot{n}}}^k$.
 Then by Lemma~\ref{lemma: dimension} the Taylor expansion of $u_\bullet$ at $p_\bullet$, with coordinate $w$ on $\Sigma_{\bullet}$ centered at $p_{\bullet}$, has the form
$$u_{\bullet}(w) = \frac{1}{k!}\cdot\frac{\partial^{k}u_{\bullet}}{\partial w^{k}}(0)w^{k} +O(|w|^{k+1}).$$ 
For later use, we denote by $z$ the standard coordinate on $\Sigma_{\mathghost}$ centered at $p_{\mathghost}$.
Recall also that $$\frac{\partial^{k}u_{\bullet}}{\partial w^{k}}(0) = D^{k}u_{\bullet}(p)[\partial_{w}^{\otimes k}].$$ Together with $x_0=u_{\bullet}(p)$ this partial derivative determines the $k$th jet of $u_\bullet$ at $p$; see Remark~\ref{remark: tangent-jet-space-first}.

The main result of this section identifies the leading term of the $L^{2}$-pairing of the obstruction section $$\operatorname{ob}=o+\hat{o}$$ with relevant elements of the ghost obstructions $$\mathcal{O}_{\mathghost}^H = (H^{0,1}(\Sigma_{\mathghost}) \otimes T_{x_0} X)^H.$$
There is a special case of the previous discussion when $G_p$ is trivial and $m_{\ddot{n}}=1$. In this case, $k=1$, and the discussion regarding the Taylor expansion remains valid.

Together with the vanishing of the obstruction term, this result implies the vanishing of the leading jet, $D^k u_\bullet(p)=0$, which we discuss in more detail at the end of this section. We also note that throughout this section we use the convention $\varepsilon=|\tau|$ for a gluing parameter $\tau$.

We denote by $H^{1,0}(\Sigma_{\mathghost})_{\chi}$ the subspace of holomorphic $1$-forms on which $H$ acts with character $\chi$.

\begin{proposition}\label{proposition: obstruction-on-higher-jets}
For each $(\sigma, \tau, \kappa) \in \Delta \times K^H$ of sufficiently small norm and each 
$$\overline{\eta}\otimes v \in H^{0,1}(\Sigma_{\mathghost})_{\chi^{-1}}\otimes(T_{x_0} M)_{\chi}$$ 
the following holds:
\begin{gather}
    \langle \operatorname{pull}(\operatorname{ob}(\sigma,\tau,\kappa)), \operatorname{pull}(\overline{\eta}\otimes v) \rangle_{L^{2}} = C \cdot d_{\ddot{n}} \langle (D^k u_{\sigma, \kappa}(p_{\bullet})\otimes D^{k-1}\eta(p_{\mathghost}))[\tau\partial_w \otimes\partial_z]^{\otimes k}, v \rangle + e , \nonumber
\end{gather}
with $|e| \le C_0 |\eta|\cdot|v|\varepsilon^{k+\frac{1}{2}}$ for some constants $C, C_0 \in \R$, $C\ge 0$.
\end{proposition}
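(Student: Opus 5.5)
We follow the strategy of \cite[Section 6]{doan2023} (the computation of the leading term of the obstruction pairing), upgraded to the equivariant setting and to higher-order jets. By Proposition~\ref{prop: 2nd-step-gluing}, the glued map $\check u=\exp_{\widetilde u_{\sigma,\tau,\kappa}}(\hat\xi)$ satisfies
\[
\mathscr{F}_{\widetilde u_{\sigma,\tau,\kappa}}(\hat\xi)+\operatorname{pull}(\operatorname{ob})=0 .
\]
Pairing this identity with $\operatorname{pull}(\overline\eta\otimes v)$ in $L^2$ gives
\[
\langle\operatorname{pull}(\operatorname{ob}),\operatorname{pull}(\overline\eta\otimes v)\rangle=-\langle \overline\partial_{J}\widetilde u_{\sigma,\tau,\kappa},\operatorname{pull}(\overline\eta\otimes v)\rangle-\langle D_{\widetilde u}\hat\xi+Q(\hat\xi),\operatorname{pull}(\overline\eta\otimes v)\rangle .
\]
The task is to extract the leading contribution of the first term, which is supported in the neck $A_\tau$, and to show that everything else is $O(\varepsilon^{k+1/2})$.

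\textbf{Step 1: the main term.} On the neck over each node $gp$, the preglued map is $\exp_{x_0}\bigl(\chi_\tau(z)\exp^{-1}_{x_0}u_{\sigma,\kappa}(\iota_\tau z)\bigr)$ on the ghost side, since $u_0$ is constant there. The almost complex structure equals $J_0$ there, and by the localization trick (Lemma~\ref{lemma: localization trick}) it is standard at $x_0$. Hence
\[
\overline\partial\widetilde u=\overline\partial\chi_\tau\otimes \exp_{x_0}^{-1}u_{\sigma,\kappa}(\tau/z)+(\text{error}).
\]
By Lemma~\ref{lemma: dimension}, the Taylor expansion of $u_{\sigma,\kappa}$ in the $\chi$-isotypic direction starts at order $k$:
\[
\tfrac1{k!}D^ku_{\sigma,\kappa}(p_\bullet)[\partial_w^{\otimes k}](\tau/z)^k .
\]
Write $\eta=\tfrac{1}{(k-1)!}D^{k-1}\eta(p_{\mathghost})z^{k-1}dz+O(|z|^k)$; this form of the expansion is forced by the $\chi^{-1}$-equivariance of $\eta$ at the stacky node, as in Lemma~\ref{lemma: 1-forms-on-cyclic-covers}. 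Then
\[
\int \overline\partial\chi_\tau\wedge (\tau/z)^k z^{k-1}dz
\]
is evaluated by Stokes' theorem: it reduces to a residue at $z=0$ of $\tau^k z^{-1}dz$, giving a constant $C$ times $\tau^k$. Summing over the $d_{\ddot n}$ nodes in $Hp$ and using $H$-equivariance of all data, the contributions agree, which produces the factor $d_{\ddot n}$. Only the $\chi$-component of $D^ku$ survives, because $v\in(T_{x_0}M)_\chi$ and the isotypic components are orthogonal.

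\textbf{Step 2: error terms.} There are three sources of error.
\begin{itemize}
\item The $O(|w|^{k+1})$ Taylor remainder. On the neck $|z|\sim R_0$, $|w|\sim\varepsilon/R_0$, it contributes $O(\varepsilon^{k+1})$.
\item The non-$\chi$ components and the variation of $J$ and $\nabla$. These are controlled via the adapted $J_0$ and the holomorphic projection $\pi_\chi$.
\item The term $\langle D_{\widetilde u}\hat\xi,\operatorname{pull}(\overline\eta\otimes v)\rangle$. Integrate by parts: $D^*_{u_{\mathghost}}(\overline\eta\otimes v)=0$ away from the neck. In the neck, $D_{\widetilde u}^*$ applied to $\operatorname{pull}(\overline\eta\otimes v)$ is small, of size $|\eta||v|\varepsilon^{?}$ times a weight coming from the vanishing order $k-1$ of $\eta$.
\end{itemize}
The quadratic term $Q(\hat\xi)$ is bounded by $\|\hat\xi\|^2$. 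By \eqref{eq: bound-on-obstruction}, $\|\hat\xi\|\lesssim\|\overline\partial\widetilde u\|_{L^p}\lesssim\varepsilon^{k}\cdot$(neck factor).

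\textbf{Main obstacle.} The hard part is showing that the correction $\hat\xi$ contributes at order $\varepsilon^{k+1/2}$ rather than merely $\varepsilon^{k/2}$ or $\varepsilon$. The crude bound from \eqref{eq: bound-on-obstruction} is too weak for $k>1$. My plan is to use weighted Sobolev norms on the neck, as in \cite[Section 6]{doan2023}. The key input is the vanishing order $k-1$ of $\eta$ at $p_{\mathghost}$: the pulled-back test form then decays like $|z|^{k-1}$ toward the node. Pairing it against $\hat\xi$, whose size near $|z|\sim\varepsilon^{1/2}$ is controlled by the $\varepsilon^{k}$-sized error, gains the extra $\varepsilon^{1/2}$. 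I expect to have to refine the estimate of $\hat\xi$ in weighted norms with weight $|z|^{k-1}$ and check that the right inverse $R_{\widetilde u}$ is bounded uniformly in these norms. Its $H$-equivariance (Lemma~\ref{lemma: inequalities-right-invers}) keeps $\hat\xi$ within the correct isotypic pieces, so the weighted estimate can be applied componentwise.
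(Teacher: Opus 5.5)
You handle the main term as the paper does: you pair $\overline{\partial}\chi_\tau\cdot u_{\sigma,\kappa}\circ\iota_\tau$ with $\overline{\eta}\otimes v$, expand to order $k$ and $k-1$, evaluate by a residue, and sum over the $d_{\ddot{n}}$ nodes, where $\chi$ and $\chi^{-1}$ cancel. The Taylor-remainder and quadratic errors also match the paper.

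The gap is the error $\langle D_{\widetilde{u}}\hat{\xi},\operatorname{pull}(\overline{\eta}\otimes v)\rangle$. You leave it at ``$\varepsilon^{?}$'' and defer it to a weighted-norm program, with uniform weighted bounds on $R_{\widetilde{u}}$, that you do not carry out. Your diagnosis of the difficulty is also off. The crude bound is exactly what is needed, and nothing changes for $k>1$ compared with the case $k=1$ in \cite{doan2023}. To see the crude bound:
\begin{itemize}
\item On the main side $\overline{\partial}_J\widetilde{u}_{\sigma,\kappa,\tau}=0$, because the preglued map is $u_{\sigma,\kappa}$ there ($u_0$ is constant on the ghost).
\item On the ghost side it vanishes wherever the cutoff is $\equiv 1$, because there $\widetilde{u}=u_{\sigma,\kappa}\circ\iota_\tau$ is holomorphic.
\item So it is supported in the fixed annuli $R_0\le r_{gp_{\mathghost}}\le 2R_0$, where it is pointwise $O(\varepsilon^k)$; the $J$-variation part is even $O(\varepsilon^{2k})$ there.
\end{itemize}
Hence \eqref{eq: bound-on-obstruction} gives $\|\hat{\xi}\|_{W^{1,p}}\le C\varepsilon^k$, with no extra neck factor.

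The missing idea is an integration by parts whose boundary term your sketch overlooks. You argue that the adjoint kills $\overline{\eta}\otimes v$ away from the neck and is small in the neck. But $\operatorname{pull}(\overline{\eta}\otimes v)$ lives only on the ghost part $\Sigma^{\sigma,\tau}_{\mathghost}=\Sigma^{\circ}_{\sigma,\tau}\cap\Sigma_{\mathghost}$ and is cut off along the gluing circles $r_{gp_{\mathghost}}=\varepsilon^{1/2}$. Applying the formal adjoint of $\overline{\partial}$ to it distributionally therefore produces a term concentrated on those circles.

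The fix is to write $D_{\widetilde{u}}\hat{\xi}=\overline{\partial}\hat{\xi}+(D_{\widetilde{u}}-\overline{\partial})\hat{\xi}$ on $\Sigma^{\sigma,\tau}_{\mathghost}$. The second summand is supported in the neck, and its contribution is $O(\varepsilon^{2k})$ by \cite[Proposition 8.16]{doan2023}. For the first summand, $\overline{\eta}$ is antiholomorphic, so Stokes' theorem gives
\[
\bigl|\langle\overline{\partial}\hat{\xi},\overline{\eta}\otimes v\rangle_{L^2(\Sigma^{\sigma,\tau}_{\mathghost})}\bigr|=\Bigl|\int_{\partial\Sigma^{\sigma,\tau}_{\mathghost}}\langle\hat{\xi},*(\overline{\eta}\otimes v)\rangle\Bigr| .
\]
The boundary consists of $d_{\ddot{n}}$ circles of length $2\pi\varepsilon^{1/2}$. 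By Sobolev embedding ($p>2$), $\|\hat{\xi}\|_{C^0}\le C\|\hat{\xi}\|_{W^{1,p}}\le C\varepsilon^k$. So this term is at most $C\varepsilon^{k+1/2}|\eta||v|$.

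The extra factor $\varepsilon^{1/2}$ is purely geometric: it is the length of the gluing circles. The vanishing of $\eta$ to order $k-1$ at $p_{\mathghost}$ only improves the bound further. No weighted refinement of $\hat{\xi}$ or of the right inverse is needed.
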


\begin{proof}
Our proof follows the same steps as the proof of \cite[Lemma 5.47]{doan2023}. The main novelty is the choice of $\eta$ in the $\chi$-subrepresentation, which modifies Step 3 of their proof. The remaining steps are essentially unchanged, and we include the details for clarity.

We recall that
\begin{equation}\label{eq: J-holomorphic-gluing-identity}
    \langle\mathscr{F}_{\widetilde{u}_{\sigma, \kappa, \tau}}(\hat{\xi})+\operatorname{pull}(o+\hat{o}), \operatorname{pull}(\overline{\eta}\otimes v)\rangle_{L^2}=0,
\end{equation}
and there is an identity
\begin{equation}\label{eq: J-section-quadratic-expression}
    \mathscr{F}_{\widetilde{u}_{\sigma, \kappa, \tau}}(\hat{\xi})=\overline{\partial}_{J_{\sigma, \tau}}\widetilde{u}_{\sigma, \kappa, \tau}+D_{\widetilde{u}_{\sigma, \kappa, \tau}}\hat{\xi}+\mathscr{N}_{\widetilde{u}_{\sigma, \kappa, \tau}}\hat{\xi}.
\end{equation}
The proof strategy is to utilize~\eqref{eq: J-holomorphic-gluing-identity}, and to estimate the pairing of each term in~\eqref{eq: J-section-quadratic-expression} with $\operatorname{pull}(\overline{\eta} \otimes v)$.

\s\n
\emph{Step 1.} Recall that $\operatorname{ob}(\sigma,\tau,\kappa) = (o,\hat{o}) \in \mathcal{O}_\bullet \oplus \mathcal{O}_{\mathghost}$. Since $o$ vanishes near $\pi^{-1}(\ddot{n})$, we have
\begin{equation}
    \langle \operatorname{pull}(o), \operatorname{pull}(\overline{\eta} \otimes v) \rangle_{L^2}=0.
\end{equation}

\s\n
\emph{Step 2.}
Recall that $H/G_p \cong \mu_{d_{\ddot{n}}}$, and we can regard it as a subgroup of $H$.
The term $$\overline{\partial}_{J_{\sigma,  \tau}}(\widetilde{u}_{\sigma, \kappa,\tau}) + \operatorname{pull}(o)$$
is clearly supported in the regions where $r_{gp_*} \le 2R_{0}$ for some $g \in \mu_{d_{\ddot{n}}}$  and $*=\bullet,\mathghost$.
By inspecting the formula, one sees that for $*=\bullet$ this term vanishes since $u_0$ is a ghost on $\Sigma_{\mathghost}$.

Hence the term is supported on $r_{g p_{\mathghost}} \le 2R_{0}$. In the local chart near $x_g=u_{\sigma, \kappa}(gp)$: $$\widetilde{u}_{\sigma,\kappa,\tau} = \psi_{\tau} \cdot u_{\sigma,\kappa} \circ \iota_{\tau}.$$ We have:
\begin{gather}  \label{eq: sum-for-dbar+pull}
    \overline{\partial}_{J_{\sigma, \tau}} \widetilde{u}_{\sigma,\kappa,\tau} + \op{pull}(o) = \overline{\partial}_{J_{\sigma, \tau}}(\psi_{\tau} \cdot u_{\sigma,\kappa} \circ \iota_{\tau})\\
    = \overline{\partial}\psi_{\tau} \cdot u_{\sigma,\kappa} \circ \iota_{\tau} + \psi_{\tau}\frac{1}{2}(J_{\sigma, \tau}(\widetilde{u}_{\sigma, \tau, \kappa})-J_{\sigma, \tau}(u_{\sigma, \kappa})) d(u_{\sigma, \kappa}\circ \iota_{\tau}) \circ j_{\sigma} + \psi_\tau \overline{\partial}_{J_{\sigma, \tau}}(u_{\sigma, \kappa}\circ \iota_\tau). \nonumber
\end{gather}
The last term vanishes since by construction $u_{\sigma,\kappa}$ is $J_{\sigma, \tau}$-holomorphic in this region. 
This and $H$-equivariance of $u_{\sigma,\kappa}$  by Lemma~\ref{lemma: dimension} imply from the Taylor series of $u_{\sigma,\kappa}$ at $gp_{\mathghost}$:
$$|u_{\sigma,\kappa} \circ \iota_{\tau}(z)| \le C \frac{\varepsilon^k}{r_{gp_{\mathghost}}^k(z)}, \quad |d(u_{\sigma,\kappa} \circ \iota_{\tau})(z)| \le C \frac{\varepsilon^k}{r_{gp_{\mathghost}}^{k+1}(z)},$$
where $z$ is the standard local coordinate on $\Sigma_{\mathghost}$ near $gp_{\mathghost}$ with $z(gp_{\mathghost})=0$. The $G_p$-action on this coordinate has character $\zeta_{m_{\ddot{n}}}^{-1}$.
Hence, the second term in the sum~\eqref{eq: sum-for-dbar+pull} has norm bounded by $C \varepsilon^{2k}$ (here we use the smoothness of $J_{\sigma, \tau}$). Finally, the first term clearly has an upper bound $$|\overline{\partial}\psi_{\tau} \cdot u_{\sigma,\kappa} \circ \iota_{\tau}| \le C \cdot \varepsilon^{k}.$$
Therefore, we have
\begin{gather}\label{eq: leading+e1}
    \overline{\partial}_J(\widetilde{u}_{\sigma,\kappa,\tau}) + \operatorname{pull}(o) = \overline{\partial}\psi_{\tau} \cdot u_{\sigma,\kappa} \circ \iota_{\tau} + e_1,  \text{ with}\\
    |\overline{\partial}\psi_{\tau} \cdot u_{\sigma,\kappa} \circ \iota_{\tau}| \le C\cdot\varepsilon^k \text{, and } |e_1| \le C\cdot \varepsilon^{2k}. \nonumber
\end{gather}

\s\n
\emph{Step 3.} Now we estimate 
\begin{equation} \label{eq: pairing-with-leading-term}
\langle \overline{\partial}\psi_{\tau} \cdot u_{\sigma,\kappa} \circ \iota_{\tau}, \op{pull}(\overline{\eta}\otimes v) \rangle_{L^2}.
\end{equation}
Again, since $u_{\sigma,\kappa}$ is $J$-holomorphic in the standard coordinate $z$ near the node $p_{\mathghost}$, we have
$$u_{\sigma,\kappa} \circ \iota_{\tau}(z) = u_{\sigma,\kappa}\left(\frac{\tau}{z}\right) = \frac{D^k u_{\sigma,\kappa}(p_{\bullet})[\partial_w^{\otimes k}]}{k!}\left(\frac{\tau}{z}\right)^k + R'.$$
The remainder term satisfies
\begin{equation}\label{eq: e2 term}
    |e_2|\coloneqq |\langle \overline{\partial}\psi_\tau R', \operatorname{pull}(\overline{\eta}\otimes v) \rangle_{L^2}| \le C \varepsilon^{k+1}|\eta| |v|.
\end{equation}
We note that $$\overline{\partial}\psi_\tau(z) = \frac{1}{2}\psi'\left(\frac{|z|}{R_0}\right)\frac{z}{R_0 |z|} d\overline{z}.$$
Also, since $\eta$ is of character $\chi$ with respect to the action of $G_p \cong \mu_{m_{\ddot{n}}}$ (recall that the action on $1$-forms is given by $g\cdot \eta=(g^{-1})^*\eta$, and the action of $\mu_{m_{\ddot{n}}}$ on $z$ is of character $\zeta_{m_{\ddot{n}}}^{-1}$), the Taylor expansion of $\eta$ at $p$ is given by:
$$\eta(z) = \tilde{\eta}(z)dz=\left(\frac{D^{k-1}\eta(0)[\partial_z^{\otimes k}]}{(k-1)!}z^{k-1} + O(|z|^{k})\right) dz.$$
By the residue theorem:
\begin{align*}
    2\pi \frac{D^{k-1}\eta(0)[\partial_z^{\otimes k}]}{(k-1)!} &= \frac{1}{i} \int_{|z|=r} \frac{\tilde{\eta}(z)}{z^k} \, dz = \frac{1}{i} \int_{0}^{2\pi} \frac{\tilde{\eta}(re^{i\theta})}{(re^{i\theta})^k} (ire^{i\theta}) \, d\theta  =\int_{0}^{2\pi} \frac{\tilde{\eta}(re^{i\theta})}{(re^{i\theta})^{k-1}} \, d\theta.
\end{align*}
We then compute the contribution to~\eqref{eq: pairing-with-leading-term} from the neck-region near $p$:
\begin{align*}
    \left\langle \bar{\partial}\psi_{\tau} \frac{D^k u_{\sigma,\kappa}(p_{\bullet})[\partial_w^{\otimes k}]}{k!} \cdot \left(\frac{\tau}{z}\right)^k, \bar{\eta} \right\rangle_{L^2} 
    &=C \tau^k D^k u_{\sigma,\kappa}(p_\bullet)[\partial_w^{\otimes k}] \cdot \int_{R_0 \le |z| \le 2R_0} \left( \frac{1}{2} \psi'\left(\frac{|z|}{R_0}\right) \cdot \frac{z}{R_0|z|} \cdot \frac{1}{z^k} \cdot \tilde{\eta}(z) \right) \,d\text{vol} \\
    &= C \tau^k D^k u_{\sigma,\kappa}(p_\bullet)[\partial_w^{\otimes k}] \cdot \int_{R_0}^{2R_0} \frac{1}{2} \psi'\left(\frac{r}{R_0}\right) \frac{1}{R_0} \cdot \left( \int_0^{2\pi} \frac{\tilde{\eta}(re^{i\theta})}{(re^{i\theta})^{k-1}} d\theta \right) \, dr \\
    &= C  (D^k u_{\sigma, \kappa}(p_{\bullet})\otimes D^{k-1}\eta(p_{\mathghost}))[\tau\partial_w\otimes\partial_z]^{\otimes k}.
\end{align*}

Hence, summing the contributions near all $gp$ for $g \in H/G_p$ we get:
\begin{align*}
    \langle \bar{\partial}\psi_{\tau} \cdot u_{\sigma,\kappa} \circ &\iota_{\tau}, \op{pull}(\bar{\eta}\otimes v) \rangle  \\
    &= C \sum_{g \in \mathbb{Z}/d_{\ddot{n}}\mathbb{Z}} \left\langle ((D^k u_{\sigma, \kappa}(p_{\bullet})\otimes D^{k-1}\eta(p_{\mathghost}))[\tau\partial_w \otimes\partial_z]^{\otimes k}, v \right \rangle + e_2 \\
    &= C \cdot d_{\ddot{n}} \left \langle (D^k u_{\sigma, \kappa}(p_{\bullet})\otimes D^{k-1}\eta(p_{\mathghost}))[\tau\partial_w \otimes\partial_z]^{\otimes k}, v \right \rangle + e_2.
\end{align*}
In the last equality we use that $\eta \in H^{1,0}(\Sigma_{\mathghost})_\chi$ which implies that $D^{k-1}\eta(gp) = \chi^{-1}(g)D^{k-1}\eta(p)$, while $D^{k}u_{\sigma, \kappa}(gp_{\bullet})=\chi(g)D^{k}u_{\sigma, \kappa}(p_{\bullet})$ (see~\eqref{eq: jet-action} and the proof of Lemma~\ref{lemma: dimension}). That is, under summation over the orbit, these characters ``annihilate" one another. If these characters were not opposite, the sum would vanish.

\s\n
\emph{Step 4.} Now we focus on the term \begin{equation}
    \langle D_{\widetilde{u}_{\sigma,\kappa,\tau}}\hat{\xi} + \mathscr{N}_{\widetilde{u}_{\sigma,\kappa,\tau}}\hat{\xi}, \op{pull}(\overline{\eta}\otimes v) \rangle_{L^2}.
\end{equation}
By~\eqref{eq: leading+e1} and Proposition~\ref{prop: 2nd-step-gluing} we have $$\| \hat{\xi}\|_{W^{1,p}} \le C\varepsilon^k.$$ 
Then the quadratic estimate of \cite[Proposition 3.5.3]{mcduffsalamon2004} implies
\begin{equation}\label{eq: e3 term}
    |e_3| \coloneqq |\langle \mathscr{N}_{\widetilde{u}_{\sigma,\kappa,\tau}}\hat{\xi}, \operatorname{pull}(\overline{\eta}\otimes v) \rangle_{L^2}| \le C\varepsilon^{2k}|\eta|\cdot|v|.
\end{equation}
We notice that the term $$D_{\widetilde{u}_{\sigma,\kappa,\tau}}\hat{\xi} - \overline{\partial}\hat{\xi}$$ defined over $\Sigma_{\mathghost}^{\sigma, \tau} \coloneqq \Sigma_{\sigma, \tau}^{\circ} \cap \Sigma_{\mathghost}$ is supported in the region $\varepsilon^{1/2} \le r_{gp_{\mathghost}} \le 2R_0$ for some $g \in H/G_p$.
Moreover, by \cite[Proposition 8.16]{doan2023} the following holds:
\begin{align*}
    \sum_{g \in \mathbb{Z}/d_{\ddot{n}}\mathbb{Z}} \int_{\varepsilon^{1/2} \le r_{gp_{\mathghost}}(z) \le 2R_0} |D_{\widetilde{u}_{\sigma,\kappa,\tau}}\hat{\xi} - \bar{\partial}\hat{\xi}| & \\
    \le C \cdot \sum_{g \in \mathbb{Z}/d_{\ddot{n}}\mathbb{Z}} \int_{\varepsilon^{1/2} \le r_{gp_{\mathghost}}(z) \le 2R_0} \left( |\hat{\xi}| \cdot |\nabla \hat{\xi}| + |\hat{\xi}|^2 \right) &
    \le C \varepsilon^{2k}.
\end{align*}
Finally, since $\overline{\eta}$ is antiholomorphic, integration by parts implies:
$$|\langle \overline{\partial}\hat{\xi}, \overline{\eta}\otimes v \rangle_{L^2(\Sigma_{\mathghost}^{\sigma, \tau})}| = \left| \int_{\partial \Sigma_{\mathghost}^{\sigma, \tau}} \langle \hat{\xi}, *(\overline{\eta}\otimes v) \rangle \right| \le C \varepsilon^{1/2} \|\hat{\xi} \|_{W^{1,p}} \cdot |\eta| \cdot |v| \le C \varepsilon^{k+1/2}|\eta|\cdot|v|,$$
which gives the bound for 
\begin{equation}\label{eq: e4 term}
    |e_4|\coloneqq |\langle D_{\widetilde{u}_{\sigma, \kappa, \tau}}\hat{\xi}, \operatorname{pull}(\overline{\eta}\otimes v) \rangle_{L^2}| \le C \varepsilon^{k+1/2}|\eta|\cdot|v|.
\end{equation}

\s\n
\emph{Step 5.} We combine all the previous steps:
\begin{align*}
    \langle \operatorname{pull}(\operatorname{ob}), \operatorname{pull}(\overline{\eta}\otimes v) \rangle_{L^2} &= -\langle \mathscr{F}_{\widetilde{u}_{\sigma,\kappa,\tau}}(\hat{\xi}), \operatorname{pull}(\overline{\eta}\otimes v) \rangle_{L^2} \\
    &= -\langle \overline{\partial}_J \widetilde{u}_{\sigma,\kappa,\tau} + D_{\widetilde{u}_{\sigma,\kappa,\tau}}(\hat{\xi}) + \mathscr{N}_{\widetilde{u}_{\sigma,\kappa,\tau}}(\hat{\xi}), \operatorname{pull}(\overline{\eta}\otimes v) \rangle \\
    &=  C \cdot d_{\ddot{n}} \langle (D^k u_{\sigma, \kappa}(p_{\bullet})\otimes D^{k-1}\eta(p_{\mathghost}))[\tau\partial_w \otimes\partial_z]^{\otimes k}, v \rangle + e_1 + e_2 + e_3 + e_4,
\end{align*}
and the error term satisfies the claimed bound by~\eqref{eq: leading+e1}, \eqref{eq: e2 term}, \eqref{eq: e3 term}, \eqref{eq: e4 term}.
\end{proof}

\begin{remark} 
Two important ingredients help make the above proof work. 
\be
\item[(i)] Working in the Darboux chart $\mathcal{U}_\chi$ for the ``isotypic direction'' allows us to express the leading term in the Taylor expansion in terms of the character $\chi$.
\item[(ii)] The condition on $\eta$ to be in the $\chi$-subrepresentation guarantees that the leading term for the $L^2$-pairing is dictated by the leading terms of $u_{\sigma, \kappa}$ on the non-ghost side.
\ee
\end{remark} 

\n \emph{The interior ghost obstruction.}

\smallskip
We now state the obstruction result for smoothing an interior ghost in the generality sufficient for applications. We return to the setup from the beginning of Section~\ref{section Kuranishi model}.

Let $\{u_{k'}\}_{k' \in \mathbb{Z}_{\ge 0}}$ be a sequence of $G$-equivariant curves in a symplectic manifold $M$, potentially with boundary mapped to prescribed equivariant Lagrangians, converging to a curve $$u_{\infty}: \Sigma_\bullet \cup \Sigma_{\mathghost} \rightarrow M$$ 
with underlying orbicurves $\mathscr{C}_k$ converging in $\overline{\mathscr{H}}_{0, \bm h, d}^G$ to a nodal twisted orbicurve
$$\mathscr{C}_\infty=\mathscr{C}_\bullet \cup_{\ddot{n}} \mathscr{C}_{\mathghost},$$
with $\mathscr{C}_{\mathghost}$ a closed orbicurve attached along a unique node $\ddot{n}$. We assume that the underlying orbicurve
$$v_\infty \colon \mathscr{C}_\bullet \cup_{\ddot{n}} \mathscr{C}_{\mathghost} \to [M/G]$$
sends $\mathscr{C}_{\mathghost}$ to a generic point of a cyclic quotient singularity $[G \cdot \mathcal{Y}/G]$. We denote by $\Sigma_{m_{\mathcal{Y}}} \subset \Sigma_{\mathghost}$ the preimage of $\mathcal{Y}$ with stabilizer $H_{\mathcal{Y}}$. The curves $u_{k'}$ are $J$-holomorphic with respect to a domain-dependent family of $G$-equivariant $\omega$-tame almost complex structures varying continuously over $\overline{\mathscr{H}}_{0, \bm h, d}^G$ within a small $C^k$-neighborhood (for arbitrary $k$) of an $[G \cdot \mathcal{Y}/G]$-adapted almost complex structure $J_0$, such that each $J$ coincides with $J_0$ in a neighborhood of stacky points and on $\Sigma_{\mathghost}$.

\begin{theorem}\label{ref: theorem-isotypic-direction-jet-vanishing}
Let $\{u_{k'}\}_{k' \in \mathbb{Z}_{\ge 0}}$ be a sequence of $G$-equivariant curves as above.
Assume that there exists a holomorphic 1-form $\eta \in H^{1,0}(\Sigma_{m_{\mathcal{Y}}})_\chi$ with respect to the $H_\mathcal{Y}$-action, such that, at each node of $\Sigma_{m_{\mathcal{Y}}}$ lying over $\ddot{n}$, the form $\eta$ vanishes to order exactly
\[
    k(\chi,\Sigma_{m_{\mathcal{Y}}})-1.
\] Then 
\begin{equation}
    D^{k(\chi, \Sigma_{\mathghost})} (\pi_\chi \circ u_{\infty})(p) = 0
\end{equation}
for any node $p \in \pi^{-1}(\ddot{n})$, where $\pi_{\chi}$ is the local projection onto the auxiliary isotypic submanifold $V_\chi$.
\end{theorem}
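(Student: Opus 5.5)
We reduce to the local model and then argue by contradiction, following the strategy of \cite[Theorem 1.1]{doan2023}. First we apply the localization trick of Lemma~\ref{lemma: localization trick} for the character $\chi$. This replaces $u_{k'}$ by $H$-equivariant curves $\widetilde{u}_{k'}\colon\widetilde{\Sigma}_{k'}\to\mathcal{U}_\chi$ with boundary on $L_\chi$, which converge to $\widetilde{u}_\infty$. Near the nodes over $\ddot{n}$, the map $\widetilde{u}_\infty$ agrees with $\pi_\chi\circ u_\infty$, so it suffices to show that $D^{k}\widetilde{u}_\bullet(p_\bullet)=0$ with $k=k(\chi,\Sigma_{\mathghost})$. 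On $\Sigma_{\mathghost}$ and near the stacky points the almost complex structure is $\widetilde{J}_0=J_0$, so the Kuranishi model of Section~\ref{section Kuranishi model} applies to $u_0:=\widetilde{u}_\infty$ with $H$ in place of $G$. In particular, there is the lift $\mathcal{O}=\mathcal{O}_{\operatorname{diff}}\oplus\mathcal{O}_\bullet\oplus\mathcal{O}_{\mathghost}$, and the two-step gluing maps of Propositions~\ref{proposition: 2-step-gluing-interior node} and~\ref{prop: 2nd-step-gluing} are $H$-equivariant.

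Next we represent the tail of the sequence by gluing. For $k'\gg0$ we have $\widetilde{\Sigma}_{k'}=\Sigma_{\sigma_{k'},\tau_{k'}}$ with $(\sigma_{k'},\tau_{k'})\to 0$ and $\tau_{k'}\neq0$, since the $\widetilde{u}_{k'}$ are smooth. Because $\widetilde{u}_{k'}$ converges to $u_0$, the local surjectivity of the gluing map gives $\kappa_{k'}\in K^H$ with $\kappa_{k'}\to0$ such that $\widetilde{u}_{k'}=\exp_{\widetilde{u}_{\sigma_{k'},\tau_{k'},\kappa_{k'}}}(\hat{\xi})$. We would use the version of the gluing map from Remark~\ref{remark: continuous-one-step-gluing} if needed. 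Since $\widetilde{u}_{k'}$ is genuinely holomorphic, the obstruction vanishes: $\operatorname{ob}(\sigma_{k'},\tau_{k'},\kappa_{k'})=o+\hat{o}=0$.

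Now we pair the obstruction with the given form. Take $\eta\in H^{1,0}(\Sigma_{m_{\mathcal{Y}}})_\chi$ as in the hypothesis, so that $D^{k-1}\eta(p_{\mathghost})\neq0$, and let $v\in(T_{x_0}M)_\chi$ be arbitrary. The element $\overline{\eta}\otimes v$ has the right character pairing to be $H$-invariant and lies in $\mathcal{O}_{\mathghost}^H$. By Proposition~\ref{proposition: obstruction-on-higher-jets},
\[
0=C\,d_{\ddot{n}}\,\tau_{k'}^k\,\big\langle D^{k-1}\eta(p_{\mathghost})[\partial_z^{\otimes k}]\cdot D^k u_{\sigma_{k'},\kappa_{k'}}(p_\bullet)[\partial_w^{\otimes k}],\,v\big\rangle+e,\qquad |e|\le C_0\,\varepsilon_{k'}^{k+1/2}.
\]
We divide by $|\tau_{k'}|^k$ and let $k'\to\infty$. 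Since $u_{\sigma,\kappa}\to u_0$ in $C^\infty$ near $p_\bullet$ (elliptic regularity, with $J=J_0$ there), $D^ku_{\sigma_{k'},\kappa_{k'}}(p_\bullet)\to D^k u_\bullet(p_\bullet)$. The phases $(\tau_{k'}/|\tau_{k'}|)^k$ have a convergent subsequence, and we replace $v$ by a rotated vector to absorb the phase. This gives $\langle c\,D^ku_\bullet(p_\bullet)[\partial_w^{\otimes k}],v\rangle=0$ for all $v\in(T_{x_0}M)_\chi$, where $c\neq0$. By Lemma~\ref{lemma: dimension}, the leading jet lies in $(T_{x_0}V_\chi)_\chi=(\nu_\chi)_{x_0}$, so it vanishes. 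Equivariance of $u_\bullet$ then extends the vanishing from $p$ to every node in $\pi^{-1}(\ddot{n})$.

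The main obstacle is the nondegeneracy of the constant $C$ in Proposition~\ref{proposition: obstruction-on-higher-jets}. It comes from $\int\psi'(r/R_0)\,dr/R_0$, which is nonzero, but the residue computation and the character cancellation over the $d_{\ddot{n}}$ nodes must be checked to produce a nonzero multiple and not a sum that cancels. A second concern is ensuring that the error term is genuinely $o(\varepsilon^k)$ uniformly in $\kappa$. The reduction of the actual sequence to the image of the equivariant gluing map is also delicate, and there we would rely on Remark~\ref{remark: continuous-one-step-gluing}.
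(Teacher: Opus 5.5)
Your proposal is correct and follows essentially the same route as the paper. You localize via Lemma~\ref{lemma: localization trick}, put the tail of the sequence in the image of the two-step gluing map, conclude $\operatorname{ob}(\sigma_{k'},\tau_{k'},\kappa_{k'})=0$, and apply Proposition~\ref{proposition: obstruction-on-higher-jets}. That is what the paper does; it phrases the last step as $|D^{k}u_{\sigma_{k'},\kappa_{k'}}(p)|\le C'|\tau_{k'}|^{1/2}$, which is equivalent to your divide-by-$|\tau_{k'}|^k$ limit. The surjectivity input the paper uses is \cite[Prop.~5.36]{doan2023}. That is the statement you want, because $\operatorname{ob}=o+\hat{o}$ is defined through the two-step construction, so Remark~\ref{remark: continuous-one-step-gluing} is not needed.

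Your worry about $C$ is settled in Step~3 of that proposition's proof. The characters of $\eta$ and of $D^k u_{\sigma,\kappa}$ cancel along the $H/G_p$-orbit, so the $d_{\ddot{n}}$ node contributions add rather than cancel. The radial factor $\int_{R_0}^{2R_0}\psi'(r/R_0)\,dr/R_0=-1$ is nonzero, so $C\neq 0$.
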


\begin{proof}
First, apply the localization trick of Lemma~\ref{lemma: localization trick} to reduce the given sequence of curves to a sequence of curves as in the setup of Proposition~\ref{proposition: obstruction-on-higher-jets}. Since $J$ varies continuously in a small neighborhood of $J_0$ with respect to any $C^k$-norm, we can apply \cite[Prop. 5.36]{doan2023}. 
It implies that for $k' \gg 0$ there is a section $\hat{\xi}_{k'}$ such that $u_{k'} = \exp_{\widetilde{u}_{\sigma_{k'}, \tau_{k'}, k_{k'}}}(\hat{\xi}_{k'})$, and therefore
$$\operatorname{ob}(\sigma_{k'}, \tau_{k'}, \kappa_{k'}) = 0.$$
This, together with the previous proposition, implies that $D^{k(\chi, \Sigma_{\mathghost})} u_{\sigma_{k'}, \kappa_{k'}}(p) \le C'|\tau_{k'}|^{1/2}$ for $k' \gg 0$ and fixed constant $C' >0$, from which the claim follows.
\end{proof}

\begin{remark}
    The proof of \cite[Proposition 5.36]{doan2023} relies on the fact that the metric used in gluing comes from the initial almost Hermitian structure, but only on the neck regions. It is used to show the containment of the necks within tiny balls. If for pregluing we use a metric which coincides with the initial metric on a ball of fixed size centered at the image of the ghost, these energy arguments, based on \cite[Section 4.7]{mcduffsalamon2004}, go unchanged.
\end{remark}

\subsubsection{Boundary ghosts}

In this subsection, we work in the setup of Section~\ref{subsection: boundary-node-gluing}, where we consider the formation of a ghost attached at a boundary node, and our goal is to obtain restricting conditions on the jet of $u_{\bullet}$ at $n_\bullet$. Since $\ddot{n}$ is a regular, non-stacky boundary node, this condition involves only the first derivative, as equivariance imposes no vanishing constraint on this derivative.

That is to say, the present problem is simpler than the case of the interior ghost formation, considered in the previous section. In particular, we do not need to project onto the isotypic directions. 

Let $x=u_{\mathghost}(\Sigma_{m_{\mathcal Y}})\in \mathcal Y$, and let $H=H(\mathcal Y)$ be the stabilizer of $\mathcal Y$. We require that $x\in L \cap \mathcal{Y} \subset M$ for a Lagrangian $L$. Moreover, the orbit $G \cdot L$ is the Lagrangian boundary condition for both $\Sigma_{\mathghost}$ and components of $\partial \Sigma_{\bullet}$ containing $\pi^{-1}(n_{\bullet})$. 

 Recall that the obstruction has the following presentation $$\mathcal{O}_{\mathghost} \cong H^{0,1}_{u_{\mathghost}^*TL}(\Sigma_{m_\mathcal{Y}},u_{\mathghost}^*TM)^{H} \cong (H^{0,1}_{\R}(\Sigma_{m_\mathcal{Y}}) \otimes_{\R} T_xL)^{H}.$$

\begin{proposition}\label{proposition: boundary-ghost-leading-obstruction}
For every $(\sigma, \tau, \kappa) \in \Delta \times K^G$ of sufficiently small norm and every $$\overline{\eta}\otimes v \in( H^{0,1}_{\R}(\Sigma_{\mathghost})\otimes T_x L)^H,$$ 
the following identity holds
\begin{gather}
    \operatorname{Re} \,\langle \operatorname{pull}(\operatorname{ob}(\sigma,\tau,\kappa)), \operatorname{pull}(\overline{\eta}\otimes v) \rangle_{L^{2}} = C \cdot m_\mathcal{Y} \langle (du_{\sigma, \kappa}(p_{\bullet})\otimes \eta(p_{\mathghost}))[\tau\partial_\omega \otimes\partial_z], v \rangle + e , \nonumber
\end{gather}
with $|e| \le C_0 |\eta|\cdot|v|\varepsilon^{\frac{1}{2}}$ for some fixed constants $C, C_0\in \R$ with $C_0>0$.
\end{proposition}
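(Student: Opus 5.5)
The plan is to repeat the five-step scheme of Proposition~\ref{proposition: obstruction-on-higher-jets}, now in the boundary-node gluing setup of Section~\ref{subsection: boundary-node-gluing}. The node $\ddot{n}$ is regular, so $k=1$ and no projection to isotypic directions is needed. The starting point is again the identity
\[
\langle\mathscr{F}_{\widetilde{u}_{\sigma,\tau,\kappa}}(\hat{\xi})+\operatorname{pull}(o+\hat{o}),\operatorname{pull}(\overline{\eta}\otimes v)\rangle_{L^2}=0.
\]
We then expand $\mathscr{F}$ into $\overline{\partial}_{J}\widetilde{u}_{\sigma,\tau,\kappa}+D\hat{\xi}+\mathscr{N}\hat{\xi}$ and take real parts throughout. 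The real part is the natural pairing, because the obstruction space $(H^{0,1}_{\R}(\Sigma_{m_{\mathcal{Y}}})\otimes_{\R}T_xL)^H$ is a real vector space. Step~1 is unchanged: $o\in\mathcal{O}_\bullet$ vanishes near $\pi^{-1}(\ddot{n})$, so it pairs to zero.

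Step~2 is also as before. The term $\overline{\partial}_J\widetilde{u}+\operatorname{pull}(o)$ is supported near the $m_{\mathcal{Y}}$ boundary nodes $gp_{\mathghost}$, $g\in H$, in the half-disk coordinate $z\in\mathbb{H}_+$. There it equals
\[
\overline{\partial}\psi_\tau\cdot u_{\sigma,\kappa}\circ\iota_\tau + e_1,
\qquad |e_1|\le C\varepsilon^2,
\]
and the leading piece is $O(\varepsilon)$. Since $\iota_\tau(z)=\tau/z$ with $\tau\in\R_{\le0}$, the expansion $u_{\sigma,\kappa}(w)=du_{\sigma,\kappa}(p_\bullet)[\partial_w]\,w+O(|w|^2)$ gives the leading piece $du(p_\bullet)[\partial_w]\,\tau/z$, with a remainder contributing $O(\varepsilon^2)$.

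Step~3 evaluates the pairing over the half-annulus $R_0\le|z|\le 2R_0$, $\operatorname{Im}z\ge0$. Here we double across the real axis. The boundary condition $u(\R)\subset L$ and the reality of $\eta$ on $\partial\Sigma_{\mathghost}$ (meaning $\overline{\eta}\otimes v$ satisfies the real boundary condition) make the integrand on the lower half-plane the complex conjugate of the one on the upper half-plane. The real part of the half-annulus integral is therefore half the full-annulus integral. Then the residue computation of Step~3 with $k=1$ yields
\[
C\,\langle (du_{\sigma,\kappa}(p_\bullet)\otimes\eta(p_{\mathghost}))[\tau\partial_w\otimes\partial_z],v\rangle .
\]
Summing over $g\in H$ gives the factor $m_{\mathcal{Y}}$. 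This uses $H$-invariance of $\overline{\eta}\otimes v$ together with equivariance of $du$, which sends $du(gp_\bullet)$ to $g_*du(p_\bullet)$, so that all $m_{\mathcal{Y}}$ terms agree.

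Step~4 bounds $\langle D\hat{\xi}+\mathscr{N}\hat{\xi},\overline{\eta}\otimes v\rangle$. By \eqref{eq: bound-on-obstruction 2} we have $\|\hat{\xi}\|_{W^{1,p}}\le C\varepsilon$. The quadratic term and $D-\overline{\partial}$ are then $O(\varepsilon^2)$. For the $\overline{\partial}\hat{\xi}$ term we integrate by parts against the antiholomorphic $\overline{\eta}$. The boundary of $\Sigma_{\mathghost}^{\sigma,\tau}$ now has two parts. One is the half-circles $|z|=\varepsilon^{1/2}$, which contribute $\le C\varepsilon^{1/2}\|\hat{\xi}\|$. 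The other is the pieces lying on $\partial\Sigma_{\mathghost}$, where $\hat{\xi}\in TL$ and $*(\overline{\eta}\otimes v)$ is $\omega$-orthogonal/real-orthogonal to $TL$ because of the totally real condition; after taking real parts these vanish.

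The main obstacle is this real-part and boundary bookkeeping, in Steps~3 and~4. Everything else is a line-by-line copy of the interior case. For this reason the claimed error exponent is only $\varepsilon^{1/2}$, weaker than $\varepsilon^{k+1/2}$, and collecting $e_1,\dots,e_4$ gives the bound $|e|\le C_0|\eta||v|\varepsilon^{1/2}$ (with room to spare).
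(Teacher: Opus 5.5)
Your proposal is correct and follows the paper's proof: it uses the same five-step scheme with only Step~3 modified. Your reflection of the half-annulus integral to the full circle is equivalent to the paper's indented contour $\Gamma_{r,\delta}$ yielding $\operatorname{Re}\bigl(\frac{1}{i}\int_{C_r^+}\frac{\eta(z)}{z}\,dz\bigr)=\pi\eta(0)$, and your explicit handling of the Lagrangian boundary terms in the Step~4 integration by parts fills in a point the paper only asserts. One correction to your commentary: your own estimates give $|e|\le C_0|\eta||v|\varepsilon^{3/2}$ with no loss from the boundary bookkeeping, and this stronger bound is what the subsequent vanishing theorem needs, since an $\varepsilon^{1/2}$ error would swamp the $O(\varepsilon)$ leading term.
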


\begin{proof}[Sketch of proof]
The proof follows the same steps as the proof of Proposition~\ref{proposition: obstruction-on-higher-jets}. All steps except the third are identical. The error bounds proved there also imply the corresponding bounds after taking real parts.  

It remains to explain the third step, namely, the estimate of 
\begin{equation*} 
\operatorname{Re} \,\langle \overline{\partial}\psi_{\tau} \cdot u_{\sigma,\kappa} \circ \iota_{\tau}, \op{pull}(\overline{\eta}\otimes v) \rangle_{L^2}.
\end{equation*}
We first consider the contribution from a single boundary node
$p \in \pi^{-1}(\ddot{n}) \cap \Sigma_{m_\mathcal{Y}}$. Since $\eta$ is a $1$-form on the upper half-plane, which is real along the boundary, we can locally near the origin extend it to a holomorphic $1$-form, with no residues in a neighborhood of the origin. Let $\Gamma_{r,\delta}$ be the contour consisting of the half-circle $C_r^+$, the intervals $[-r,-\delta]\cup[\delta,r]$, and the small half-circle $C_\delta^-$ around the origin. Taking the limit as $\delta\to 0$, we obtain
\begin{align*}
     2\pi\eta(0) =\lim_{\delta \to 0}\operatorname{Re} \frac{1}{i}\int_{\Gamma_{r, \delta}} \frac{\eta(z)}{z}dz
     &=\operatorname{Re} \left(\frac{1}{i}\int_{C_r^+}\frac{\eta(z)}{z}dz+ \frac{1}{i}\int_{-r}^r\frac{\eta(x)}{x}dx + \lim_{\delta \to 0}\frac{1}{i}\int_{C_\delta^-}\frac{\eta(z)}{z}dz\right) \\
     &=\operatorname{Re} \left(\frac{1}{i}\int_{C_r^+}\frac{\eta(z)}{z}dz\right)+\pi \eta(0).
\end{align*}
Hence
\begin{equation}
    \operatorname{Re}\left(\frac{1}{i}\int_{C_r^+}\frac{\eta(z)}{z}dz\right)=\pi \eta(0).
\end{equation}

The leading term of $u_{\sigma, \tau} \circ \iota_\tau (z)$ is given by $du_{\sigma, \kappa}(p_{\bullet})[\partial_w] \frac{\tau}{z}$. As in the proof of Proposition~\ref{proposition: obstruction-on-higher-jets}, it is sufficient to estimate the pairing with the leading term:
\begin{align*}
    \operatorname{Re} \left\langle \bar{\partial}\psi_{\tau}  d u_{\sigma,\kappa}(p_{\bullet})[\partial_w]  \left(\frac{\tau}{z}\right)^k, \bar{\eta}\right\rangle_{L^2} &=  \tau du_{\sigma, \kappa}(p_{\bullet})[\partial_w] \operatorname{Re}\left(\int_{R_0 \le |z| \le 2R_0, \operatorname{Im}(z) \ge 0} \left( \frac{1}{2} \psi'\left(\frac{|z|}{R_0}\right) \frac{\tilde{\eta}(z)}{R_0|z|}  \right) d\text{vol}\right) \\
    &=\tau du_{\sigma, \kappa}(p_{\bullet})[\partial_w] \int_{R_0}^{2R_0} \frac{1}{2} \psi'\left(\frac{r}{R_0}\right) \frac{1}{R_0} \cdot \operatorname{Re} \,\left( \frac{1}{i}\int_{C_r^+}\frac{\eta(z)}{z}dz\right) \, dr\\
    &=Cdu_{\sigma, \kappa}(p_{\bullet})
    \otimes\eta(p_{\mathghost})[\tau \partial_w \otimes \partial_z].
\end{align*}
Since $\overline{\eta}\otimes v$ is $H$-invariant, summing over the nodes in the $H$-orbit of $p$ gives
\begin{equation}
    \operatorname{Re} \,\langle \bar{\partial}\psi_{\tau} \cdot u_{\sigma,\kappa} \circ \iota_{\tau}, \text{pull}(\bar{\eta}\otimes v) \rangle_{L^2}=C\cdot m_{\mathcal{Y}}\langle (du_{\sigma, \kappa}(p_{\bullet})\otimes \eta(p_{\mathghost}))[\tau\partial_w \otimes\partial_z], v \rangle + e'
\end{equation}
for $|e'| \le C' \varepsilon^2 |\eta| |v|$. By the argument from Proposition~\ref{proposition: obstruction-on-higher-jets}, the proof follows.
\end{proof}

\n \emph{The boundary ghost obstruction.}

\smallskip
We now record the corresponding obstruction result for smoothings of boundary ghosts in sufficient generality. To avoid repetition, we assume that we are given a sequence of curves $u_k$ as in the setup prior to Theorem~\ref{ref: theorem-isotypic-direction-jet-vanishing}, with the only change that $\mathscr{C}_{\mathghost}$ is an orbidisk with boundary which is mapped to a point in the intersection of an equivariant Lagrangian $\mathcal{L}=G \cdot L$ and a cyclic quotient singularity $[G \cdot \mathcal{Y}/G]$. We assume that $\Sigma_{m_{\mathcal{Y}}}$ is mapped to a point in $L \cap \mathcal{Y}$. The Lagrangian $L$ is assumed to be \emph{totally geodesic} with respect to the metric induced by an adapted and tame almost complex structure $J_0$.

\begin{theorem}\label{ref: theorem-real-derivatives-vanishing}
Let $\{u_{k'}\}_{k' \in \mathbb{Z}_{\ge 0}}$ be a sequence of curves as above. Suppose that there exists a nonzero $1$-form
$$\overline{\eta}\otimes v \in( H^{0,1}_{\R}(\Sigma_{m_{\mathcal{Y}}})\otimes T_x L)^{H_{\mathcal{Y}}}.$$
Let $p$ be a node of $\Sigma_{m_{\mathcal{Y}}}$ in $\pi^{-1}(\ddot{n})$.
If $\eta(p) \neq 0$, then 
\begin{equation}
    \langle du_\infty(p), v \rangle =0.
\end{equation}
\end{theorem}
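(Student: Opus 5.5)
The proof will mirror that of Theorem~\ref{ref: theorem-isotypic-direction-jet-vanishing}, with Proposition~\ref{proposition: boundary-ghost-leading-obstruction} in place of Proposition~\ref{proposition: obstruction-on-higher-jets}.

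\emph{Localization and gluing chart.} We first apply the boundary localization trick of Lemma~\ref{lemma: localization trick for boundary}. This replaces $u_{k'}$ by $H_{\mathcal{Y}}$-equivariant curves $\widetilde{u}_{k'}$ in a Darboux chart $\mathcal{U}$ around $x$, with boundary on $L$ and $\widetilde{L}$. These curves agree with $u_{k'}$ near the boundary nodes over $\ddot{n}$ and are $J_0$-holomorphic there. They converge in the Gromov topology to a localized limit $\widetilde{u}_\infty$ with the same derivative at $p$ as $u_\infty$. Since $L$ is totally geodesic for the metric induced by $J_0$ near $x$ (with the metric modified as in the remark following that lemma), the gluing setup of Section~\ref{subsection: boundary-node-gluing} applies.

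For $k'\gg0$, the analogue of \cite[Prop.~5.36]{doan2023} for boundary nodes writes $\widetilde{u}_{k'}=\exp_{\widetilde{u}_{\sigma_{k'},\tau_{k'},\kappa_{k'}}}(\hat{\xi}_{k'})$ with $(\sigma_{k'},\tau_{k'},\kappa_{k'})\to 0$, $\tau_{k'}\neq0$ and $\kappa_{k'}\in K^{H}$. The containment-of-necks energy argument goes through because the metric agrees with the original one on a ball of fixed size around $x$. Since $\widetilde{u}_{k'}$ is an honest solution, the obstruction vanishes: $\operatorname{ob}(\sigma_{k'},\tau_{k'},\kappa_{k'})=0$.

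\emph{Pairing and limit.} Next we pair this vanishing obstruction with the given $H$-invariant element $\overline{\eta}\otimes v$ and invoke Proposition~\ref{proposition: boundary-ghost-leading-obstruction}. This gives
\[
0 = C\, m_{\mathcal{Y}}\,\bigl\langle (du_{\sigma_{k'},\kappa_{k'}}(p_\bullet)\otimes\eta(p_{\mathghost}))[\tau_{k'}\partial_w\otimes\partial_z], v\bigr\rangle + e_{k'}.
\]
Since $\tau_{k'}$ is real, it factors out of the leading term. Dividing by $|\tau_{k'}|$, and using $\eta(p)\neq0$ (with $\eta(p)$ real, as $\overline{\eta}$ is real along the boundary), we would get
\[
|\langle du_{\sigma_{k'},\kappa_{k'}}(p_\bullet)[\partial_w], v\rangle| \le C'|\tau_{k'}|^{1/2}.
\]
Letting $k'\to\infty$, we have $u_{\sigma_{k'},\kappa_{k'}}\to u_\infty$ in $W^{1,p}$ by Proposition~\ref{proposition: 2-step-gluing-interior node} (boundary version). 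Elliptic regularity on $\Sigma_\bullet$ near $p_\bullet$ upgrades this to $C^1$ convergence. Hence $\langle du_\infty(p),v\rangle=0$.

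\emph{Main obstacle: the error exponent.} The danger is that the error term is not genuinely subleading. As stated, Proposition~\ref{proposition: boundary-ghost-leading-obstruction} bounds $e$ only by $\varepsilon^{1/2}$, while the main term is of order $\varepsilon=|\tau|$. So I would re-track the error bounds in the steps imported from the proof of Proposition~\ref{proposition: obstruction-on-higher-jets} with $k=1$:
\[
|e_1|,|e_3|\lesssim\varepsilon^{2},\qquad |e_2|\lesssim\varepsilon^{2},\qquad |e_4|\lesssim\varepsilon^{3/2}.
\]
Taking real parts does not change these. The worst term is $e_4$, the boundary integral from integrating $\langle\overline{\partial}\hat{\xi},\overline{\eta}\otimes v\rangle$ by parts. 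I would check that the extra boundary pieces along $\partial\Sigma_{\mathghost}$ vanish: $\hat{\xi}$ takes values in $TL$ there, and $\overline{\eta}\otimes v$ satisfies the real boundary condition, so the integrand is purely imaginary and contributes nothing to the real part. This gives $|e|\lesssim\varepsilon^{3/2}$, so the main term dominates.

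A secondary point is the sign and normalization of the half-disk residue computation. I would also need that the sum over the $H$-orbit of nodes yields $m_{\mathcal{Y}}$ copies of the same quantity, rather than cancelling. This follows from the $H$-invariance of $\overline{\eta}\otimes v$ together with the $H$-equivariance of $du$ at the orbit of $p_\bullet$.
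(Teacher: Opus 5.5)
Your proposal is correct and follows the paper's route: boundary localization via Lemma~\ref{lemma: localization trick for boundary}, then \cite[Prop.~5.36]{doan2023} to write the curves as glued curves with vanishing obstruction, then Proposition~\ref{proposition: boundary-ghost-leading-obstruction}, dividing by $|\tau_{k'}|$ and passing to the limit. Your re-tracking of the error term is also right and fills a detail the paper leaves implicit. The bound $\varepsilon^{1/2}$ stated in that proposition has to be read as $\varepsilon^{3/2}$, the $k=1$ case of $\varepsilon^{k+1/2}$, or the argument would not close. Your observation that the integration-by-parts term along the real boundary of the ghost disk has vanishing real part is exactly what is needed to get that bound.
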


\begin{proof}
    The proof is analogous to that of Theorem~\ref{ref: theorem-isotypic-direction-jet-vanishing}, and is based on combining the localization trick of Lemma~\ref{lemma: localization trick for boundary} and Proposition~\ref{proposition: boundary-ghost-leading-obstruction}.
\end{proof}

\section{Orbifold Hecke algebras and the $A_\infty$-algebras of cotangent fibers}\label{section: A_infty and orbifold Hecke}
\subsection{Orbifold Hecke algebras}

In this section, we recall Etingof's {\em orbifold Hecke algebras} for complex global quotient orbifolds, closely following \cite[Section 3]{EtingofPavel}. 

We briefly review $G$-paths and the orbifold fundamental group of a global quotient orbifold $[X/G]$.  For more details, we refer the reader to the classical reference \cite{Haefliger1990orbi}, and to \cite{angelcolman2023pathgroupoids} for a modern exposition, which treats the global quotient case specifically.

Let $\hat{I}=\sqcup_{g \in G} I_g$, where $I_g$ is a copy of $I=[0,1]$. We denote by $g\cdot t$ the point in $I_g$ corresponding to $t \in [0,1]$; this gives rise to a left $G$-action on $\hat{I}$.
A {\em $G$-path on $X$ based at $Gx$ for $x\in X^{\op{reg}}$} is a $G$-equivariant continuous map
$$\hat{\gamma} \colon \hat{I} \to X, \quad \hat{\gamma}(G\cdot0)=\hat{\gamma}(G\cdot 1)=Gx, \quad \hat{\gamma}(e\cdot 0)=x.$$ 

Let  $\Omega_G(X, Gx)$ be the space of $G$-paths on $X$ based at $Gx$. By equivariance $\hat{\gamma}$ is determined by $\hat{\gamma}_e=\hat{\gamma}|_{I_e}$ and we get
\begin{equation}\label{eq: G-paths-decomposition}
    \Omega_G(X, Gx) \cong \bigsqcup_{g \in G} \Omega(X, x, gx).
\end{equation} 
The orbifold fundamental group $\pi_1^{\operatorname{orb}}([X/G], [x])$ for $x \in X^{\operatorname{reg}}$ can be defined as the group of equivalence classes of $G$-paths based at $Gx$ up to homotopy. The multiplication of $G$-paths is given by concatenation. This definition coincides with the more classical one via the Borel construction 
$$\pi_1^{\operatorname{orb}}([X/G])=\pi_1(EG \times_G X).$$
There is also a short exact sequence
\begin{equation}
    1 \to \pi_1(X,x) \to \pi_1^{\operatorname{orb}}([X/G], [x]) \to G \to 1.
\end{equation}

Let $X$ be a complex manifold and $G \subset \operatorname{Aut}(X)$ a finite subgroup. 
Consider a connected hypersurface $Y \subset X^{\operatorname{sing}}$ with $H_Y$ its stabilizer of order $m_Y$. To each $G$-orbit of such hypersurfaces, we assign formal variables $\tau^Y_0, \dots, \tau_{m_Y-1}^Y$. Let $\bm \tau$ be the collection of the $\tau^Y_j$ variables over all $Y$ and $j$.

We denote by $C_Y$ the conjugacy class in the \emph{braid group} $\pi_1(X^{\operatorname{reg}}/G, [x])$ generated by an arbitrary loop $\gamma=c(\tilde{\gamma},\gamma_Y,\tilde{\gamma}^{-1})$, where $\tilde{\gamma}$ is a path connecting $[x]$ to a point $p$ close to $[Y/G]$, $\gamma_Y$ is a counterclockwise small loop around $[Y/G]$ starting at $p$, and $c$ denotes concatenation of paths. Clearly, any two such loops $\gamma$ are conjugate. 

\begin{definition}
    The \emph{orbifold Hecke algebra} $\mathcal{H}_{\bm \tau}([X/G])$ is the quotient of the $\bm \tau$-adic completion $$\C[\pi_1(X^{\operatorname{reg}}/G, [x])]\llbracket \bm \tau \rrbracket$$ of the group algebra by the Hecke relations
    \begin{equation}
        \prod_{j=0}^{m_Y-1}(\gamma-\zeta_{m_Y}^je^{\tau_j^Y})=0,
    \end{equation}
    for any connected hypersurface $Y$ in $X^{\operatorname{sing}}$ and any $\gamma \in C_Y$.
\end{definition}

Now we introduce a slightly different variant of the orbifold Hecke algebra, which is more suited to our geometric approach, and show that it is equivalent to the above one. As before, for each $G$-orbit of hypersurfaces $Y \subset X^{\operatorname{sing}}$ we introduce variables $\hbar_{0}^Y, \dots, \hbar_{m_Y-1}^Y$. 

\begin{definition}
     The \emph{orbifold Hecke algebra} $\mathcal{H}_{\bm \hbar}([X/G])$ is the quotient of the $\bm \hbar$-adic completion $$\C[\pi_1(X^{\operatorname{reg}}/G, [x])]\llbracket \bm \hbar \rrbracket$$ of the group algebra by the Hecke relations
    \begin{equation}\label{eq: hbar-Hecke-relation}
        \gamma^{m_Y}-\hbar_{m_Y-1}^{Y}\gamma^{m_Y-1}-\dots-\hbar_{1}^{Y}\gamma-(\hbar_0^Y+1)=0,
    \end{equation}
    for any connected hypersurface $Y$ in $X^{\operatorname{sing}}$ and any $\gamma \in C_Y$.
\end{definition}

It is clear from the definitions that both $\mathcal{H}_{\bm \hbar}([X/G])$ and $\mathcal{H}_{\bm \tau}([X/G])$ are deformations of the group algebra of $\pi_1^{\operatorname{orb}}([X/G])$.
There is a natural map
\begin{equation}
    \mathcal{V}_{\bm \tau}^{\bm \hbar} \colon \mathcal{H}_{\bm \tau}([X/G]) \to \mathcal{H}_{\bm \hbar}([X/G]),
\end{equation}
which is given by equating polynomials

\begin{equation}
    P_{m_Y}(T)=\prod_{j=0}^{m_Y-1}(T-\zeta^j_{m_Y}e^{\tau^Y_j})=T^{m_Y}-\hbar_{m_Y-1}^YT^{m_Y-1}-\dots-\hbar_{1}^YT-(\hbar^Y_0+1)
\end{equation}
and expressing the $\bm \hbar$ variables in terms of the $\bm \tau$ variables. Clearly, the natural induced map on completions descends to the map of the quotient since such maps send the $\bm \tau$-ideal of Hecke relations to the corresponding $\bm \hbar$-ideal.  

\begin{lemma}\label{lemma: change-of-variables}
    The map $\mathcal{V}_{\bm \tau}^{\bm \hbar}$ is an isomorphism.
\end{lemma}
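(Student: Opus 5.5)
The plan is to construct an explicit inverse to $\mathcal{V}_{\bm \tau}^{\bm \hbar}$ on the level of coefficient rings. Both algebras are quotients of completed group algebras $\C[\pi_1(X^{\operatorname{reg}}/G,[x])]\llbracket\bm\tau\rrbracket$ and $\C[\pi_1(X^{\operatorname{reg}}/G,[x])]\llbracket\bm\hbar\rrbracket$. For each $G$-orbit of hypersurfaces $Y$, the Hecke ideals are generated by the values of the same monic polynomial $P_{m_Y}$ at $\gamma\in C_Y$, written once in the $\bm\tau$-coordinates and once in the $\bm\hbar$-coordinates. It therefore suffices to show that the ring map
\[
\phi\colon \C\llbracket\bm\hbar\rrbracket\to\C\llbracket\bm\tau\rrbracket
\]
defined by equating coefficients of $P_{m_Y}$ is an isomorphism of complete local rings. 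The map $\mathcal{V}_{\bm \tau}^{\bm \hbar}$ is induced by $\phi^{-1}$ in the direction $\bm\tau\mapsto\bm\hbar$, or equivalently by $\phi$ in the other direction. Once $\phi$ is an isomorphism, it sends the Hecke ideal generated by $P_{m_Y}(\gamma)$ bijectively to the corresponding ideal, so the quotients are identified.

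First, check that $\phi$ is well defined, i.e.\ that it preserves maximal ideals. The coefficients of $\prod_j(T-\zeta_{m_Y}^je^{\tau_j^Y})$ are elementary symmetric functions $e_i(\zeta^je^{\tau_j})$. At $\bm\tau=0$ these reduce to the coefficients of $\prod_j(T-\zeta^j)=T^{m_Y}-1$. So $\hbar_i^Y$ for $1\le i\le m_Y-1$, and $\hbar_0^Y$ (using the normalization $-(\hbar_0^Y+1)$), are power series in $\bm\tau$ without constant term.

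Second, show that $\phi$ is invertible by the formal inverse function theorem for power series rings. The map is a product over the orbits $Y$, so it is enough to compute the Jacobian at $0$ of the map $(\tau_0,\dots,\tau_{m-1})\mapsto(\hbar_0,\dots,\hbar_{m-1})$ for each $Y$ and show it is invertible. The differential of the coefficient map $(r_0,\dots,r_{m-1})\mapsto\prod(T-r_j)$ at the distinct roots $r_j=\zeta^j$ is invertible; this is the standard Vandermonde fact that a monic polynomial with simple roots depends étale-locally on its roots. Concretely, $\partial/\partial r_j$ of the product is $-\prod_{l\neq j}(T-r_l)$, and these polynomials are linearly independent by Lagrange interpolation at the distinct points $\zeta^l$. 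Composing with $\tau_j\mapsto r_j=\zeta^je^{\tau_j}$, whose differential at $0$ is $\operatorname{diag}(\zeta^j)$ and hence invertible, gives an invertible linear part. A formal power series map with no constant term and invertible linear part has a formal inverse, so $\phi$ is an isomorphism of complete rings.

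Third, extend $\phi$ coefficientwise to an isomorphism of completed group algebras. This isomorphism maps $P_{m_Y}(\gamma)$ written in $\bm\hbar$ to $P_{m_Y}(\gamma)$ written in $\bm\tau$, so it maps the closed two-sided ideal generated by these elements onto the corresponding ideal, and it descends to the inverse of $\mathcal{V}_{\bm \tau}^{\bm \hbar}$.

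The main obstacle is bookkeeping in the completion: one has to check that ideals are taken as closed ideals and that $\phi$ is continuous, which holds because $\phi$ preserves the maximal ideal. The Jacobian computation itself is routine.
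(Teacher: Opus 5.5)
Your proposal is correct and follows the paper's argument. Both show that the coefficient substitution between $\bm\tau$ and $\bm\hbar$ is an isomorphism of complete local rings, using invertibility of its Jacobian at the origin (a Vandermonde-type determinant, nonzero because the roots $\zeta_{m_Y}^j$ of $T^{m_Y}-1$ are distinct), and both note that this isomorphism carries the Hecke relations onto each other. Your Lagrange-interpolation check, the chain-rule factor $\operatorname{diag}(\zeta^j)$, and the remark on which direction the substitution naturally goes are more careful versions of steps the paper compresses into a few lines.
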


\begin{proof}
    The change of variables sends the origin with respect to the $\tau^Y_j$-variables to the origin with respect to the $\hbar^Y_j$-variables. To show that the map upstairs (before quotienting by the Hecke relations) is an isomorphism, it suffices to show that this change of variables is invertible. Since we are working with completions, it suffices to show that the Jacobian of this change of variables at the origin is nonzero. Notice that up to sign $\hbar_j^Y$ (or $(\hbar_0^Y+1)$) is the $j$th symmetric polynomial in the variables $(\zeta_{m^Y}^0e^{\tau_0^Y}, \dots, \zeta_{m^Y}^{m_Y-1}e^{\tau_{m_Y-1}^Y})$. Then up to sign, the Jacobian of this change of variables is the Vandermonde determinant of $P_{m_Y}(T)$ (see e.g. \cite{lascouxpragacz2002}), and since its roots are distinct (at the origin in the $\bm \tau$-variables), it is nonzero at the origin. This shows that the map upstairs is an isomorphism, which immediately implies that $\mathcal{V}_{\bm \tau}^{\bm \hbar}$ is an isomorphism.
\end{proof}

The following fundamental result of \cite{EtingofPavel} regarding the flatness of the deformations provided by orbifold Hecke algebras will be of great importance to us:

\begin{theorem}[{\cite[Theorem 3.7]{EtingofPavel}}]\label{theorem: etingof-flatness}
    If $\pi_2(X) \otimes \Q=0$, then $\mathcal{H}_{\bm \tau}([X/G])$ is a flat formal deformation of $\C[\pi_1^{\operatorname{orb}}([X/G])]$.
\end{theorem}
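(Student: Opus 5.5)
The plan is to reduce flatness to the existence of one sufficiently large representation. Set $\Gamma=\pi_1^{\operatorname{orb}}([X/G],[x])$. The algebra $\mathcal{H}_{\bm\tau}([X/G])$ is a quotient of $\C[\pi_1(X^{\operatorname{reg}}/G)]\llbracket\bm\tau\rrbracket$. Setting $\bm\tau=0$ turns the Hecke relations into $\gamma^{m_Y}=1$, so by the presentation of $\Gamma$ as the quotient of the braid group by these relations, $\mathcal{H}_{\bm\tau}/(\bm\tau)\cong\C[\Gamma]$. Flatness therefore amounts to showing that $\mathcal{H}_{\bm\tau}$ is topologically free over $\C\llbracket\bm\tau\rrbracket$. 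By a standard Nakayama-type argument it suffices to find a $\C\llbracket\bm\tau\rrbracket$-free module $\mathcal{M}\cong\C[\Gamma]\llbracket\bm\tau\rrbracket$ carrying an $\mathcal{H}_{\bm\tau}$-action that reduces modulo $\bm\tau$ to the left regular representation. Indeed, choose a lift $\C[\Gamma]\llbracket\bm\tau\rrbracket\to\mathcal{H}_{\bm\tau}$ of a basis of $\C[\Gamma]$; it is surjective by completeness. Composing with the action on $1\in\mathcal{M}$ gives a map $\C[\Gamma]\llbracket\bm\tau\rrbracket\to\mathcal{M}$ that is the identity modulo $\bm\tau$, hence an isomorphism. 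This forces the lift to be injective, so $\mathcal{H}_{\bm\tau}$ is free.

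To build $\mathcal{M}$, I would realize it as the monodromy of a flat connection, following Cherednik and the Dunkl--Opdam/KZ philosophy. Let $\widetilde X\to X$ be the universal cover, so that $\Gamma$ acts on $\widetilde X$ with quotient $[X/G]$. Let $\widetilde Y$ range over the lifts of the hypersurfaces $Y$, and let $e_{Y,j}$ be the idempotents of $\C[H_Y]$. On the trivial bundle with fiber $\C[\Gamma]\llbracket\bm\tau\rrbracket$ over $\widetilde X^{\operatorname{reg}}$, the plan is to write a $\Gamma$-equivariant connection
\[
\nabla=d-\sum_{\widetilde Y}\sum_{j} c_{Y,j}(\bm\tau)\,\omega_{\widetilde Y}\otimes e_{\widetilde Y,j},
\]
where $\omega_{\widetilde Y}$ is a closed logarithmic $1$-form with a simple pole along $\widetilde Y$ and residue $1$. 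The constants $c_{Y,j}$ would be chosen so that a small loop around $Y$ has local monodromy with eigenvalues $\zeta_{m_Y}^je^{\tau_j^Y}$. Then each $\gamma\in C_Y$ acts through an operator annihilated by $\prod_j(T-\zeta_{m_Y}^je^{\tau^Y_j})$, so the Hecke relations hold on the space of flat sections, and at $\bm\tau=0$ that space is the regular representation. Flat sections exist order by order in $\bm\tau$ because the base change is formal and the connection is trivial modulo $\bm\tau$. Passing to $\Gamma$-invariant flat sections over $\widetilde X^{\operatorname{reg}}$ descends the monodromy to an action of $\pi_1(X^{\operatorname{reg}}/G)$.

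The main obstacle is constructing the forms $\omega_{\widetilde Y}$ globally, and arranging the connection to be flat rather than merely locally flat near each $\widetilde Y$. The obstruction to a closed log form with residue $1$ along $\widetilde Y$ is the class of $[\widetilde Y]$ in $H^2(\widetilde X,\C)$. The cross terms $\omega\wedge\omega'$ in the curvature must also be controlled by commutation relations among the idempotents at intersecting hypersurfaces. Since $\widetilde X$ is simply connected, Hurewicz and universal coefficients give $H^2(\widetilde X,\Q)\cong\operatorname{Hom}(\pi_2(X),\Q)=0$. This is exactly where the hypothesis $\pi_2(X)\otimes\Q=0$ enters. I would first try to kill the curvature inductively in powers of $\bm\tau$, modifying the connection at each order by a $\Gamma$-equivariant $1$-form. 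The obstruction at each order is a closed $2$-form class on $\widetilde X$ (equivariant, after averaging over the finite group $G$), which vanishes because $H^2(\widetilde X,\C)=0$. If the equivariance creates difficulties, a fallback is to replace the explicit connection by a twisted $\mathcal{D}$-module (a sheaf of Cherednik algebras on $X$) and use the same $H^2$-vanishing to untwist it.
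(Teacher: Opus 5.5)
The paper gives no proof of this statement. It is quoted from Etingof, who proves it within his theory of sheaves of Cherednik algebras and the monodromy (KZ) of their modules.

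Your first paragraph is sound. You have $\mathcal{H}_{\bm\tau}/(\bm\tau)\cong\C[\Gamma]$, and a topologically free module deforming the left regular representation does force $\mathcal{H}_{\bm\tau}\cong\C[\Gamma]\llbracket\bm\tau\rrbracket$.

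The gap is in constructing that module, specifically the claim that the curvature of your connection can be killed order by order using $H^2(\widetilde X,\C)=0$.
\begin{itemize}
\item Put $A_{\widetilde Y}=\sum_j c_{Y,j}e_{\widetilde Y,j}$. The curvature of your ansatz is $\tfrac12\sum_{\widetilde Y\neq\widetilde Y'}[A_{\widetilde Y},A_{\widetilde Y'}]\,\omega_{\widetilde Y}\wedge\omega_{\widetilde Y'}$. This is a non-central $\C\Gamma$-valued $2$-form with poles along the hypersurfaces.
\item At order $n$ you must solve $d\alpha_n=-\beta_n$ on $\widetilde X^{\operatorname{reg}}$, where $\beta_n$ collects products of lower-order terms. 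So the obstruction is a class in $H^2(\widetilde X^{\operatorname{reg}};\C\Gamma)$, not a scalar class in $H^2(\widetilde X,\C)$.
\item By the Gysin sequence, that group is built from three pieces: $H^2(\widetilde X)$, residue classes in $H^1$ of the hypersurfaces, and local classes at their codimension-two intersections. Only the first piece is controlled by $\pi_2(X)\otimes\Q=0$.
\item Your corrections $\alpha_n$ must leave the residues along each $\widetilde Y$ untouched, since those residues are what produce the Hecke relation. Nothing in your scheme enforces this.
\item $G$ does not act on $\widetilde X$. The acting group is $\Gamma$, which is infinite whenever $\pi_1(X)$ is, so there is nothing to average over.
\item Once the local monodromies are nontrivial, there are no global flat sections on $\widetilde X^{\operatorname{reg}}$. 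What you want is the monodromy at $[x]$ of the descended flat bundle on $X^{\operatorname{reg}}/G$.
\end{itemize}

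Your fallback is the right direction, but the hypothesis has to kill a module-deformation obstruction, not merely untwist. The Dunkl--Opdam operators generate a sheaf $H_c$ of Cherednik algebras on all of $\widetilde X$, not just on $\widetilde X^{\operatorname{reg}}$, and $H_c$ is a flat formal deformation of $\mathcal{D}_{\widetilde X}\#\Gamma$. A module over $H_c$ restricts on $\widetilde X^{\operatorname{reg}}$ to a flat bundle whose monodromy satisfies the Hecke relations, by a local computation at generic points of the $Y$'s. So the residues never have to be controlled by hand.

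Now deform $M_0=\mathcal{O}_{\widetilde X}\otimes\C\Gamma=\operatorname{Ind}_1^\Gamma\mathcal{O}_{\widetilde X}$ to an $H_c$-module order by order. The obstructions lie in
$\operatorname{Ext}^2_{\mathcal{D}_{\widetilde X}\#\Gamma}(M_0,M_0)\cong\operatorname{Ext}^2_{\mathcal{D}_{\widetilde X}}(\mathcal{O}_{\widetilde X},\mathcal{O}_{\widetilde X}\otimes\C\Gamma)\cong H^2(\widetilde X;\C\Gamma)=\operatorname{Hom}(H_2(\widetilde X;\Z),\C\Gamma)=0$.
The induction adjunction replaces your averaging. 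The decisive point is that this is cohomology of $\widetilde X$ rather than of $\widetilde X^{\operatorname{reg}}$, precisely because the module is defined across the hypersurfaces. What remains is to match the Cherednik and twisting parameters with $\bm\tau$.
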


\begin{remark}\label{remark: hbar-flatness}
    Theorem \ref{theorem: etingof-flatness} implies that $\mathcal{H}_{\bm \tau}([X/G])\simeq \C[\pi_1^{\operatorname{orb}}([X/G])]\llbracket \bm \tau \rrbracket$ as a $\C\llbracket \bm \tau \rrbracket$-module. Combined with Lemma~\ref{lemma: change-of-variables}, it implies that $\mathcal{H}_{\bm \hbar}([X/G])$ is flat under the conditions of the theorem.
\end{remark}

Observe that the group algebra of the braid group $\C[\pi_1(X^{\operatorname{reg}}/G)]\llbracket \bm \hbar \rrbracket$ is isomorphic to the algebra 
\begin{equation}
    H_0(\Omega_G(X^{\operatorname{reg}}, Gx); \C)\llbracket \bm \hbar \rrbracket,
\end{equation} 
with the Pontryagin product, i.e., the concatenation of $G$-paths. Using this perspective, we can define the Hecke algebra as the quotient of
$H_0(\Omega_G(X^{\operatorname{reg}}, Gx); \C)\llbracket \bm \hbar \rrbracket$
by the Hecke relations~\eqref{eq: hbar-Hecke-relation}.

Finally, let us denote by 
\begin{equation}
    \mathcal{H}_{\bm \hbar}|_{\bm \hbar_0=0}([X/G])
\end{equation}
the specialization of the Hecke algebra $ \mathcal{H}_{\bm \hbar}([X/G])$ obtained by setting $\hbar_0^Y=0$ for all hypersurfaces $Y$.

\subsection{Relating the $A_\infty$-algebra of a cotangent fiber and the Hecke algebra}\label{section: evaluation-map-iso}

The goal of this subsection is to prove one of our main results and its corollary:

\begin{theorem}\label{theorem: Hecke-isomorphism}
    There exists a surjective morphism $\mathcal{EV}$ from the algebra $HW^0_{\bm \hbar}(T^*_{[x]}[X/G])$ to the orbifold Hecke algebra $\mathcal{H}_{\bm \hbar}|_{\bm \hbar_0=0}([X/G])$. If $\pi_2(X) \otimes \mathbb{Q}=0$ the morphism $\mathcal{EV}$ is an isomorphism.
\end{theorem}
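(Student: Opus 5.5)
Following Abouzaid's proof that wrapped Floer cohomology of a cotangent fiber is the homology of the based loop space, I construct $\mathcal{EV}$ by counting \emph{stacky half-strips}. These are orbidisks $[u]\colon \mathscr{C}\to[T^*X/G]$ with one incoming puncture asymptotic to an orbichord $\hat{x}\in\chi(T^*_{[x]}[X/G],T^*_{[x]}[X/G])$ and boundary on $T^*_{[x]}[X/G]$ near the puncture. The rest of the boundary lies on the zero section $[X/G]$, which sits in the region where $\alpha=0$. The disks carry $b$ interior stacky points mapped to type $A$ inertia $(T^*Y_i,(h_i))$, weighted by $\frac{1}{b!}\prod\hbar^{Y_i}_{h_i}$.

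For a degree-$0$ input the relevant moduli spaces are $0$-dimensional, since these bulk insertions are dimensionless by Example~\ref{exmpl: hypersurface-inertia}. The boundary arc on the zero section traces a $G$-path in $X$ based at $Gx$. Exactness and the maximum principle give compactness. A generic perturbation, Proposition~\ref{prop: proposition-jets-transversality-with-submanifolds} applied with $\bm Z$ the codimension-$2$ set $X^{\operatorname{sing}}$, lets me push the boundary $G$-path into $X^{\operatorname{reg}}$. Hence each rigid half-strip determines a class in $H_0(\Omega_G(X^{\operatorname{reg}},Gx))\llbracket\bm\hbar\rrbracket=\C[\pi_1(X^{\operatorname{reg}}/G)]\llbracket\bm\hbar\rrbracket$. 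I define $\mathcal{EV}$ as the sum of these classes, projected to the quotient $\mathcal{H}_{\bm\hbar}|_{\bm\hbar_0=0}$.

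Two things must be checked: that $\mathcal{EV}$ is a chain map, descending to cohomology, and that it is multiplicative. Both come from the boundary of $1$-dimensional moduli spaces of half-strips, with one or two inputs respectively. The possible degenerations are:
\begin{enumerate}
\item Strip breaking at the input. This gives $\mu^1$ or $\mu^2$ terms, as in Abouzaid.
\item Splitting of the half-strip along the zero section. This gives Pontryagin concatenation.
\item Interior orbighosts. These are ruled out exactly as in the proof of Theorem~\ref{theorem: well-defined-ainfty-cat}, using Theorem~\ref{theorem: obstruction-for-Am-main}(2) and Corollary~\ref{cor: moduli-spaces-with-condtions}.
\item Boundary ghosts on the zero section at a point of $Y$. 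If $\mathscr{C}_{\mathghost}$ has at least two stacky points, Theorem~\ref{theorem: obstruction-for-Am-main}(3) forces $\langle du_\bullet(p),v\rangle=0$ for $v\in T_yX\cap\nu$. This is a codimension-$2$ condition (the two real normal directions), so such configurations do not appear in $1$-dimensional families.
\item A nodal orbidisk with a \emph{single} stacky point, with monodromy $h\in H_Y$, attached along a boundary node on $Y$. Here the ghost disk is rigid, and the boundary path on the main component crosses $Y$.
\end{enumerate}

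The key computation concerns case 5. Perturbing the crossing point off $Y$ in the two directions produces $G$-paths differing by the loop $\gamma^{j}\in C_Y$ times the path class. Summing over $j$ with the weights $\hbar^Y_j$, the contributions of these boundary points cancel precisely modulo the relation $\gamma^{m_Y}-\sum_{j\ge1}\hbar^Y_j\gamma^j-1$. The $\hbar_0$ term is absent because a ghost with trivial monodromy is not stacky, which is why we specialize $\hbar_0=0$. Thus $\mathcal{EV}$ is a well-defined algebra map only after imposing the Hecke relations. For sign and multiplicity bookkeeping I would reduce to the local model $[\C^2/\mu_m]\times\C^{n-2}$ and count explicit half-disks, checking that each $\gamma^j$ arises with coefficient $\pm1$. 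I expect this identification of the local count, including orientations and the correct exponent $j$ matching $h=s_Y^j$, to be the main obstacle.

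Surjectivity comes from the $\bm\hbar=0$ specialization. There, only $b=0$ half-strips contribute, and by the equivariant version of Abouzaid's argument (or Abbondandolo–Schwarz) $HW^0(T^*_{[x]}[X/G])$ maps isomorphically onto $H_0(\Omega_G(X,Gx))=\C[\pi_1^{\operatorname{orb}}([X/G])]$. The braid group surjects onto $\pi_1^{\operatorname{orb}}$, so Nakayama's lemma over the complete ring $\C\llbracket\bm\hbar\rrbracket$ lifts surjectivity of the reduction to surjectivity of $\mathcal{EV}$.

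For injectivity when $\pi_2(X)\otimes\Q=0$, first note that $HW^0_{\bm\hbar}$ is a deformation of $HW^0$, via the $\bm\hbar$-adic spectral sequence. Remark~\ref{remark: hbar-flatness} says that $\mathcal{H}_{\bm\hbar}$, and hence its specialization, is a flat deformation of the same group algebra. A surjection of $\bm\hbar$-adically complete modules that is an isomorphism modulo $\mathfrak m_{\bm\hbar}$, with flat target, is an isomorphism. This uses that $HW^{-1}$ does not interfere with the degree-$0$ reduction, which should follow from the same Nakayama argument in its derived form. When $X$ is a $K(\pi,1)$, the loop space homology is concentrated in degree $0$, so $HW^*_{\bm\hbar}$ is concentrated in degree $0$ as well.
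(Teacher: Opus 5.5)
Your proposal is essentially the paper's proof. It uses the same half-strip evaluation into $G$-paths in $X^{\operatorname{reg}}$, the same interior and boundary obstruction theorems plus jet transversality to leave only boundary ghosts with a single stacky point, the same ``fountain'' of $m_Y-1$ glued branches at a curve $u^0$ whose outgoing boundary crosses $G\cdot Y$, and the same finish: Abouzaid/Abbondandolo--Schwarz at $\bm\hbar=0$, Nakayama for surjectivity, and Etingof's flatness for injectivity. On the step you flag, note first that the two sides of the unglued family through $u^0$ differ by $\tilde\gamma^{m_Y}$, not $\tilde\gamma^{j}$. The paper then fixes the exponent $j$ of the branch glued from a ghost with monodromy $g_Y^j$ by a winding argument that uses the adapted $J$ near the outgoing boundary, not by an explicit local count: the Taylor expansion at the stacky point forces at least $j$ zeros of $u_+$ and $m_Y-j$ zeros of $u_-$, and invariance of the intersection number of the doubled curves with $V_\chi\cup V_{-\chi}$ (equal to $2m_Y$, as for $u^0$) rules out any others, so the branch evaluates to exactly $\tilde\gamma^{j}\Gamma$.
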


\begin{corollary}\label{corollary: derived-Nakayama}
    If $X$ is a $K(\pi,1)$ space, then 
    $$HW^*_{\bm \hbar}(T^*_{[x][X/G]}) \cong \mathcal{H}_{\bm \hbar}|_{\bm \hbar_0=0}([X/G]).$$
\end{corollary}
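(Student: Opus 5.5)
The plan is to combine Theorem~\ref{theorem: Hecke-isomorphism} in degree $0$ with a derived Nakayama argument in the $\bm\hbar$-adic filtration, which shows that $HW^*_{\bm \hbar}$ has no cohomology outside degree $0$. First, a $K(\pi,1)$ space has $\pi_2(X)=0$, so $\pi_2(X)\otimes\Q=0$. Theorem~\ref{theorem: Hecke-isomorphism} then gives an isomorphism $HW^0_{\bm \hbar}(T^*_{[x]}[X/G])\cong \mathcal{H}_{\bm \hbar}|_{\bm \hbar_0=0}([X/G])$. It therefore remains to show that $HW^i_{\bm\hbar}(T^*_{[x]}[X/G])=0$ for $i\neq 0$.

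\emph{Step 1: the undeformed algebra.} Consider the reduction $\bm\hbar=0$. In this case $\mu^1$ counts only curves with $b=0$, i.e.\ $G$-orbits of ordinary Floer strips. A Hamiltonian orbichord from $\mathcal{L}_x=\cup_g T^*_{gx}X$ to itself is the $G$-orbit of a chord from $T^*_xX$ to some $T^*_{gx}X$. By Lemma~\ref{lemma: transversality-for-strips}, the complex $CW^*(T^*_{[x]}[X/G])$ is isomorphic to $\bigoplus_{g\in G} CW^*(T^*_xX,T^*_{gx}X)$, with the undeformed differential. Abouzaid's theorem \cite{abouzaid2012wrapped}, together with Abbondandolo--Schwarz \cite{abbondandolo2006floer}, identifies each summand's cohomology with $H_{-*}(\Omega(X,x,gx))$. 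Using~\eqref{eq: G-paths-decomposition}, the total is $H_{-*}(\Omega_G(X,Gx))$. Since $X$ is aspherical, every path-space component is contractible. Hence the undeformed $HW^*$ is concentrated in degree $0$, where it equals $\C[\pi_1^{\operatorname{orb}}([X/G])]$ (after extending scalars to $\C$).

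\emph{Step 2: the filtration argument.} The bulk parameters have degree $0$: for type $A$ components $2\iota(\bm m)=2b$, so $\mu^1$ preserves the cohomological grading for every $b$. Thus $CW^*_{\bm\hbar}=CW^*\llbracket\bm\hbar\rrbracket$ is a complete, exhaustively filtered complex. Its filtration is by total $\bm\hbar$-degree, and its differential has the form $\mu^1_{\bm\hbar}=\mu^1+O(\bm\hbar)$. The associated graded complex is $CW^*\otimes \operatorname{gr}\Bbbk\llbracket\bm\hbar\rrbracket$, with the undeformed differential. By Step 1, the $E_1$-page of the spectral sequence is $HW^*\otimes\operatorname{gr}$, concentrated in degree $0$. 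All higher differentials change cohomological degree by $+1$, so the spectral sequence degenerates at $E_1$.

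\emph{Step 3: convergence.} Convergence to the cohomology of the completed complex is the step I expect to be the main technical point. I would argue it degree by degree, each filtration quotient $CW^*\otimes \Bbbk[\bm\hbar]/\mathfrak m^N$ being a finite extension. Long exact sequences show that these quotients have cohomology only in degree $0$, and that the transition maps on $H^0$ are surjective. The Mittag-Leffler condition then kills the $\varprojlim^1$ terms. Hence $H^i(\varprojlim)=0$ for $i\neq0$. Together with the degree-$0$ identification above, this yields $HW^*_{\bm \hbar}(T^*_{[x]}[X/G])\cong\mathcal{H}_{\bm \hbar}|_{\bm \hbar_0=0}([X/G])$. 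As a consistency check, the argument also shows that $HW^0_{\bm\hbar}$ is a flat deformation of $\C[\pi_1^{\operatorname{orb}}]$, which matches Remark~\ref{remark: hbar-flatness}.
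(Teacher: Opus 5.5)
Your proposal is correct and follows essentially the same route as the paper. You identify the undeformed cohomology with $\bigoplus_{g\in G} H_{-*}(\Omega(X,x,gx))$, which is concentrated in degree $0$ for aspherical $X$, lift this vanishing to the $\bm\hbar$-adically completed complex, and combine it with Theorem~\ref{theorem: Hecke-isomorphism} in degree $0$; the only difference is packaging, since the paper does the lifting step by an explicit successive-approximation (derived Nakayama) argument in degrees $\le -1$, using that the complex sits in non-positive degrees, while your $\bm\hbar$-adic spectral sequence with the Mittag-Leffler/$\varprojlim^1$ check is the same computation and does not need that degree bound.
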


Let $\overline{\mathscr{T}}^G_{0, \bm h, d}$ be the moduli space of admissible $G$-covers of a \emph{half-disk} $C$ with $(d+2)$ punctures $\psi_{-1}$, $\psi_0$, $\dots$, $\psi_{d}$ in counterclockwise order on the boundary, such that $\psi_{-1}$ and $\psi_0$ correspond to the corner points of the half-disk, and following \cite[Section 4]{abouzaid2012wrapped} we call the portion of the boundary between punctures $\psi_{-1}$ and $\psi_0$ outgoing. The punctures $\psi_0, \dots, \psi_d$ are assumed to be positive, and the puncture $\psi_{-1}$ is assumed to be negative. Observe that $\overline{\mathscr{T}}^G_{0, \bm h, d}$ is the same orbifold as $\overline{\mathscr{H}}^G_{0, \bm h, d+1}$, and the reason for the new notation is that we are about to pick slightly different Floer data on this collection of orbifolds adapting the approach of Abouzaid.

As before, there are universal curves  $\overline{\mathfrak{F}}_{0, \bm h, d}^G\to \overline{\mathscr{T}}^G_{0, \bm h, d}$ and $\overline{\mathfrak{T}}_{0, \bm h, d}^G\to \overline{\mathscr{T}}^G_{0, \bm h, d}$ corresponding to domains and codomains of these admissible covers, and marked point sections $q_1, \dots, q_b \colon \overline{\mathscr{T}}^G_{0, \bm h, d} \to \mathfrak{F}_{0, \bm h, d}^G$. The following definition parallels Definition~\ref{defn: floer data}, and we only highlight the differences:

\begin{definition}\label{defn: floer-data-for-half-covers}
    A {\em Floer data} on $\overline{\mathscr{T}}^{G}_{0, \bm h, d}$ is a tuple 
    $$(\rho, \alpha, H, J)$$ 
    consisting of:
\end{definition}
    \begin{enumerate}
    \item A time-shifting map $\rho \colon \partial^v \overline{\mathfrak{T}}_{0, \bm h, d}^G \to [1, +\infty)$, which is smooth, constant near the boundary punctures of each half-disk $(C, q_1, \ldots, q_b)$, and constant on each component of $\bdry C$ without any boundary punctures. The weight functions additionally satisfy $w_0,w_{-1}\equiv 1$.
    
    \item A basic $1$-form $\alpha \in \Omega^1(\overline{\mathfrak{T}}_{0, \bm h, d}^G)$, which vanishes along the parts of $\bdry^v \overline{\mathfrak{T}}_{0, \bm h, d}^G$ which are not outgoing, on a neighborhood $\mathcal{N}$ of the sections $q_1, \ldots, q_b$, on a neighborhood of punctures $\psi_{-1}$ and $\psi_0$, and also on some auxiliary open subset which is disjoint from $\mathcal{N}$ and intersects each fiber $C$ in an open subset $U_C$ which is disjoint from strip-like ends, finite strips and cylinders, and \emph{an open neighborhood of the outgoing boundary}. The restriction $\alpha|_C$ to any fiber $C$ is required to be sub-closed. 

    \item An equivariant domain-dependent Hamiltonian $H \in \mathcal{H}^T_{\bm h,d}(M)$, where the latter is defined in a similar fashion as $\mathcal{H}_{\bm h, d}(M)$ and the only difference is that $H$ is required to vanish on the product of a neighborhood of the outgoing boundary with a neighborhood of the $0$-section $X \subset M$.

    \item A domain-dependent almost complex structure $J \in \mathcal{J}_{\bm h, d}^T(M)^G$, where the latter space is defined in a similar fashion as $\mathcal{J}_{\bm h, d}(M)^G$ and the only difference is that near the outgoing boundary $J$ is a family of adapted almost complex structures in the sense of Definition~\ref{defn: adapted J}. 
    \end{enumerate}

\s
The boundary of the orbifold $\overline{\mathscr{T}}_{0, \bm h, d}^G$ is naturally covered by the images of 
\begin{gather}
    \overline{\mathscr{T}}_{0, \bm h_1, d}^G \times \overline{\mathscr{H}}_{0, \bm h_2}^G \hookrightarrow \overline{\mathscr{T}}_{0, \bm h, d}^G, \quad \bm h =\bm h_1 \#_{(h_{\ddot{n}})}\bm h_2;\\
    \overline{\mathscr{T}}_{0, \bm h_1, d_1}^G \times \overline{\mathscr{T}}_{0, \bm h_2, d_2}^G \hookrightarrow \overline{\mathscr{T}}_{0, \bm h, d}^G ,\quad d_1+d_2=d,\, \bm h=(\bm h_1, \bm h_2); \\
    \overline{\mathscr{T}}_{0, \bm h_1, d_1}^G \times \overline{\mathscr{H}}_{0, \bm h_2, d_2}^G \hookrightarrow \overline{\mathscr{T}}_{0, \bm h, d}^G, \quad d_1+d_2=d+1, \, \bm h=(\bm h_1, \bm h_2);
\end{gather}
where the first case corresponds to a genus $0$ orbicurve bubbling off of an orbifold half-disk $\mathscr{C}$, 
the second corresponds to the creation of a boundary node along the outgoing boundary, and the last case corresponds to the creation of a boundary node along some other boundary component.
 
\begin{definition}
    A choice of Floer data for all $\overline{\mathscr{T}}^G_{0, \bm h, d}$ is {\em universal and conformally consistent}, if its restriction to a boundary stratum is conformally equivalent to the product of Floer data coming from either the Floer data for the $A_\infty$-operations chosen in Section~\ref{section: orbifold-Fukaya-prelim} or a lower-dimensional moduli space $\overline{\mathscr{T}}^G_{0, \bm h', d'}$, and which near such a boundary stratum agrees to infinite order with the Floer data obtained by gluing.  
\end{definition}

Using such Floer data, we define the moduli space of $G$-equivariant curves covering half-disks as follows: Let $\overrightarrow{\hat{x}}=(\hat{x}_d, \dots, \hat{x}_1)$ be a tuple of orbichords connecting the equivariant Lagrangian $\mathcal{L}_{[x]}=T^*_{[x]}[X/G]$ with itself for some point $x \in X^{\operatorname{reg}}$ and let $\bm m$ be a connected component of $\mathcal{I}[T^*X/G]^b$ for $b\ge 0$.

\begin{definition}\label{defn: moduli-of-half-orbidisks}
    The moduli space $\mathcal{T}(\overrightarrow{\hat{x}}; \bm m)$ is the space of $G$-equivariant maps
    $$u \colon \Sigma \to M$$
    for $(\pi \colon \Sigma \to C; q_1, \dots, q_b) \in \mathscr{T}_{0, \bm h, d}^G$ which satisfy
    \begin{equation}\label{eq: Floer-equation-half}
        (du-\pi^*(X_H \otimes \alpha)|_C)^{0,1}_{J|_C}=0,
    \end{equation}
    \begin{equation}
        \mathcal{E}^{\mathcal{I}}([u]) \in \bm m
    \end{equation}
    and
    \begin{gather}
        u(z) \in \psi^{\rho(z)|_C}\mathcal{L}_i \quad \text{ for }z\in \pi^{-1}(w) \text{ with } w\in \partial C \text{ between }\psi^i \text{ and } \psi^{i+1} \text{ and }i\neq0, \\
        u(z) \in X \quad \text{ for }z\in \pi^{-1}(w) \text{ with } w\in \partial C \text{ between }\psi^{-1} \text{ and } \psi^{0},\\
        \lim_{s\to +\infty}u|_ {\pi^{-1}(\epsilon^i(s, \cdot))}=\psi^{w_k}\circ\hat{x}_i \quad \text{for }i \ge 1, \\
        \lim_{s\to \pm\infty}u|_ {\pi^{-1}(\epsilon^i(s, \cdot))}=G \cdot x \quad \text{for }i = 0, -1.
    \end{gather}
\end{definition}

\begin{remark}
    The conditions on the Hamiltonian $H$ in Definition~\ref{defn: floer-data-for-half-covers} guarantee that the Floer equation~\eqref{eq: Floer-equation-half} near the preimage under $\pi$ of the outgoing boundary agrees with $\overline{\partial}_J=0.$
\end{remark}

For generic data, each $\mathcal{T}(\overrightarrow{\hat{x}}; \bm m)$ is a smooth manifold of dimension
\begin{equation}
    \operatorname{dim} \mathcal{T}(\overrightarrow{\hat{x}}; \bm m)=-|\overrightarrow{\hat{x}}|+2b-2\iota(\bm m),
\end{equation}
and there is a continuous evaluation map
\begin{equation}
    ev \colon \mathcal{T}(\overrightarrow{\hat{x}}; \bm m) \to \Omega_G(X, Gx),
\end{equation}
which uses a parametrization of the arc connecting $\psi^{-1}$ with $\psi^0$ by $[0,1]$; the proof uses methods of Section~\ref{section: transversality-for-jets}.

Let us restrict to the case of a single $\hat{x}$ with $|\hat{x}|=0$ and a tuple 
$$\bm m=\bm m_{\bm Y, \bm h}=((T^*Y_1, (h_1)), \dots, (T^*Y_b, (h_b)))$$ 
of type $A$ inertia for a collection of hypersurfaces $(Y_1, \dots, Y_b)$ as in Section~\ref{section: orbifold-Fukaya-prelim}.  Then 
$$\operatorname{dim}\mathcal{T}(\hat{x}; \bm m_{\bm Y, \bm h})=0.$$
By the results of Section~\ref{section: transversality-for-jets}, the image $ev(\mathcal{T}(\hat{x}; \bm m_{\bm Y, \bm h})) $ is contained in $ \Omega_G(X^{\operatorname{reg}}, Gx)$ and we have a well-defined map
\begin{equation}
    ev \colon \mathcal{T}(\hat{x}; \bm m_{\bm Y, \bm h}) \to \Omega_G(X^{\operatorname{reg}}, Gx). 
\end{equation}

We then define 
\begin{gather}
  \mathcal{EV} \colon CW^0_{\bm \hbar}(T^*_{[x]}[X/G]) \to \mathcal{H}_{\hbar}([X/G]),\\
  [\hat{x}] \mapsto \sum_{\substack{\,b \in \Z_{\ge 0}, \, (\bm Y, \bm h) \in \mathcal{H}^{\mathcal{I}}([X/G])^b\\ [u] \in \mathcal{T}(\hat{x}; \bm m_{\bm Y, \bm h})}} \frac{1}{b!}\prod_{i=1}^b\hbar^{Y_i}_{h_i}ev([u]), \nonumber
\end{gather}
where the sum on the right-hand side is treated as the class in the Hecke algebra of the singular chain in
$$C^0(\Omega_G(X^{\operatorname{reg}}, Gx); \C)\llbracket \bm\hbar \rrbracket$$
given by this sum. Notice that $ev([u])$ is a signed count where the sign is given by comparing the orientation of $\mathcal{T}(\hat{x}; \bm m_{\bm Y, \bm h})$ induced by $[x]$ with $\operatorname{det}(D_u) \cong \R$ via the isomorphism $\operatorname{det}(D_u) \cong o_{\hat{x}}$. 

\begin{proposition}\label{prop: algebra-morphism}
    The map $\mathcal{EV}$ descends to a well-defined map
    \begin{equation}
        \mathcal{EV} \colon  HW^0_{\bm \hbar}(T^*_{[x]}[X/G]) \to \mathcal{H}_{\bm \hbar}|_{\bm \hbar_0=0}([X/G]).
    \end{equation}
    Moreover, this map is an algebra morphism.
\end{proposition}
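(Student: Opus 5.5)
The plan is to follow Abouzaid's argument in \cite{abouzaid2012wrapped} and analyze the boundary of $1$-dimensional moduli spaces of half-disks. The proof has two parts: well-definedness (chain map property) and multiplicativity. In both we use the Gromov compactification of $\mathcal{T}(\overrightarrow{\hat{x}}; \bm m_{\bm Y,\bm h})$ and the three boundary embeddings of $\overline{\mathscr{T}}^G_{0,\bm h,d}$ listed above, together with breaking of strips at the incoming puncture.

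For well-definedness, take $|\hat{x}|=-1$, so that $\operatorname{dim}\mathcal{T}(\hat{x};\bm m_{\bm Y,\bm h})=1$. The ends of the compactification fall into the following types.
\begin{enumerate}
\item[(a)] Breaking of a Floer strip at $\psi_1$. This contributes $\mathcal{EV}(\mu^1 \hat{x})$.
\item[(b)] Interior orbisphere bubbling. Here the ghost lands in type $A$ inertia. Case 1 of the proof of Theorem~\ref{theorem: well-defined-ainfty-cat} together with Theorem~\ref{theorem: obstruction-for-Am-main}(2) rules it out via Corollary~\ref{cor: moduli-spaces-with-condtions}. The argument is verbatim, since $J$ near the outgoing boundary is adapted.
\item[(c)] Boundary nodes on the outgoing arc (the second embedding). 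Since $T^*X$ is exact, a disk component with boundary on the zero section and no punctures must be a ghost: a constant orbidisk mapped to a point $[y]\in[T^*Y/G]\cap X$. These are handled below.
\item[(d)] Codimension-one degenerations of the path $ev([u])$ in $\Omega_G(X,Gx)$. In degree $0$ these are invisible, because we take $H_0$ of the space of $G$-paths.
\end{enumerate}

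For the ends of type (c), suppose the ghost $\mathscr{C}_{\mathghost}$ carries at least two stacky points. Theorem~\ref{theorem: obstruction-for-Am-main}(3) then forces $\langle du_\bullet(p),v\rangle=0$ for every $v\in T_yX\cap\nu_{\mathcal{Y}}$. Together with the requirement that $u_\bullet(p)\in T^*Y$, this is a jet condition of positive codimension, and Proposition~\ref{prop: proposition-jets-transversality-with-submanifolds} shows that such configurations do not occur in dimension $\le 1$. The remaining case is a ghost carrying exactly one stacky point of type $(Y,h)$. Here the outgoing path crosses $Y$, and the boundary stratum is the fibre product of a zero-dimensional $\mathcal{T}$ with the point moduli space of the monodromy-$h$ orbidisk. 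These ends are exactly the terms expressing the difference between the two ways a path can be pushed off $Y$. In $H_0(\Omega_G(X^{\operatorname{reg}},Gx))$ they appear with weight $\hbar^Y_{h}$, with $h=s_Y^j$.

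The local model is a single crossing of $Y$ resolved into the loops $\gamma^j$, $j=1,\dots,m_Y-1$. Summing the contributions from one crossing gives the relation $\gamma^{m_Y}=\sum_{j\ge1}\hbar^Y_j\gamma^j+1$, which is the Hecke relation with $\hbar_0=0$. Indeed, no single-stacky-point ghost has trivial monodromy, which explains why $\hbar_0^Y$ is absent. Consequently a cycle ($\mu^1$-closed) maps to a class in the quotient, and boundaries map to zero modulo Hecke relations.

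Multiplicativity is the usual degeneration argument. We consider the family of half-disks with two incoming punctures and let the outgoing arc break at an interior point. One end of this parameter family gives $\mathcal{EV}(\mu^2(\hat{x}_2,\hat{x}_1))$. The other end gives the concatenation $ev(u_2)\cdot ev(u_1)$, which is the Pontryagin product. Any stacky ghosts appearing in this family are handled exactly as in the well-definedness step.

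Surjectivity would follow by constructing, as in Abouzaid, half-disks with no stacky points realizing every class of $\pi_1(X^{\operatorname{reg}}/G)$ with leading coefficient $\pm1$, and then using $\bm\hbar$-adic filtration completeness. The isomorphism under $\pi_2(X)\otimes\Q=0$ would be proved by comparing associated gradeds. At $\bm\hbar=0$ we have Abouzaid's isomorphism $HW^0(T^*_xX)^{G\text{-twisted}}\cong\C[\pi_1^{\operatorname{orb}}]$. Flatness of the Hecke algebra (Theorem~\ref{theorem: etingof-flatness}, Remark~\ref{remark: hbar-flatness}) then allows a Nakayama-type argument.

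The main obstacle is the exact-count step for the single-stacky-point boundary ghosts. It requires showing that the count of the constant monodromy-$h$ orbidisk attached to a transverse crossing is $\pm1$ with a coherent orientation. It also requires showing that the glued family produces precisely the loop $\gamma^j$, rather than some other homotopy class. My first attempt would be a local computation in $[\C^2/\mu_m]\times\C^{n-2}$ with the standard $J$, explicitly solving the cyclic cover gluing.
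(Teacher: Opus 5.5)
Your skeleton is the paper's. You analyze one-dimensional moduli spaces of half-disks, discard interior ghosts as in Theorem~\ref{theorem: well-defined-ainfty-cat}, exclude ghost disks with at least two stacky points by Theorem~\ref{theorem: obstruction-for-Am-main}(3) and Proposition~\ref{prop: proposition-jets-transversality-with-submanifolds}, and take the Hecke relation from ghost disks with a single stacky point.

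However, item (d) is false in the setting that matters, and it obscures where the relation comes from. The evaluation must land in $\Omega_G(X^{\operatorname{reg}},Gx)$. Along a ghost-free one-parameter family $u^\varepsilon$ in $\mathcal{T}(\hat x;\bm m_{\bm Y,\bm h})$, the outgoing path can cross $G\cdot Y$ transversally at an \emph{interior} point $u^0$ of the moduli space. Across $u^0$ the class jumps, $[ev(u^{-\varepsilon})]=\tilde\gamma^{m_Y}[ev(u^{\varepsilon})]$. So for $|\hat x|=-1$ the evaluation of the one-dimensional moduli space is not a chain in $\Omega_G(X^{\operatorname{reg}},Gx)$ at all.

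The single-stacky-point ghost configurations $u^0\cup u^{\mathghost}_j$ are ends of the moduli spaces with $b+1$ stacky points. Their main component, however, is exactly such an interior crossing point $u^0$ of a one-dimensional space with $b$ stacky points, not an element of a zero-dimensional $\mathcal{T}$. Gluing produces a ``fountain'' at $u^0$: the interval through $u^0$ together with $m_Y-1$ half-arcs, one for each monodromy $g_Y^j$. The relation $\tilde\gamma^{m_Y}\Gamma=(1+\sum_{j=1}^{m_Y-1}\hbar^Y_j\tilde\gamma^j)\Gamma$ is read off from this local picture. Your phrase about the two ways of pushing a path off $Y$ points in this direction, but it is incompatible with (d) and is never made into an argument.

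The more serious gap is that the step you call the main obstacle is the actual content of the proposition, and you leave it open. Equivariance alone only gives that the $j$-th glued arc has class $\tilde\gamma^{j+am_Y}\Gamma$ for some unknown $a\in\Z$. An explicit solution of the cyclic-cover gluing is neither available nor needed; the missing idea is a topological count against the isotypic divisors. It runs as follows.
\begin{enumerate}
\item Near the outgoing boundary the Hamiltonian vanishes and $J$ is adapted, so the projections onto $V_{\pm\chi}$ are holomorphic.
\item In the coordinates of Remark~\ref{remark: coordinates-for-isotypic-decomposition} the zero-section is $\{v_+=v_-,\,p=0\}$. Hence the normal component of the outgoing path is $2u_+$, and its winding around $Y$ is governed by the zeros of $u_+$ inside the contours $gC_\delta$.
\item For the unglued family, positivity of intersections with $V_{-\chi}$ gives one zero per contour. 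This comes from \cite[Lemma 6.1.4]{pomerleanoseidel2024} applied to doubled curves.
\item For the $j$-th glued arc, $\pi_\chi\circ u$ has generic leading term $c(w^{j'},0)$ at the $\gcd(m_Y,j)$ preimages of the ghost's stacky point, by Corollary~\ref{cor: moduli-spaces-with-condtions}. This forces at least $j$ zeros of $u_+$ and at least $m_Y-j$ zeros of $u_-$.
\item The total intersection number with $V_\chi\cup V_{-\chi}$ is unchanged by attaching a ghost. So both counts are exact, the path winds exactly $j$ times, and $a=0$.
\end{enumerate}
Existence and uniqueness of each glued arc come from one-step gluing (Proposition~\ref{proposition-one-step-gluing}, Remark~\ref{remark: continuous-one-step-gluing}), since the one-stacky-point ghost is unobstructed by Lemma~\ref{lemma: index-computation-for-An-singularity}(a). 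Signs are matched by orienting every branch of the fountain with the same generator of $o_{\hat x}$. Without these ingredients, your statement that summing the contributions of one crossing gives the Hecke relation is an assertion rather than a proof.
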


\begin{proof}
   The proof of both assertions is based on the analysis of boundary degenerations for the moduli spaces $\mathcal{T}(\hat{x}; \bm m_{\bm Y, \bm h})$ with $|\hat{x}|=-1$ (for the first assertion) and $\mathcal{T}(\hat{x}_2,\hat{x}_1; \bm m_{\bm Y, \bm h})$ with $|\hat{x}_1|=|\hat{x}_2|=0$ (for the second assertion). It should be regarded as a glimpse of verification that a map of the form $\mathcal{EV}$ provides an $A_\infty$-functor into a full derived Hecke algebra, which we do not define in this paper. 
   
 To prove this proposition, it is enough to establish the relations: 
   \begin{gather}
       \mathcal{EV} \circ \mu_1=\partial_{\operatorname{sing}} \circ \mathcal{EV}, \label{eq: differential-commutes} \\
       \mathcal{EV} \cdot \mathcal{EV}=\mathcal{EV} \circ \mu_2.
   \end{gather}
   Let us note that the right-hand side of~\eqref{eq: differential-commutes} involves evaluation map from $CW^1_{\bm \hbar}(T^*_{[x]}[X/G])$ to $C^1(\Omega_G(X^{\operatorname{reg}}, Gx); \C)\llbracket \bm\hbar \rrbracket$, which apriori is not well-defined since in a $1$-parameter family a $G$-path not contained within $X^{\operatorname{reg}}$ can occur. The analysis below shows, in particular, that such contributions cancel each other, making the right-hand side well-defined.
   
   Specifically, we have the following list of boundary degenerations that do not involve ghosts for the moduli spaces $\mathcal{T}(\hat{x}; \bm m_{\bm Y, \bm h})$ and $\mathcal{T}(\hat{x}_2,\hat{x}_1; \bm m_{\bm Y, \bm h})$:
   \begin{gather}
       \mathcal{M}(\hat{x}, \hat{x}'; \bm m_{\bm Y_1, \bm h_1})  \times \mathcal{T}(\hat{x}'; \bm m_{\bm Y_2, \bm h_2}), \quad |\hat{x}|=0, \, (\bm Y, \bm h)=((\bm Y_1, \bm h_1), (\bm Y_2, \bm h_2));\\
       \mathcal{M}((\hat{x}_2, \hat{x}_1), \hat{x}'; \bm m_{\bm Y_1, \bm h_1}) \times \mathcal{T}(\hat{x}'; \bm m_{\bm Y_2, \bm h_2}), \quad |\hat{x}|=0, \, (\bm Y, \bm h)=((\bm Y_1, \bm h_1), (\bm Y_2, \bm h_2));\\
       \mathcal{T}(\hat{x}_2; \bm m_{\bm Y_2, \bm h_2}) \times \mathcal{T}(\hat{x}_1; \bm m_{\bm Y_1, \bm h_1}), \quad (\bm Y, \bm h)=((\bm Y_1, \bm h_1), (\bm Y_2, \bm h_2)).
   \end{gather}
   These contributions are standard, and we omit the detailed analysis of signs and gluing near these breakings as it routinely follows by applying the results of Appendix~\ref{appendix: orientations} and Proposition~\ref{proposition-one-step-gluing}.
   
    The main challenge is to analyze ghost bubbling. Interior ghost bubbles can be ruled out as in the proof of Theorem~\ref{theorem: well-defined-ainfty-cat}. The new difficulty is that some stacky points may approach the outgoing boundary $\partial \mathscr{C}$ since the $0$-section $X$ is a singular Lagrangian, and a limit curve may admit boundary nodal ghosts.
    The first step is devoted to constraining boundary degenerations by showing that in such families only ghost disks with a \emph{unique} stacky point can occur, which relies on our obstructions to smoothing orbicurves. The second step is devoted to showing that the allowed degenerations of a ghost orbidisk with a unique stacky point cancel out using the Hecke relations. We notice that these degenerations behave similarly for both $\mathcal{T}(\hat{x}; \bm m_{\bm Y, \bm h})$ and $\mathcal{T}(\hat{x}_2,\hat{x}_1; \bm m_{\bm Y, \bm h})$ as above, and we focus on the former.

    \s \emph{Step 1: Ruling out obstructed ghosts.}
    
    Let 
    $$u_\infty=(u_{\bullet}, u_{\mathghost}) \colon \Sigma_{\bullet} \cup \Sigma_{\mathghost} \to T^*X$$
    be a limit of a sequence of curves in $\mathcal{T}(\hat{x}, \bm m_{\bm Y, \hbar})$, whose underlying orbicurve $\mathscr{C}_{\mathghost}$ contains at least $2$ stacky points. Let $p_{\bullet}$ be a point on $\Sigma_{\bullet}$ in the preimage of the boundary node, and let $u(p_{\bullet})=y \in Y$ for some fixed hypersurface $Y \subset X$.
    Then Lemma~\ref{lemma: index-computation-for-An-singularity} below and Theorem~\ref{ref: theorem-real-derivatives-vanishing} imply that the derivative $du(p_\bullet)$ has vanishing $\nu_Y$ component. Hence the curve $u_\bullet$ does not exist by Proposition~\ref{prop: proposition-jets-transversality-with-submanifolds} and we have eliminated the case where $\mathscr{C}_{\mathghost}$ contains more than $1$ stacky point. 
 
   \s \emph{Step 2: The Hecke relation.} 

   \s\n
   \emph{Intersecting fixed hypersurfaces along the outgoing boundary.} Along a $1$-parameter family of curves $u^{\varepsilon} \in \mathcal{T}(\hat{x}, \bm m_{\bm Y, \bm h})$, we may encounter a curve 
   \begin{equation} \label{eqn: u zero}
       u^0:\Sigma^0\to T^*X \quad \mbox{with} \quad \pi_0: \Sigma^0\to\mathscr{C}^0, 
   \end{equation}
   whose outgoing boundary $\partial_{\operatorname{out}}\Sigma^0=\pi_0^{-1}(\partial_{\operatorname{out}}\mathscr{C}^0)$ 
   
   transversely intersects $G \cdot Y$ at a single $G$-orbit of boundary points. It follows from Proposition~\ref{prop: proposition-jets-transversality-with-submanifolds} that multiple such intersections and non-transverse intersections cannot occur.  We denote by $q^0_{\times} \in \partial_{\operatorname{out}}\mathscr{C}^0$ and $p^0_{\times}\in \pi_0^{-1}(q^0_{\times})$ points such that $u^0(p^0_{\times})=y\in Y.$

   Choose a holomorphic coordinate $z=(s,t)$ on a neighborhood $\mathcal{N}(q^0_\times)$ of $q^0_{\times}$ in $\mathscr{C}^0$, with $s \ge 0$ and 
   $$\partial_{\operatorname{out}}\mathscr{C}^0=\{s=0\}$$ 
   near the point $q^0_{\times}=(0, t_0)$ with $0<t_0<1$.
   This local coordinate lifts via $\pi_0$ to a coordinate near $p^0_{\times}=(0, t_0)$, which we also denote by $z=(s,t)$ by abuse of notation. Since the domains of the curves $u^{\varepsilon}$ can be assumed to differ from the domain of $u^{0}$ by a variation of almost complex structure away from $(\pi_0)^{-1}(q^0_{\times})$, we can use this holomorphic coordinate on the domains of all the $u^{\varepsilon}$ locally. Let us denote by $\gamma^{\varepsilon}(t)=u^{\varepsilon}(0,t_0-t)$ the curve in $X$ for $t \in (-\delta, \delta)$ for $\delta>0$ sufficiently small.
\begin{figure}[ht]
    \centering
    \includegraphics[width=0.75\linewidth]{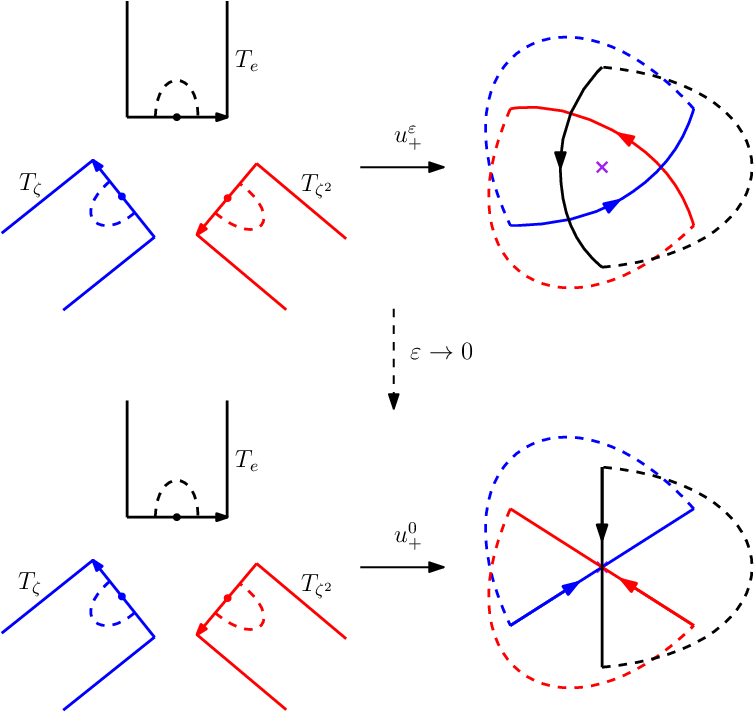}
    \caption{A family of curves $u^{\varepsilon}$ with $\varepsilon <0$ converging to $u^0$, where $H_Y \cong \mu_3$.  The left-hand side gives the portion of the domain $\Sigma^0$ near the orbit $H_Yp_{\times}$, which consists of $3$ portions of a half strip $T_e, T_{\zeta_3}, T_{\zeta_3^2}$ related by the $\mu_3$-action such that $\zeta_3T_e=T_{\zeta_3}$. The dotted arcs together with boundary portions trace the contours $C_\delta, \zeta_3C_\delta, \zeta_3^2 C_\delta$. The right-hand side of the figure depicts the images of these contours under the maps $u^{\varepsilon}_+$ inside $\C_+$. The purple marker is the origin.}
    \label{fig:family-with-crossing}
\end{figure}
\begin{figure}[ht]
     \centering
     \includegraphics[width=14.5cm]{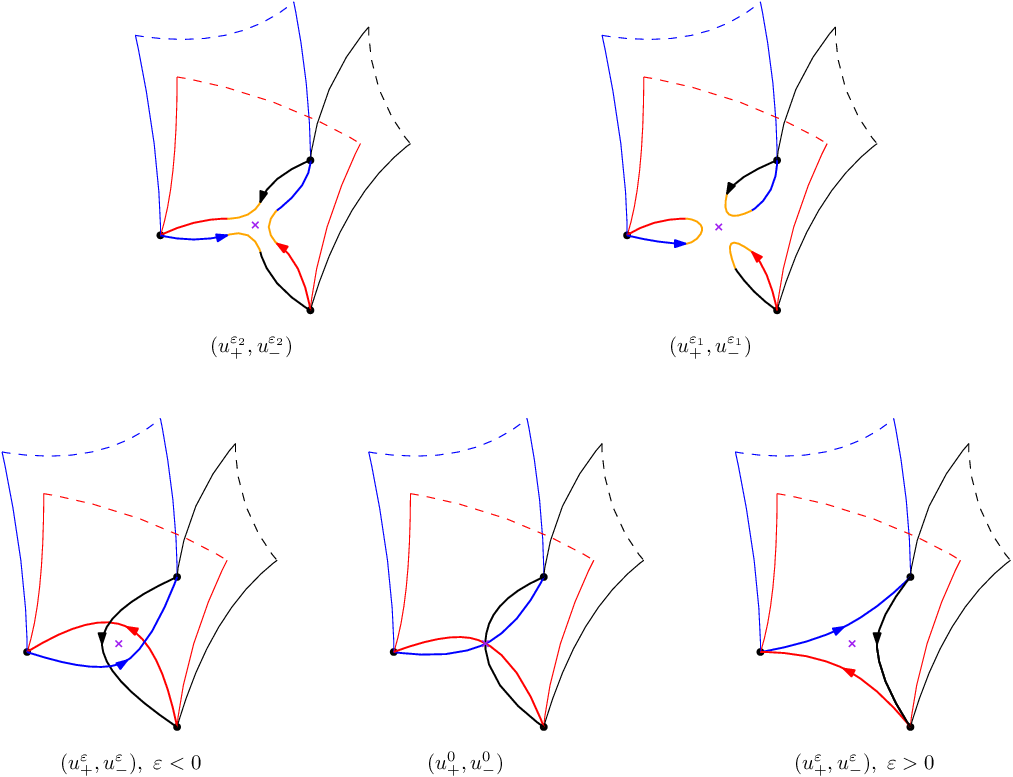}
     \caption{A schematic depiction of the projections of the curves in our families to $\C_+ \times \C_-$, with the boundary mapping to $\R^2 \subset \C_+ \times \C_-$. Dotted points represent the projections of endpoints of the paths contributing to the evaluation $G$-path, under the assumption that the basepoint $x$ is sufficiently close to the hypersurface $Y$.}
    \label{figure: all families in a fountain}
\end{figure}

By our choice of Floer data --- in particular, since near the outgoing boundary the Hamiltonian vanishes and $J$ is adapted to type $A$ inertia --- we can apply \cite[Lemma 6.1.4]{pomerleanoseidel2024} to conclude, without loss of generality, that the curves $u^{\varepsilon}$ with $\varepsilon< 0$ and $|\varepsilon|$ small intersect the divisor $V_{-\chi}$ with multiplicity $1$ at 
a unique point $p^{\varepsilon}_{\times}$ in 
the neighborhood $\mathcal{N}(p^0_{\times})$ of $p^0_{\times}$ inherited from $\mathcal{N}(q^0_\times)$.
To see this, we double $u^{\varepsilon}$ using the standard anti-symplectic 
involution on $T^*X$ and apply the lemma in loc. cit. to the doubled family. Observe that the curves in the doubled family have algebraic intersection number $2m_Y$ with the divisor $V_{\chi} \cup V_{-\chi}$.

We denote by $\partial_{\varepsilon}$ the vector field on $\mathcal{T}(\hat{x}; \bm m_{\bm Y, \bm h})$ with integral curve $u^\varepsilon$, and assume the following:
\be
\item[(*)] the differential of the boundary evaluation map at $(u^0, p^0_{\times})$ takes $(\partial_\varepsilon, \partial_t)$ to a pair of vectors that project to a positive frame of $(\nu_Y)_y \subset T_yX$.
\ee
The assumption (*) does not affect the final computation. By (*), the curves $\gamma^{\varepsilon}(t)$ for small $|t|$ and $\varepsilon<0$ move in the counterclockwise direction around $Y$ as $t$ increases. See Figure~\ref{fig:family-with-crossing}.

Recall the coordinates 
\begin{equation} \label{eqn: coordinates}
    \C_{+} \times \C_{-} \times T^*\R^{2n-2}
\end{equation}
of $T^*X$ near $y \in Y$ from Remark~\ref{remark: coordinates-for-isotypic-decomposition}. In these coordinates the $0$-section $X$ is given by 
\begin{equation}\label{eq: coordinate-for-zero-section}
    \{(v_+, v_-, x, p) \mid v_-=v_+,  \, p=0\}
\end{equation}
and we write our curves as $u^{\varepsilon}=(u^{\varepsilon}_+, u^{\varepsilon}_-, u^{\varepsilon}_{T^*\R^{2n-2}})$. We pick a contour
$$C_\delta=\{  i (t_0-\delta e^{i\theta}) \mid \theta \in [0, \pi]\} \cup (\{0\} \times [t_0+\delta, t_0-\delta]),$$
consisting of a half-circle and a boundary segment, with $\delta>0$ sufficiently small, so that along this contour the curve 
$$u^{\varepsilon}_+|_{C_\delta} \colon C_\delta \to \C_+ $$
has winding number exactly $1$ for each small $\varepsilon <0$. This is possible, since the number of zeroes of $u^{\varepsilon}_+$ counts the number of intersection points of $u^{\varepsilon}$ with $V_{-\chi}$. For an element $g \in H_Y$, we denote by $g C_{\delta}$ the contour near $g p^0_{\times}$ parametrized similarly (we may assume that the $G$-action identifies these parametrizations), and notice that $u^{\varepsilon}_+|_{gC_\delta}$ also has winding number equal to $1$. 

With respect to coordinates \eqref{eqn: coordinates} the restriction of $u^{\varepsilon}$ to the boundary~\eqref{eq: coordinate-for-zero-section} near $p_{\times}^0$ is given by 
\begin{equation}
    \gamma^{\varepsilon}(t)=(\gamma^{\varepsilon}_+(t), \gamma^\varepsilon_+(t), \gamma^{\varepsilon}_Y(t), 0).
\end{equation}
A key observation for later use is that
the vertical component of $\gamma^{\varepsilon}(t)$ in the coordinates $\C \times \R^{2n-2}$ for the tubular neighborhood of $Y \subset X$ is given by $2\gamma_+^{\varepsilon}(t)$, 
and therefore $\gamma_+^{\varepsilon}(t)$ moves in the counterclockwise direction (as $t$ increases) by (*). Indeed, this follows since a point $(v_+, v_-, x, p) \in \C_+ \times \C_- \times T^*\R^{2n-2}$ can be written as
$$(v_++v_-, B_+v_++B_-v_-, x, p)\in T^*\C \times T^*{\R^{2n-2}},$$
where we use the conventions of Remark~\ref{remark: coordinates-for-isotypic-decomposition}.

On the other hand, $\gamma_+^{\varepsilon}(t)$ is a portion of the loop $u^{\varepsilon}_+|_{C_\delta}$. Hence, the winding of $u^{\varepsilon}_+|_{C_\delta}$ can be used to analyze the winding around $Y$ of the evaluation of $u^{\varepsilon}$ along the outgoing boundary around $Y$.

Assumption (*) implies the following identity in $\pi_1(X^{\operatorname{reg}}/G)$:
\begin{equation}
    [ev(u^{-\varepsilon})]=\tilde{\gamma}^{m_Y}[ev(u^{\varepsilon})],
\end{equation}
where $\tilde{\gamma} \in C_Y$ is a loop in the conjugacy class of a small counterclockwise loop around $Y$, and $\varepsilon>0$ is sufficiently small. We set $\Gamma=[ev(u^{\varepsilon})]$.
\begin{figure}[ht]
    \centering
    \includegraphics[width=.85\linewidth]{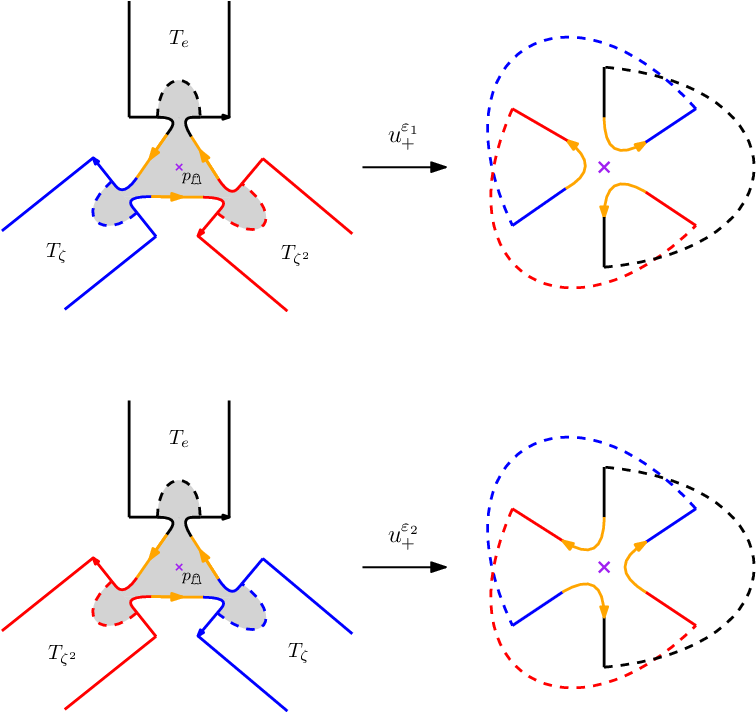}
    \caption{Some curves in the families $u^{\varepsilon_1}_+$ and $u^{\varepsilon_2}_+$ obtained by gluing a nodal ghost orbidisk. The shaded region is $\hat{\Sigma}_{m_Y}^j$. Observe that since the local monodromies at the stacky point are different for $\hat{\Sigma}_{m_Y}^1$ and $\hat{\Sigma}_{m_Y}^2$, the half-strip regions $T_e, T_{\zeta_3}, T_{\zeta_3^2}$ appear in opposite orders. The orange arcs trace the image of the restriction of $u^{\varepsilon_j}_+$ to the portion of the boundary coming from the new disk. It is clear from the cartoon that $u_+^{\varepsilon_1}$ winds once around the origin, while $u_+^{\varepsilon_2}$ winds twice.
    }
    \label{fig: gluing-families-mu3}
\end{figure}
We specify the generator $g_Y \in H_Y$ which acts via multiplication by $\zeta_{m_Y}$ on $\nu_Y$; with respect to this presentation of $H_Y$ the character $\chi$ is exactly $\chi_1^{m_Y}$.

\s\n
\emph{Gluing unobstructed ghost disks.} We now glue an unobstructed ghost orbidisk to the curve $(u^0,\pi_0)$ from \eqref{eqn: u zero}.

Choose $j \in \{1, \dots, m_Y-1\}$ and consider
$$\pi_\infty \colon \Sigma^\infty \to \mathscr{C}^{\infty}, \quad \mathscr{C}^\infty=\mathscr{C}^0\cup \mathscr{C}_{\mathghost},$$
where
$\mathscr{C}_{\mathghost}$ is a disk with a unique stacky point $q_{\mathghost}$ whose local monodromy is given by $(g_Y^j)$ and which is attached to $\mathscr{C}^0$ along a unique boundary node $\ddot{n}=(q^0_{\times}, n_{\mathghost})$. We have a decomposition $\Sigma^{\infty}=\Sigma^0 \cup \Sigma_{\mathghost}$ and 
$$\Sigma_{\mathghost}=\sqcup_{g \in G/H_Y} \Sigma_{m_Y},$$
where $\Sigma_{m_Y}$ is a disjoint union of $\gcd(m_Y, j)$ disks, with $H_Y$ preserving $\Sigma_{m_Y}$. We set
$$j'=\frac{j}{\gcd(m_Y, j)}.$$

Next, consider the equivariant map
$$u^\infty_j=(u^0, u_j^{\mathghost}) \colon \Sigma^0 \cup \Sigma_{\mathghost} \to T^*X$$
such that $u^\infty_j(\Sigma_{m_Y})=y \in Y$. 
We denote 
\begin{equation}
(\bm Y^j, \bm h^j)=((\bm Y, \bm h), ((T^*Y, (g_Y^j))).
\end{equation}

By Proposition~\ref{proposition-one-step-gluing} (or more precisely, by Remark~\ref{remark: continuous-one-step-gluing}), we have a continuous family of curves $$u^{\varepsilon_j} \in \mathcal{T}(\hat{x}; \bm m_{(\bm Y^j, \bm h^j)})$$ with $\varepsilon_j> 0$ converging to $u^{\infty}_j$ (strictly speaking, one should denote this family $u^{\varepsilon_j}_j$, but we will allow ourselves this abuse of notation). We claim that 
\begin{equation}\label{eq: evaluation-gluing-identity}
[ev(u^{\varepsilon_j})]=\tilde{\gamma}^{j}\Gamma.
\end{equation}
From the equivariance it is clear that $[ev(u^{\varepsilon_j})]=\tilde{\gamma}^{j+a\cdot m_Y}\Gamma$ for some integer $a \in \Z$, and hence our goal is to show that $a=0$.

To see this, we first note that in the coordinate $w \in \C$ near each point in $p_{\mathghost} \in(\pi_{\infty})^{-1}(q_{\mathghost})$, the Taylor expansion of the projection of $u^{\varepsilon_j}$ to $V_{\chi}$ is of the form
$$\pi_{\chi} \circ u^{\varepsilon_j}(w)=(u_+^{\varepsilon_j},u^{\varepsilon_j}_{T^*\R^{2n}})(w)=c_{\varepsilon_j}(w^{j'}, 0)+O(|w|^{j'+1})$$
for some nonzero constant $c_{\varepsilon_j}$ by Corollary~\ref{cor: moduli-spaces-with-condtions}. Hence, the restriction of the component $u^{\varepsilon_j}_+$ to some small loop around $p_{\mathghost}$ has winding number equal to $j'$. Summing up the contributions from all of the preimages of $q_{\mathghost}$, we obtain that $u^{\varepsilon_j}_+$ has at least $j$ zeroes when restricted to the neighborhood $\widehat{\Sigma}_{m_Y}^{\varepsilon_j}$ of $\Sigma_{m_Y}^{\varepsilon_j}$, the portion of the domain corresponding to $\Sigma_{m_Y}$ under gluing, and moreover we may guarantee that this neighborhood has the boundary comprised of portions of contours $gC_\delta$ for $g \in H_Y$. By the same argument, the curves $u^{\varepsilon_j}_-$ have $m_Y-j$ zeroes.

On the other hand, each of the curves $u^0_{\pm}$ has exactly $m_Y$ zeroes in the neighborhood of the orbit $H_Y \cdot p_{\times}^0$ that we specified earlier. A simple homological argument shows that curves $(u^{\varepsilon_j}_+, u_-^{\varepsilon_j})$ and $(u^0_+, u_-^0)$ must have the same intersection number with the set $\{v_+=0\} \cup \{v_-=0\}$. Notice that the above intersection counts are equal to counting the number of intersection points of pseudoholomorphic curves $u^{\varepsilon_j}$ and $u^0$ with the divisor $V_{\chi} \cup V_{-\chi}$.

Indeed, considering the doubles of curves $(u^{\varepsilon_j}_+, u_-^{\varepsilon_j})$ restricted to $\widehat{\Sigma}_{m_Y}^{\varepsilon_j}$, we see that they represent the same element in $H_2(D^2_+ \times D^2_-, A)$, where $D^2_{\pm}$ is a disk of a small radius in $\C_{\pm}$, and $A$ is a neighborhood of $\partial D^2_+ \times \partial D^2_-$ (this $A$ is within a small neighborhood of the images of doubles of half-circles used in contours $gC_\delta$). This class in $H_2(D^2_+ \times D^2_-, A)$ is equal to the class of the local double of $(u^0_+, u_-^0)$, since we are attaching a ghost bubble. Hence, the intersection number of doubles of curves $(u^{\varepsilon_j}_+, u_-^{\varepsilon_j})$ with submanifold $\{v_+=0\} \cup \{v_-=0\}$ equals exactly $2m_Y$. 

\begin{figure}[ht]
    \centering
    \includegraphics[width=.9\linewidth]{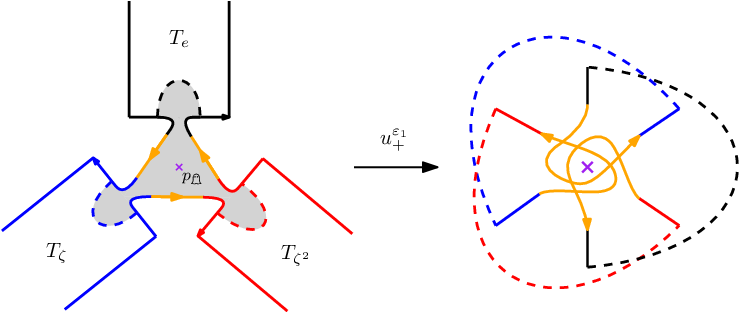}
    \caption{The boundary evaluation of a potential family $u^{\varepsilon_1}_+$, which is eliminated by our argument.}
    \label{fig:wrong-gluing}
\end{figure}
The argument above shows that when restricted to $\widehat{\Sigma}_{m_Y}^{\varepsilon_j}$, the only zeroes of $u^{\varepsilon_j}_+$ are the preimages of $q_{\mathghost}$ and there are exactly $j$ of them counted with multiplicities. Hence, the restriction of $u^{\varepsilon_j}_+$ to the boundary $\partial\widehat{\Sigma}_{m_Y}^{\varepsilon_j}$ winds around the origin exactly $j$ times, and by our earlier observations, this is equal to the count of winding around $Y$ of restriction to the boundary of the curve $u^{\varepsilon_j}$. We refer the reader to Figure~\ref{figure: all families in a fountain} and Figure~\ref{fig: gluing-families-mu3} for illustrations of the local behavior of curves $u^{\varepsilon_j}$ and $u^{\varepsilon}$ near the gluing region, and for their boundary evaluations.

\s\n
\emph{The Hecke relation as a fountain.} We are now in a position to conclude the proof of Proposition~\ref{prop: algebra-morphism}. Indeed, for any generator $[\hat{x}] \in o_{\hat{x}}$ we may glue together moduli spaces $\mathcal{T}(\hat{x}; \bm m_{\bm Y, \bm h})$ and $\mathcal{T}(\hat{x}; \bm m_{\bm Y^j, \bm h^j})$ by identifying the curve $u^0$ with curves $u^\infty_j$ for all $j\in\{1, \dots, m_Y-1\}$. Since in each of these cases we can use the same generator $[x]$ to orient the moduli spaces, we obtain a local relation
\begin{equation}\label{eq: curve-counting-Hecke-relation}
    \tilde{\gamma}^{m_Y}[ev(u^{\varepsilon})]=(\hbar_{{m_Y-1}}^Y\tilde{\gamma}^{m_Y-1}+\dots+\hbar_{1}^Y\tilde{\gamma}^{1}+1)[ev(u^{\varepsilon})],
\end{equation}
by summing up the individual contributions~\eqref{eq: evaluation-gluing-identity} from each curve.
This is precisely the Hecke relation~\eqref{eq: hbar-Hecke-relation}. In essence, we showed that the interval in $\mathcal{T}(\hat{x}, \bm m_{\bm Y, \bm h})$ gave rise to a \emph{fountain} of ${m_Y}-1$ arcs in the glued moduli space; see Figure~\ref{fig: fountain} for an illustration.
Notice that one should account for the difference in the coefficient $\frac{1}{b!}$ and $\frac{1}{(b+1)!}$, by observing that there is a permutation ambiguity in defining $(\bm Y^j, \bm h^j)$. It is also clear that one could start with a family $u^{\varepsilon_j}$ converging to a curve $u^{\infty}_j$ and obtain the curve $u^0$ by deleting the ghost. Hence, such a $1$-parameter family also produces a similar-looking fountain. This concludes the proof of the well-definedness of the map $\mathcal{EV}$, and by our earlier remarks, the proof of the fact that $\mathcal{EV}$ is an algebra morphism is analogous.
\begin{figure}[ht]
    \centering
\includegraphics[width=0.35\linewidth]{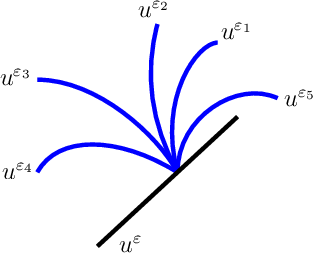}
    \caption{The local topology of the glued moduli space near a fountain of curves in the case $m_Y=6$. }
    \label{fig: fountain}
\end{figure}
\end{proof}

\begin{lemma}\label{lemma: index-computation-for-An-singularity}
For $u \in \mathcal{M}(T^*X, X; \bm m)$ with $\bm m=((T^*Y, (h_1)), \ldots, (T^*Y, (h_b)))$ corresponding to a constant map $$u \colon \Sigma_m \to T^*X$$
with $u(\Sigma)=x \in X$, and $\Sigma_m$ a $\mu_m$-cover over the unit disk $D$ branched at $b$ points, one has
\begin{enumerate} 
    \item[(a)] $\op{dim}(\op{coker} D_{(u^*TM,u^*TX)} ^{\mu_m})=2b-2.$
    \item[(b)] Writing $T_xX=T_xY \oplus \nu_Y|_x$, whenever $b>1$ for any point $p \in \partial\Sigma$ for any $v \in \nu_Y|_x$ there is an element $$\bar\eta \otimes v \in (H^{0,1}_{\R}(\Sigma) \otimes_{\R} T_xX)^{\mu_m} \cong H^{0,1}_{u^*TX}(\Sigma, u^*TM)^{\mu_m} \cong \operatorname{coker} D_{(u^*TM,u^*TX)} ^{\mu_m}$$
    which does not vanish at $p$.
\end{enumerate}
\end{lemma}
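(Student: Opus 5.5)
The plan is to reduce everything to a computation on the closed double of $\Sigma_m$, and then use the Chevalley--Weil argument of Lemma~\ref{lemma: 1-forms-on-cyclic-covers} for each character.

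\textbf{Reduction to $1$-forms.} Since $u$ is constant, $D_{(u^*TM,u^*TX)}$ is the standard $\overline{\partial}$-operator on the trivial bundle pair $(\Sigma_m\times T_xM,\partial\Sigma_m\times T_xX)$. Its kernel is the constants $T_xX$. Its cokernel is $H^{0,1}_{\R}(\Sigma_m)\otimes_{\R}T_xX$, where $H^{0,1}_{\R}$ denotes the conjugates of holomorphic $1$-forms that are real along $\partial\Sigma_m$. We first identify $\mu_m$-invariants in this way. With $T_xX=\R^{2n-2}_{\operatorname{triv}}\oplus\R^2_{A_{m-1}}$ and $\R^2_{A_{m-1}}\otimes\C\cong\chi\oplus\chi^{-1}$, the invariant cokernel splits as
\[
H^{0,1}_{\R}(\Sigma_m)^{\mu_m}\otimes\R^{2n-2}\;\oplus\;\bigl(H^{0,1}_{\R}(\Sigma_m)\otimes_{\R}\R^2_{A_{m-1}}\bigr)^{\mu_m}.
\]

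\textbf{Computing dimensions by doubling.} Double $\Sigma_m$ along its boundary to a closed curve $\hat\Sigma_m$. This is a $\mu_m$-cover of $\P^1$ branched at the $2b$ points $q_i,\bar q_i$, with monodromies $h_i$ and $h_i^{-1}$. Real $1$-forms on $\Sigma_m$ correspond to holomorphic $1$-forms on $\hat\Sigma_m$ that are invariant under the antiholomorphic involution. This involution exchanges the $\chi_j$ and $\chi_{-j}$ isotypic pieces, so $\dim_{\R}H^{0,1}_{\R}(\Sigma_m)_{\{\chi_j,\chi_{-j}\}}=\dim_{\C}H^{1,0}(\hat\Sigma_m)_{\chi_j}$ for $j\ne -j$.

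The trivial-character part vanishes, because $H^{1,0}(\hat\Sigma_m)^{\mu_m}=H^{1,0}(\P^1)=0$. For the $A_{m-1}$ part, the invariant piece is $H^{1,0}(\hat\Sigma_m)_{\chi^{-1}}$ paired with $\chi$, counted with its real structure. Chevalley--Weil gives
\[
h^1(\chi_{-1})=\sum_i\bigl(\{a_i/m\}+\{-a_i/m\}\bigr)-1=2b-1
\]
when all branch points are genuinely stacky. However, $\dim\ker$ and the index from Proposition~\ref{prop: dimension-transverse} (dimension $2b-2\iota=0$ of the ghost family modulo the automorphism and domain parameters) must match. So I would instead compute $\dim\operatorname{coker}^{\mu_m}$ as $\dim\ker^{\mu_m}-\operatorname{ind}^{\mu_m}$. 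Here $\ker^{\mu_m}=(T_xX)^{\mu_m}=T_xY$ has dimension $2n-2$. The equivariant index is computed by the orbifold Riemann--Roch of Proposition~\ref{proposition: index} for the disk with $b$ stacky points of age $1$: it equals $n-2\sum\iota=n-2b$ after accounting for the boundary Maslov contribution. I expect this bookkeeping to yield $2b-2$, and I will fix conventions so the two counts agree. This cross-check is the step most prone to off-by-one errors, and it is where I expect the main obstacle.

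\textbf{Part (b).} The invariant elements of the form $\bar\eta\otimes v$ with $v\in\nu_Y|_x$ correspond to $\eta\in H^{1,0}(\hat\Sigma_m)_{\chi^{-1}}$ satisfying the reality condition, and this space is nonzero when $b>1$ by part (a). For nonvanishing at a boundary point $p$, I will use the rank-one sheaf argument of Lemma~\ref{lemma: 1-forms-on-cyclic-covers}(2) on $\hat\Sigma_m$. The isotypic sheaf $(\pi_*K_{\hat\Sigma_m})_{\chi^{-1}}$ on $\P^1$ is a line bundle with sections, hence basepoint-free. Since $\pi(p)$ is not a branch point, some section does not vanish at $\pi(p)$. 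Averaging this section with its image under the antiholomorphic involution (or taking the real part with respect to the real structure) keeps it nonzero at $p$, because $p$ is fixed by the involution. To get every $v\in\nu_Y|_x$, rotate by $\C^*$ inside $\chi$, since the $\chi$-line is preserved.
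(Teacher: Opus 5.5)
\textbf{Part (a).} As written, (a) is not proved. The one explicit computation you give is wrong, and the fallback ends with ``I will fix conventions so the two counts agree.'' The doubling route actually works; the discrepancy you saw comes from two errors.

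First, let $\sigma$ be the antiholomorphic involution of $\hat\Sigma_m$. For $\chi_j\neq\chi_{-j}$, the real $\{\chi_j,\chi_{-j}\}$-isotypic part of $H^{0,1}_\R(\Sigma_m)$ complexifies to $H^{1,0}(\hat\Sigma_m)_{\chi_j}\oplus H^{1,0}(\hat\Sigma_m)_{\chi_{-j}}$, because the real structure $\eta\mapsto\overline{\sigma^*\eta}$ is antilinear and swaps these two pieces. Its real dimension is therefore $2\dim_\C H^{1,0}(\hat\Sigma_m)_{\chi_j}$, not $\dim_\C H^{1,0}(\hat\Sigma_m)_{\chi_j}$.

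Second, the $2b$ branch points of $\hat\Sigma_m\to\P^1$ come in $b$ pairs with monodromies $a_i,-a_i$, and each pair contributes $1$. So Chevalley--Weil gives $\dim_\C H^{1,0}(\hat\Sigma_m)_{\chi^{\pm1}}=\sum_{i=1}^b(\{a_i/m\}+\{-a_i/m\})-1=b-1$, not $2b-1$.

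Complexifying the invariants then gives $\dim_\R\bigl(H^{0,1}_\R(\Sigma_m)\otimes_\R\nu_Y|_x\bigr)^{\mu_m}=\dim_\C H^{1,0}(\hat\Sigma_m)_{\chi}+\dim_\C H^{1,0}(\hat\Sigma_m)_{\chi^{-1}}=2b-2$. Together with your (correct) vanishing of the trivial-character part, this proves (a) with no index theory.

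Your fallback also works once ranks are used consistently. If $T_xM\cong\C^N$, then $\ker^{\mu_m}=T_xY$ has real dimension $N-2$ and the invariant index is $N-2b$, so the cokernel has dimension $2b-2$. You wrote $2n-2$ and $n-2b$, mixing real and complex dimensions. This is the paper's argument, with one difference: the paper does not quote an orbifold index formula (the appendix only states one for punctured disks with orbichords). Instead it splits off the normal summand, checks in the local model $z\mapsto z^m$ that the desingularized boundary loop has Maslov index $-2$, and sums the local contributions. The corrected doubling computation replaces this Maslov bookkeeping with Chevalley--Weil, as in Lemma~\ref{lemma: 1-forms-on-cyclic-covers}.

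\textbf{Part (b).} Your route differs from the paper's and is sound in outline. You use global generation of the rank-one sheaf $(\pi_*K_{\hat\Sigma_m})_{\chi^{-1}}$ on $\P^1$, exactly as in Lemma~\ref{lemma: 1-forms-on-cyclic-covers}(2). The paper instead uses Serre duality on $D$, Katz--Liu doubling of the desingularized pair to a rank-two bundle $\mathcal{V}$ on $\P^1$, the Grothendieck splitting, and a degree count to find a globally generated summand of $\mathcal{V}^\vee\otimes\mathcal{O}(-2)$. Your version is shorter, and nonvanishing of the relevant space for $b>1$ follows from $b-1\ge1$.

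The realification step, however, is wrong as stated.
\begin{itemize}
\item For $m>2$, no nonzero $\eta\in H^{1,0}(\hat\Sigma_m)_{\chi^{-1}}$ satisfies the reality condition, since the real structure maps the $\chi^{-1}$-piece to the $\chi$-piece.
\item The average $\tfrac12(\eta+\overline{\sigma^*\eta})$, evaluated on the boundary tangent $\partial_t$ at $p$, equals $\operatorname{Re}\eta_p(\partial_t)$. This vanishes for the wrong phase, so the fact that $p$ is fixed by $\sigma$ does not help.
\end{itemize}
The correct invariant real element is $\Theta=\eta\otimes w+\overline{\sigma^*\eta}\otimes\bar w$, where $w\in\nu_Y|_x\otimes\C$ is the $\chi$-eigenvector. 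Then $\Theta_p(\partial_t)=2\operatorname{Re}\bigl(\eta_p(\partial_t)\,w\bigr)\in\nu_Y|_x$. This is nonzero whenever $\eta(p)\neq0$, and replacing $\eta$ by $e^{i\theta}\eta$ sweeps it through every direction of $\nu_Y|_x$.
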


\begin{proof}[Proof of (a)]
Observe that 
$$(u^*TM, u^*TX)\cong(\underline{\C}^{n-2}, \underline{\R}^{n-2}) \oplus (\underline{V}^{\C}, \underline{V}),$$
where $\C^{n-2}$ is the trivial $\mu_m$-representation and $\underline{\C}^{n-2}$ is the associated trivial bundle over $\Sigma_m$, while $V$ is a complex representation of character $\chi_{k}^m$ and $\underline{V}^{\C}$ is the trivial bundle over $\Sigma_m$ associated with complexification of representation $V$. In particular, $V^{\C} \cong \rho_{\chi_k^m} \oplus \rho_{\chi_{m-k}^m}$, and $\underline{V}^{\C}$ is not a trivial $\mu_m$-bundle. Since the space of holomorphic functions on $\Sigma_m$ with real boundary conditions is real $1$-dimensional (by the maximum principle), the claim reduces to the index computation for the desingularized bundle pair on $D$ corresponding to $(u^*TM, u^*TX)$. We note that the Maslov index for the desingularization of the trivial bundle is clearly $0$, we only perform the computation for the component $(\underline{V}^{\C}, \underline{V})$. We denote the desingularized bundle pair via $(|\underline{V}^{\C}|, |\underline{V}|)$.

First, we consider the case $b=1$, then $\pi \colon \Sigma_m \to D$ is just the map $z \mapsto z^m$ and $h_1$ generates $\mu_m$ (if $h_1$ does not generate $\mu_m$ the computation reduces to the analogous computation for $\mu_{\operatorname{ord}(h_1)}$). Let $(e_1, e_2)$ be the standard $\R$-basis of $V$, then the eigenbasis of $V_\C$ is given by 
$$(v_+, v_-)=(e_1-ie_2, e_1+ie_2).$$
Using this basis, we see that the action of $h_1$ on $\underline{V}_C=D^2 \times \C^2$ is given by
$$h_1(z, w_1, w_2)=(\zeta_mz, \zeta_m^\ell w_1, \zeta^{m-\ell}_mw_2),$$
where $0 \le \ell< m$ such that $\chi^m_k(h_1)=\zeta^\ell_m$. Now we note that in these coordinates $V$ is the graph of the anticomplex involution $V=\{(w, \bar{w}) \in \C_{v_+} \oplus \C_{v_-} \mid w \in \C\}$. Hence the image of $\underline{V}|_{\partial D}$ under the desingularization map is given by
$$(e^{i\theta}, e^{-\frac{i \theta \ell}{m}}w, e^{-\frac{i \theta (m-\ell)}{m}}\bar{w}),  \quad w\in \C, \, \theta \in [0, 2\pi].$$
Setting $w'=e^{-\frac{i\theta \ell}{m}}w$ this Lagrangian boundary subbundle can be rewritten as
$$(e^{i\theta}, w', e^{-i \theta}w'), \quad w'\in \C, \, \theta \in [0, 2\pi],$$
and in the $(e_1, e_2)$-basis this is just
$$(e^{i\theta}, e^{-i \frac{\theta}{2}}\R, e^{-i \frac{\theta}{2}}\R), \quad \theta \in [0, 2\pi].$$
Clearly, for this Lagrangian path, the Maslov index is equal to $-2$, and since
$$\operatorname{ind}(\overline{\partial}^{\mu_m}_{(\underline{V}^{\C}, \underline{V})})=\operatorname{dim}(V)+\mu(|\underline{V}^{\C}|, |\underline{V}|),$$
the result follows in this case.

\s
The general case now follows by summing up the local contributions: choose a trivialization
$$\Phi \colon |\underline{V}^{\C}| \to D \times \C^n$$
of $|V|$ over the whole disk $D$ which coincides with the trivializations near the stacky points obtained from the desingularization maps. Since $\underline{V}$ is constant, it extends to the whole of $\Sigma_m$ as a Lagrangian subbundle, and hence it induces a Lagrangian subbundle $|\underline{V}|$ of $|\underline{V}^{\C}| =D \times  \C^N$ over $D \setminus \{q_1, \ldots, q_b\}$. Since the Maslov index can be computed as the Maslov index of the loop induced by $|\underline{V}|_{\partial D}$ in the Lagrangian Grassmannian of $\C^n$. By the above the class of this loop in $\pi_1(\Lambda Gr_n) \cong H_1(\Lambda Gr_n)$ is equal to the sum of the loops corresponding to restrictions of $|\underline{V}|$ to the small loops around $q_1, \ldots q_b$, and we computed their Maslov indices in the previous case.

\s 
\emph{Proof of (b).} 
By Serre duality, we need to show that the sheaf $\mathcal{O}(T^*D \otimes_{\C} |\underline{V}^{\C}|^{\vee}, T^*\partial D \otimes_{\R}|\underline{V}|^{\vee})$ has a section which does not vanish at $\pi(p)$. To both bundle pairs $(|\underline{V}^{\C}|, |\underline{V}|)$ and $(T^*D \otimes_{\C} |\underline{V}^{\C}|^{\vee}, T^*\partial D \otimes_{\R}|\underline{V}|^{\vee})$ we apply the doubling construction of \cite[Section 3]{KL06} to get vector bundles $\mathcal{V}$ and $\mathcal{W}\cong \mathcal{V}^{\vee} \otimes \mathcal{O}_{\mathbb{P}^1}(-2)$ over $\mathbb{P}^1$ equipped with anticomplex involutions. By \cite[Theorem 3.3.8]{KL06} and discussion at the beginning of \cite[Section 3.4]{KL06} for each bundle pair $(E,L)$ on the disk, and the corresponding double $\mathcal{E}$ on $\mathbb{P}^1$ with involution $\tau$ there are isomorphisms
\begin{equation}\label{eq: KL-theorem}
    H^0(\mathcal{O}(E,L))\otimes_{\R} \C \cong H^0(\mathcal{O}(\mathcal{E}))^{\tau} \otimes_{\R} \C \cong H^0(\mathcal{O}(\mathcal{E})).
\end{equation}
Hence, by the above and the argument in (a), we have that $H^0(\mathcal{O}(\mathcal{V}))=1$. By Grothendieck's theorem
$$\mathcal{V} \cong \mathcal{O}(d_1) \oplus \mathcal{O}(d_2),$$
and since there are no nonconstant sections $d_1, d_2 \le 0$. Now we have
$$\mathcal{W} \cong \mathcal{O}(-d_1-2) \oplus \mathcal{O}(-d_2-2),$$
and the latter sheaf has nonconstant sections by (a) and~\eqref{eq: KL-theorem}. Hence, we can assume $d_1 \le -2$, which implies that for any $v \in \mathcal{O}(-d_1-2)_{\pi(p)}$ there is a global section $s$ of $\mathcal{W}$ with $s(q)=v$. Taking the projector $\frac{s+\tau'(s)}{2}$ we get a $\tau'$-fixed section with value $\frac{v+\tau'_{\pi(p)}(v)}{2}$ at this point. 

Hence, by~\eqref{eq: KL-theorem} we may conclude that for the initial bundle pair $(\underline{V}^{\C}, \underline{V})$ there is a $1$-form 
$$\bar\eta \otimes v \in (H^{0,1}_{\R}(\Sigma) \otimes_{\R} T_xX)^{\mu_m}$$ 
that does not vanish at point $p$. To conclude the proof, we notice that we also have a $\mu_m$-equivariant form
$$\bar\eta \otimes Iv \in (H^{0,1}_{\R}(\Sigma) \otimes_{\R} T_xX)^{\mu_m},$$
where $I$ is the complex structure on $V$, and since $v$ and $Iv$ span $V\cong \nu_Y|_x$, the claim follows.
\end{proof}

\s\n
\emph{Review of standard results on complete algebras.} In the proof of Theorem~\ref{theorem: Hecke-isomorphism}, we use the following two lemmas on complete algebras, both of which are well-known, and we present their proofs for the reader's convenience. Let us introduce the notation $R_{\bm \hbar}=\C\llbracket \bm \hbar \rrbracket$ and $\mathfrak{m}_{\bm \hbar}$ for the maximal ideal of $R_{\bm \hbar}$. 

We start with the following corollary of Nakayama's lemma \cite[\href{https://stacks.math.columbia.edu/tag/00M9}{Lemma 10.96.1(1)}]{stacks-project}, applied to modules over the commutative ring $R_{\bm \hbar}$.
\
\begin{lemma}\label{lemma: Nakayma}
    Let $f \colon B \to A$ be a morphism of $R_{\bm \hbar}$-algebras complete with respect to $\mathfrak{m}_{\bm \hbar}$. If $f|_{\bm \hbar=0} \colon B/\mathfrak{m}_{\bm \hbar}B \to A/\mathfrak{m}_{\bm \hbar}A$ is an isomorphism, then $f$ is a surjection.
\end{lemma}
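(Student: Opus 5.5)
The plan is to reduce the statement to a standard consequence of Nakayama's lemma applied to the cokernel $Q=A/f(B)$, viewed as an $R_{\bm \hbar}$-module. Since $f$ is a morphism of $R_{\bm \hbar}$-algebras, $f(B)$ is an $R_{\bm \hbar}$-submodule of $A$, so $Q$ is an $R_{\bm \hbar}$-module. Tensoring the right exact sequence $B\to A\to Q\to 0$ with $R_{\bm \hbar}/\mathfrak{m}_{\bm \hbar}\cong\C$ gives the exact sequence
\[
B/\mathfrak{m}_{\bm \hbar}B\longrightarrow A/\mathfrak{m}_{\bm \hbar}A\longrightarrow Q/\mathfrak{m}_{\bm \hbar}Q\longrightarrow 0 .
\]
By hypothesis the first map is an isomorphism, hence surjective, so $Q=\mathfrak{m}_{\bm \hbar}Q$. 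Equivalently, $A=f(B)+\mathfrak{m}_{\bm \hbar}A$.

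We cannot invoke \cite[Lemma 10.96.1(1)]{stacks-project} for $Q$ directly, since $Q$ need not be finitely generated and need not be complete. Instead, we will run the successive approximation argument that underlies that lemma, using completeness of $A$ and $B$. Fix $a\in A$. Using $A=f(B)+\mathfrak{m}_{\bm \hbar}A$ repeatedly, we construct inductively elements $b_n\in \mathfrak{m}_{\bm \hbar}^nB$ and $a_n\in\mathfrak{m}_{\bm \hbar}^{n}A$ with $a_0=a$ and $a_n=f(b_n)+a_{n+1}$.

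The key point is the degree bookkeeping. Write $a_n=\sum_\alpha \hbar^{\alpha}c_\alpha$ as a finite sum over monomials of degree $n$ with $c_\alpha\in A$. Decompose each $c_\alpha=f(b_\alpha)+c'_\alpha$ with $c'_\alpha\in\mathfrak{m}_{\bm \hbar}A$. Then set $b_n=\sum\hbar^\alpha b_\alpha\in\mathfrak{m}_{\bm\hbar}^nB$ and $a_{n+1}=\sum\hbar^\alpha c'_\alpha\in\mathfrak{m}_{\bm\hbar}^{n+1}A$. Finiteness of the sum uses that $\mathfrak{m}_{\bm\hbar}^n$ is finitely generated, which holds when $\bm\hbar$ is a finite tuple of variables. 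This is the case whenever there are finitely many $G$-orbits of fixed hypersurfaces, as happens for compact $X$.

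Completeness of $B$ then makes $b=\sum_n b_n$ well defined. Next, $f$ is $R_{\bm\hbar}$-linear, so $f(\mathfrak{m}_{\bm \hbar}^nB)\subset\mathfrak{m}_{\bm \hbar}^nA$, and $f$ is therefore continuous for the adic topologies. This gives
\[
f(b)=\lim_N\sum_{n\le N}f(b_n)=\lim_N(a-a_{N+1})=a,
\]
where the last equality uses that $a_{N+1}\to0$ and that $A$ is separated as part of being complete. Hence $f$ is surjective.

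The only delicate step is the inductive construction keeping $b_n$ in $\mathfrak{m}_{\bm\hbar}^nB$, which is handled by the monomial decomposition above. If the variables were infinite in number, I would instead interpret completeness as completeness with respect to the filtration by total degree and run the same argument degree by degree.
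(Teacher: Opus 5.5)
Your proof is correct and follows essentially the same route as the paper, which also inducts on $k$ using the surjectivity of $\mathfrak{m}_{\bm \hbar}^kB/\mathfrak{m}_{\bm \hbar}^{k+1}B\to\mathfrak{m}_{\bm \hbar}^kA/\mathfrak{m}_{\bm \hbar}^{k+1}A$ and then appeals to completeness; you make explicit two steps the paper leaves implicit, namely the graded surjectivity (via your monomial decomposition) and the passage to the limit (convergence of $\sum_n b_n$ in $B$ together with separatedness of $A$). Your finiteness caveat is not actually needed: every element of $\mathfrak{m}_{\bm \hbar}^nA$ is by definition a finite sum $\sum_i r_ic_i$ with $r_i\in\mathfrak{m}_{\bm \hbar}^n$ and $c_i\in A$, and the decomposition $c_i=f(b_i)+c_i'$ with $c_i'\in\mathfrak{m}_{\bm \hbar}A$ then works verbatim.
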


\begin{proof}
      By completeness, it suffices to show the surjectivity of $f^k \colon B/\mathfrak{m}_{\bm \hbar}^kB \to A/\mathfrak{m}_{\bm \hbar}^kA$ for each $k>0$. We prove this by induction, with the $k=1$ case given by the assumption. Let $a \in A/\mathfrak{m}_{\bm \hbar}^{k+1}A$.  Then there exists $b \in B/\mathfrak{m}_{\bm \hbar}^{k+1}B$ such that $a-f(b) \in \mathfrak{m}_{\bm \hbar}^{k}A/\mathfrak{m}_{\bm \hbar}^{k+1}A$. The surjectivity of the induced map $\mathfrak{m}_{\bm \hbar}^{k}B/\mathfrak{m}_{\bm \hbar}^{k+1}B \to \mathfrak{m}_{\bm \hbar}^{k}A/\mathfrak{m}_{\bm \hbar}^{k+1}A$ follows immediately from the assumption.
\end{proof}

\begin{lemma}\label{lemma: flatness-means-injective}
    Let $f \colon B \to A$ be a morphism of $R_{\bm \hbar}$-algebras complete with respect to $\mathfrak{m}_{\bm \hbar}$. If $A$ is flat over $R_{\bm \hbar}$, and $f|_{\bm \hbar=0} \colon B/\mathfrak{m}_{\bm \hbar}B \to A/\mathfrak{m}_{\bm \hbar}A$ is an isomorphism, then $f$ is also injective.
\end{lemma}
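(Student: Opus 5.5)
The plan is to reduce injectivity to a statement about the associated graded pieces with respect to the $\mathfrak{m}_{\bm \hbar}$-adic filtration, and to use flatness of $A$ to identify those graded pieces with $A/\mathfrak{m}_{\bm \hbar}A$ tensored with $\mathfrak{m}^k_{\bm \hbar}/\mathfrak{m}^{k+1}_{\bm \hbar}$. First note that by Lemma~\ref{lemma: Nakayma} the map $f$ is already surjective.

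Let $K=\ker f$. Since $B$ is complete, and in particular $\mathfrak{m}_{\bm \hbar}$-adically separated, it suffices to show $K\subset \mathfrak{m}^k_{\bm \hbar}B$ for every $k$, so that $K\subset\bigcap_k \mathfrak{m}_{\bm \hbar}^kB=0$.

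The argument is an induction on $k$, with $k=0$ trivial.
\begin{itemize}
\item Suppose $b\in K\cap \mathfrak{m}^k_{\bm \hbar}B$. Write $b=\sum_\alpha \bm\hbar^\alpha b_\alpha$, summing over the finitely many monomials $\bm\hbar^\alpha$ of degree $k$. This is possible because $\mathfrak{m}^k_{\bm \hbar}$ is finitely generated by these monomials.
\item Then $0=f(b)=\sum_\alpha \bm\hbar^\alpha f(b_\alpha)$ in $A$.
\item Flatness of $A$ over the Noetherian local ring $R_{\bm \hbar}$ gives $\operatorname{Tor}_1^{R_{\bm \hbar}}(R_{\bm \hbar}/\mathfrak{m}_{\bm \hbar},A)=0$. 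Equivalently, $\mathfrak{m}^k_{\bm \hbar}/\mathfrak{m}^{k+1}_{\bm \hbar}\otimes_{\C} A/\mathfrak{m}_{\bm \hbar}A \to \mathfrak{m}_{\bm \hbar}^kA/\mathfrak{m}_{\bm \hbar}^{k+1}A$ is an isomorphism. This follows by tensoring $0\to\mathfrak{m}^{k+1}\to\mathfrak{m}^k\to \mathfrak{m}^k/\mathfrak{m}^{k+1}\to 0$ with $A$, which stays exact, and noting that $\mathfrak{m}^kA\cong\mathfrak{m}^k\otimes A$.
\item Hence the classes $\overline{f(b_\alpha)}\in A/\mathfrak{m}_{\bm \hbar}A$ all vanish.
\item Since $f|_{\bm\hbar=0}$ is injective, each $b_\alpha\in \mathfrak{m}_{\bm \hbar}B$, and therefore $b\in \mathfrak{m}^{k+1}_{\bm \hbar}B$.
\end{itemize}

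The main point to check is the graded identification from flatness, since $A$ is not assumed finitely generated. This step uses only $\operatorname{Tor}$-vanishing, which holds for any flat module, so no finiteness is needed. Completeness of $B$ is used only through separatedness.
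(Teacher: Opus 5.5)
Your argument is correct, but it takes a different route from the paper's.

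The paper takes the short exact sequence $0\to\ker f\to B\to A\to 0$, tensors it with $R_{\bm\hbar}/\mathfrak m_{\bm\hbar}$, and uses $\operatorname{Tor}_1^{R_{\bm\hbar}}(R_{\bm\hbar}/\mathfrak m_{\bm\hbar},A)=0$. This identifies $\ker f/\mathfrak m_{\bm\hbar}\ker f$ with the kernel of the isomorphism $B/\mathfrak m_{\bm\hbar}B\to A/\mathfrak m_{\bm\hbar}A$. Hence $\ker f=\mathfrak m_{\bm\hbar}\ker f$ in one stroke, and separatedness of $B$ then forces $\ker f=0$.

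You instead work element by element along the $\mathfrak m_{\bm\hbar}$-adic filtration of $B$. You invoke flatness at every level, in the form $\mathfrak m_{\bm\hbar}^kA/\mathfrak m_{\bm\hbar}^{k+1}A\cong \mathfrak m_{\bm\hbar}^k/\mathfrak m_{\bm\hbar}^{k+1}\otimes_{\C}A/\mathfrak m_{\bm\hbar}A$. You also use tacitly that the degree-$k$ monomials form a $\C$-basis of $\mathfrak m_{\bm\hbar}^k/\mathfrak m_{\bm\hbar}^{k+1}$; that is what lets you conclude each $\overline{f(b_\alpha)}$ vanishes.

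The paper's route is shorter, but it genuinely needs $f$ to be surjective. Otherwise the sequence would end in $\operatorname{im} f$, and a submodule of a flat module over a multivariable power series ring need not be flat. Surjectivity is supplied, tacitly, by Lemma~\ref{lemma: Nakayma}.

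Your route never uses surjectivity, so your opening appeal to Lemma~\ref{lemma: Nakayma} can be dropped. In fact you prove a stronger statement: injectivity of $f|_{\bm\hbar=0}$ (rather than bijectivity), flatness of $A$, and mere separatedness of $B$ already imply that $f$ is injective.
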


\begin{proof}
    The flatness of $A$ implies that there is an exact sequence
    $$0 \to \operatorname{ker}(f)/\mathfrak{m}_{\bm \hbar}\operatorname{ker}(f) \to B/\mathfrak{m}_{\bm \hbar}B \to A/\mathfrak{m}_{\bm \hbar}A,$$
    and since the last arrow is an isomorphism, we conclude that $\operatorname{ker}(f) =\mathfrak{m}_{\bm \hbar}\operatorname{ker}(f)$. On the other hand,  if $\operatorname{ker}(f)$ is not zero, then by the completeness of $B$ there is a minimal $k\ge 0$ such that $\operatorname{ker}(f) \subseteq \mathfrak{m}_{\bm \hbar}^{k}B$ but $\operatorname{ker}(f) \not \subseteq \mathfrak{m}_{\bm \hbar}^{k+1}B$. 
    This is a contradiction since $\operatorname{ker}(f)=\mathfrak{m}_{\bm \hbar}\operatorname{ker}(f) \subseteq \mathfrak{m}_{\bm \hbar}^{k+1}B$.
\end{proof}

\begin{proof}[Proof of Theorem~\ref{theorem: Hecke-isomorphism}]
    By Proposition~\ref{prop: algebra-morphism}, the map $\mathcal{EV}$ is a morphism of algebras. First, we observe that 
    $$\mathcal{EV}|_{\bm \hbar=0} \colon HW^0_{\bm \hbar=0}(T^*_{[x]}[X/G]) \to \mathcal{H}_{\bm \hbar=0}([X/G])$$
    is an isomorphism, since for each $g \in G$ it restricts to counting half-strips with Lagrangian boundaries given by $T^*_{gx}X, T^*_xX$ and $X$, with the outgoing boundary tracing a path in $\Omega(X, x, gx)$, hence it coincides with the evaluation map of Abouzaid in \cite[Section 4]{abouzaid2012wrapped}. This evaluation map is known to induce an isomorphism between wrapped Lagrangian Floer homology $HW^*(T^*_{gx}X, T^*_xX)$ and singular (co)-homology $H_{-*}(\Omega(X, x, gx))$, as it is an inverse of the isomorphism of Abbondandolo-Schwarz \cite{abbondandolo2006floer, abbondandolo2010floer}. In light of Decomposition~\eqref{eq: G-paths-decomposition} and our definition of the orbifold $\pi_1([X/G])$ in terms of $G$-paths, the above implies that $\mathcal{EV}|_{\bm \hbar=0}$ is an isomorphism, and the morphism $\mathcal{EV}$ is surjective by Lemma~\ref{lemma: Nakayma}.
     
    Finally, applying Etingof's theorem (Theorem~\ref{theorem: etingof-flatness}) together with our change of variables (see Remark~\ref{remark: hbar-flatness}) we see that the specialized Hecke algebra $\mathcal{H}_{\bm \hbar}|_{\bm \hbar_0=0}([X/G])$ is flat under the assumption of the vanishing of $\pi_2(X) \otimes \Q$. Hence $\mathcal{EV}$ is injective by Lemma~\ref{lemma: flatness-means-injective}.
\end{proof}

\begin{proof}[Proof of Corollary~\ref{corollary: derived-Nakayama}]
    By the proof of Theorem~\ref{theorem: Hecke-isomorphism}, we see immediately that for a $K(\pi,1)$ space $X$, $HW^*_{\bm \hbar=0}(T^*_{[x]}[X/G])=0$ for $*\neq 0$, as it is isomorphic to $\oplus_{g \in G} H_{-*}(\Omega(X, x, gx))$. For the rest of the proof, we introduce the notation $A_0 =CW^*_{\bm \hbar =0}(T^*_{[x]}[X/G])$ and $A_{\hbar}=CW^*_{\bm \hbar}(T^*_{[x]}[X/G])=A_0\llbracket\bm \hbar \rrbracket.$
    
    The $A_\infty$-algebra $A_{\hbar}$ is a \emph{topologically free} (formal) deformation of the $A_\infty$-algebra $A_0$, and both $A_\hbar$ and $A_0$ are concentrated in non-positive degrees. We claim under these assumptions the vanishing of $H^*(A_0)$ for $* \le -1$ implies the vanishing of $H^*(A_{\hbar})$ for $* \le -1$. This is an instance of the derived Nakayama lemma~\cite[\href{https://stacks.math.columbia.edu/tag/0G1U}{Lemma 0G1U}]{stacks-project}, whose proof we provide below.

    Let $x \in A_{\hbar}^i$ for $i \le -1$ such that $dx=0$. We prove by induction that there exists a sequence $\{y_k\}_{k \ge 1}$ with $y_k \in A_{\hbar}^{i-1}$ such that
    \begin{equation}\label{eqn: terms of sequance}
        x-dy_k \in \mathfrak{m}_{\bm \hbar}^kA_{\bm \hbar}\quad \mbox{and} \quad y_k-y_{k+1} \in \mathfrak{m}_{\bm \hbar}^{k}A_{\bm \hbar}^{i-1}.
    \end{equation}
    The existence of such a sequence $\{y_k\}_{k \ge 1}$ implies that $x$ is a coboundary, since the limit $y$ of such a sequence exists and satisfies $dy=x$.
    
    The first term $y_1$ exists since $d_{\bm \hbar=0}x=0$ and $H^i(A_0)=0$.  Next, assume that $y_1,\dots, y_k$ exist for some positive $k \in \Z$. Observe that 
    $$\mathfrak{m}_{\bm \hbar}^kA^i_{\bm \hbar}/\mathfrak{m}_{\bm \hbar}^{k+1}A^i_{\bm \hbar}=A_0^i\otimes \mathfrak{m}_{\bm \hbar}^k/\mathfrak{m}_{\bm \hbar}^{k+1},$$
    since the deformation is topologically free. The vanishing of $H^i(A_0)$, together with the fact that $\mathfrak{m}_{\bm \hbar}^k/\mathfrak{m}_{\bm \hbar}^{k+1}$ is of finite dimension, then implies that there exists an element $z \in \mathfrak{m}_{\bm \hbar}^kA_{\bm \hbar}^{i-1}$ such that
    $$[dz]=[(x-dy_k)] \in \mathfrak{m}_{\bm \hbar}^kA^i_{\bm \hbar}/\mathfrak{m}_{\bm \hbar}^{k+1}A^i_{\bm \hbar}.$$
    Hence $y_{k+1}=y_k+z$ satisfies \eqref{eqn: terms of sequance}.
\end{proof}

\section{Examples and discussion}\label{section: examples} 

In this section, we discuss various examples of Hecke algebras in the literature and illustrate how Theorem~\ref{theorem: Hecke-isomorphism} allows one to recover these Hecke algebras in terms of the wrapped Fukaya category. The discussion here is complementary to \cite[Section 3.7]{EtingofPavel} and \cite[Section 4]{EtingofPavel}, to which we also refer our reader. 

In Theorem~\ref{theorem: Hecke-isomorphism} we work with cotangent bundles of global quotient orbifolds of the form $[X/G]$, where $X$ is assumed to be \emph{compact}. In Section~\ref{section: compact examples} we list some of the well-known Hecke algebras that arise this way.

In Section~\ref{section: noncompact} we discuss examples which are associated with noncompact $X$. We note that such examples have appeared earlier historically.

\subsection{Compact examples}\label{section: compact examples}

\begin{example}[\emph{Double Affine Hecke algebras}]~\label{example: daha}
    Let $G$ be a simple Lie group, $W$ be its Weyl group, and $V$ be the reflection representation of $W$. Let $Q^{\vee}$ be the dual root lattice of $W$, and set $\Gamma=Q^{\vee} \oplus \eta Q^{\vee}$ for some $\eta \in \mathbb{C}$ with $\operatorname{Im}(\eta)>0$. Then 
    $$X=V/\Gamma$$
    is a compact complex torus. The algebra $\mathcal{H}_{\bm \hbar}([X/W])$ is the \emph{double affine Hecke algebra} (DAHA) $\mathcal{H}_{\bm \hbar}(V, W)$ of Cherednik \cite{cherednik1995}.
    This algebra is known to be flat by the classical results of \cite[Theorem 2.3]{cherednik1995}, \cite[Theorem 3.2]{sahi1999nonsymmetric}. It also follows from Etingof's result since $\pi_2(X)=0$.
 
   Corollary~\ref{corollary: derived-Nakayama} applies in this case since $X$ is a torus. Hence the whole graded algebra $HW^*_{\bm \hbar}(T^*_{[x]}[X/W])$ coincides with the Hecke algebra $\mathcal{H}_{\bm \hbar}|_{\bm \hbar_0=0}([X/W])$.
\end{example}

\begin{example}[\emph{Cherednik algebras of complex  crystallographic reflection groups}]\label{example: crystall-cherednik} 
    Let $G$ be an irreducible complex reflection group and $V$ be its reflection representation. We assume that $G$ is \emph{crystallographic}, i.e., that there exists a cocompact lattice $\Gamma \subset V$ preserved by the action of $G$.  We note that these groups were classified by Popov \cite{popov1982complex}; see also \cite{malle1996presentations} and \cite[Section 3]{puenteshepler2019steinberg}. In particular, they include groups $G(\ell, p, n)$ for $\ell=1,2,3,4$ or $6$ (and only for these values of $\ell \in \Z_{\ge1}$).

    Since $G$ preserves $\Gamma$, it induces an action on the compact complex torus $X=V/\Gamma$. The Hecke algebra $\mathcal{H}_{\bm \hbar}([X/G])$ was studied in \cite[Section 6]{etingofma2008dunkl}, \cite{efmv2011calogeromoser} and \cite{ACL2025sw}. This example is a generalization of Example~\ref{example: daha}, and by the same argument we have
    \begin{equation}\label{eq: isomorphism-for-tori}
        HW_{\bm \hbar}^*(T^*_{[x]}[X/G]) \cong HW^0_{\bm \hbar}(T^*_{[x]}[X/G]) \cong \mathcal{H}_{\bm \hbar}|_{\bm \hbar_0=0}([X/G]).
    \end{equation}
    In the case of $G(\ell, 1, n) \cong S_n \ltimes (\Z/\ell \Z)^n$ for $\ell=2,3,4$ or $6$ this construction recovers the \emph{generalized DAHAs} of types $D_4, E_6, E_7$ or $E_8$, respectively, introduced in \cite{ego2006GDAHA}.
\end{example}

\begin{example}[\emph{Trigonometric analogs of symplectic reflection algebras}]
    The trigonometric analog \cite[Example 3.17]{EtingofPavel} of a symplectic reflection algebra of \cite{etingof2002symplectic} is given as follows: Let $\Lambda$ be a symplectic lattice, and $G \subset Sp(\Lambda)$ be a finite subgroup of its automorphisms. The action of $G$ extends to the action on $V=\R \otimes_{\Z} \Lambda$, which admits a unitary $G$-invariant structure. Then $G$ acts on the compact complex torus  $X=V/\Lambda$, and one obtains the associated Hecke algebra $\mathcal{H}_{\bm \hbar}([X/G])$. 
    
    The isomorphism~\eqref{eq: isomorphism-for-tori} also holds in this case. An interesting family of examples is given by the wreath products $S_n \ltimes \Gamma^n$ acting on $\C^{2n}$, where $\Gamma \subset SL_2(\C)$ is a finite subgroup which preserves a rank $4$ lattice. For instance, one can take $\Gamma$ to be the quaternion group $Q_8$. 
    
    It is also well-known that by ``doubling" a complex reflection group $(V,G)$, which replaces the initial representation with $V \oplus V^*$, one obtains a symplectic reflection group. However, its Hecke algebra has no deformation parameters, as there are no fixed hypersurfaces. 
    
    Wreath products and doubled complex reflection groups constitute most of the indecomposable symplectic reflection groups, and we refer the reader to \cite{cohen1980cassification} for the classification of symplectic reflection groups, and \cite[Section 4]{ballamyschedler2016existence} and \cite{schmitt2023thesis} for a modern exposition.
\end{example}

\begin{example}[\emph{Hecke algebras of orbifold surfaces}]\label{example: hecke-surface}
    Let $H$ be a simply connected Riemann surface, $\Gamma \subset \operatorname{Aut}(H)$ be a cocompact, discrete subgroup acting properly discontinuously with finite stabilizers, and $\Gamma' \subset \Gamma$ be the maximal finite index subgroup acting freely. Let $X=\Sigma=H/\Gamma'$ be the corresponding smooth Riemann surface and $G=\Gamma/\Gamma'$. Then the Hecke algebra of the orbifold $[H/\Gamma]$ is given by $\mathcal{H}_{\bm \hbar}([X/G])$. Let us note that any \emph{developable} (or \emph{good}) closed orbifold Riemann surface arises in this way.
        
    In the case $H= \C$ or $H= \H_+$, this deformation is flat by Theorem~\ref{theorem: etingof-flatness}, and hence Theorem~\ref{theorem: Hecke-isomorphism} provides the computation of the $A_\infty$-algebras of these orbifolds. These Hecke algebras were studied in \cite{etingofrains2005new, eor2007GDAHA}.

    The case $H=X= \P^1$ was studied in \cite{etingofrains2008new2}.  In particular, the corresponding Hecke algebra is not flat, and hence Theorem~\ref{theorem: Hecke-isomorphism} does not apply here. We expect $HW^0(T^*_{[x]}[X/G])$ to be isomorphic to the Hecke algebra, and moreover the differential in $CW^*(T^*_{[x]}[X/G])$ to witness the non-flatness of the Hecke algebra $\mathcal{H}_{\bm \hbar}([X/G])$. It would also be interesting to see the full computation of $HW^*(T^*_{[x]}[X/G])$ for these spherical orbifolds. The Morse-theoretic approach developed in \cite{hkty2025morse} should provide sufficient tools for performing this computation.
\end{example}

\begin{example}[\emph{Hecke algebra of symmetric powers of surfaces}]\label{example: hecke symmetric powers of surfaces}
    Using $H, \Gamma, \Gamma', \Sigma$ from the previous example, we consider $X=\Sigma^n$ and $G= S_n \ltimes (\Gamma/\Gamma')^n$. Then $[X/G]=\operatorname{Sym}^n([H/\Gamma])$, and Theorem~\ref{theorem: Hecke-isomorphism} again applies if $H \neq \P^1$. 
    
    In \cite{HTY2026}, these algebras for $\Gamma$ acting freely were called \emph{braid surface Hecke algebras}. We note that the braid surface Hecke algebras have an additional $c$-variable that we discuss in more detail in the next example.
\end{example}

\begin{example}[\em Full DAHA of type $A_n$]
    There is a caveat to the discussion above. Cherednik's \emph{full DAHA} $\mathcal{H}_{\hbar, c}(\C^n, S_n)$ of $A_n$-type contains an additional variable $c$. As explained in \cite[Section 2.4.3]{simentalnotes}, the full DAHA does not arise from the orbifold perspective alone, since the full DAHA is not a deformation of the group algebra of the braid group. Instead, it is a deformation of its central extension.

    Two groups, \cite{jordanvazirani2021} and \cite{morton2021dahas}, realized that, in order to remedy this, one needs to work with the braid group of the once-punctured torus $T^2\setminus\{\operatorname{pt}\}$ instead.
    In \cite{morton2021dahas}, the additional relation involving the $c$-variable is introduced for isotopies ``passing through" the puncture. It is shown \cite[Theorem 3.5]{morton2021dahas} (see also \cite[Proposition 4.5]{jordanvazirani2021}) that this algebra is isomorphic to the full DAHA. We note that this relation can be implemented at the level of $S_n$-paths in $(T^2)^n$ by adding an additional relation involving $c$ for ``passing through" the $S_n$-orbit of the hypersurface $\operatorname{pt} \times (T^2)^{n-1}$. 
    
    In \cite{HTY2026}, the authors showed that the cylindrical version of the $A_\infty$-algebra of endomorphisms of $T^*_{[x]}\operatorname{Sym}^nT^2$ does recover DAHA. In the cylindrical interpretation, instead of counting orbicurves in $\operatorname{Sym}^n(T^*T^2)$,  one counts curves in $$\R \times [0,1] \times T^*T^2$$ which $n$-fold cover the strip direction. The $c$-variable is obtained by counting the intersection pairing with $\R \times [0,1] \times T^*_{\operatorname{pt}}T^2$. The latter submanifold is not a symplectic divisor, and hence the intersection can be negative and one is forced to work over $\Bbbk\llbracket c^{\pm1}\rrbracket$. From the perspective of the symmetric product, this submanifold corresponds to the $S_n$-orbit of $T^*_{\operatorname{pt}}T^2 \times (T^*T^2)^{n-1}$, and hence this $c$-variable can also be seen through the orbifold lens. Denoting the cohomology of such an $A_\infty$-algebra (we omit the full definition) by $HW^*_{\hbar, c}(T^*_{[x]}\op{Sym}^nT^2)$, we expect that there is an isomorphism $$HW^*_{\hbar, c}(T^*_{[x]}\operatorname{Sym}^nT^2) \cong \mathcal{H}_{\hbar, c}(\C^n, S_n),$$
    which extends our isomorphism from Example~\ref{example: daha}.
    This is a restatement of \cite[Theorem 1.4]{HTY2026} for $T^2$.
\end{example}

\begin{remark}[\emph{Braid surface Hecke algebras}]
    In \cite{HTY2026} the braid surface Hecke algebras $\mathcal{H}_{\hbar, c}([\Sigma^n/S_n])$ were introduced for an arbitrary closed Riemann surface $\Sigma$, extending the definition of \cite{morton2021dahas}. It is claimed that the isomorphism
    $$HW_{\hbar, c}^*(T^*_{[x]}\operatorname{Sym}^n \Sigma) \cong\mathcal{H}_{\hbar, c}([\Sigma^n/S_n])$$
    holds for any closed Riemann surface of genus $\geq 1$.
    We point out that in their proof (see \cite[Section 6]{HTY2026}) it is tacitly assumed that $\mathcal{H}_{\hbar, c}([\Sigma^n/S_n])$ is flat, which is then used to obtain the said isomorphism as in Lemma~\ref{lemma: flatness-means-injective}. However, this flatness is not known for genus $\geq 2$, and does not seem to follow directly from Theorem~\ref{theorem: etingof-flatness}. It would be interesting to see whether methods of \cite[Section 3]{EtingofPavel} allow one to prove flatness of the full $\mathcal{H}_{\hbar, c}([\Sigma^n/S_n])$, and not only of $\mathcal{H}_{\hbar}([\Sigma^n/S_n])$.

    In \cite[Theorem 1.7]{hkty2025morse}, it is shown that the $A_\infty$-algebra $CW_{\hbar}^*(T^*_{[x]}\operatorname{Sym}^nS^2)$ is quasi-isomorphic to a certain explicit differential graded algebra $H_n$, defined in terms of generators and relations. From this computation, it follows that the isomorphism
    $$HW^0_{\hbar}(T^*_{[x]}\operatorname{Sym}^nS^2) \cong \mathcal{H}_{\hbar}(\operatorname{Sym}^nS^2)$$
    holds, and, moreover, the differential in $H_n$ witnesses non-flatness of the Hecke algebra $\mathcal{H}_{\hbar}(\operatorname{Sym}^nS^2)$.

\end{remark}

\begin{example}[\emph{Ball quotients}]
    Consider the $n$-dimensional complex hyperbolic space $H=\mathbb{H}_{\C}^n$ and a finitely-generated discrete subgroup $\Gamma \subset PSU(n,1)$ acting on it. By Selberg's lemma, there exists a finite index subgroup $\Gamma' \subset \Gamma$ acting freely on $H$. Then for $X=H/\Gamma'$ and $G=\Gamma/\Gamma'$ the Hecke algebra is flat, and Theorem~\ref{theorem: Hecke-isomorphism} applies if $\Gamma$ is cocompact.

    For $n=2$ there is a family of \emph{Mostow groups} $\Gamma(p,t) \subset PU(2,1)$ for $p \in \Z_{>0}$ and $t \in \Q$ (some of which were first introduced in \cite{mostow1980remarkable}), which are generated by three complex reflections $R_1, R_2, R_3$ with rotation parameter given by $\zeta_p$, and related via conjugation, i.e., there is a matrix $J$ such that $R_2=JR_1J^{-1}$ and $R_3=J^{-1}R_1J$, and $t \in \Q$ is specialized by requiring
    $$\op{Tr}(R_1J)=e^{(\tfrac{3}{2}+\tfrac{1}{3p}-\tfrac{t}{3})\pi i}.$$
    For a more detailed description, we refer to \cite[Section 3]{DPP2021new}. We also note that these groups are known to be isomorphic to Deligne-Mostow groups, which arise as monodromy groups of hypergeometric functions \cite{delignemostow1986monodromy}.
    
    Only $39$ pairs $(p,t)$ give rise to a lattice, and such lattices are called Deligne-Mostow lattices. We refer to \cite[Section 3]{pasquinelli2016dmthree} and \cite[Appendix A.9]{DPP2021new} for a full list of these lattices. As Deraux shows \cite[Proposition 3.4]{deraux2024subgroups}, $\Gamma(p,t)$ is cocompact if and only if the group does not contain parabolic elements. Using the table \cite[Table 1]{pasquinelli2016dmthree} or the table in \cite[Appendix 9]{DPP2021new} one sees that only the following $33$ groups among the Deligne-Mostow lattices are cocompact
    \begin{gather*}
        \Gamma(5,7/10), \, \Gamma(7,9/14), \, \Gamma(8,5/8), \, \Gamma(9,11/18), \, \Gamma(10,3/5), \, \Gamma(12,7/12), \, \Gamma(18,5/9),  \\
         \Gamma(4,5/12), \, \Gamma(5,11/30), \, \Gamma(7,13/42), \, \Gamma(8,7/24), \, \Gamma(9,5/  18), \, \Gamma(10,4/15), \, \Gamma(12,1/4), \, \Gamma(18,2/9), \,\\
         \Gamma(3,1/3), \, \Gamma(5,1/5), \, \Gamma(8,1/8), \, \Gamma(12,1/12), \, \Gamma(3,7/30), \, \Gamma(4,3/20), \, \Gamma(5,1/10), \,  \Gamma(10,0), \, \\
         \Gamma(4, 1/12), \, \Gamma(3,5/42), \, \Gamma(3,1/12), \, \Gamma(4,0), \,  \Gamma(3,1/18), \, \Gamma(3,1/30), \, \Gamma(3,0), \, \Gamma(5,1/2), \, \Gamma(7,3/14), \, \Gamma(9,1/18).
    \end{gather*}
\end{example}

\begin{example}[\emph{Global quotients of projective spaces}]
    Given a complex representation $V$ of a finite group $G$, one associates the orbifold Hecke algebra to the pair $(X= \mathbb{P}(V), G)$. It was studied in \cite{BM2014affinity}, and \cite[Lemma 5.4.1]{BM2014affinity} provides a way to compute it in terms of $\mathcal{H}_{\bm \hbar}(V,G)$. Theorem~\ref{theorem: Hecke-isomorphism} does not apply here, and we expect the $A_\infty$-algebra of the cotangent fiber in this case to give rise to an interesting derived orbifold Hecke algebra. 
\end{example}
\subsection{Noncompact examples}\label{section: noncompact}

Before we list the examples of interest, we state the following:

\begin{conjecture}\label{conjecture: noncompact}
    The isomorphism of Theorem~\ref{theorem: Hecke-isomorphism} also holds in the case of a noncompact complex manifold $X$ equipped with the action of the finite subgroup $G \subset \operatorname{Aut}(X)$.
\end{conjecture}

In the case of the trivial group $G$, the above statement follows from \cite[Corollary 6.1]{gps2024microlocal}. However, their proof is not a generalization of the theorems of \cite{abbondandolo2010floer} and \cite{abouzaid2012wrapped}, and is rather a corollary of their main result \cite[Theorem 1.1]{gps2024microlocal}, which gives an isomorphism between the wrapped Fukaya category of $T^*X$ and the category $\operatorname{Sh}^c(X)$ of compact objects in the dg category of sheaves of dg $\Z$-modules on $X$. Currently, it is not known whether such an isomorphism extends to the orbifold case.
\begin{remark} \label{remark: approaches to proving conjecture}
    We comment on the approach to proving Conjecture~\ref{conjecture: noncompact}.  
    The main nontrivial step is the extension of the Abbondandolo-Schwarz map \cite{abbondandolo2010floer} to the case of manifolds with boundary (which one may further extend to the noncompact case by exhaustion), and it is not yet present in the literature: it would require 
    carefully combining their methods with the choices of Floer data used in \cite{gps2020covariant}. After this is done, one also needs to extend Abouzaid's evaluation map from \cite[Section 5]{abouzaid2012wrapped} to the case of manifolds with boundary for which the techniques of \cite[Section 2]{gps2020covariant} should be sufficient. Showing that the extension of Abouzaid's evaluation map is the inverse of the extended Abbondandolo-Schwarz map should be straightforward. We note that using linear Hamiltonians as in \cite{abouzaid2015symplectic} could help overcome some difficulties on this path.

    Once the above is carried out, the proof of the conjecture would follow the steps of the proof of Theorem~\ref{theorem: Hecke-isomorphism}.
\end{remark}

\begin{remark}
    In \cite[Lemma 7.5]{HTY2026}, the authors claim that the Abbondan\-dolo-Schwarz map extends to the case of cotangent bundles of punctured surfaces. However, the proof of this is missing in \cite{HTY2026} and should be fixed, potentially using the approach from Remark~\ref{remark: approaches to proving conjecture}.
\end{remark}

\begin{example}[\emph{Hecke algebras of complex reflection groups}]
    Let $G$ be an irreducible complex reflection group, and $X=V$ be its reflection representation. Then $\mathcal{H}_{\bm \hbar}([X/G])$ is the Hecke algebra of \cite{BMR}. This example also includes the classical Hecke algebras of the Weil groups of simple Lie groups.
  
    In this particular case, by taking $X$ to be a unit ball near the origin, one can in fact prove Conjecture~\ref{conjecture: noncompact}, as the required isomorphism for the restriction to $\bm \hbar=0$ is straightforward, since there are no Reeb chords for the standard metric. Hence, the strategy of the proof of Theorem~\ref{theorem: Hecke-isomorphism} extends to this case, once one picks Floer data that guarantees compactness as in \cite{gps2020covariant} (in a different setting, a similar compactness result is shown in \cite[Lemma 7.4]{HTY2026}). 
\end{example}

\begin{example}[\emph{Affine Hecke algebras}]
    Let $G$ be a simple Lie group, $W$ be its Weyl group, and $V$ be the reflection representation of $W$. Let $Q^{\vee}$ be the dual root lattice of $W$. Then 
    $$X=V/Q^{\vee}$$
    is a complex algebraic torus. The algebra $\mathcal{H}_{\bm \hbar}([X/W])$ is the \emph{affine Hecke algebra} $\mathcal{H}_{\bm \hbar}(V, W)$.
\end{example}

\begin{example}[\emph{Hecke algebras of punctured surfaces and their symmetric powers}]
    In Example~\ref{example: hecke-surface}, if we drop the condition for $\Gamma \subset \operatorname{Aut}(H)$ to act cocompactly, one obtains a punctured orbifold surface $[H/\Gamma]$. The Hecke algebras from Example~\ref{example: hecke-surface} extend to this setting.
    
    In the case of $\Gamma$ acting on $H$ freely, the Hecke algebra $\mathcal{H}_\hbar([\Sigma^n/S_n])$ was considered in \cite[Section 7]{HTY2026}, where $\Sigma=H/\Gamma$.
\end{example}

\appendix

\section{Orientation and index theory for orbicurves}\label{appendix: orientations}

The aim of this appendix is to lift the standard result for orientations of determinant lines \cite[Proposition 11.13]{seidel2008fukaya} to the setting of orbidisks with punctures. Let $[u] \colon \mathscr{C} \to [X/G]$ be an orbidisk with $d$ positive and $1$ negative punctures represented by a $G$-equivariant map $u \colon \Sigma \to X$ with boundary conditions given by equivariant Lagrangians $\mathcal{L}_0, \dots, \mathcal{L}_d$ and chords $\hat{x}_0, \dots, \hat{x}_d$. We assume that the evaluation map at the stacky points $q_1, \ldots, q_b$ lands in the connected component $\bm m$ of $\mathcal{I}[X/G]^b$, i.e., $u$ is an element of $\mathcal{M}(\overrightarrow{\hat{x}}, \hat{x}_0; \bm m)$.

\begin{proposition}\leavevmode\par
\begin{enumerate}
    \item[(a)]\label{proposition: orientations}
    The choice of relative spin structures for Lagrangians $\mathcal{L}_i$ and chords $\hat{x}_i$ induces a natural isomorphism
    \begin{equation}
        \operatorname{det}(D_{[u]}) \otimes o_{\hat{x}_d} \otimes\dots \otimes o_{\hat{x}_1} \cong o_{\hat{x}_0}.
    \end{equation}
    \item[(b)]\label{proposition: index} The index of $D_{[u]}$ is given by
\begin{equation}
    \operatorname{ind}(D_{[u]})=|\hat{x}_0|-|\overrightarrow{\hat{x}}|-2\iota(\bm m)
\end{equation}
    \end{enumerate}
\end{proposition}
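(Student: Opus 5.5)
The plan is to reduce both statements to the non-orbifold results of Seidel by passing from the equivariant operator $D_u^G$ on $\Sigma$ to an ordinary Cauchy--Riemann operator on the coarse disk $C$. The tool is desingularization, as already used in the proof of Lemma~\ref{lemma: index-computation-for-An-singularity}. Near each stacky point $q_i$ with local group $\mu_{d_i}$ acting on $u^*TM$ with weights $\zeta_{d_i}^{m_1},\dots,\zeta_{d_i}^{m_n}$, $0\le m_j<d_i$, the invariant sections are, in a local coordinate $w$ on $\Sigma$, exactly those of the form $w^{m_j}f_j(w^{d_i})$. The substitution $f_j$ identifies $\ker$ and $\operatorname{coker}$ of $D_u^G$ with those of a real Cauchy--Riemann operator $|D_{[u]}|$ on the bundle pair $(|(u^*TM)^{\operatorname{orb}}|,|(u^*T\mathcal{L})^{\operatorname{orb}}|)$ over $C$ (this is the content invoked as Lemma~\ref{lemma: ker and coker}). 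Away from the stacky points, including the strip-like ends, nothing changes, since $\pi$ is an honest cover there and the punctures carry the same asymptotic data $o_{\hat{x}_i}\cong o_{x_i}$ for any lift $x_i\in\hat{x}_i$.

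For (b), I would compute the index of $|D_{[u]}|$ by Seidel's index formula for a punctured disk with Lagrangian boundary conditions. Everything is as in the smooth case except the boundary loop, which must be compared with the one coming from the graded Lagrangians along $\partial\Sigma$. The comparison is local, following the two-step argument in part (a) of Lemma~\ref{lemma: index-computation-for-An-singularity}. First choose a trivialization of $|(u^*TM)^{\operatorname{orb}}|$ over $C$ that agrees with the desingularization trivializations near each $q_i$. Then the Maslov class of the boundary path differs from the one computed through $\Sigma$ by a sum of local contributions, one small loop around each $q_i$. In the desingularized coordinates the loop around $q_i$ is $\operatorname{diag}(e^{-i\theta m_j/d_i})$, contributing Maslov index $-2\sum_j m_j/d_i=-2\iota(\mathcal{Y}_i,(h_i))$. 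Summing over $i$ gives $|\hat{x}_0|-|\overrightarrow{\hat{x}}|-2\iota(\bm m)$. The extra term $2b+(d-2)$ in \eqref{equation: dim} then comes from the moduli of the domain $\mathscr{H}^G_{0,\bm h,d}$ and is not part of $\operatorname{ind}D_{[u]}$.

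For (a), the plan is to run Seidel's gluing argument for orientation lines, \cite[Proposition 11.13]{seidel2008fukaya}, on $|D_{[u]}|$. I would glue the caps defining $o_{\hat{x}_i}$, which are the $G$-invariant parts of the operators $D_{\hat{x}}$ on $|G|$ copies of a punctured disk, onto the ends, obtaining an operator on a closed orbidisk with boundary condition $\mathcal{L}$ plus paths $\Lambda_{\hat{x}}$. Its determinant line is then trivialized using the $G$-invariant relative spin structures $\mathfrak{s}^{\#}$ and $\Lambda_{\hat{x}}$. The spin structures descend to the desingularized boundary bundle because they are $G$-equivariant; the lifts constructed in Claim~\ref{claim: zero equivariant Stiefel-Whitney} and the preceding example ensure this.

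The main obstacle is to check that the desingularization is canonical enough that this orientation does not depend on the choices near $q_i$, in particular the local coordinate and the trivialization near the stacky point. I would handle this by noting that the space of such choices is connected, since it is a product of $\C^*$'s acting diagonally on isotypic pieces, so the induced isomorphism of determinant lines is constant. Finally, I would verify compatibility with the gluing maps of Proposition~\ref{proposition-one-step-gluing} for the sign checks used in Theorem~\ref{theorem: well-defined-ainfty-cat}.
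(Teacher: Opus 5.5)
Your part (b) is in substance the paper's argument. The paper caps off the ends, quotes Cho--Shin's orbifold index formula, and computes the Chern--Weil term on the cover. The content of that formula is exactly your local comparison of the desingularized trivialization with the one coming from the gradings. That comparison is really the winding of $\det^2$ of the transition $\operatorname{diag}(z^{-m_j})$. It is single-valued downstairs because $G$-invariance of the volume form forces $\det h_*=1$, i.e.\ $\iota(\mathcal{Y}_i,(h_i))\in\Z$.

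For (a) your route genuinely differs. The paper does not desingularize nontrivial isotropy on the bordered piece. It pinches a circle parallel to $\partial\mathscr{C}$, so that all stacky points move onto a closed genus-$0$ orbicurve attached at one balanced node. It also deforms $(E,F)$ and $u^*E_b$ so that $G$ acts trivially on the fibers over the disk component. The sphere part is equivariantly homotopic to a complex-linear operator, hence canonically oriented without spin data. On the disk part the desingularization is trivial on fibers, so Lemma~\ref{lemma: ker and coker} and the Seidel--Abouzaid argument apply verbatim. Independence of local choices and the handling of the background bundle come for free, at the cost of an equivariant gluing theorem for determinant lines at the pinched node.

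Your global desingularization avoids that gluing step, but you must supply those points by hand.
\begin{itemize}
\item You first need to homotope $D_u^G$ through equivariant operators to the standard $\bar\partial$ near each ramification point. Desingularizing a general equivariant zeroth-order term can give coefficients unbounded at $q_i$, and Lemma~\ref{lemma: ker and coker} is stated only for $\bar\partial$.
\item Descent of the spin structure along $\partial C$ is automatic, because $G$ acts freely on $\partial\Sigma$. What converting the relative spin structure into a spin structure on $|F|$ actually needs is an extension of $u^*E_b$ over the coarse disk. So you must also desingularize $(u^*E_b)^{\mathrm{orb}}$, which has nontrivial fractional-age isotropy for $E_b=\pi^*TX$; this is possible since $TX$ is complex. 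You then have to fix this extension as part of the sign convention.
\end{itemize}
Granting these, your connectedness argument does give independence of the choices. The relevant group is $\prod_j GL_{n_j}(\C)$ over the isotypic pieces, together with equivariant coordinate changes, not just a product of $\C^*$'s, but it is still connected.
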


\begin{proof}[Sketch of proof of (a):] $\mbox{}$

\s\n
\emph{Step 1: Pinching.} We assume that there is at least one stacky point on $\mathscr{C}$, as otherwise the result follows immediately from \cite[Proposition 11.13]{seidel2008fukaya} and $G$-invariance.
Let us introduce notation $(E,F)=(u^*TX, \{u^*T\mathcal{L}_i\}_{i=0}^d)$ for the \emph{bundle pair} on $(\Sigma, \partial \Sigma)$ induced by $u$, and $([E/G], [F/G])$ is the corresponding orbibundle pair on $(\mathscr{C}, \partial \mathscr{C})$. Let $\bm h=((g_1), \dots, (g_h))$ be the Hurwitz datum of $\pi \colon \Sigma \to C$, and set $g=(g_1\dots g_b)^{-1}$. Then we may pinch a curve $\gamma$ parallel to $\partial \mathscr{C}$ to get a nodal orbicurve consisting of an orbidisk $\mathscr{D}$ with Hurwitz datum $\bm h_1=(g^{-1})$ and a closed genus $0$ orbicurve $\mathscr{C}'$ with profile $\bm h_2 =((g_1), \dots, (g_b), (g))$, the node is stacky and balanced. 
This deformation can also be performed at the level of $G$-covers and we get two $G$-covers $\pi_1 \colon \Sigma_1 \to D$ and $\pi_2 \colon \Sigma' \to C'$, with $\Sigma_1$ being a disjoint union of $|G|/|g|$ disks.
        
This deformation carries a deformation of $(E,F)$ to a pair of $G$-equivariant vector bundles $E_1 \to \Sigma_1$ and $E_2 \to \Sigma_2$ with trivial $G$-action on $E_1$. This last claim follows since along $\gamma$ the complex vector bundle $E$ is trivial. Now $D_{E_2}$ is homotopic to a complex linear operator through $G$-equivariant operators, hence $\operatorname{det}(D_{E_2}^G)$ is naturally orientable. By the equivariant version of the gluing result \cite[Theorem 2.4.5]{wehrheimwoodward2015orientations}, whose proof is similar to the equivariant gluing and is omitted here, we have
$$\operatorname{det}(D_{[u]}) \cong \operatorname{det}(D_{E_1, F}^G) \otimes \operatorname{det}(D_{E_2}^G).$$
A similar deformation as above allows one to deform the relative spin structure $E'=u^*E_b$ to a relative spin structure $E_1'$ on $\Sigma_1$.

Now let $D_1$ be one of the discs in $\Sigma_1$ that is preserved by a subgroup of $G$ isomorphic to $\mu_m$ with $m=|g|$. Then, clearly, $\operatorname{det}(D_{(E_1,F)|_{D_1}}^{\mu_m}) \cong \operatorname{det}(D_{E_1}^G)$. Hence, we reduced to the case of one orbidisk with the standard $\mu_m$-action and the trivial action on the fiber bundle $E_1$ over it. We note that for each chord $\hat{x}_i$ the punctures on the boundary of $D_1$ correspond only to an $m$-subtuple of it, but the associated $\mu_m$-invariant Cauchy-Riemann operator is isomorphic to $D_{\hat{x}_i}$ due to $G$-invariance of the choices of relative spin structures in Section~\ref{section: orbifold-Fukaya-prelim}.

\s\n
{\em Step 2: Orbidisk case.}
    First of all, using equivariant gluing again for strip-like ends, we obtain
    \begin{equation}
           \operatorname{det}(D_{E_1, F}^{\mu_m}) \otimes o_{\hat{x}_d}\otimes \dots \otimes o_{\hat{x}_1} \otimes \lambda(\Lambda_{\hat{x}_0(t)}) \otimes o_{\hat{x}_0}^{-1} \cong \operatorname{det}(D_{(\overline{E}_1, \overline{F})}^{\mu_m}).
    \end{equation}
    It remains to obtain a trivialization of the latter orientation line for an operator on the disk $\overline{D}_1$ with closed boundary. Since the action on $\overline{E}_1$ is trivial, we may use standard trivialization results on bundle pairs over Riemann surfaces with boundary to reduce to the one-dimensional case; see \cite[Lemma C.3.8]{mcduffsalamon2004}. We may also assume $D_{E_1}$ to be the standard $\overline{\partial}$ operator on the disk (we may apply the homotopy argument as above).

    Since the pair $(E_1, F)$ is a bundle pair over the disk $D_1$, the relative spin structure on $F$ is equivalent to a spin structure on $F$ by \cite[Lemma A.1]{abouzaid2011cotangent}; see also a more general discussion in \cite[Section 6.6]{chenzinger2024} (needless to say, that the trivial $\mu_m$-action lifts to this spin structure). We denote by $|\overline{E}_1|$ the desingularization complex line bundle over $D$ associated with $E_1$ as in \cite[Section 4.2]{Chen2004}, which is isomorphic to $E_1$ away from the origin via the trivialization
    \begin{gather*}\Psi \colon D_1\setminus \{0\} \times \C \to D \setminus \{0\} \times \C,\\
    (z,w) \mapsto (z^m, w).
    \end{gather*}
    We will use the notation $x=z^m$ to distinguish the coordinate on $D$ from the coordinate on $D_1$. Note that there is a well-defined pushforward real line bundle $\Psi_*F$ equipped with a spin structure on it, since $F$ is a $\mu_m$-invariant subbundle. The claim of the proposition now follows from \cite[Lemma 11.7]{seidel2008fukaya} (or more specifically the arguments in \cite[Appendix A]{abouzaid2011cotangent}) once we establish
    \begin{equation}
           \operatorname{det}(D_{E_1}^{\mu_m}) \cong \operatorname{det}(D_{|E_1|}).
    \end{equation}

    This result then follows from Lemma~\ref{lemma: ker and coker}.
\end{proof}

\begin{proof}[Proof of (b)]  As in the second step above, we may first glue strip-like ends to obtain
    $$\operatorname{ind}(D_{[u]})+|\overrightarrow{\hat{x}}|+n-|\hat{x}_0|= \operatorname{ind}(D_{\overline{\mathscr{C}}}),$$
    where $D_{\overline{\mathscr{C}}}$ is the operator on the closed $\mathscr{C}$ with Lagrangian boundary condition extended using paths of spin Lagrangians associated with orbichords. Now, using \cite[Prop. 6.10]{choshin2016}, we obtain that
    $$\operatorname{ind}(D_{\overline{\mathscr{C}}})=\mu_{CW}(D_{\overline{\mathscr{C}}})-2\iota(\bm m).$$
    To compute $\mu_{CW}(D_{\overline{\mathscr{C}}})$ we recall that it can be computed as $\frac{1}{|G|} \mu(D_{\overline{\Sigma}})$ by \cite[Proposition 6.7]{choshin2016} where $\overline{\Sigma}$ is the compactification of $\Sigma$ covering $\overline{\mathscr{C}}$. Since over the boundary $\partial\Sigma$ the $G$-action is free it consist of $|G|$ circles and applying \cite[Lemma 11.12]{seidel2008fukaya} we conclude that $\mu_{CW}(D_{\overline{\mathscr{C}}})=n$.
\end{proof}

\begin{lemma} \label{lemma: ker and coker}
    Let $\pi \colon \Sigma \to D$ be a cyclic $\mu_m$-cover representing an orbidisk $\mathscr{D}$ and let $(E,F)$ be a bundle pair over $\Sigma$ with a $\mu_m$-action and a choice of trivialization of $E \cong \Sigma \times V$, where $V$ is a representation of $\mu_m$. If $(E^{\operatorname{orb}}, F^{\operatorname{orb}})$ is the orbifold bundle pair on $\mathscr{D}$ obtained via descent, then 
    \begin{gather}
        \operatorname{ker}((\overline{\partial}_{(E,F)})^{\mu_m}) \cong \operatorname{ker}(\overline{\partial}_{(|E^{\operatorname{orb}}|,F^{\operatorname{orb}} )}),\\
         \operatorname{coker}((\overline{\partial}_{(E,F)})^{\mu_m}) \cong \operatorname{coker}(\overline{\partial}_{(|E^{\operatorname{orb}}|,F^{\operatorname{orb}} )}),
    \end{gather}
    where $(|E^{\operatorname{orb}}|,F^{\operatorname{orb}} )$ is the desingularized bundle pair over $D$.
    
    Moreover, these isomorphisms are induced from a natural pullback map
    \begin{equation}
        \Psi^* \colon (|E^{\operatorname{orb}}|, F^{\operatorname{orb}}) \to (E,F).
    \end{equation}
    In particular, given any subspace $\mathcal{O} \subset \Omega^{0,1}_{F^{\operatorname{orb}}}(D, |E^{\operatorname{orb}}|)$ representing $\operatorname{coker}(\overline{\partial}_{(|E^{\operatorname{orb}}|,F^{\operatorname{orb}} )})$, its pullback $\Psi^*(\mathcal{O})$ represents the space $\operatorname{coker}((\overline{\partial}_{(E,F)})^{\mu_m})$.
\end{lemma}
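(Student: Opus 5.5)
The plan is to compare the two Cauchy--Riemann problems through the explicit local model near the branch points, working one isotypic summand at a time. Since $E\cong\Sigma\times V$ is trivialized $\mu_m$-equivariantly, we split $V=\bigoplus_\chi V_\chi$ into isotypic components. The operator $\overline{\partial}_{(E,F)}$ is the standard $\overline{\partial}\otimes\operatorname{Id}_V$, so it respects this splitting, and $\mu_m$-invariant sections of $E$ are exactly $\mu_m$-equivariant maps $\Sigma\to V$. Away from the branch locus $\pi$ is an honest unbranched cover. There, invariant sections of $E$ descend bijectively to sections of $E^{\operatorname{orb}}$, and invariant $(0,1)$-forms descend to $(0,1)$-forms. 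It therefore suffices to check that the pullback $\Psi^*$ identifies, near each branch point, holomorphic (respectively $L^p$, $W^{1,p}$) sections of $|E^{\operatorname{orb}}|$ with invariant ones upstairs. The boundary plays no role in this step, since $\mu_m$ acts freely near $\partial\Sigma$ and $F^{\operatorname{orb}}$ is simply the descent of $F$.

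The key step is local. Near a ramification point take the chart $z$ with $x=z^m$ and a character $\chi=\chi_\ell^m$, $0\le\ell<m$. A $\mu_m$-equivariant $V_\chi$-valued function has the form $f(z)=z^\ell g(z^m)$ with $g$ smooth. The desingularization in \cite[Section 4.2]{Chen2004} is defined precisely so that $\Psi\colon(z,w)\mapsto(z^m,z^{-\ell}w)$ (equivalently, the frame $z^\ell e$) gives a smooth local frame of $|E^{\operatorname{orb}}|$. Under this frame, $f$ corresponds to $g(x)$, and $\overline{\partial}f=z^\ell\,\overline{\partial}_x g\cdot\overline{(mz^{m-1})}\,d\bar z$.

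This yields a chain map
\[
\Psi^*\colon\bigl(W^{1,p}_{F^{\operatorname{orb}}}(D,|E^{\operatorname{orb}}|)\to L^p\Omega^{0,1}\bigr)\longrightarrow\bigl(W^{1,p}_F(\Sigma,E)^{\mu_m}\to L^p\Omega^{0,1}(\Sigma,E)^{\mu_m}\bigr).
\]
On kernels it is an isomorphism: a holomorphic invariant section is $z^\ell g(z^m)$ with $g$ holomorphic, by expanding in Laurent series and keeping only the equivariant exponents, and this recovers exactly the holomorphic sections of the desingularized pair. For cokernels I would use $L^2$-duality: the cokernels are identified with kernels of the formal adjoints, namely antiholomorphic forms with the dual boundary condition. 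The same Laurent-series argument, applied to $\Omega^{0,1}\otimes V_\chi$ with the twisted character, matches invariant antiholomorphic forms upstairs with those of the desingularized dual pair downstairs, and $\Psi^*$ realizes this matching. The final sentence of the lemma then follows at once: if $\mathcal{O}$ is complementary to the image downstairs, then $\Psi^*\mathcal{O}$ is complementary to the invariant image upstairs, because $\Psi^*$ intertwines the operators and is injective on forms.

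I expect the main obstacle to be the Sobolev bookkeeping at the branch points. The weights $|z|^\ell$ and the Jacobian $|z|^{m-1}$ mean that $\Psi^*$ is not an isomorphism of the full $W^{1,p}$ and $L^p$ spaces: pullback of $d\bar x$ introduces $\bar z^{m-1}$, and the $z^\ell$ twist changes the integrability. I would avoid proving a Banach-space isomorphism. Instead I would argue only on kernels and cokernels, where elliptic regularity makes all elements smooth, so that only the algebraic identification of smooth and holomorphic germs is needed. To identify the cokernels I would dualize to the adjoint kernels, which are also smooth. Index equality, from Proposition~\ref{proposition: index}(b) or directly, then serves as a consistency check.
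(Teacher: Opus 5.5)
Your treatment of the kernels is fine. It is the same local computation the paper relies on: Chen--Ruan's identification $(\pi_*\mathcal{O}(E,F))^{\mu_m}\cong\mathcal{O}(|E^{\operatorname{orb}}|,F^{\operatorname{orb}})$ of sheaves of holomorphic sections.

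The gap is on the cokernel side. It lies in the claim that the isomorphism is \emph{induced by} $\Psi^*$, which is exactly what the last sentence of the lemma needs.

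First, matching $L^2$-adjoint kernels via the Laurent-series argument gives only an abstract isomorphism, i.e.\ equal dimensions. $\Psi^*$ does not carry one adjoint kernel into the other, because a smooth metric on one side corresponds to a metric singular at the branch points on the other. For example, the flat metric on $E\cong\Sigma\times V$ becomes $|x|^{2\ell/m}$ in the frame $z^\ell e$ of $|E^{\operatorname{orb}}|$.

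Second, your reason for the final sentence ("$\Psi^*$ intertwines the operators and is injective on forms") does not suffice. A chain map that is injective on cochains need not be injective on cohomology. The danger here is precisely the one you flagged yourself: $\Psi^*$ is not onto the invariant $W^{1,p}$ sections. So a priori $\Psi^*o=\overline{\partial}\xi$ could hold for an invariant $\xi$ that is not a pullback, while $o$ is not exact downstairs.

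What is missing is injectivity of the induced map $\operatorname{coker}\to\operatorname{coker}$; surjectivity then follows from your dimension count. You can supply it directly. Let $e_\ell$ denote the smooth frame of $|E^{\operatorname{orb}}|$ corresponding to $z^\ell e$, and suppose $\Psi^*(h\,d\bar x\otimes e_\ell)=\overline{\partial}\xi$ with $\xi$ invariant. Then $\xi'=z^{-\ell}\xi$ is a well-defined function of $x=z^m$, and it satisfies $\partial_{\bar x}\xi'=h$ for $x\neq0$. Since $\xi(0)=0$ for $\ell\ge1$ and $\xi\in C^{0,1-2/p}$, one gets $|\xi'|\lesssim|x|^{(1-2/p-\ell)/m}=o(|x|^{-1})$; for $\ell=0$, $\xi'$ is simply bounded. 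Hence the singularity is removable, $\xi'\in W^{1,p}$, and $\xi=\Psi^*\xi'$.

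Alternatively, replace $L^2$-adjoints by the metric-free Serre pairing. It satisfies $\int_\Sigma\langle\Psi^*o,\Psi^*\eta\rangle=m\int_D\langle o,\eta\rangle$ for $\eta\in\Omega^{1,0}(D,|E^{\operatorname{orb}}|^\vee)$. Combine this with $|K_{\mathscr{D}}\otimes(E^{\operatorname{orb}})^\vee|\cong K_D\otimes|E^{\operatorname{orb}}|^\vee$.

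The paper sidesteps all of this. It observes that $\Psi^*$ is a morphism from the smooth Dolbeault fine resolution of $\mathcal{O}(|E^{\operatorname{orb}}|,F^{\operatorname{orb}})$ to that of $(\pi_*\mathcal{O}(E,F))^{\mu_m}$, lying over the sheaf isomorphism. It therefore induces isomorphisms on $H^0$ and $H^1$, even though it is not surjective on smooth sections. The statement about $\Psi^*(\mathcal{O})$ then follows immediately.
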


\begin{proof}
    We denote by $\mathcal{O}(E,F)$ the sheaf of holomorphic sections of $E$ with boundary condition given by $F$ (we use the holomorphic structure on $E$ induced from the trivialization). There is also a sheaf $\mathcal{O}(E^{\operatorname{orb}}, F^{\operatorname{orb}})$ on $D$ of holomorphic sections of $E^{\operatorname{orb}}$ with boundary condition given by $F^{\operatorname{orb}}$ (notice that along $\partial \Sigma$ the action of $\mu_m$ is free, hence it is clear how to descend $F$ to $F^{\operatorname{orb}}$), where locally near the stacky points such holomorphic sections are defined as in \cite[Section 4]{Chen2004} via equivariant sections in the local orbifold charts. We note that in our global setup, we have
    \begin{equation}
              \mathcal{O}(E^{\operatorname{orb}}, F^{\operatorname{orb}}) \cong (\pi_*\mathcal{O}(E,F))^{\mu_m},
    \end{equation}
    which follows from the definition. The main observation of \cite[Proposition 4.2.2]{Chen2004} in our case with boundary is the isomorphism of sheaves
    \begin{equation}
              \mathcal{O}(E^{\operatorname{orb}}, F^{\operatorname{orb}}) \cong \mathcal{O}(|E^{\operatorname{orb}}|, F^{\operatorname{orb}}).
    \end{equation}
    The first claim of the lemma follows since these are just isomorphisms of cohomologies of sheaves $(\pi_*\mathcal{O}(E,F))^{\mu_m}$ and $\mathcal{O}(|E^{\operatorname{orb}}|, F^{\operatorname{orb}})$.

          Now, the pullback arises as follows: over a small neighborhood $U \subset D$ away from the stacky points, we have compatible trivializations $|E^{\operatorname{orb}}|_{U} \cong U \times \C^n$ and $E \cong \pi^{-1}(U) \times \C^n$, hence on such $U$ the pullback is defined as usual. 
          
          On a chart $U' \cong D^2$ near a ramification $p$ where $E$ trivializes as $E|_{U'} \cong D^2 \times V$ with $\mu_{m(p)}$ acting via standard rotations on $D^2$, and $V=\operatorname{Res}^{\mu_m}_{\mu_{m(p)}}V$ is the restricted representation to the stabilizer $\mu_{m(p)}$ of $p$. We pick a basis $V \cong \C^n$ with action
          $$\zeta_{m(p)}(w_1, \dots, w_n)=(\zeta_{m(p)}^{j_1}w_1, \dots, \zeta_{m(p)}^{j_n}w_n).$$
Recall that $|E|^{\operatorname{orb}}|_{\pi(U')}$ is trivialized via extension to $\pi(p)$ of the trivialization
\begin{gather}
    \Psi_{U'} \colon D^2 \setminus \{0\} \times \C^n \to D^2 \setminus \{0\} \times \C^n \nonumber \\
    (z, w_1, \dots, w_n) \mapsto (z^m, z^{-j_1}w_1, \dots, z^{-j_n}w_n)
\end{gather}
as defined in \cite[Section 4.2]{Chen2004}. The main observation here is that a smooth section of $|E|^{\operatorname{orb}}|_{\pi(U')}$ pulls back to a smooth $\mu_{m(p)}$-equivariant section of $E|_U$. We notice, however, that the converse is not true: take $n=1$ and $j_1=1$ and section $\xi(z)=z^2\bar{z}$, which is equivariant with pushforward $\xi(x)=|x|^{2/m}$ in coordinate $x=z^m$.

Hence, the following commutative diagram for the standard fine resolutions of sheaves $(\pi_*\mathcal{O}(E,F))^{\mu_m}$ and $\mathcal{O}(|E^{\operatorname{orb}}|, F^{\operatorname{orb}})$ holds; see \cite[Section 3.4]{KL06}

\[\begin{tikzcd}
	0& \mathcal{O}(|E^{\operatorname{orb}}|, F^{\operatorname{orb}}) && \mathcal{A}^0(|E^{\operatorname{orb}}|, F^{\operatorname{orb}}) && \mathcal{A}^{0,1}(|E^{\operatorname{orb}}|) & 0 \\
	0 & (\pi_*\mathcal{O}(E,F))^{\mu_m} && (\pi_*\mathcal{A}^0(E, F))^{\mu_m} && (\pi_*\mathcal{A}^{0,1}(E))^{\mu_m}  & 0
	\arrow[from=1-1, to=1-2]
	\arrow[from=1-2, to=1-4]
	\arrow[from=1-2, to=2-2, "\cong"]
	\arrow[from=1-4, to=1-6]
	\arrow[hook, from=1-4, to=2-4, "\Psi^*"]
	\arrow[from=1-6, to=1-7]
	\arrow[hook, from=1-6, to=2-6, "\Psi^*"]
	\arrow[from=2-1, to=2-2]
	\arrow[from=2-2, to=2-4]
	\arrow[from=2-4, to=2-6]
	\arrow[from=2-6, to=2-7]
\end{tikzcd},\]
where each of the maps is naturally induced from the pullback map $\Psi^*$. Since $\Psi^*$ induces the isomorphisms between sheaves of holomorphic sections, it induces the isomorphisms on cohomology, hence the result follows.
       \end{proof}
\printbibliography

@book {Adem-Leida-Ruan,
    AUTHOR = {Adem, Alejandro and Leida, Johann and Ruan, Yongbin},
     TITLE = {Orbifolds and stringy topology},
    SERIES = {Cambridge Tracts in Mathematics},
    VOLUME = {171},
 PUBLISHER = {Cambridge University Press, Cambridge},
      YEAR = {2007},
     PAGES = {xii+149},
      ISBN = {978-0-521-87004-7; 0-521-87004-6},
   MRCLASS = {57R19 (19L64 55N35)},
  MRNUMBER = {2359514},
MRREVIEWER = {Yunfeng\ Jiang},
       DOI = {10.1017/CBO9780511543081},
       URL = {https://doi.org/10.1017/CBO9780511543081},
}

@article {BMR,
    AUTHOR = {Brou\'e, Michel and Malle, Gunter and Rouquier, Rapha\"el},
     TITLE = {Complex reflection groups, braid groups, {H}ecke algebras},
   JOURNAL = {J. Reine Angew. Math.},
  FJOURNAL = {Journal f\"ur die Reine und Angewandte Mathematik. [Crelle's
              Journal]},
    VOLUME = {500},
      YEAR = {1998},
     PAGES = {127--190},
      ISSN = {0075-4102,1435-5345},
   MRCLASS = {20F55 (20C05 20F36)},
  MRNUMBER = {1637497},
MRREVIEWER = {Arjeh\ M.\ Cohen},
}

@book{seidel2008fukaya,
    AUTHOR = {Seidel, Paul},
     TITLE = {Fukaya categories and {P}icard-{L}efschetz theory},
    SERIES = {Zurich Lectures in Advanced Mathematics},
 PUBLISHER = {European Mathematical Society (EMS), Z\"urich},
      YEAR = {2008},
     PAGES = {viii+326},
      ISBN = {978-3-03719-063-0},
   MRCLASS = {53D40 (16E45 32Q65 53D12)},
  MRNUMBER = {2441780},
MRREVIEWER = {Timothy\ Perutz},
       DOI = {10.4171/063},
       URL = {https://doi.org/10.4171/063},
}

@article{lipshitz2006cylindrical,
    AUTHOR = {Lipshitz, Robert},
     TITLE = {A cylindrical reformulation of {H}eegaard {F}loer homology},
   JOURNAL = {Geom. Topol.},
  FJOURNAL = {Geometry and Topology},
    VOLUME = {10},
      YEAR = {2006},
     PAGES = {955--1096},
      ISSN = {1465-3060},
   MRCLASS = {57R58 (57M27)},
  MRNUMBER = {2240908},
MRREVIEWER = {Stanislav Jabuka},
       DOI = {10.2140/gt.2006.10.955},
       URL = {https://doi.org/10.2140/gt.2006.10.955},
}

@article {abbondandolo2010floer,
    AUTHOR = {Abbondandolo, Alberto and Schwarz, Matthias},
     TITLE = {Floer homology of cotangent bundles and the loop product},
   JOURNAL = {Geom. Topol.},
  FJOURNAL = {Geometry \& Topology},
    VOLUME = {14},
      YEAR = {2010},
    NUMBER = {3},
     PAGES = {1569--1722},
      ISSN = {1465-3060},
   MRCLASS = {53D40 (55N45 55P50 57R58)},
  MRNUMBER = {2679580},
MRREVIEWER = {Janko Latschev},
       DOI = {10.2140/gt.2010.14.1569},
       URL = {https://doi.org/10.2140/gt.2010.14.1569},
}

@article {abbondandolo2006floer,
    AUTHOR = {Abbondandolo, Alberto and Schwarz, Matthias},
     TITLE = {On the {F}loer homology of cotangent bundles},
   JOURNAL = {Comm. Pure Appl. Math.},
  FJOURNAL = {Communications on Pure and Applied Mathematics},
    VOLUME = {59},
      YEAR = {2006},
    NUMBER = {2},
     PAGES = {254--316},
      ISSN = {0010-3640},
   MRCLASS = {53D40 (57R58)},
  MRNUMBER = {2190223},
MRREVIEWER = {Michael J. Usher},
       DOI = {10.1002/cpa.20090},
       URL = {https://doi.org/10.1002/cpa.20090},
}

@article {abouzaid2012wrapped,
    AUTHOR = {Abouzaid, Mohammed},
     TITLE = {On the wrapped {F}ukaya category and based loops},
   JOURNAL = {J. Symplectic Geom.},
  FJOURNAL = {The Journal of Symplectic Geometry},
    VOLUME = {10},
      YEAR = {2012},
    NUMBER = {1},
     PAGES = {27--79},
      ISSN = {1527-5256},
   MRCLASS = {53D37 (53D12)},
  MRNUMBER = {2904032},
MRREVIEWER = {Janko Latschev},
       URL = {http://projecteuclid.org/euclid.jsg/1332853049},
}

@article {morton2021dahas,
    AUTHOR = {Morton, Hugh and Samuelson, Peter},
     TITLE = {D{AHA}s and skein theory},
   JOURNAL = {Comm. Math. Phys.},
  FJOURNAL = {Communications in Mathematical Physics},
    VOLUME = {385},
      YEAR = {2021},
    NUMBER = {3},
     PAGES = {1655--1693},
      ISSN = {0010-3616},
   MRCLASS = {57K14 (17B37)},
  MRNUMBER = {4283999},
       DOI = {10.1007/s00220-021-04052-8},
       URL = {https://doi.org/10.1007/s00220-021-04052-8},
}

@article {abouzaid2010geometric,
    AUTHOR = {Abouzaid, Mohammed},
     TITLE = {A geometric criterion for generating the {F}ukaya category},
   JOURNAL = {Publ. Math. Inst. Hautes \'{E}tudes Sci.},
  FJOURNAL = {Publications Math\'{e}matiques. Institut de Hautes \'{E}tudes
              Scientifiques},
    NUMBER = {112},
      YEAR = {2010},
     PAGES = {191--240},
      ISSN = {0073-8301},
   MRCLASS = {53D37},
  MRNUMBER = {2737980},
MRREVIEWER = {Timothy Perutz},
       DOI = {10.1007/s10240-010-0028-5},
       URL = {https://doi.org/10.1007/s10240-010-0028-5},
}

@unpublished{CHT,
    title={Towards a definition of Khovanov homology for links in fibered $3$-manifolds},
    author={Vincent Colin and Ko Honda and Yin Tian},
    year={2025},
    eprint={2510.26164},
    archivePrefix={arXiv},
    primaryClass={math.SG}
}

@misc{ganatra2013thesis,
    title={Symplectic cohomology and duality for the wrapped Fukaya category},
    author={Sheel Ganatra},
    year={2013},
    eprint={1304.7312},
    archivePrefix={arXiv},
    primaryClass={math.SG}
}

@article {etingof2002symplectic,
    AUTHOR = {Etingof, Pavel and Ginzburg, Victor},
     TITLE = {Symplectic reflection algebras, {C}alogero-{M}oser space, and
              deformed {H}arish-{C}handra homomorphism},
   JOURNAL = {Invent. Math.},
  FJOURNAL = {Inventiones Mathematicae},
    VOLUME = {147},
      YEAR = {2002},
    NUMBER = {2},
     PAGES = {243--348},
      ISSN = {0020-9910},
   MRCLASS = {16S10 (14L30 17B66 17B80 20C08)},
  MRNUMBER = {1881922},
MRREVIEWER = {Alexander Braverman},
       DOI = {10.1007/s002220100171},
       URL = {https://doi.org/10.1007/s002220100171},
}

@article {EtingofPavel,
    AUTHOR = {Etingof, Pavel},
     TITLE = {Cherednik and {H}ecke algebras of varieties with a finite
              group action},
   JOURNAL = {Mosc. Math. J.},
  FJOURNAL = {Moscow Mathematical Journal},
    VOLUME = {17},
      YEAR = {2017},
    NUMBER = {4},
     PAGES = {635--666},
      ISSN = {1609-3321},
   MRCLASS = {20C08 (33D80)},
  MRNUMBER = {3734656},
MRREVIEWER = {Gwyn Bellamy},
       DOI = {10.17323/1609-4514-2016-16-4-635-666},
       URL = {https://doi.org/10.17323/1609-4514-2016-16-4-635-666},
}

@misc{abouzaid2015symplectic,
    AUTHOR = {Abouzaid, Mohammed},
     TITLE = {Symplectic cohomology and {V}iterbo's theorem},
 BOOKTITLE = {Free loop spaces in geometry and topology},
    SERIES = {IRMA Lect. Math. Theor. Phys.},
    VOLUME = {24},
     PAGES = {271--485},
 PUBLISHER = {Eur. Math. Soc., Z\"urich},
      YEAR = {2015},
      ISBN = {978-3-03719-153-8},
   MRCLASS = {57R17 (53D40 55P50 57R58 58E05)},
  MRNUMBER = {3444367},
MRREVIEWER = {Darko\ Milinkovi\'c},
}

@article{abouzaid2011cotangent,
    AUTHOR = {Abouzaid, Mohammed},
     TITLE = {A cotangent fibre generates the {F}ukaya category},
   JOURNAL = {Adv. Math.},
  FJOURNAL = {Advances in Mathematics},
    VOLUME = {228},
      YEAR = {2011},
    NUMBER = {2},
     PAGES = {894--939},
      ISSN = {0001-8708,1090-2082},
   MRCLASS = {53D37},
  MRNUMBER = {2822213},
MRREVIEWER = {Michael\ J.\ Usher},
       DOI = {10.1016/j.aim.2011.06.007},
       URL = {https://doi.org/10.1016/j.aim.2011.06.007},
}

@incollection {Chen2002,
    AUTHOR = {Chen, Weimin and Ruan, Yongbin},
     TITLE = {Orbifold {G}romov-{W}itten theory},
 BOOKTITLE = {Orbifolds in mathematics and physics ({M}adison, {WI}, 2001)},
    SERIES = {Contemp. Math.},
    VOLUME = {310},
     PAGES = {25--85},
 PUBLISHER = {Amer. Math. Soc., Providence, RI},
      YEAR = {2002},
      ISBN = {0-8218-2990-4},
   MRCLASS = {53D45 (14N35)},
  MRNUMBER = {1950941},
MRREVIEWER = {Ignasi\ Mundet-Riera},
       DOI = {10.1090/conm/310/05398},
       URL = {https://doi.org/10.1090/conm/310/05398},
}

@article {Chen2004,
    AUTHOR = {Chen, Weimin and Ruan, Yongbin},
     TITLE = {A new cohomology theory of orbifold},
   JOURNAL = {Comm. Math. Phys.},
  FJOURNAL = {Communications in Mathematical Physics},
    VOLUME = {248},
      YEAR = {2004},
    NUMBER = {1},
     PAGES = {1--31},
      ISSN = {0010-3616,1432-0916},
   MRCLASS = {57R19 (53D45)},
  MRNUMBER = {2104605},
MRREVIEWER = {Paolo\ Lisca},
       DOI = {10.1007/s00220-004-1089-4},
       URL = {https://doi.org/10.1007/s00220-004-1089-4},
}

@article {COW2024twisted,
    AUTHOR = {Chen, Bohui and Ono, Kaoru and Wang, Bai-Ling},
     TITLE = {Twisted sectors for {L}agrangian {F}loer theory on symplectic
              orbifolds},
   JOURNAL = {SIGMA Symmetry Integrability Geom. Methods Appl.},
  FJOURNAL = {SIGMA. Symmetry, Integrability and Geometry. Methods and
              Applications},
    VOLUME = {20},
      YEAR = {2024},
     PAGES = {Paper No. 011, 14},
      ISSN = {1815-0659},
   MRCLASS = {53D40 (53D37 57R18)},
  MRNUMBER = {4697924},
MRREVIEWER = {Alexander\ Fel\cprime shtyn},
       DOI = {10.3842/SIGMA.2024.011},
       URL = {https://doi.org/10.3842/SIGMA.2024.011},
}

@article {Cho2014,
    AUTHOR = {Cho, Cheol-Hyun and Poddar, Mainak},
     TITLE = {Holomorphic orbi-discs and {L}agrangian {F}loer cohomology of
              symplectic toric orbifolds},
   JOURNAL = {J. Differential Geom.},
  FJOURNAL = {Journal of Differential Geometry},
    VOLUME = {98},
      YEAR = {2014},
    NUMBER = {1},
     PAGES = {21--116},
      ISSN = {0022-040X,1945-743X},
   MRCLASS = {53D40 (32Q65 57R18)},
  MRNUMBER = {3263515},
MRREVIEWER = {Saibal\ Ganguli},
       URL = {http://projecteuclid.org/euclid.jdg/1406137695},
}

@incollection {ACV2003,
    AUTHOR = {Abramovich, Dan and Corti, Alessio and Vistoli, Angelo},
     TITLE = {Twisted bundles and admissible covers},
      NOTE = {Special issue in honor of Steven L. Kleiman},
   JOURNAL = {Comm. Algebra},
  FJOURNAL = {Communications in Algebra},
    VOLUME = {31},
      YEAR = {2003},
    NUMBER = {8},
     PAGES = {3547--3618},
      ISSN = {0092-7872,1532-4125},
   MRCLASS = {14H10 (14A20 14H30)},
  MRNUMBER = {2007376},
MRREVIEWER = {Andrew\ Kresch},
       DOI = {10.1081/AGB-120022434},
       URL = {https://doi.org/10.1081/AGB-120022434},
}

@book {mcduffsalamon2004,
    AUTHOR = {McDuff, Dusa and Salamon, Dietmar},
     TITLE = {{$J$}-holomorphic curves and symplectic topology},
    SERIES = {American Mathematical Society Colloquium Publications},
    VOLUME = {52},
 PUBLISHER = {American Mathematical Society, Providence, RI},
      YEAR = {2004},
     PAGES = {xii+669},
      ISBN = {0-8218-3485-1},
   MRCLASS = {53D45 (32Q65 53D40 57R17)},
  MRNUMBER = {2045629},
MRREVIEWER = {Ignasi\ Mundet-Riera},
       DOI = {10.1090/coll/052},
       URL = {https://doi.org/10.1090/coll/052},
}

@article {CieliebakMohnke2007,
    AUTHOR = {Cieliebak, Kai and Mohnke, Klaus},
     TITLE = {Symplectic hypersurfaces and transversality in
              {G}romov-{W}itten theory},
   JOURNAL = {J. Symplectic Geom.},
  FJOURNAL = {The Journal of Symplectic Geometry},
    VOLUME = {5},
      YEAR = {2007},
    NUMBER = {3},
     PAGES = {281--356},
      ISSN = {1527-5256,1540-2347},
   MRCLASS = {53D45},
  MRNUMBER = {2399678},
MRREVIEWER = {Michael\ J.\ Usher},
       DOI = {10.4310/jsg.2007.v5.n3.a2},
       URL = {https://doi.org/10.4310/jsg.2007.v5.n3.a2},
}

@book {KMS1993,
    AUTHOR = {Kol\'ar, Ivan and Michor, Peter W. and Slov\'ak, Jan},
     TITLE = {Natural operations in differential geometry},
 PUBLISHER = {Springer-Verlag, Berlin},
      YEAR = {1993},
     PAGES = {vi+434},
      ISBN = {3-540-56235-4},
   MRCLASS = {58A20 (53A55 53B05 53C05)},
  MRNUMBER = {1202431},
MRREVIEWER = {Jaime\ Mu\~noz Masqu\'e},
       DOI = {10.1007/978-3-662-02950-3},
       URL = {https://doi.org/10.1007/978-3-662-02950-3},
}

@article {Zehmisch2015,
    AUTHOR = {Zehmisch, Kai},
     TITLE = {Holomorphic jets in symplectic manifolds},
   JOURNAL = {J. Fixed Point Theory Appl.},
  FJOURNAL = {Journal of Fixed Point Theory and Applications},
    VOLUME = {17},
      YEAR = {2015},
    NUMBER = {2},
     PAGES = {379--402},
      ISSN = {1661-7738,1661-7746},
   MRCLASS = {53D35 (32Q65 53D45 58A20)},
  MRNUMBER = {3397123},
MRREVIEWER = {Josef\ G.\ Dorfmeister},
       DOI = {10.1007/s11784-014-0178-z},
       URL = {https://doi.org/10.1007/s11784-014-0178-z},
}

@article {Galeotti2022,
    AUTHOR = {Galeotti, Mattia},
     TITLE = {Moduli of {$G$}-covers of curves: geometry and singularities},
   JOURNAL = {Ann. Inst. Fourier (Grenoble)},
  FJOURNAL = {Universit\'e{} de Grenoble. Annales de l'Institut Fourier},
    VOLUME = {72},
      YEAR = {2022},
    NUMBER = {6},
     PAGES = {2191--2240},
      ISSN = {0373-0956,1777-5310},
   MRCLASS = {14H10 (14D20 14D23 14E05 14H20 14H60)},
  MRNUMBER = {4500355},
MRREVIEWER = {Luca\ Schaffler},
       DOI = {10.5802/aif.3503},
       URL = {https://doi.org/10.5802/aif.3503},
}

@article {Seidel2000graded,
    AUTHOR = {Seidel, Paul},
     TITLE = {Graded {L}agrangian submanifolds},
   JOURNAL = {Bull. Soc. Math. France},
  FJOURNAL = {Bulletin de la Soci\'et\'e{} Math\'ematique de France},
    VOLUME = {128},
      YEAR = {2000},
    NUMBER = {1},
     PAGES = {103--149},
      ISSN = {0037-9484,2102-622X},
   MRCLASS = {53D40 (53D12 57R58)},
  MRNUMBER = {1765826},
MRREVIEWER = {David\ E.\ Hurtubise},
       URL = {http://www.numdam.org/item?id=BSMF_2000__128_1_103_0},
}

@article {mclaughlin1992orient,
    AUTHOR = {McLaughlin, Dennis A.},
     TITLE = {Orientation and string structures on loop space},
   JOURNAL = {Pacific J. Math.},
  FJOURNAL = {Pacific Journal of Mathematics},
    VOLUME = {155},
      YEAR = {1992},
    NUMBER = {1},
     PAGES = {143--156},
      ISSN = {0030-8730,1945-5844},
   MRCLASS = {57R20 (58B25 81T30)},
  MRNUMBER = {1174481},
MRREVIEWER = {Randy\ A.\ Baadhio},
       URL = {http://projecteuclid.org/euclid.pjm/1102635473},
}

@misc{mak2025orbifoldhamiltonianfloertheory,
      title={Orbifold Hamiltonian Floer theory for global quotients}, 
      author={Cheuk Yu Mak and Sobhan Seyfaddini and Ivan Smith},
      year={2025},
      eprint={2502.11290},
      archivePrefix={arXiv},
      primaryClass={math.SG},
}

@article{cho2013finitegroupactionslagrangian,
    AUTHOR = {Cho, Cheol-Hyun and Hong, Hansol},
     TITLE = {Finite group actions on {L}agrangian {F}loer theory},
   JOURNAL = {J. Symplectic Geom.},
  FJOURNAL = {The Journal of Symplectic Geometry},
    VOLUME = {15},
      YEAR = {2017},
    NUMBER = {2},
     PAGES = {307--420},
      ISSN = {1527-5256,1540-2347},
   MRCLASS = {53D40},
  MRNUMBER = {3683788},
MRREVIEWER = {Christopher\ T.\ Woodward},
       DOI = {10.4310/JSG.2017.v15.n2.a1},
       URL = {https://doi.org/10.4310/JSG.2017.v15.n2.a1},
}

@article{bao2021,
    AUTHOR = {Bao, Erkao and Honda, Ko},
     TITLE = {Equivariant {L}agrangian {F}loer cohomology via semi-global
              {K}uranishi structures},
   JOURNAL = {Algebr. Geom. Topol.},
  FJOURNAL = {Algebraic \& Geometric Topology},
    VOLUME = {21},
      YEAR = {2021},
    NUMBER = {4},
     PAGES = {1677--1722},
      ISSN = {1472-2747,1472-2739},
   MRCLASS = {53D40 (53D10 57M50)},
  MRNUMBER = {4302482},
       DOI = {10.2140/agt.2021.21.1677},
       URL = {https://doi.org/10.2140/agt.2021.21.1677},
}

@article{doan2023,
    AUTHOR = {Doan, Aleksander and Walpuski, Thomas},
     TITLE = {Counting embedded curves in symplectic 6-manifolds},
   JOURNAL = {Comment. Math. Helv.},
  FJOURNAL = {Commentarii Mathematici Helvetici. A Journal of the Swiss
              Mathematical Society},
    VOLUME = {98},
      YEAR = {2023},
    NUMBER = {4},
     PAGES = {693--769},
      ISSN = {0010-2571,1420-8946},
   MRCLASS = {53D05 (14N35 53C15)},
  MRNUMBER = {4680501},
       DOI = {10.4171/cmh/556},
       URL = {https://doi.org/10.4171/cmh/556},
}

@misc{CDW2020,
      title={Fukaya category for Landau-Ginzburg orbifolds}, 
      author={Cho, Cheol-Hyun and Dongwook Choa and Wonbo Jeong},
      year={2020},
      eprint={2010.09198},
      archivePrefix={arXiv},
      primaryClass={math.SG},
}

@article {siegel2021,
    AUTHOR = {Siegel, Kyler},
     TITLE = {Squared {D}ehn twists and deformed symplectic invariants},
   JOURNAL = {J. Symplectic Geom.},
  FJOURNAL = {The Journal of Symplectic Geometry},
    VOLUME = {19},
      YEAR = {2021},
    NUMBER = {5},
     PAGES = {1189--1280},
      ISSN = {1527-5256,1540-2347},
   MRCLASS = {53D12 (53D37)},
  MRNUMBER = {4436690},
       DOI = {10.4310/jsg.2021.v19.n5.a5},
       URL = {https://doi.org/10.4310/jsg.2021.v19.n5.a5},
}

@article {sheridan2015CYhypersurfaces,
    AUTHOR = {Sheridan, Nick},
     TITLE = {Homological mirror symmetry for {C}alabi-{Y}au hypersurfaces
              in projective space},
   JOURNAL = {Invent. Math.},
  FJOURNAL = {Inventiones Mathematicae},
    VOLUME = {199},
      YEAR = {2015},
    NUMBER = {1},
     PAGES = {1--186},
      ISSN = {0020-9910,1432-1297},
   MRCLASS = {14F05 (14J32 14J33 18E30)},
  MRNUMBER = {3294958},
MRREVIEWER = {Sergiy\ Koshkin},
       DOI = {10.1007/s00222-014-0507-2},
       URL = {https://doi.org/10.1007/s00222-014-0507-2},
}

@article {sheridan2020versality,
    AUTHOR = {Sheridan, Nick},
     TITLE = {Versality of the relative {F}ukaya category},
   JOURNAL = {Geom. Topol.},
  FJOURNAL = {Geometry \& Topology},
    VOLUME = {24},
      YEAR = {2020},
    NUMBER = {2},
     PAGES = {747--884},
      ISSN = {1465-3060,1364-0380},
   MRCLASS = {53D37},
  MRNUMBER = {4153652},
MRREVIEWER = {Nathaniel\ Bottman},
       DOI = {10.2140/gt.2020.24.747},
       URL = {https://doi.org/10.2140/gt.2020.24.747},
}

@article {perutzsheridan2023relative,
    AUTHOR = {Perutz, Timothy and Sheridan, Nick},
     TITLE = {Constructing the relative {F}ukaya category},
   JOURNAL = {J. Symplectic Geom.},
  FJOURNAL = {The Journal of Symplectic Geometry},
    VOLUME = {21},
      YEAR = {2023},
    NUMBER = {5},
     PAGES = {997--1076},
      ISSN = {1527-5256,1540-2347},
   MRCLASS = {53D37},
  MRNUMBER = {4754914},
MRREVIEWER = {Johan\ Asplund},
       DOI = {10.4310/jsg.2023.v21.n5.a4},
       URL = {https://doi.org/10.4310/jsg.2023.v21.n5.a4},
}

@misc{sheridan20225bigrelative,
      title={Constructing the big relative Fukaya category, and its open--closed maps}, 
      author={Sheridan, Nick},
      year={2025},
      eprint={2511.01656},
      archivePrefix={arXiv},
      primaryClass={math.SG},
      url={https://arxiv.org/abs/2511.01656}, 
}

@article {tamborini2022shimura,
    AUTHOR = {Tamborini, Carolina},
     TITLE = {Symmetric spaces uniformizing {S}himura varieties in the
              {T}orelli locus},
   JOURNAL = {Ann. Mat. Pura Appl. (4)},
  FJOURNAL = {Annali di Matematica Pura ed Applicata. Series IV},
    VOLUME = {201},
      YEAR = {2022},
    NUMBER = {5},
     PAGES = {2101--2119},
      ISSN = {0373-3114,1618-1891},
   MRCLASS = {14G35 (14H15 14H40)},
  MRNUMBER = {4491459},
MRREVIEWER = {Yifei\ Zhu},
       DOI = {10.1007/s10231-022-01193-y},
       URL = {https://doi.org/10.1007/s10231-022-01193-y},
}

@article {CW1934formula,
    AUTHOR = {Chevalley, C. and Weil, A. and Hecke, E.},
     TITLE = {\"Uber das verhalten der integrale 1. gattung bei
              automorphismen des funktionenk\"orpers},
   JOURNAL = {Abh. Math. Sem. Univ. Hamburg},
  FJOURNAL = {Abhandlungen aus dem Mathematischen Seminar der Universit\"at
              Hamburg},
    VOLUME = {10},
      YEAR = {1934},
    NUMBER = {1},
     PAGES = {358--361},
      ISSN = {0025-5858,1865-8784},
   MRCLASS = {99-04},
  MRNUMBER = {3069638},
       DOI = {10.1007/BF02940687},
       URL = {https://doi.org/10.1007/BF02940687},
}

@article {arapura2022residues,
    AUTHOR = {Arapura, Donu},
     TITLE = {Residues of connections and the {C}hevalley-{W}eil formula for
              curves},
   JOURNAL = {Enseign. Math.},
  FJOURNAL = {L'Enseignement Math\'ematique},
    VOLUME = {68},
      YEAR = {2022},
    NUMBER = {3-4},
     PAGES = {291--306},
      ISSN = {0013-8584,2309-4672},
   MRCLASS = {14H55 (14H37)},
  MRNUMBER = {4452424},
MRREVIEWER = {Jos\'e\ Javier\ Etayo},
       DOI = {10.4171/lem/1030},
       URL = {https://doi.org/10.4171/lem/1030},
}

@incollection {KL06,
    AUTHOR = {Katz, Sheldon and Liu, Chiu-Chu Melissa},
     TITLE = {Enumerative geometry of stable maps with {L}agrangian boundary
              conditions and multiple covers of the disc},
 BOOKTITLE = {The interaction of finite-type and {G}romov-{W}itten
              invariants ({BIRS} 2003)},
    SERIES = {Geom. Topol. Monogr.},
    VOLUME = {8},
     PAGES = {1--47},
 PUBLISHER = {Geom. Topol. Publ., Coventry},
      YEAR = {2006},
   MRCLASS = {53D45 (14N35 32G81)},
  MRNUMBER = {2402818},
       DOI = {10.2140/gtm.2006.8.1},
       URL = {https://doi.org/10.2140/gtm.2006.8.1},
}

@article{seidelkhovanov2002,
    AUTHOR = {Khovanov, Mikhail and Seidel, Paul},
     TITLE = {Quivers, {F}loer cohomology, and braid group actions},
   JOURNAL = {J. Amer. Math. Soc.},
  FJOURNAL = {Journal of the American Mathematical Society},
    VOLUME = {15},
      YEAR = {2002},
    NUMBER = {1},
     PAGES = {203--271},
      ISSN = {0894-0347,1088-6834},
   MRCLASS = {53D40 (18E30 20F36 57R58)},
  MRNUMBER = {1862802},
MRREVIEWER = {Alexander\ E.\ Polishchuk},
       DOI = {10.1090/S0894-0347-01-00374-5},
       URL = {https://doi.org/10.1090/S0894-0347-01-00374-5},
}

@article {pardonvfc,
    AUTHOR = {Pardon, John},
     TITLE = {An algebraic approach to virtual fundamental cycles on moduli
              spaces of pseudo-holomorphic curves},
   JOURNAL = {Geom. Topol.},
  FJOURNAL = {Geometry \& Topology},
    VOLUME = {20},
      YEAR = {2016},
    NUMBER = {2},
     PAGES = {779--1034},
      ISSN = {1465-3060,1364-0380},
   MRCLASS = {53D35 (37J10 53D37 53D40 53D42 53D45 54B40 57R17)},
  MRNUMBER = {3493097},
MRREVIEWER = {Sonja\ Hohloch},
       DOI = {10.2140/gt.2016.20.779},
       URL = {https://doi.org/10.2140/gt.2016.20.779},
}

@article {swaminathan2021relcinfty,
    AUTHOR = {Swaminathan, Mohan},
     TITLE = {Rel-{$C^\infty$} structures on {G}romov-{W}itten moduli
              spaces},
   JOURNAL = {J. Symplectic Geom.},
  FJOURNAL = {The Journal of Symplectic Geometry},
    VOLUME = {19},
      YEAR = {2021},
    NUMBER = {2},
     PAGES = {413--473},
      ISSN = {1527-5256,1540-2347},
   MRCLASS = {53D30 (53D45)},
  MRNUMBER = {4325409},
MRREVIEWER = {Li\ Sheng},
       DOI = {10.4310/JSG.2021.v19.n2.a4},
       URL = {https://doi.org/10.4310/JSG.2021.v19.n2.a4},
}

@misc{cant2022thesis,
      title={A dimension formula for relative symplectic field theory. Stanford University PhD Thesis.}, 
      author={Cant, Dylan},
      year={2022},
      url={https://dylancant.ca/thesis.pdf}, 
}

@misc{schmitt2023thesis,
      title={On $\mathbb{Q}$-factorial terminalizations of symplectic linear quotient singularities. Dissertation, Kaiserslautern-Landau, Rheinland-Pfälzische Technische Universität Kaiserslautern-Landau}, 
      author={Schmitt, Johannes},
      year={2023},
      url={https://kluedo.ub.rptu.de/frontdoor/index/index/docId/7351}, 
}

@incollection {salamon1997,
    AUTHOR = {Salamon, Dietmar},
     TITLE = {Lectures on {F}loer homology},
 BOOKTITLE = {Symplectic geometry and topology ({P}ark {C}ity, {UT}, 1997)},
    SERIES = {IAS/Park City Math. Ser.},
    VOLUME = {7},
     PAGES = {143--229},
 PUBLISHER = {Amer. Math. Soc., Providence, RI},
      YEAR = {1999},
      ISBN = {0-8218-0838-9},
   MRCLASS = {53D40 (37J45 53D45 57R17 57R58)},
  MRNUMBER = {1702944},
MRREVIEWER = {David\ E.\ Hurtubise},
       DOI = {10.1016/S0165-2427(99)00127-0},
       URL = {https://doi.org/10.1016/S0165-2427(99)00127-0},
}

@misc{Wendl2010,
    author = {Chris Wendl},
    title = {Lectures on {Holomorphic} {Curves} in {Symplectic} and {Contact} {Geometry}},
    year={2010},
    eprint={1011.1690},
    archivePrefix={arXiv},
    primaryClass={math.SG},
}

@article {Liu2020moduli,
    AUTHOR = {Liu, C.-C. Melissa},
     TITLE = {Moduli of {$J$}-holomorphic curves with {L}agrangian boundary
              conditions and open {G}romov-{W}itten invariants for an
              {$S^1$}-equivariant pair},
   JOURNAL = {J. Iran. Math. Soc.},
  FJOURNAL = {Journal of the Iranian Mathematical Society},
    VOLUME = {1},
      YEAR = {2020},
    NUMBER = {1},
     PAGES = {5--95},
      ISSN = {2717-1612},
   MRCLASS = {53D45 (14N35)},
  MRNUMBER = {4528678},
       DOI = {10.30504/jims.2020.104185},
       URL = {https://doi.org/10.30504/jims.2020.104185},
}

@misc{henriques2005thesis,
      title={Orbispaces, Ph.D. thesis, Massachusetts Institute of Technology.}, 
      author={Henriques, Andre Gil},
      year={2005},
      pages = {1--130}
}

@article {solomontukachinsky2024forms,
    AUTHOR = {Solomon, Jake P. and Tukachinsky, Sara B.},
     TITLE = {Differential forms on orbifolds with corners},
   JOURNAL = {J. Topol. Anal.},
  FJOURNAL = {Journal of Topology and Analysis},
    VOLUME = {16},
      YEAR = {2024},
    NUMBER = {4},
     PAGES = {561--615},
      ISSN = {1793-5253,1793-7167},
   MRCLASS = {58A10 (18E35 20L05 58A25)},
  MRNUMBER = {4782292},
MRREVIEWER = {Paulo\ Carrillo Rouse},
       DOI = {10.1142/S1793525323500048},
       URL = {https://doi.org/10.1142/S1793525323500048},
}

@article {Lerman2010stacks,
    AUTHOR = {Lerman, Eugene},
     TITLE = {Orbifolds as stacks?},
   JOURNAL = {Enseign. Math. (2)},
  FJOURNAL = {L'Enseignement Math\'ematique. Revue Internationale. 2e
              S\'erie},
    VOLUME = {56},
      YEAR = {2010},
    NUMBER = {3-4},
     PAGES = {315--363},
      ISSN = {0013-8584},
   MRCLASS = {18D05 (22A22)},
  MRNUMBER = {2778793},
MRREVIEWER = {Chenchang\ Zhu},
       DOI = {10.4171/LEM/56-3-4},
       URL = {https://doi.org/10.4171/LEM/56-3-4},
}

@article {pardon2022enough,
    AUTHOR = {Pardon, John},
     TITLE = {Enough vector bundles on orbispaces},
   JOURNAL = {Compos. Math.},
  FJOURNAL = {Compositio Mathematica},
    VOLUME = {158},
      YEAR = {2022},
    NUMBER = {11},
     PAGES = {2046--2081},
      ISSN = {0010-437X,1570-5846},
   MRCLASS = {57R18 (19L10 19L47 55N32 55P91 55R91)},
  MRNUMBER = {4515091},
MRREVIEWER = {Thomas\ O.\ Rot},
       DOI = {10.1112/s0010437x22007783},
       URL = {https://doi.org/10.1112/s0010437x22007783},
}

@book {moerdijkmrcun2003groupoids,
    AUTHOR = {Moerdijk, I. and Mr\v{c}un, J.},
     TITLE = {Introduction to foliations and {L}ie groupoids},
    SERIES = {Cambridge Studies in Advanced Mathematics},
    VOLUME = {91},
 PUBLISHER = {Cambridge University Press, Cambridge},
      YEAR = {2003},
     PAGES = {x+173},
      ISBN = {0-521-83197-0},
   MRCLASS = {58H05 (17B99 57R30)},
  MRNUMBER = {2012261},
MRREVIEWER = {Jan\ Kubarski},
       DOI = {10.1017/CBO9780511615450},
       URL = {https://doi.org/10.1017/CBO9780511615450},
}

@misc{metzler2003smoothstacks,
  title={Topological and Smooth Stacks},
  author={Metzler, David},
  eprint={0306176},
  archivePrefix={arXiv},
  year={2003},
  primaryClass={math.DG},
}

@misc{joyce2017kuranishi1,
  title={Kuranishi spaces and Symplectic Geometry, Volume I},
  author={Joyce, Dominic},
  URL ={https://people.maths.ox.ac.uk/joyce/kubookv1.pdf},
  year={2017}
}

@misc{joyce2017kuranishi2,
  title={Kuranishi spaces and Symplectic Geometry, Volume II},
  author={Joyce, Dominic},
  URL ={https://people.maths.ox.ac.uk/joyce/kubookv2.pdf},
  year={2017}
}

@article {behrendxu2011,
    AUTHOR = {Behrend, Kai and Xu, Ping},
     TITLE = {Differentiable stacks and gerbes},
   JOURNAL = {J. Symplectic Geom.},
  FJOURNAL = {The Journal of Symplectic Geometry},
    VOLUME = {9},
      YEAR = {2011},
    NUMBER = {3},
     PAGES = {285--341},
      ISSN = {1527-5256,1540-2347},
   MRCLASS = {53C08 (19L50 22A22 53D50 58H05)},
  MRNUMBER = {2817778},
MRREVIEWER = {Chenchang\ Zhu},
       DOI = {10.4310/jsg.2011.v9.n3.a2},
       URL = {https://doi.org/10.4310/jsg.2011.v9.n3.a2},
}

@misc{zernik2017moduli,
  title={Moduli of open stable maps to a homogeneous space},
  author={Zernik, Amitai Netser},
  eprint={1709.07402},
  archivePrefix={arXiv},
  year={2017},
  primaryClass={math.SG},
}

@misc{pomerleanoseidel2024,
  title={Symplectic cohomology relative to a smooth anticanonical divisor},
  author={Pomerleano, Daniel and Paul Seidel},
  eprint={2408.09039},
  archivePrefix={arXiv},
  year={2024},
  primaryClass={math.SG},
}

@misc{gironellazhou2021exact,
  title={Exact orbifold fillings of contact manifolds.},
  author={Gironella, Fabio and Zhengyi Zhou},
  eprint={2108.12247},
  archivePrefix={arXiv},
  year={2021},
  primaryClass={math.SG},
}

@book{taams2025,
    AUTHOR = {Taams, Lisanne},
     TITLE = {Sheaves on stacky curves},
    SERIES = {Radboud Dissertation Series},
 PUBLISHER = {Radboud University Press},
   ADDRESS = {Nijmegen},
      YEAR = {2025},
      ISBN = {978-94-6515-168-7},
       DOI = {10.54195/9789465151687},
       URL = {https://doi.org/10.54195/9789465151687},
}

@incollection {Haefliger1990orbi,
    AUTHOR = {Haefliger, Andr\'e},
     TITLE = {Orbi-espaces},
 BOOKTITLE = {Sur les groupes hyperboliques d'apr\`es {M}ikhael {G}romov
              ({B}ern, 1988)},
    SERIES = {Progr. Math.},
    VOLUME = {83},
     PAGES = {203--213},
 PUBLISHER = {Birkh\"auser Boston, Boston, MA},
      YEAR = {1990},
      ISBN = {0-8176-3508-4},
   MRCLASS = {57M10 (57M05)},
  MRNUMBER = {1086659},
       DOI = {10.1007/978-1-4684-9167-8\_11},
       URL = {https://doi.org/10.1007/978-1-4684-9167-8_11},
}

@article {angelcolman2023pathgroupoids,
    AUTHOR = {\'Angel, Andr\'es and Colman, Hellen},
     TITLE = {Free and based path groupoids},
   JOURNAL = {Algebr. Geom. Topol.},
  FJOURNAL = {Algebraic \& Geometric Topology},
    VOLUME = {23},
      YEAR = {2023},
    NUMBER = {5},
     PAGES = {1959--2008},
      ISSN = {1472-2747,1472-2739},
   MRCLASS = {55P35 (18B40 55R91 58D19 58E40)},
  MRNUMBER = {4621422},
MRREVIEWER = {Timothy\ Porter},
       DOI = {10.2140/agt.2023.23.1959},
       URL = {https://doi.org/10.2140/agt.2023.23.1959},
}

@misc{baixu2022transversality,
  title={A new transversality condition on orbifolds and integer-valued Gromov-Witten type invariants.},
  author={Bai, Shaoyun and Guangbo Xu},
  eprint={2201.02688v3},
  archivePrefix={arXiv},
  year={2022},
  primaryClass={math.SG},
}

@article {ekholmshende2025ghost,
    AUTHOR = {Ekholm, Tobias and Shende, Vivek},
     TITLE = {Ghost bubble censorship},
   JOURNAL = {Comm. Anal. Geom.},
  FJOURNAL = {Communications in Analysis and Geometry},
    VOLUME = {33},
      YEAR = {2025},
    NUMBER = {8},
     PAGES = {1815--1863},
      ISSN = {1019-8385,1944-9992},
   MRCLASS = {32Q65 (53D30)},
  MRNUMBER = {5039054},
       DOI = {10.4310/cag.260215235638},
       URL = {https://doi.org/10.4310/cag.260215235638},
}

@article{rezaeeswaminathan2025construction,
    AUTHOR = {Rezaee, Fatemeh and Swaminathan, Mohan},
     TITLE = {Constructing smoothings of stable maps},
   JOURNAL = {Adv. Math.},
  FJOURNAL = {Advances in Mathematics},
    VOLUME = {467},
      YEAR = {2025},
     PAGES = {Paper No. 110188, 66},
      ISSN = {0001-8708,1090-2082},
   MRCLASS = {14D20 (14H10 14N35 32Q65 53D45)},
  MRNUMBER = {4875370},
MRREVIEWER = {Amin\ Gholampour},
       DOI = {10.1016/j.aim.2025.110188},
       URL = {https://doi.org/10.1016/j.aim.2025.110188},
}

@article {rezaeeswaminathan2024obstruction,
    AUTHOR = {Rezaee, Fatemeh and Swaminathan, Mohan},
     TITLE = {An obstruction to smoothing stable maps},
   JOURNAL = {Selecta Math. (N.S.)},
  FJOURNAL = {Selecta Mathematica. New Series},
    VOLUME = {32},
      YEAR = {2026},
    NUMBER = {3},
     PAGES = {Paper No. 53},
      ISSN = {1022-1824,1420-9020},
   MRCLASS = {14D20 (14N35 32G15)},
  MRNUMBER = {5083451},
       DOI = {10.1007/s00029-026-01160-y},
       URL = {https://doi.org/10.1007/s00029-026-01160-y},
}

@article {ionel1998genus1,
    AUTHOR = {Ionel, Eleny},
     TITLE = {Genus {$1$} enumerative invariants in {$\bold P^n$} with fixed
              {$j$} invariant},
   JOURNAL = {Duke Math. J.},
  FJOURNAL = {Duke Mathematical Journal},
    VOLUME = {94},
      YEAR = {1998},
    NUMBER = {2},
     PAGES = {279--324},
      ISSN = {0012-7094,1547-7398},
   MRCLASS = {53D45 (14N10 14N35)},
  MRNUMBER = {1638587},
MRREVIEWER = {Andreas\ Gathmann},
       DOI = {10.1215/S0012-7094-98-09414-5},
       URL = {https://doi.org/10.1215/S0012-7094-98-09414-5},
}

@article {zinger2009sharp,
    AUTHOR = {Zinger, Aleksey},
     TITLE = {A sharp compactness theorem for genus-one pseudo-holomorphic
              maps},
   JOURNAL = {Geom. Topol.},
  FJOURNAL = {Geometry \& Topology},
    VOLUME = {13},
      YEAR = {2009},
    NUMBER = {5},
     PAGES = {2427--2522},
      ISSN = {1465-3060,1364-0380},
   MRCLASS = {53D45 (32Q65)},
  MRNUMBER = {2529940},
MRREVIEWER = {Hsian-Hua\ Tseng},
       DOI = {10.2140/gt.2009.13.2427},
       URL = {https://doi.org/10.2140/gt.2009.13.2427},
}

@misc{niu2016thesis,
      title={Refined Convergence for Genus-Two Pseudo-Holomorphic Maps. ProQuest LLC, Ann Arbor,
MI, 2016. Thesis (Ph.D.)–State University of New York at Stony Brook.}, 
      author={Niu, Jingchen},
      year={2016},
}

@article {zinger2009reduced,
    AUTHOR = {Zinger, Aleksey},
     TITLE = {The reduced genus 1 {G}romov-{W}itten invariants of
              {C}alabi-{Y}au hypersurfaces},
   JOURNAL = {J. Amer. Math. Soc.},
  FJOURNAL = {Journal of the American Mathematical Society},
    VOLUME = {22},
      YEAR = {2009},
    NUMBER = {3},
     PAGES = {691--737},
      ISSN = {0894-0347,1088-6834},
   MRCLASS = {14N35 (14J32 14J33 33C90 53D45)},
  MRNUMBER = {2505298},
MRREVIEWER = {Hsian-Hua\ Tseng},
       DOI = {10.1090/S0894-0347-08-00625-5},
       URL = {https://doi.org/10.1090/S0894-0347-08-00625-5},
}

@article {vakil2000enumerative,
    AUTHOR = {Vakil, Ravi},
     TITLE = {The enumerative geometry of rational and elliptic curves in
              projective space},
   JOURNAL = {J. Reine Angew. Math.},
  FJOURNAL = {Journal f\"ur die Reine und Angewandte Mathematik. [Crelle's
              Journal]},
    VOLUME = {529},
      YEAR = {2000},
     PAGES = {101--153},
      ISSN = {0075-4102,1435-5345},
   MRCLASS = {14N10 (14H10 14N35)},
  MRNUMBER = {1799935},
MRREVIEWER = {Gilberto\ Bini},
       DOI = {10.1515/crll.2000.094},
       URL = {https://doi.org/10.1515/crll.2000.094},
}

@incollection {vakil2001tool,
    AUTHOR = {Vakil, Ravi},
     TITLE = {A tool for stable reduction of curves on surfaces},
 BOOKTITLE = {Advances in algebraic geometry motivated by physics ({L}owell,
              {MA}, 2000)},
    SERIES = {Contemp. Math.},
    VOLUME = {276},
     PAGES = {145--154},
 PUBLISHER = {Amer. Math. Soc., Providence, RI},
      YEAR = {2001},
      ISBN = {0-8218-2810-X},
   MRCLASS = {14H10 (14D06)},
  MRNUMBER = {1837115},
MRREVIEWER = {Susan\ J.\ Colley},
       DOI = {10.1090/conm/276/04517},
       URL = {https://doi.org/10.1090/conm/276/04517},
}

@misc{mss2025orbifoldspectral,
      title={Orbifold Floer spectral invariants, symmetric product links and Weyl laws}, 
      author={Cheuk Yu Mak and Sobhan Seyfaddini and Ivan Smith},
      year={2025},
      eprint={2512.00454},
      archivePrefix={arXiv},
      primaryClass={math.SG},
      url={https://arxiv.org/abs/2512.00454}, 
}

@article {olsson2007log,
    AUTHOR = {Olsson, Martin C.},
     TITLE = {({L}og) twisted curves},
   JOURNAL = {Compos. Math.},
  FJOURNAL = {Compositio Mathematica},
    VOLUME = {143},
      YEAR = {2007},
    NUMBER = {2},
     PAGES = {476--494},
      ISSN = {0010-437X,1570-5846},
   MRCLASS = {14D22 (14A20 14N35)},
  MRNUMBER = {2309994},
MRREVIEWER = {Charles\ D.\ Cadman},
       DOI = {10.1112/S0010437X06002442},
       URL = {https://doi.org/10.1112/S0010437X06002442},
}

@article {abramovichvistoli2002,
    AUTHOR = {Abramovich, Dan and Vistoli, Angelo},
     TITLE = {Compactifying the space of stable maps},
   JOURNAL = {J. Amer. Math. Soc.},
  FJOURNAL = {Journal of the American Mathematical Society},
    VOLUME = {15},
      YEAR = {2002},
    NUMBER = {1},
     PAGES = {27--75},
      ISSN = {0894-0347,1088-6834},
   MRCLASS = {14H10 (14A20 14D20 14N35)},
  MRNUMBER = {1862797},
MRREVIEWER = {Tyler\ J.\ Jarvis},
       DOI = {10.1090/S0894-0347-01-00380-0},
       URL = {https://doi.org/10.1090/S0894-0347-01-00380-0},
}

@misc{ekholmshende2024counting,
      title={Counting bare curves}, 
      author={Ekholm, Tobias and Vivek Shende},
      year={2024},
      eprint={2406.00890},
      archivePrefix={arXiv},
      primaryClass={math.SG},
      url={https://arxiv.org/abs/2406.00890}, 
}

@article {HTY2026,
    AUTHOR = {Honda, Ko and Tian, Yin and Yuan, Tianyu},
     TITLE = {Higher-dimensional {H}eegaard {F}loer homology and {H}ecke
              algebras},
   JOURNAL = {J. Eur. Math. Soc. (JEMS)},
  FJOURNAL = {Journal of the European Mathematical Society (JEMS)},
    VOLUME = {28},
      YEAR = {2026},
    NUMBER = {4},
     PAGES = {1661--1694},
      ISSN = {1435-9855,1435-9863},
   MRCLASS = {53D10 (20C08 53D40 57R58)},
  MRNUMBER = {5032986},
       DOI = {10.4171/jems/1525},
       URL = {https://doi.org/10.4171/jems/1525},
}

@misc{hkty2025morse,
    title={Morse theory of loop spaces and Hecke algebras.},
    author={Honda, Ko and Krutowski, Roman and Tian, Yin and Yuan, Tianyu},
    eprint={2503.07543},
    archivePrefix={arXiv},
    primaryClass={math.SG},
    year={2025}
}

@misc{hirschihugtenburg2025,
    title={An open-closed Deligne-Mumford field theory associated to a Lagrangian submanifold.},
    author={Hirschi, Amanda and Kai Hugtenburg},
    eprint={2501.04687},
    archivePrefix={arXiv},
    primaryClass={math.SG},
    year={2025}
}

@misc{rabah2024,
    title={Fukaya Algebra over $\mathbb {Z}$.},
    author={Rabah, Mohamad},
    eprint={2411.14657},
    archivePrefix={arXiv},
    primaryClass={math.SG},
    year={2024}
}

@misc{xu2026reduced,
    title={Reduced Gromov-Witten invariants without ghost bubble censorship.},
    author={Xu, Guangbo},
    eprint={2604.13209},
    archivePrefix={arXiv},
    primaryClass={math.SG},
    year={2026}
}

@article {lascouxpragacz2002,
    AUTHOR = {Lascoux, Alain and Pragacz, Piotr},
     TITLE = {Jacobians of symmetric polynomials},
   JOURNAL = {Ann. Comb.},
  FJOURNAL = {Annals of Combinatorics},
    VOLUME = {6},
      YEAR = {2002},
    NUMBER = {2},
     PAGES = {169--172},
      ISSN = {0218-0006,0219-3094},
   MRCLASS = {05E05 (26B10)},
  MRNUMBER = {1955517},
       DOI = {10.1007/PL00012583},
       URL = {https://doi.org/10.1007/PL00012583},
}

@misc{oh2021reduced,
    title={Geometry of Liouville sectors and the maximum principle.},
    author={Oh, Yong-Geun},
    eprint={2111.06112},
    archivePrefix={arXiv},
    primaryClass={math.SG},
    year={2021}
}

@article {abouzaidseidel2010,
    AUTHOR = {Abouzaid, Mohammed and Seidel, Paul},
     TITLE = {An open string analogue of {V}iterbo functoriality},
   JOURNAL = {Geom. Topol.},
  FJOURNAL = {Geometry \& Topology},
    VOLUME = {14},
      YEAR = {2010},
    NUMBER = {2},
     PAGES = {627--718},
      ISSN = {1465-3060,1364-0380},
   MRCLASS = {53D40 (53D12 53D37)},
  MRNUMBER = {2602848},
MRREVIEWER = {Timothy\ Perutz},
       DOI = {10.2140/gt.2010.14.627},
       URL = {https://doi.org/10.2140/gt.2010.14.627},
}

@misc{wendl2020sft,
      title={Lectures on Symplectic Field Theory.}, 
      author={Wendl, Chris },
      year={2020},
      url={https://www.mathematik. hu-berlin.de/~wendl/Sommer2020/SFT/lecturenotes.pdf}, 
}

@article {ego2006GDAHA,
    AUTHOR = {Etingof, Pavel and Gan, Wee Liang and Oblomkov, Alexei},
     TITLE = {Generalized double affine {H}ecke algebras of higher rank},
   JOURNAL = {J. Reine Angew. Math.},
  FJOURNAL = {Journal f\"ur die Reine und Angewandte Mathematik. [Crelle's
              Journal]},
    VOLUME = {600},
      YEAR = {2006},
     PAGES = {177--201},
      ISSN = {0075-4102,1435-5345},
   MRCLASS = {20C08 (17B67 37K20)},
  MRNUMBER = {2283803},
MRREVIEWER = {Bogdan\ Ion},
       DOI = {10.1515/CRELLE.2006.091},
       URL = {https://doi.org/10.1515/CRELLE.2006.091},
}

@article {eor2007GDAHA,
    AUTHOR = {Etingof, Pavel and Oblomkov, Alexei and Rains, Eric},
     TITLE = {Generalized double affine {H}ecke algebras of rank 1 and
              quantized del {P}ezzo surfaces},
   JOURNAL = {Adv. Math.},
  FJOURNAL = {Advances in Mathematics},
    VOLUME = {212},
      YEAR = {2007},
    NUMBER = {2},
     PAGES = {749--796},
      ISSN = {0001-8708,1090-2082},
   MRCLASS = {20C08 (14J26)},
  MRNUMBER = {2329319},
MRREVIEWER = {James\ W.\ Parkinson},
       DOI = {10.1016/j.aim.2006.11.008},
       URL = {https://doi.org/10.1016/j.aim.2006.11.008},
}

@misc{wehrheimwoodward2015orientations,
    title={Orientations for pseudoholomorphic quilts.},
    author={Wehrheim, Katrin and Chris Woodward},
    eprint={1503.07803},
    archivePrefix={arXiv},
    primaryClass={math.SG},
    year={2015}
}

@book {chenzinger2024,
    AUTHOR = {Chen, Xujia and Zinger, Aleksey},
     TITLE = {Spin/pin-structures and real enumerative geometry},
 PUBLISHER = {World Scientific Publishing Co. Pte. Ltd., Hackensack, NJ},
      YEAR = {[2024] \copyright 2024},
     PAGES = {xiv+452},
      ISBN = {[9789811278532]; [9789811278549]; [9789811278556]},
   MRCLASS = {53Dxx (14N35 14P25 53C27)},
  MRNUMBER = {4704417},
       DOI = {10.1142/13476},
       URL = {https://doi.org/10.1142/13476},
}

@article {choshin2016,
    AUTHOR = {Cho, Cheol-Hyun and Shin, Hyung-Seok},
     TITLE = {Chern-{W}eil {M}aslov index and its orbifold analogue},
   JOURNAL = {Asian J. Math.},
  FJOURNAL = {Asian Journal of Mathematics},
    VOLUME = {20},
      YEAR = {2016},
    NUMBER = {1},
     PAGES = {1--19},
      ISSN = {1093-6106,1945-0036},
   MRCLASS = {53D12 (57R18)},
  MRNUMBER = {3460756},
MRREVIEWER = {Dmitri\u i\ Vladimir\ Alekseevsky},
       DOI = {10.4310/AJM.2016.v20.n1.a1},
       URL = {https://doi.org/10.4310/AJM.2016.v20.n1.a1},
}

@misc{stacks-project,
  author       = {The {Stacks project authors}},
  title        = {The Stacks project},
  howpublished = {\url{https://stacks.math.columbia.edu}},
  year         = {2026},
}

@article {LCHL2024,
    AUTHOR = {Amorim, Lino and Cho, Cheol-Hyun and Hong, Hansol and Lau,
              Siu-Cheong},
     TITLE = {Big quantum cohomology of orbifold spheres},
   JOURNAL = {Comm. Anal. Geom.},
  FJOURNAL = {Communications in Analysis and Geometry},
    VOLUME = {32},
      YEAR = {2024},
    NUMBER = {6},
     PAGES = {1509--1592},
      ISSN = {1019-8385,1944-9992},
   MRCLASS = {14N35},
  MRNUMBER = {4836043},
       DOI = {10.4310/cag.241203233544},
       URL = {https://doi.org/10.4310/cag.241203233544},
}

@article {polterovichshelukhin2023,
    AUTHOR = {Polterovich, Leonid and Shelukhin, Egor},
     TITLE = {Lagrangian configurations and {H}amiltonian maps},
   JOURNAL = {Compos. Math.},
  FJOURNAL = {Compositio Mathematica},
    VOLUME = {159},
      YEAR = {2023},
    NUMBER = {12},
     PAGES = {2483--2520},
      ISSN = {0010-437X,1570-5846},
   MRCLASS = {53D12 (37J06 53D05)},
  MRNUMBER = {4651502},
       DOI = {10.1112/s0010437x23007455},
       URL = {https://doi.org/10.1112/s0010437x23007455},
}

@misc{FLYZ,
    title={Remodeling Conjecture with Descendants.},
    author={Fang, Bohan and Liu, Chiu-Chu Melissa and Yu, Song and Zong, Zhengyu},
    eprint={2504.15696},
    archivePrefix={arXiv},
    primaryClass={math.SG},
    year={2025}
}

@article {maksmith2021,
    AUTHOR = {Mak, Cheuk Yu and Smith, Ivan},
     TITLE = {Non-displaceable {L}agrangian links in four-manifolds},
   JOURNAL = {Geom. Funct. Anal.},
  FJOURNAL = {Geometric and Functional Analysis},
    VOLUME = {31},
      YEAR = {2021},
    NUMBER = {2},
     PAGES = {438--481},
      ISSN = {1016-443X,1420-8970},
   MRCLASS = {53D12 (57K10)},
  MRNUMBER = {4268306},
MRREVIEWER = {Roman\ Golovko},
       DOI = {10.1007/s00039-021-00562-8},
       URL = {https://doi.org/10.1007/s00039-021-00562-8},
}

@article {cherednik1995,
    AUTHOR = {Cherednik, Ivan},
     TITLE = {Double affine {H}ecke algebras and {M}acdonald's conjectures},
   JOURNAL = {Ann. of Math. (2)},
  FJOURNAL = {Annals of Mathematics. Second Series},
    VOLUME = {141},
      YEAR = {1995},
    NUMBER = {1},
     PAGES = {191--216},
      ISSN = {0003-486X,1939-8980},
   MRCLASS = {33D80 (05E15 05E35 17B20 39A70)},
  MRNUMBER = {1314036},
MRREVIEWER = {Eric\ M.\ Opdam},
       DOI = {10.2307/2118632},
       URL = {https://doi.org/10.2307/2118632},
}

@article {sahi1999nonsymmetric,
    AUTHOR = {Sahi, Siddhartha},
     TITLE = {Nonsymmetric {K}oornwinder polynomials and duality},
   JOURNAL = {Ann. of Math. (2)},
  FJOURNAL = {Annals of Mathematics. Second Series},
    VOLUME = {150},
      YEAR = {1999},
    NUMBER = {1},
     PAGES = {267--282},
      ISSN = {0003-486X,1939-8980},
   MRCLASS = {33D52 (20C08 33D80)},
  MRNUMBER = {1715325},
MRREVIEWER = {Kenji\ Taniguchi},
       DOI = {10.2307/121102},
       URL = {https://doi.org/10.2307/121102},
}

@incollection {etingofma2008dunkl,
    AUTHOR = {Etingof, Pavel and Ma, Xiaoguang},
     TITLE = {On elliptic {D}unkl operators},
      NOTE = {Special volume in honor of Melvin Hochster},
   JOURNAL = {Michigan Math. J.},
  FJOURNAL = {Michigan Mathematical Journal},
    VOLUME = {57},
      YEAR = {2008},
     PAGES = {293--304},
      ISSN = {0026-2285,1945-2365},
   MRCLASS = {14J60 (16S99 20C08 20F55 32L05)},
  MRNUMBER = {2492454},
MRREVIEWER = {Maurizio\ Martino},
       DOI = {10.1307/mmj/1220879410},
       URL = {https://doi.org/10.1307/mmj/1220879410},
}

@article {efmv2011calogeromoser,
    AUTHOR = {Etingof, Pavel and Felder, Giovanni and Ma, Xiaoguang and
              Veselov, Alexander},
     TITLE = {On elliptic {C}alogero-{M}oser systems for complex
              crystallographic reflection groups},
   JOURNAL = {J. Algebra},
  FJOURNAL = {Journal of Algebra},
    VOLUME = {329},
      YEAR = {2011},
     PAGES = {107--129},
      ISSN = {0021-8693,1090-266X},
   MRCLASS = {37J35 (20F55 20H15 81R12)},
  MRNUMBER = {2769318},
MRREVIEWER = {Maurizio\ Martino},
       DOI = {10.1016/j.jalgebra.2010.04.011},
       URL = {https://doi.org/10.1016/j.jalgebra.2010.04.011},
}

@book {popov1982complex,
    AUTHOR = {Popov, V. L.},
     TITLE = {Discrete complex reflection groups},
    SERIES = {Communications of the Mathematical Institute,
              Rijksuniversiteit Utrecht},
    VOLUME = {15},
 PUBLISHER = {Rijksuniversiteit Utrecht, Mathematical Institute, Utrecht},
      YEAR = {1982},
     PAGES = {89},
   MRCLASS = {20H15 (14D25 20F38 22E40)},
  MRNUMBER = {645542},
MRREVIEWER = {Klaus\ Pommerening},
}

@article {malle1996presentations,
    AUTHOR = {Malle, Gunter},
     TITLE = {Presentations for crystallographic complex reflection groups},
   JOURNAL = {Transform. Groups},
  FJOURNAL = {Transformation Groups},
    VOLUME = {1},
      YEAR = {1996},
    NUMBER = {3},
     PAGES = {259--277},
      ISSN = {1083-4362,1531-586X},
   MRCLASS = {20H15 (20F05 20F34)},
  MRNUMBER = {1417713},
MRREVIEWER = {Arjeh\ M.\ Cohen},
       DOI = {10.1007/BF02549209},
       URL = {https://doi.org/10.1007/BF02549209},
}

@article {puenteshepler2019steinberg,
    AUTHOR = {Puente, Philip and Shepler, Anne V.},
     TITLE = {Steinberg's theorem for crystallographic complex reflection
              groups},
   JOURNAL = {J. Algebra},
  FJOURNAL = {Journal of Algebra},
    VOLUME = {522},
      YEAR = {2019},
     PAGES = {332--350},
      ISSN = {0021-8693,1090-266X},
   MRCLASS = {20F55 (20H15)},
  MRNUMBER = {3899045},
MRREVIEWER = {Gordon\ Ian\ Williams},
       DOI = {10.1016/j.jalgebra.2018.11.032},
       URL = {https://doi.org/10.1016/j.jalgebra.2018.11.032},
}

@article {jordanvazirani2021,
    AUTHOR = {Jordan, David and Vazirani, Monica},
     TITLE = {The rectangular representation of the double affine {H}ecke
              algebra via elliptic {S}chur-{W}eyl duality},
   JOURNAL = {Int. Math. Res. Not. IMRN},
  FJOURNAL = {International Mathematics Research Notices. IMRN},
      YEAR = {2021},
    NUMBER = {8},
     PAGES = {5968--6019},
      ISSN = {1073-7928,1687-0247},
   MRCLASS = {20C08 (17B37 20C30)},
  MRNUMBER = {4251269},
MRREVIEWER = {Anton\ Cox},
       DOI = {10.1093/imrn/rnz030},
       URL = {https://doi.org/10.1093/imrn/rnz030},
}

@misc{simentalnotes,
      title={Notes for a talk on Double Affine Hecke Algebras.}, 
      author={Simental, Jos\'{e} },
      year={2017},
      url={https://ivanloseu.github.io/DAHAEHA_Jose2.pdf}, 
}

@article {etingofrains2005new,
    AUTHOR = {Etingof, Pavel and Rains, Eric},
     TITLE = {New deformations of group algebras of {C}oxeter groups},
   JOURNAL = {Int. Math. Res. Not.},
  FJOURNAL = {International Mathematics Research Notices},
      YEAR = {2005},
    NUMBER = {10},
     PAGES = {635--646},
      ISSN = {1073-7928,1687-0247},
   MRCLASS = {20F55 (16S34 16S80 20C05 20C08)},
  MRNUMBER = {2147005},
MRREVIEWER = {James\ E.\ Humphreys},
       DOI = {10.1155/IMRN.2005.635},
       URL = {https://doi.org/10.1155/IMRN.2005.635},
}

@article {etingofrains2008new2,
    AUTHOR = {Etingof, Pavel and Rains, Eric},
     TITLE = {New deformations of group algebras of {C}oxeter groups. {II}},
   JOURNAL = {Geom. Funct. Anal.},
  FJOURNAL = {Geometric and Functional Analysis},
    VOLUME = {17},
      YEAR = {2008},
    NUMBER = {6},
     PAGES = {1851--1871},
      ISSN = {1016-443X,1420-8970},
   MRCLASS = {20F55 (16S34 20C05 20C08)},
  MRNUMBER = {2399085},
       DOI = {10.1007/s00039-007-0642-7},
       URL = {https://doi.org/10.1007/s00039-007-0642-7},
}

@article {gps2024microlocal,
    AUTHOR = {Ganatra, Sheel and Pardon, John and Shende, Vivek},
     TITLE = {Microlocal {M}orse theory of wrapped {F}ukaya categories},
   JOURNAL = {Ann. of Math. (2)},
  FJOURNAL = {Annals of Mathematics. Second Series},
    VOLUME = {199},
      YEAR = {2024},
    NUMBER = {3},
     PAGES = {943--1042},
      ISSN = {0003-486X,1939-8980},
   MRCLASS = {53D37 (14J33 53D40)},
  MRNUMBER = {4740209},
MRREVIEWER = {Christopher\ T.\ Woodward},
       DOI = {10.4007/annals.2024.199.3.1},
       URL = {https://doi.org/10.4007/annals.2024.199.3.1},
}

@article {gps2020covariant,
    AUTHOR = {Ganatra, Sheel and Pardon, John and Shende, Vivek},
     TITLE = {Covariantly functorial wrapped {F}loer theory on {L}iouville
              sectors},
   JOURNAL = {Publ. Math. Inst. Hautes \'Etudes Sci.},
  FJOURNAL = {Publications Math\'ematiques. Institut de Hautes \'Etudes
              Scientifiques},
    VOLUME = {131},
      YEAR = {2020},
     PAGES = {73--200},
      ISSN = {0073-8301,1618-1913},
   MRCLASS = {53D40},
  MRNUMBER = {4106794},
MRREVIEWER = {Alexander\ Fel\cprime shtyn},
       DOI = {10.1007/s10240-019-00112-x},
       URL = {https://doi.org/10.1007/s10240-019-00112-x},
}

@book {cherednik2005,
    AUTHOR = {Cherednik, Ivan},
     TITLE = {Double affine {H}ecke algebras},
    SERIES = {London Mathematical Society Lecture Note Series},
    VOLUME = {319},
 PUBLISHER = {Cambridge University Press, Cambridge},
      YEAR = {2005},
     PAGES = {xii+434},
      ISBN = {0-521-60918-6},
   MRCLASS = {32G34 (05E15 11L05 17B20 20C08 33D52 33D80)},
  MRNUMBER = {2133033},
       DOI = {10.1017/CBO9780511546501},
       URL = {https://doi.org/10.1017/CBO9780511546501},
}

@article {ACL2025sw,
    AUTHOR = {Argyres, Philip C. and Chalykh, Oleg and L\"u, Yongchao},
     TITLE = {Complex crystallographic reflection groups and
              {S}eiberg-{W}itten integrable systems: rank 1 case},
   JOURNAL = {J. High Energy Phys.},
  FJOURNAL = {Journal of High Energy Physics},
      YEAR = {2025},
    NUMBER = {12},
     PAGES = {Paper No. 84, 49},
      ISSN = {1126-6708,1029-8479},
   MRCLASS = {81T60 (81R12)},
  MRNUMBER = {5004580},
       DOI = {10.1007/jhep12(2025)084},
       URL = {https://doi.org/10.1007/jhep12(2025)084},
}

@article {ballamyschedler2016existence,
    AUTHOR = {Bellamy, Gwyn and Schedler, Travis},
     TITLE = {On the (non)existence of symplectic resolutions of linear
              quotients},
   JOURNAL = {Math. Res. Lett.},
  FJOURNAL = {Mathematical Research Letters},
    VOLUME = {23},
      YEAR = {2016},
    NUMBER = {6},
     PAGES = {1537--1564},
      ISSN = {1073-2780,1945-001X},
   MRCLASS = {14B05 (14L35 20H20)},
  MRNUMBER = {3621098},
MRREVIEWER = {D.\ A.\ Stepanov},
       DOI = {10.4310/MRL.2016.v23.n6.a1},
       URL = {https://doi.org/10.4310/MRL.2016.v23.n6.a1},
}

@article {cohen1980cassification,
    AUTHOR = {Cohen, Arjeh M.},
     TITLE = {Finite quaternionic reflection groups},
   JOURNAL = {J. Algebra},
  FJOURNAL = {Journal of Algebra},
    VOLUME = {64},
      YEAR = {1980},
    NUMBER = {2},
     PAGES = {293--324},
      ISSN = {0021-8693},
   MRCLASS = {20H15},
  MRNUMBER = {579063},
MRREVIEWER = {J.\ D.\ Dixon},
       DOI = {10.1016/0021-8693(80)90148-9},
       URL = {https://doi.org/10.1016/0021-8693(80)90148-9},
}

@misc{KY2023,
    title={Heegaard Floer symplectic homology and {V}iterbo's isomorphism theorem in the context of multiple identical particles},
    author={Roman Krutowski and Tianyu Yuan},
    year={2023},
    eprint={2311.17031},
    archivePrefix={arXiv},
    primaryClass={math.SG}
    }

@misc{MMMR2019,
    title={Floer homology for global quotient orbifolds, Master’s thesis, Técnico Lisboa},
    author={Moreira, Miguel Miranda Ribeiro},
    year={2019},
    url={https://math.mit.edu/~miguel73/master_thesis.pdf}
    }

@misc{rothgang2024,
    title={Equivariant transversality for closed holomorphic curves. Dissertation, Humboldt Universitaet zu Berlin (Germany)},
    author={Rothgang, Michael Benjamin},
    year={2019},
    url={https://doi.org/10.18452/33812}
    }

@article {pasquinelli2016dmthree,
    AUTHOR = {Pasquinelli, Irene},
     TITLE = {Deligne-{M}ostow lattices with three fold symmetry and cone
              metrics on the sphere},
   JOURNAL = {Conform. Geom. Dyn.},
  FJOURNAL = {Conformal Geometry and Dynamics. An Electronic Journal of the
              American Mathematical Society},
    VOLUME = {20},
      YEAR = {2016},
     PAGES = {235--281},
      ISSN = {1088-4173},
   MRCLASS = {22E40 (20M10 30F40 32M05 51M10 57M50)},
  MRNUMBER = {3522983},
MRREVIEWER = {Wensheng\ Cao},
       DOI = {10.1090/ecgd/299},
       URL = {https://doi.org/10.1090/ecgd/299},
}

@article {DPP2021new,
    AUTHOR = {Deraux, Martin and Parker, John R. and Paupert, Julien},
     TITLE = {New nonarithmetic complex hyperbolic lattices {II}},
   JOURNAL = {Michigan Math. J.},
  FJOURNAL = {Michigan Mathematical Journal},
    VOLUME = {70},
      YEAR = {2021},
    NUMBER = {1},
     PAGES = {135--205},
      ISSN = {0026-2285,1945-2365},
   MRCLASS = {20H10 (22E40 32M18)},
  MRNUMBER = {4255091},
MRREVIEWER = {Grzegorz\ Gromadzki},
       DOI = {10.1307/mmj/1592532044},
       URL = {https://doi.org/10.1307/mmj/1592532044},
}

@article {deraux2024subgroups,
    AUTHOR = {Deraux, Martin},
     TITLE = {On subgroups finite index in complex hyperbolic lattice
              triangle groups},
   JOURNAL = {Exp. Math.},
  FJOURNAL = {Experimental Mathematics},
    VOLUME = {33},
      YEAR = {2024},
    NUMBER = {3},
     PAGES = {456--481},
      ISSN = {1058-6458,1944-950X},
   MRCLASS = {22E40 (20H10)},
  MRNUMBER = {4790979},
MRREVIEWER = {O.\ V.\ Shvartsman},
       DOI = {10.1080/10586458.2022.2158969},
       URL = {https://doi.org/10.1080/10586458.2022.2158969},
}

@article {BM2014affinity,
    AUTHOR = {Bellamy, Gwyn and Martino, Maurizio},
     TITLE = {Affinity of {C}herednik algebras on projective space},
   JOURNAL = {Algebra Number Theory},
  FJOURNAL = {Algebra \& Number Theory},
    VOLUME = {8},
      YEAR = {2014},
    NUMBER = {5},
     PAGES = {1151--1177},
      ISSN = {1937-0652,1944-7833},
   MRCLASS = {14F05 (14F10 16S80 20C08)},
  MRNUMBER = {3263139},
MRREVIEWER = {Andreas\ H\"oring},
       DOI = {10.2140/ant.2014.8.1151},
       URL = {https://doi.org/10.2140/ant.2014.8.1151},
}

@article {mostow1980remarkable,
    AUTHOR = {Mostow, G. D.},
     TITLE = {On a remarkable class of polyhedra in complex hyperbolic
              space},
   JOURNAL = {Pacific J. Math.},
  FJOURNAL = {Pacific Journal of Mathematics},
    VOLUME = {86},
      YEAR = {1980},
    NUMBER = {1},
     PAGES = {171--276},
      ISSN = {0030-8730,1945-5844},
   MRCLASS = {22E40 (30F35 51M10)},
  MRNUMBER = {586876},
MRREVIEWER = {\`E.\ Vinberg},
       URL = {http://projecteuclid.org/euclid.pjm/1102780622},
}

@article {delignemostow1986monodromy,
    AUTHOR = {Deligne, P. and Mostow, G. D.},
     TITLE = {Monodromy of hypergeometric functions and nonlattice integral
              monodromy},
   JOURNAL = {Inst. Hautes \'Etudes Sci. Publ. Math.},
  FJOURNAL = {Institut des Hautes \'Etudes Scientifiques. Publications
              Math\'ematiques},
    NUMBER = {63},
      YEAR = {1986},
     PAGES = {5--89},
      ISSN = {0073-8301,1618-1913},
   MRCLASS = {22E40 (32G20 33A30)},
  MRNUMBER = {849651},
MRREVIEWER = {Shigeaki\ Tsuyumine},
       URL = {http://www.numdam.org/item?id=PMIHES_1986__63__5_0},
}

\end{document}